\documentclass[10pt,a4paper]{article}
\usepackage[left=2.8cm, right=2.8cm, top=3cm, bottom=3cm]{geometry}
\usepackage{graphicx}
\usepackage[T1]{fontenc}
\usepackage[utf8]{inputenc}
\usepackage[overload]{empheq}
\usepackage{setspace}
\usepackage{amsmath}
\usepackage{esint}
\usepackage{amssymb}
\usepackage{amsfonts}
\usepackage{relsize}
\usepackage{bm}
\usepackage[dvipsnames]{xcolor}
\usepackage{enumitem}
\usepackage{url}
\usepackage{pdfpages}
\usepackage{palatino}
\usepackage{wrapfig}

\makeatletter
\def\patchsect#1\let\@svsec\@empty{#1\def\@svsec{\leavevmode\kern1sp\relax}}
\let\old@sect\@sect
\def\@sect{\expandafter\patchsect\old@sect}
\makeatother

\usepackage{amsthm}
\theoremstyle{definition}

\theoremstyle{plain}
\newtheorem{defi}{Definition}[section]
\newtheorem{lemma}[defi]{Lemma}
\newtheorem{theorem}[defi]{Theorem}
\newtheorem{prop}[defi]{Proposition}

\newtheorem{corollary}[defi]{Corollary}
\newtheorem{remark}[defi]{Remark}

\usepackage{titlesec}
\titleformat{\section}
  {\centering\normalfont\large\bfseries}{\thesection.}{1em}{}
\titleformat{\subsection}
  {\normalfont\large\bfseries}{\thesubsection.}{1em}{}
\titleformat{\subsubsection}
  {\normalfont\normalsize\itshape}{\thesubsubsection.}{1em}{}

\usepackage{hyperref}
 \hypersetup{
    colorlinks,%
    citecolor=red,%
    filecolor=black,%
    linkcolor=black,%
    urlcolor=black
}

\usepackage{float}

\usepackage{subcaption}
\usepackage{icomma}
\usepackage{appendix}
\usepackage[backend=bibtex, backref=true, style=numeric,maxbibnames=99, url=false, isbn=false]{biblatex}

\bibliography{references}
\DeclareMathOperator*{\Id}{Id}
\DeclareMathOperator*{\Tr}{Tr}
\newcommand{\jap}[1]{\langle #1 \rangle}

\title{Propagation of chaos for the Boltzmann equation with very soft potentials}
\author{Côme Tabary\footnote{Université Paris Cité and Sorbonne Université, CNRS, IMJ-PRG, F-75013 Paris, France}}
\begin{document}

\maketitle

\begin{abstract}
\noindent We build solutions to Kac's particle system and show that their empirical measures converge to the solution of the space-homogeneous Boltzmann equation in the regime of very soft potentials. This proves propagation of chaos for the last class of kernels for which it was still open. The proof relies on new estimates on the dissipation of the Fisher information along the Boltzmann equation, which allow us to control the strong singularities of the system. These estimates are obtained thanks to a new inequality related to the fractional heat flow on the sphere, that might be of independent interest.
\end{abstract}

\tableofcontents

\section{Introduction}

\subsection{Background}

The Boltzmann equation provides a statistical description of a dilute gas, out of thermodynamic equilibrium. In the space-homogeneous setting, the unknown is the distribution of velocities at time $t$, $f_t=f_t(v)\geq 0$, and the Boltzmann equation takes the form
\begin{equation}
    \label{eq:boltzmann}
    \partial_t f_t(v) = \iint_{\mathbb{S}^2\times\mathbb{R}^3}\left(f_t(v')f_t(w')-f_t(v)f_t(w)\right) B(r,\sigma\cdot \sigma')d\sigma'dw.
\end{equation}
Here the pair $(v',w')$ models the velocities of two particles that would collide (or more broadly, interact) with each other and see their velocities changed to $(v,w)$. These binary collisions should preserve momentum and energy, leading to the following relations: if one defines the variables
\begin{align}
    \label{eq:zrsigma}
    z=\frac{v+w}{2}\in \mathbb{R}^3, && r=\frac{\vert v-w\vert}{2}\in \mathbb{R}_+=[0,+\infty), &&\sigma=\frac{v-w}{\vert v-w\vert}\in\mathbb{S}^2
\end{align}
then $z$ and $r$ should be left unchanged by collisions, so that $\sigma$ is the only remaining degree of freedom. Hence we write
\begin{align}
    \label{eq:reversezrsigma}
    v=z+r\sigma,&&w=z-r\sigma,&&v'=z+r\sigma',&&w'=z-r\sigma',
\end{align}
so that all the variables in \eqref{eq:boltzmann} are now properly defined. The typical rate of change from $(v',w')$ to $(v,w)$ (and the other way around) is encoded in the \textit{collision kernel} B, of which we assume the classical factorized form
\begin{align}
\label{eq:Bfactorized}
    B(r,\sigma\cdot\sigma')=\alpha(r) b(\sigma \cdot \sigma')
\end{align}
A class of kernels $B$ of physical relevance is given by the \textit{power-law potentials}, in which we assume that the particules interact with a force proportional to a power $-q$ of their distance, for some $q>2$. In this case $B=B_q$ takes on the factorized form above, with
\begin{align}
    \label{eq:powerlaws}
    \alpha_q(r)=r^\gamma,&& b_q(\sigma\cdot\sigma')\approx \left(1-\sigma\cdot\sigma'\right)^{-1-s}, && \gamma = \frac{q-5}{q-1}\in (-3,1),&& s=\frac{1}{q-1}.
\end{align}
We also mention the very relevant \textit{hard-sphere} model $\alpha(r)=r$ and $b=1$, which we will refer to as the $\gamma=1$ case. Although each kernel as its advantages and its difficulties, as a general rule $B_q$ gets increasingly singular as $q\rightarrow 2^+$ ( $\gamma\rightarrow -3^+, s\rightarrow 1^-$), making its mathematical treatment increasingly harder. It is also possible to forget $q$ and choose the parameters $\gamma$ and $s$ (more or less) independently, mostly for the sake of mathematical generality and of understanding their roles separately. As the Boltzmann equation is certainly the most important model in kinetic theory, its mathematical study for these different interaction kernels has fostered a large body of literature to which we could not possibly do justice in this introduction. We refer to \cite{LifsicPitaevskij2008,Villani2002} for a general review, respectively from a physical and a mathematical point of view. 

By its statistical nature, the Boltzmann equation is a \textit{mesoscopic} description of a gas, acting as a bridge between microscopic particle systems and macroscopic fluid dynamics. At the heart of its derivation lies the hypothesis of \textit{molecular chaos}, which roughly states that any two particles are independent before they collide, as if they had never previously interacted. A testimony of this is the tensor product $f_t(v)f_t(w)$ in \eqref{eq:boltzmann}, which represents the distribution of two independent particles. In 1956, in the seminal work \cite{Kac1956}, Mark Kac proposed to justify this hypothesis from the microscopical scale, in what is now known as Kac's program: starting from a $N$-particle jump process on $N$ velocities (where each jump represents a collision, and the jump rates are suitably chosen), one should prove that for any integer $j$, any time $t$, the law of a fixed batch of $j$ particles should converge as $N\rightarrow +\infty$ to the law $f_t^{\otimes j}$ of $j$ independent $f_t$-distributed particles, provided that this convergence holds at the initial time $t=0$. This convergence, called \textit{Boltzmann's property} in Kac's work, is what we know refer to as \textit{chaos}, hence the name \textit{propagation of chaos} to describe such a limit: the chaos property propagates in time from $t=0$.  After Kac's initial result for a toy model in \cite{Kac1956}, several authors have shown propagation of chaos for the Boltzmann equation with different choices of collision kernels and slightly different particle systems to start with (see, among others, \cite{Grunbaum1971,GrahamMeleard1997,MischlerMouhot2012,CortezFontbona2018,Salem2019a,Heydecker2022,FournierMischler2025} and below for a more general discussion). Altogether, these results cover the range $\gamma\in(-2,1]$.

\subsection{Main results}
\subsubsection{Propagation of chaos for very soft power-laws}

In this work, we complete Kac's program by showing propagation of chaos for the Boltzmann equation in the last physically relevant regime which was, to our knowledge, still open: the so-called \textit{very soft potentials} $\gamma\in (-3,-2]$ (corresponding to $q\in(2,7/3]$ for power-law potentials). For the sake of simplicity, and because it is the physically relevant case, we assume in this paragraph that $B$ is given by a power law potential $B=B_q$. We will give a more general result later on.

The intuitive construction of Kac's particle system is as follows: consider a random vector of $N$ velocities $\mathbf{V}^N=(V^N_i)_{1\leq i \leq N}\in(\mathbb{R}^3)^N$, initially drawn independently with a common law $f_0$ (so that initial chaos holds). For any $i<j$, let $z_{ij}$, $r_{ij}$ and $\sigma_{ij}$ be defined by \eqref{eq:zrsigma} with $v=V^N_i,w=V^N_j$. The particle system $\mathbf{V}^N$ evolves by letting each pair $(V^N_i,V^N_j)$ undergo random collisions, at rate $B(r_{ij},\sigma_{ij}\cdot \sigma')/(N-1)$ for any $\sigma'$. Each collision changes the velocities $(V^N_i,V^N_j)$ into $((V^N_i)',(V^N_j)')$, according to the rule \eqref{eq:reversezrsigma} (meaning that the velocities after collision are given by $((V^N_i)',(V^N_j)')=(z_{ij}+r_{ij}\sigma',z_{ij}-r_{ij}\sigma')$).

In the case of very soft potentials, $\alpha$ and $b$ both have a strong singularity that make the intuitive construction above a bit intricate to rigorously formalize. We let $(\mathbf{V}^N)'_{ij}$ be the vector built from  $\mathbf{V}^N$ and $\sigma'$ by changing the $i$-th and $j$-th velocities to $(V^N_i)'$ and $(V^N_j)'$. We also define for any $v\in\mathbb{R}^3$ and $1\leq i\leq N$, the vector $v e_i =(0,...,v,...,0) \in (\mathbb{R}^{3})^N$, where $v$ is in the $i$-th factor of the cartesian product.  Consider $N(N-1)/2$ independent Poisson measures $(\Pi^N_{ij})_{1\leq i<j \leq N}$ on $\mathbb{R}_+\times \mathbb{S}^2\times \mathbb{R}_+$ with intensity $\frac{1}{N-1} d\tau d\sigma' dx$, and $N$ i.i.d. initial conditions $\mathbf{V}^N_0 = (V^N_{0,i})_{1\leq i \leq N}$ with law $f_0$ (independent of the Poisson measures).
We denote by $\tilde{\Pi}^N_{ij}=\Pi^N_{ij}-\frac{1}{N-1} d\tau d\sigma' dx$ the compensated Poisson measure.
Then, 
\textit{Kac's particle system} is the following process for $\mathbf{V}^N(t)$:
    \begin{align}
        \label{eq:partsystintro}
        \mathbf{V}^N&(t) = \mathbf{V}^N_0\nonumber\\
        &+  \sum_{i<j}\int_0^t\!\iint_{\mathbb{S}^2\times \mathbb{R}_+}\! \! \! \left[(\mathbf{V}^N)'_{ij}(\tau^-) - \mathbf{V}^N(\tau^-)\right] \mathbf{1}_{x<B(r_{ij}(\tau^-),\sigma_{ij}(\tau^-)\cdot\sigma')} \tilde{\Pi}^N_{ij}(d\tau,d\sigma',dx)\\
        &+\frac{2^{-\gamma-1}\bar{b}}{N-1} \sum_{i<j}\int_0^t \left\vert V^N_j(\tau)-V^N_i(\tau) \right\vert^\gamma \left(V^N_j(\tau)-V^N_i(\tau)\right) (e_i-e_j) .\nonumber
    \end{align}
\begin{remark}
\label{rem:onkacs}
    This formulation is taken from \cite{FournierMischler2025}. The constant $\bar{b}$ is related to the angular part $b$ of the kernel and is defined below in \eqref{hyp:H0}. If $B$ is more regular (say, bounded), then the formulation above can be simplified by removing the third line and replacing the compensated measure by the Poisson measure in the second line.
\end{remark}
 Our main result is propagation of chaos for very soft power-law potentials. We formulate it as the convergence of the (random) empirical measures of the system
$$\mu^N_t:= \sum_{i=1}^N \delta_{V^N_i(t)} \in \mathcal{P}(\mathbb{R}^3)$$
towards the solution of the Boltzmann equation. (See below for the standard definitions of entropy and Fisher information.)
\begin{theorem}
\label{thm:maincollisions}
Let $q\in(2,7/3]$ (so that $\gamma\in (-3,-2]$) and consider the associated collision kernel $B=B_q$ of a power law potential. Let $f_0 \in L^1(\mathbb{R}^3)$ have finite mass, entropy and Fisher information, and finite moment of order $\bar{\ell}$ large enough:
\begin{align}
    \label{hyp:momentcoll}
    m_{\bar{\ell}}(f_0):=\int_{\mathbb{R}^3}\jap{v}^{\bar{\ell}} f_0(v)dv<+\infty, && \bar{\ell}>\frac{(5-q)(q+3)}{(q-1)(3q-5)}\in[16/3,15).
\end{align}
Then, there exists a solution to Kac's particle system \eqref{eq:partsystintro} , and the sequence of its empirical measures converges in probability to the unique regular solution $(f_t)_{t\geq 0}$ of the Boltzmann equation with initial data $f_0$: For all $t\geq 0$,
\begin{equation}
    \label{eq:thmmaincoll}
    \mu^N_t \xrightarrow[N\rightarrow +\infty]{\mathbb{P}} f_t,
\end{equation}
where the space of probabilities $\mathcal{P}(\mathbb{R}^3)$ is endowed with the topology of weak convergence against bounded continuous functions. 
\end{theorem}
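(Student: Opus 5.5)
We follow the compactness–uniqueness strategy of Fournier–Mischler, combined with a Fisher-information bound for the $N$-particle system. The new ingredient is the dissipation estimate for the Fisher information along the Boltzmann flow announced in the abstract.

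\textbf{Step 1: a priori bounds at the level of the $N$-particle law.} Let $F^N_t$ be the law of $\mathbf{V}^N(t)$ on $(\mathbb{R}^3)^N$. It solves the master (Kac) equation, which is itself a Boltzmann-type linear equation on $(\mathbb{R}^3)^N$ built from the pairwise collision operators. We aim to show that the normalized Fisher information $\frac1N I(F^N_t)$ is nonincreasing in $t$, or at least bounded uniformly in $N$ on compact time intervals. For a single pair $(i,j)$, the collision operator acts only in the variable $\sigma_{ij}$, with fixed $z_{ij}$ and $r_{ij}$. We therefore plan to prove that the Fisher information of a function of $(z,r,\sigma)$ does not increase under the linear flow $\partial_t g=\alpha(r)\int (g(\sigma')-g(\sigma))b(\sigma\cdot\sigma')d\sigma'$. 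This is the fractional heat flow on $\mathbb{S}^2$, modulated by $\alpha(r)$.

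The angular part of the Fisher information is monotone by convexity. The delicate part is the radial and $z$ derivatives, because $\alpha(r)=r^\gamma$ depends on $r$. This is where we expect the new inequality for the fractional heat flow on the sphere to enter: it should bound the commutator term $\alpha'(r)\partial_r$ by the angular dissipation, via a Cauchy–Schwarz/Bochner-type estimate. Since $I$ is additive over pairs in the appropriate sense, we obtain $I(F^N_t)\le I(F^N_0)=N\,I(f_0)$. The same computation with the moment weight $\jap{v}^{\bar\ell}$ gives uniform moment bounds, and conservation of energy gives tightness.

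\textbf{Step 2: existence of the particle system.} We regularize by truncating $b$ and $\alpha$, so that $\alpha_\varepsilon$ is bounded near $r=0$ and $b_\varepsilon$ is integrable. This yields a well-defined jump process with the bounds of Step 1, uniform in $\varepsilon$. We then pass to the limit $\varepsilon\to0$ by tightness in Skorokhod space, using the compensated form \eqref{eq:partsystintro} from \cite{FournierMischler2025}.

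The key point in this limit is that the singular drift $|V_j-V_i|^{\gamma+1}$ and the small jumps are integrable in time. This follows because finite Fisher information of the two-particle marginal controls $\mathbb{E}\int_0^t|V_i-V_j|^{\gamma'}d\tau$ for $\gamma'>-3$, through a Hardy-type inequality $\int |v-w|^{\gamma'}F^{(2)}\lesssim 1+I(F^{(2)})$. This works in dimension $6$ with the relative variable in $\mathbb{R}^3$: the bound holds for exponents down to almost $-3$, and it is exactly what is needed for $\gamma\in(-3,-2]$. The moment condition on $\bar\ell$ enters when these weights are interpolated against the large-velocity tails.

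\textbf{Step 3: propagation of chaos.} By Step 1, the laws of $\mu^N$ are tight in $\mathcal{D}(\mathbb{R}_+,\mathcal{P}(\mathbb{R}^3))$. By lower semicontinuity and the standard result that normalized Fisher information passes to limits of exchangeable laws (Hauray–Mischler), any limit point is supported on measure-valued paths $(g_t)$ with $\int_0^T I(g_t)dt<\infty$ and bounded moments.

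Next we pass to the limit in the martingale formulation, tested against $\varphi\in C^2_c$. The singular term $\iint K_\varphi(v,w)\,\mu^N(dv)\mu^N(dw)$ is handled with the kernel bound $|K_\varphi(v,w)|\lesssim|v-w|^{\gamma+1}$ (from the Taylor expansion in $\sigma'$), and the diagonal/near-diagonal region is controlled by the integrability from Step 2. The martingale terms vanish as $N\to\infty$ by the $\frac1{N-1}$ scaling. Hence $(g_t)$ is a weak solution in the class with $I\in L^1_t$. Uniqueness in this class holds by the stability results for regular solutions in the very soft case (Fournier–Guérin type coupling or Wasserstein stability). This identifies $g=f$, so the limit is deterministic, which gives convergence in probability \eqref{eq:thmmaincoll}.

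\textbf{Main obstacle.} The heart of the argument is Step 1: monotonicity, or a controlled growth, of the Fisher information for the singular kernel $r^\gamma$ with $\gamma\le-2$. The radial derivative of $\alpha$ produces a term of order $r^{\gamma-1}$ that has to be absorbed by the angular dissipation. Our first attempt would be to prove a spherical inequality of the form $\big|\langle\nabla_\sigma\text{-commutator}\rangle\big|\le C\,D_b(g)$, with $C$ small enough for the power-law range. If the constant is not sharp, we would fall back on a Grönwall bound $I(F^N_t)\le e^{Ct}I(F^N_0)$, which suffices for all the steps above.
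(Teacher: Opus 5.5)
\textbf{The gap is the uniqueness step at the end of Step 3. This is exactly where very soft potentials differ from $\gamma\in(-2,0)$.} The Fournier--Gu\'erin theorem gives uniqueness only among weak solutions (with bounded energy) lying in $L^1_tL^p_v$ with $p>3/(3+\gamma)$. So you must show that the cluster point itself belongs to that class; the existence of the regular Imbert--Silvestre--Villani solution does not help with this. The class you end up with is that of weak solutions with $\int_0^T I(g_t)\,dt<\infty$. It only gives $g\in L^1_tL^3_v$, via $\Vert g\Vert_{L^3}=\Vert\sqrt g\Vert_{L^6}^2\lesssim I(g)$, and in dimension $3$ Fisher information gives no better Lebesgue integrability than $L^3$ in general. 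Since $3/(3+\gamma)\geq 3$ for every $\gamma\le -2$ (with equality at $\gamma=-2$, where the inequality must be strict), your argument closes only for $\gamma>-2$; it reproduces Fournier--Mischler. Relatedly, the ``main obstacle'' you single out is not the real one. The uniform bound $I(F^N_t)\le I(F^N_0)$ is already available from Imbert--Silvestre--Villani and Fournier--Mischler: the $\alpha'$-term is absorbed by the spherical dissipation under $|\gamma|\le 2\sqrt{\Lambda_b}$ (numerically verified for $b_q$). No new spherical inequality is used there.

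\textbf{What is missing is to exploit the \emph{dissipation} of the Fisher information, not only its monotonicity.} The paper does this in four steps. (i) Keep a fraction of the spherical dissipation term. This is why \eqref{hyp:H1} is assumed with a margin $\lambda>0$, uniformly along the regularization. A naive truncation of $b$ is not known to satisfy this, hence the specific approximation $b^k$ of Appendix~\ref{app:powerlaws}. (ii) Show that the dissipation along the fractional heat flow controls $\Vert\sqrt g\Vert^2_{\dot H^{1+s}(\mathbb{S}^2)}$. This is Theorem~\ref{thm:key_ineq}, proved by subordination; the difficulty is comparing $\sqrt{g_t}$ with $(\sqrt g)_t$. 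A nonlinear Sobolev inequality then controls $\int g\,\vert\nabla_\sigma\log g\vert^{2(1+s)}$. (iii) Transfer this bound from $F^N$ to the random cluster point. This requires a functional of order $1+s$ that is lower semicontinuous, superadditive and affine in infinite dimension: the superposition $\mathbb{K}^\omega_\beta$ of second-order Fisher informations along the spherical heat flow. (iv) ``Unlift'' the angular bound on $f\otimes f$ into a bound on $\int f\,\vert\nabla\log f\vert^{2(1+s)}\jap{v}^{\gamma-2}$, using moments and non-aligned triplets. The weight functional of the triplets is lower semicontinuous, which allows a continuation argument in time starting from the entropy of the regular solution. Sobolev embedding then gives $f\in L^1_tL^p_v$ with $p>3/(3+\gamma)$. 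This last step needs $\gamma+2s>-2$, and it is where the condition on $\bar\ell$ really enters, not Step 2.

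Separately, your Hardy-type claim is false beyond exponent $-2$. Take $a>2$ and a relative density behaving like $\vert x\vert^{-b}$ near $0$ with $b\in(3-a,1)$: it has finite Fisher information while $\mathbb{E}\vert V_i-V_j\vert^{-a}=\infty$. This does not damage Step 2. Only $\mathbb{E}\vert V_i-V_j\vert^{-2}\le I(F^N)$ is needed there, since the relevant exponents $\gamma+1$ and $\gamma+2$ are larger than $-2$.
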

We make a few comments on this result before rephrasing it in terms of Kac's original point of view.
\begin{remark}
\label{rem:fisher}
By unique regular solution of the Boltzmann equation we mean the only one lying in $L^1_t L^\infty_v$ by the uniqueness result of \cite{FournierGuerin2008}. The existence of such a solution relies on the breakthrough result from \cite{ImbertSilvestreVillani2024} on the monotonicity of Fisher information. We wish to emphasize that for very soft potentials, the monotonicity of Fisher information relies on some numerical computations related to the non-explicit kernel $b_q$. Those numerical computations show that $b_q$ satisfies the key inequality guaranteeing monotonicity (see \eqref{eq:forfiherdecrease} below) with a comfortable margin in the constant \cite[Appendix A]{ImbertSilvestreVillani2024}. Because we need the same inequality to hold, our work also relies on these numerical computations for power-law potentials. In the general Theorem~\ref{thm:maincollisions} below, we state the exact hypotheses we rely on.
\end{remark}
\begin{remark}
\label{rem:regularization}
    The existence of a solution to Kac's particle system is proven using a regularization argument, temporarily replacing $B$ by a bounded version, following the proof by Fournier and Mischler \cite{FournierMischler2025}. However, because we need the Fisher information to decrease at the level of the regularized particle system (in order to obtain tightness estimates), we cannot choose any regularization of $b_q$ (for instance, it seems at least hard to prove -if not false- that the naive truncatation $\min(b_q,n)$ satisfies the inequality for monotonicity of Fisher information). The need for a careful regularization is a specificity and an additional difficulty of very soft potentials. In the general Theorem~\ref{thm:main} below, we state the existence of a well-behaved approximation as an hypothesis. The satisfaction of this hypothesis for power-law kernels is shown in Appendix~\ref{app:powerlaws}. It relies on the aforementioned numerical computations of \cite{ImbertSilvestreVillani2024} (but without new ones).
\end{remark}
\begin{remark}
\label{rem:improvedconv}
    If we fix a distance $d$ metricizing the weak convergence on $\mathcal{P}(\mathbb{R}^3)$ (see \cite[Chapter 6]{Villani2009}), the convergence \eqref{eq:thmmaincoll} means that for any $\varepsilon>0$,
    $\mathbb{P}(d(\mu^N_t,f_t)>\varepsilon)\xrightarrow[N\rightarrow +\infty]{} 0.$
    We actually prove the slightly better result of locally-in-time uniform convergence
   $$\mathbb{P}\left(\sup_{t\in[0,T]}d(\mu^N_t,f_t)>\varepsilon\right)\xrightarrow[N\rightarrow +\infty]{} 0,$$
    for all $T\geq 0$ and all $\varepsilon>0$. One can also study an even stronger convergence, at the level of trajectories: we refer to \cite{FournierMischler2025} for further detail. We do not consider it to avoid lengthening this work, but we expect the techniques developed here could be used to prove such a convergence by combining them with the martingale framework of \cite{FournierMischler2025}.
\end{remark}
Kac's original formulation of chaos does not involve empirical measures. Instead, it takes on the following form: for any $t\geq 0$, let $F^N_t \in \mathcal{P}(\mathbb{R}^{3N})$ be the law of $\mathbf{V}^N(t)$ (hence $F^N_0=f_0^{\otimes N}$), which is symmetric in its $N$ variables. The law of a batch of $j$ particles is then given by the marginal
$$F^{N:j}_t= \int_{\mathbb{R}^{3(N-j)}} F^{N}_t(dv_{j+1}...dv_N)\in \mathcal{P}(\mathbb{R}^{3j}),$$
for any $1\leq j \leq N$. The exchangeability of the particles makes it so it does not matter on which variables we integrate over. We can then reformulate our main result, with classical improvements of the convergence, as
\begin{corollary}
\label{cor}
Under the same hypothesis as Theorem~\ref{thm:maincollisions}, for any $j\geq 1$ and for any $t\geq 0$,
$$F^{N:j}_t \xrightarrow[N\rightarrow +\infty]{} f_t^{\otimes j} $$
in $L^1(\mathbb{R}^{3j})$. Entropic chaos also holds, \textit{i.e.} for any $t\geq 0$,
$$ H(F^N_t)\xrightarrow[N\rightarrow +\infty]{} H(f_t).$$
\end{corollary}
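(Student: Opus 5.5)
We deduce the corollary from Theorem~\ref{thm:maincollisions} using the standard equivalences between forms of chaos, together with the Fisher information bounds on $F^N_t$ that are used in the paper for tightness. We take $\mu^N_t$ to be the normalized empirical measure $\frac1N\sum_i\delta_{V^N_i(t)}$; the displayed definition is presumably missing this factor.

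\textbf{Step 1: weak convergence of marginals.} By a classical result (Sznitman; see also Hauray--Mischler), for an exchangeable law $F^N_t$, convergence in probability of $\mu^N_t$ to the deterministic limit $f_t$ is equivalent to $F^{N:j}_t\to f_t^{\otimes j}$ weakly for every $j$. We show the direction we need directly. Take $\varphi_1,\dots,\varphi_j$ bounded and continuous. By exchangeability,
$$\mathbb{E}\Big[\prod_{k=1}^j\langle \mu^N_t,\varphi_k\rangle\Big]=\int \varphi_1\otimes\cdots\otimes\varphi_j\, dF^{N:j}_t+O(j^2/N),$$
since the only discrepancy comes from terms with repeated indices. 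The left-hand side converges to $\prod_k\langle f_t,\varphi_k\rangle$ by Theorem~\ref{thm:maincollisions} and bounded convergence. Tensor products of such functions are convergence determining, and $F^{N:j}_t$ is tight because moments are propagated. Hence $F^{N:j}_t\to f_t^{\otimes j}$ weakly.

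\textbf{Step 2: upgrade to $L^1$ and entropic chaos.} This requires uniform regularity of the $F^N_t$. The construction gives $I(F^N_t)\le I(F^N_0)=N I(f_0)$ from the Fisher information monotonicity at the regularized particle level; this is exactly the estimate used for tightness, and it passes to the limit by lower semicontinuity. The $N$-particle entropy is nonincreasing, so $H(F^N_t)\le N H(f_0)$. The energy is conserved, so $H(F^N_t)/N$ is also bounded below.

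By superadditivity of Fisher information (Carlen; Hauray--Mischler), $I(F^{N:j}_t)\le \frac{j}{\lfloor N/j\rfloor j}\,I(F^N_t)\lesssim j\,I(f_0)$. A uniform Fisher bound plus moment bounds gives compactness in $L^1$: $\sqrt{F^{N:j}_t}$ is bounded in $H^1$ with tight mass, so Rellich's theorem applies. Combined with Step 1, this yields strong $L^1$ convergence $F^{N:j}_t\to f_t^{\otimes j}$.

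For entropic chaos (with $H(F^N_t)$ understood normalized by $N$), we use the Hauray--Mischler theorem. Chaos together with $\frac1N I(F^N_t)\to I(f_t)$, or more simply bounded normalized Fisher information plus moments, implies $\frac1N H(F^N_t)\to H(f_t)$. Their quantitative HWI-type argument shows that Fisher-bounded chaotic sequences are entropically chaotic. Concretely, lower semicontinuity and superadditivity give $\liminf \frac1N H(F^N_t)\ge H(f_t)$. The upper bound follows from the HWI inequality $H(F^N|f_t^{\otimes N})/N\lesssim \sqrt{I/N}\,W_2(F^N,f_t^{\otimes N})/\sqrt N+\dots$, with the normalized $W_2$ going to zero by chaos and moments. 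This also requires $f_t$ to be regular enough ($\log f_t$ with controlled gradient), which follows from finite $I(f_t)$ and Gaussian-type lower bounds; alternatively, we can appeal to \cite[Theorem 1.4]{HaurayMischler}-type statements directly.

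\textbf{Main obstacle.} The delicate point is the uniform bound $I(F^N_t)\le N I(f_0)$ for the limiting (non-regularized) particle system. It must be obtained from the regularized systems and lower semicontinuity along the subsequence used in the existence proof. The remaining steps are standard applications of exchangeability arguments.
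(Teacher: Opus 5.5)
Your structure is sound, and two of your three steps coincide with the paper's. The first is the weak convergence of marginals: you re-derive it by exchangeability and counting repeated indices, where the paper cites \cite[Theorem 5.3, (1)]{HaurayMischler2012}. The second is entropic chaos via the Hauray--Mischler theorem, using weak chaos, the bounded normalized Fisher information of Proposition~\ref{prop:estimatesbis}, and a moment bound.

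The genuinely different step is the $L^1$ convergence. The paper proves entropic chaos first, then quotes the classical fact that entropic chaos with energy bounds upgrades weak convergence of marginals to $L^1$ convergence. You use compactness instead:
\begin{itemize}
\item superadditivity gives $I(F^{N:j}_t)\le I(F^N_t)\le 2I(f_0)$, so $\sqrt{F^{N:j}_t}$ is bounded in $H^1(\mathbb{R}^{3j})$ uniformly in $N$;
\item energy conservation gives tightness;
\item Rellich--Kondrachov then makes $\sqrt{F^{N:j}_t}$ precompact in $L^2$, hence $F^{N:j}_t$ precompact in $L^1$;
\item Step 1 identifies every subsequential limit as $f_t^{\otimes j}$.
\end{itemize}
This argument is correct and more elementary, and it needs only second moments. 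The paper's route instead gets $L^1$ convergence as a by-product of entropic chaos, which it must prove anyway.

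Two points in your entropic-chaos step need fixing.

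First, your concrete justification of the upper bound should be replaced. An HWI inequality relative to $f_t^{\otimes N}$ requires semi-log-concavity of $f_t$. It also controls the relative entropy by the relative Fisher information $I(F^N_t\vert f_t^{\otimes N})$, which you do not control. The paper proves no Gaussian lower bounds on $f_t$, and such bounds would not give the needed convexity anyway. What works, and what underlies the Hauray--Mischler theorem, is the HWI inequality for the Lebesgue entropy, which is displacement convex on $\mathbb{R}^{3N}$. In the paper's normalization it reads $H(F^N_t)-H(f_t)\le N^{-1/2}\,W_2(F^N_t,f_t^{\otimes N})\sqrt{I(F^N_t)}$, and it requires no regularity of $f_t$.

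Second, the factor $N^{-1/2}W_2(F^N_t,f_t^{\otimes N})$ must tend to $0$; this is also the hypothesis of \cite[Theorem 1.3, (1)]{HaurayMischler2012}. For that you need a bound, uniform in $N$, on a moment of $F^{N:1}_t$ of order strictly larger than $2$. Energy conservation gives only $m_2$. Your phrase ``moments are propagated'' should be made precise as the paper does it: propagate $m_{\bar{\ell}}$, $\bar\ell>2$, along the regularized Master equation \eqref{def:regboltzmannmastereq} uniformly in $k$, then pass to the limit $k\to+\infty$. With these amendments, or simply by citing the Hauray--Mischler theorem directly as you suggest, the proof is complete.
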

For more details on the links between this formulation and the one of Theorem~\ref{thm:maincollisions}, and other notions of chaos, we refer to \cite{HaurayMischler2012}.

\subsubsection{Dissipation of Fisher information along the fractional heat flow}

In the recent work \cite{FournierMischler2025}, Fournier and Mischler show that the Fisher information $I$ of the law $F^N_t$ of the particle system is non-increasing in time. As it was already observed for the closely related Landau equation~\cite{CarrilloGuo2025}, the law $F^2_t$ satisfies the linear \textit{lifted} Boltzmann equation from \cite{ImbertSilvestreVillani2024}. Therein, the monotonicity of the Fisher information for the non-linear Boltzmann equation is shown by reducing the problem to the monotonicity for the linear equation satisfied by $F^2_t$. Hence \cite{ImbertSilvestreVillani2024} actually shows that $I(F^2_t)$ is non-increasing and provide a completely explicit expression of its time-derivative. Once this observation is made, the generalization to $N$ particles is rather straigthforward, so we can focus on the equation for $N=2$.

The key idea of this paper is to exploit that not only $\frac{d}{dt}I(F^2_t)$ is non-positive, but that its expression contains a dissipation term of higher regularity. This idea was first applied in the Landau setting, where the expressions are much more explicit, by Ji \cite{Ji2024} and the author \cite{Tabary2026a}. In fact, if we change variables from the two particles $(v,w)$ to $(z,r,\sigma)$ with formula \eqref{eq:zrsigma}, the equation for $F^2_t(v,w)$ becomes $\partial_t F^2_t(z,r,\sigma) = 4\alpha(r) \Delta_\sigma F^2_t(z,r,\sigma)$ as shown in \cite{GuillenSilvestre2023}. Hence the dissipation of the Fisher information for the Landau equation is closely related to the dissipation of the spherical Fisher information along the heat flow on the sphere, which is quite well-known.

In the Boltzmann case, the evolution equation for $F^2_t$ is given by a non-local operator involving the collision kernel $b$:
$$\partial_t F^2_t(z,r,\sigma)= \alpha(r)\int_{\mathbb{S}^2}\left( F^2_t (z,r,\sigma')- F^2_t (z,r,\sigma)\right) b(\sigma \cdot \sigma')d\sigma'.$$
The singularity of $b$ being of the same order as the fractional spherical Laplacian of order $s$, when computing the time derivative of $I(F^2_t)$, the dissipation is comparable to the one obtained in the equation $\partial_t F=-\alpha(r) (-\Delta_\sigma)^s F$, where $-(-\Delta_\sigma)^s$ is the fractional Laplacian. This time we are thus led to study the dissipation of spherical Fisher information along the fractional heat flow, which seems to have received very little attention from the mathematical community, apart from the expressions derived in \cite{ImbertSilvestreVillani2024}. The key inequality in our work shows that the dissipation of the spherical Fisher information along the fractional heat flow controls a Sobolev norm of order $1+s$. More precisely, if we define for any smooth non-negative function $g$ on $\mathbb{S}^2$ its spherical Fisher information by
\begin{equation}
    \label{def:sphericfisher}
    \mathcal{I}(g) := \int_{\mathbb{S}^2} \vert \nabla_\sigma\log g(\sigma)\vert^2 g(\sigma) d\sigma,
\end{equation}
and use $\jap{\cdot,\cdot}$ to denote functional Gâteaux derivatives, then our central inequality reads:
\begin{theorem}
\label{thm:key_ineq}
For any $s\in(0,1)$, there exists a  constant $C_s>0$ such that, for all non-negative, smooth functions $g$ on $\mathbb{S}^2$,
\begin{equation}
    \label{eq:thm_key_ineq}
    \jap{\mathcal{I}'(g),(-\Delta_\sigma)^s g}\geq C_s \Vert \sqrt{g} \Vert_{\dot{H}^{1+s}(\mathbb{S}^{2})}^2.
\end{equation}
\end{theorem}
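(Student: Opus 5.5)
We will prove \eqref{eq:thm_key_ineq} by writing everything in terms of $u=\sqrt{g}$ and using the spectral structure of $(-\Delta_\sigma)^s$ on $\mathbb{S}^2$ via subordination to the heat semigroup:
$$(-\Delta_\sigma)^s = \frac{1}{|\Gamma(-s)|}\int_0^\infty \left(\Id - e^{t\Delta_\sigma}\right)\frac{dt}{t^{1+s}}.$$
By linearity of the Gâteaux derivative, this gives
$$\jap{\mathcal{I}'(g),(-\Delta_\sigma)^s g} = c_s\int_0^\infty \jap{\mathcal{I}'(g), g - e^{t\Delta}g}\,\frac{dt}{t^{1+s}}.$$
The first step is then to exploit convexity of $\mathcal{I}$. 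Since $\mathcal{I}$ is convex (it is jointly convex in $(g,\nabla g)$), we have
$$\jap{\mathcal{I}'(g), g-e^{t\Delta}g}\ge \mathcal{I}(g)-\mathcal{I}(e^{t\Delta}g).$$
Along the heat flow on $\mathbb{S}^2$ the Bakry--Émery calculus gives
$$-\frac{d}{d\tau}\mathcal{I}(e^{\tau\Delta}g)=2\int \left(|\nabla^2\log g_\tau|^2+\mathrm{Ric}(\nabla \log g_\tau,\nabla\log g_\tau)\right)g_\tau.$$
Since $\mathrm{Ric}=1>0$ on $\mathbb{S}^2$, this dissipation is nonnegative. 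We thus reduce to the bound
$$\int_0^\infty\left(\mathcal{I}(g)-\mathcal{I}(e^{t\Delta}g)\right)\frac{dt}{t^{1+s}}\gtrsim \|u\|^2_{\dot H^{1+s}}.$$

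The second step is to lower-bound the heat dissipation by a quantity in $u$. The classical Landau-type estimate (used in \cite{Ji2024}) gives
$$\int |\nabla^2\log g|^2 g\ \ge\ c\,\|\nabla^2 u\|^2_{L^2}\ \gtrsim\ \|u\|_{\dot H^2}^2,$$
using $g\nabla^2\log g = 2u\nabla^2u-2\nabla u\otimes\nabla u$ together with the inequality $\int|\nabla u|^4/u^2\lesssim\int|\nabla^2 u|^2$ on the compact manifold; any lower-order remainders are absorbed by the Ricci term. Applying this with $g$ replaced by $g_\tau$ and integrating in $\tau\in[0,t]$ yields
$$\mathcal{I}(g)-\mathcal{I}(g_t)\gtrsim \int_0^t\|\sqrt{g_\tau}\|_{\dot H^2}^2\,d\tau.$$

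The third step, which is the main obstacle, is to compare $\sqrt{g_\tau}$ with $e^{\tau\Delta}u$. The map $g\mapsto\sqrt{g}$ does not commute with the heat flow, so this comparison is genuinely nonlinear. Our first attempt is to avoid it by a direct lower bound. We would use both $\mathcal I(g)-\mathcal I(g_t)\ge \int_0^t D(g_\tau)d\tau$ and the trivial bound $\mathcal I(g_t)\le\mathcal I(g)$, and aim instead for the inequality
$$\mathcal{I}(g)-\mathcal{I}(g_t)\gtrsim \sum_k \min(1,t\lambda_k)\,\lambda_k|\hat u_k|^2 ,$$
where $\lambda_k=k(k+1)$. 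Integrating against $t^{-1-s}dt$ would then give $\sum_k\lambda_k^{1+s}|\hat u_k|^2$, which is exactly $\|u\|^2_{\dot H^{1+s}}$. To prove the displayed inequality we would split $u$ into low and high frequencies at scale $\lambda\sim 1/t$, and use that
$$\mathcal{I}(g)=4\|\nabla u\|^2 .$$
It would then suffice to show that $\mathcal{I}(g_t)\le 4\|\nabla u_{\le 1/t}\|^2+C\,t\|u\|_{\dot H^2_{\le}}^2$-type quantities, i.e. that the heat flow destroys a fixed fraction of the high-frequency part of $\nabla u$. The tool for this is the convexity bound
$$\mathcal{I}(e^{t\Delta} g)\le e^{t\Delta}\text{-averaged Fisher information},$$
valid because $\mathcal I$ is convex and $e^{t\Delta}g$ is an average of rotations of $g$, each of which has the same Fisher information. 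Refining this averaging with a strict-convexity defect estimate is the delicate point.

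As a fallback, we would instead work with the explicit non-local dissipation formula of \cite{ImbertSilvestreVillani2024} for $\jap{\mathcal I'(g),(-\Delta)^s g}$, written as a double integral over $\mathbb{S}^2\times\mathbb{S}^2$ and pulled back to rotations, and bound it below by a Gagliardo-type seminorm
$$\iint \frac{|\nabla u(\sigma)-R\nabla u(\sigma')|^2}{|\sigma-\sigma'|^{2+2s}}\,d\sigma\,d\sigma',$$
which is equivalent to $\|u\|_{\dot H^{1+s}}^2$.
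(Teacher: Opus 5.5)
Your first two steps are sound and coincide with the paper's reduction (Lemma~\ref{lem:key_ineq_part1}). Subordination, convexity of $\mathcal{I}$ and a lower bound on the heat-flow dissipation give, after Fubini, $\jap{\mathcal{I}'(g),(-\Delta_\sigma)^s g}\gtrsim \mathbf{K}^s(g):=\int_0^{+\infty}\Vert\sqrt{g_t}\Vert^2_{\dot{H}^{2}(\mathbb{S}^2)}\,t^{-s}\,dt$. Your justification of that dissipation bound is not right as written. $\int\vert\nabla u\vert^4/u^2$ is of the same order as $\int\vert\nabla^2u\vert^2$, with a constant that is not small. So the cross term in $\vert\nabla^2u-\nabla u\otimes\nabla u/u\vert^2$ can be absorbed neither by Young's inequality nor by the first-order Ricci term; one needs an exact integration by parts, as in Appendix~\ref{app:twoineq}. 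The needed bound itself is true: it is \eqref{eq:logcontrolssqrt2}.

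The genuine gap is step 3, which is the actual content of the theorem: you never prove $\mathbf{K}^s(g)\gtrsim\Vert\sqrt{g}\Vert^2_{\dot{H}^{1+s}(\mathbb{S}^2)}$. Your first attempt aims at a pointwise-in-$t$ bound $\mathcal{I}(g)-\mathcal{I}(g_t)\gtrsim\sum_k\min(1,t\lambda_k)\lambda_k\vert\hat{u}_k\vert^2$, which is stronger than needed and left unproved. Writing $g_t$ as an average of rotations of $g$ and using convexity only yields the trivial $\mathcal{I}(g_t)\le\mathcal{I}(g)$. The ``strict-convexity defect estimate'' that would quantify the gain is precisely what is missing. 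Your fallback just restates the theorem. By the formula of \cite{ImbertSilvestreVillani2024} and Lemma~\ref{lem:eqsobnorms}, the statement is literally that $\iint g(\sigma)\vert\nabla_\sigma\log g(\sigma')-\nabla_\sigma\log g(\sigma)\vert^2_{\sigma',\sigma}\chi_s$ dominates $\iint\vert\nabla_\sigma\sqrt{g}(\sigma')-\nabla_\sigma\sqrt{g}(\sigma)\vert^2_{\sigma',\sigma}\chi_s$. No pointwise comparison of the integrands is available, because of the cross term $(\sqrt{g(\sigma')}-\sqrt{g(\sigma)})\nabla_\sigma\log g(\sigma')$.

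The missing idea is that the inequality you already invoked in step 2 controls the nonlinearity. Set $h_t=\sqrt{g_t}$; it solves $\partial_t h_t=\Delta_\sigma h_t+\zeta_t$ with $\zeta_t=\vert\nabla_\sigma h_t\vert^2/h_t$. By \eqref{eq:logcontrolsfisher4}, $\Vert\zeta_t\Vert^2_{L^2}=\frac{1}{16}\mathcal{J}(g_t)\le\frac{1}{16c_1}\Vert h_t\Vert^2_{\dot{H}^{2}}$, so $\int_0^{+\infty}t^{-s}\Vert\zeta_t\Vert^2_{L^2}dt\lesssim\mathbf{K}^s(g)$. In other words, the source term that makes $h_t$ deviate from $e^{t\Delta_\sigma}\sqrt{g}$ is controlled by the very quantity you already have. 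Duhamel's formula on each spherical harmonic, with Cauchy--Schwarz in time, gives $\vert\hat{h}^\ell_t-e^{-\lambda_\ell t}\hat{h}^\ell_0\vert^2\le\frac{t^{1+s}}{1+s}Z_\ell$. Here $Z_\ell:=\int_0^{+\infty}\tau^{-s}\vert\hat{\zeta}^\ell_\tau\vert^2d\tau$ and $\sum_\ell Z_\ell\lesssim\mathbf{K}^s(g)$. Multiply $e^{-2\lambda_\ell t}\vert\hat{h}^\ell_0\vert^2\le 2\vert\hat{h}^\ell_t\vert^2+\frac{2t^{1+s}}{1+s}Z_\ell$ by $\lambda_\ell^2t^{-s}e^{-\varepsilon\lambda_\ell t}$, sum over $\ell$ and integrate in $t$. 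This gives $(2+\varepsilon)^{s-1}\Gamma(1-s)\Vert\sqrt{g}\Vert^2_{\dot{H}^{1+s}}\le 2\mathbf{K}^s(g)+\frac{2}{(1+s)\varepsilon^2}\sum_\ell Z_\ell$, which closes the argument. The damping $e^{-\varepsilon\lambda_\ell t}$ and the time integration against $t^{-s}$ are what absorb the $\lambda_\ell^2t^{1+s}$ growth of the Duhamel error. This is also why the comparison does not produce, and does not need, a pointwise-in-$t$ estimate like the one you aimed for.
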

A spectral definition of Sobolev spaces is given in  Section~\ref{ssec:fractlapandsobolev}. Using the expression for the dissipation $\jap{\mathcal{I}'(g),(-\Delta_\sigma)^s g}$ from \cite{ImbertSilvestreVillani2024}, which involves the connection between different tangent spaces $\vert \cdot \vert_{\sigma',\sigma}^2$ ; as well as the expression for the Sobolev seminorm with the kernel $\chi_s$ of the fractional Laplacian from Lemma~\ref{lem:eqsobnorms} (see Sections~\ref{ssec:vectorfieldsonthesphere} and Section~\ref{ssec:fractlapandsobolev} for detail), we can rewrite the above inequality as:
\begin{align*}
    \label{eq:thm_key_ineq_bis}
    \iint_{\mathbb{S}^2\times\mathbb{S}^2} g(\sigma)\vert \nabla_\sigma \log g (\sigma')& - \nabla_\sigma\log g(\sigma) \vert^2_{\sigma',\sigma}\chi_s(\sigma\cdot\sigma') d\sigma d\sigma'\\
&\geq C_s \iint_{\mathbb{S}^2\times\mathbb{S}^2} \left\vert \nabla _\sigma \sqrt{g}(\sigma')-\nabla_\sigma \sqrt{g}(\sigma)\right\vert_{\sigma',\sigma}^2 \chi_s (\sigma\cdot\sigma')d\sigma d\sigma'.
\end{align*}
The inequality in the local case, \textit{i.e.} \eqref{eq:thm_key_ineq} with $s=1$, holds but is already non-trivial. In one dimension, a clever integration by parts (dating at least from \cite{McKean1966}) allows one to explicitly relate both sides, and it can be adapted to higher dimensions (see Appendix~\ref{app:twoineq}, and \cite[Section 9]{GuillenSilvestre2023}). They can also be obtained through Bakry-Émery calculus, see for instance \cite[Chapter 20]{Villani2025}.

The proof of Theorem~\ref{thm:key_ineq} relies on the representation of the fractional Laplacian as a subordinate diffusion, which allows us to relate it to the standard Laplacian and the usual heat flow. One could expect that Theorem~\ref{thm:key_ineq} would then just be a direct consequence of the local version $s=1$ of the inequality. However, it is not so simple: the subordination relates the left-hand side to the flow of the heat equation $\partial_t g = \Delta_\sigma g$, but the right-hand side is naturally associated to the flow of $\partial_t \sqrt{g} = \Delta_\sigma \sqrt{g}$. This makes the above inequality a lot more intricate than the version in the local setting $s=1$, as we have to show that the difference between these two flows can be controlled.

\begin{remark}
    We emphasize that, to our knowledge, Theorem~\ref{thm:key_ineq} was not previously known even in the case of $\mathbb{R}^d$ rather than the sphere. It might potentially be proven using the same technique as in this work but replacing the spherical harmonics by Fourier analysis, but we did not investigate this idea.
\end{remark}

\subsubsection{The general propagation of chaos result}

We now state a more general version of Theorem~\ref{thm:maincollisions}, outlining the exact hypotheses made on the collision kernel. Let $\gamma\in(-3,-2]$ and $s\in(0,1)$. We consider a collision kernel $B(r,\sigma\cdot \sigma') = \alpha(r) b(\sigma\cdot \sigma')$ and make the following hypotheses: For $\alpha$, we take it of the power form
\begin{equation}
    \label{hyp:alpha}
    \alpha(r)=r^\gamma.
\end{equation}
For $b$, we suppose that it is lower semicontinuous on $[-1,1]$, and that it has a singularity at $\sigma=\sigma'$, at least comparable to the one of the kernel $\chi_s$ of the fractional Laplacian $-(-\Delta)^s$, 
\begin{align}
\label{hyp:H2}
    \forall \theta \in(0,\pi], \  b(\cos\theta)\geq c_b \chi_s(\cos\theta) \approx \theta^{-2-2s},
\end{align}
for some constant $c_b>0$. Such a singularity is never integrable, but we suppose that it can be integrated against $\theta^2$, \textit{i.e.}
\begin{equation}
    \label{hyp:H0}
    \bar{b} : = \frac{1}{2}\int_{\mathbb{S}^2} \left(1-\sigma\cdot\sigma'\right)b(\sigma\cdot\sigma')d\sigma'<\infty,
\end{equation}
where the value of this integral does not depend on $\sigma\in\mathbb{S}^2$. This is generally considered the minimal hypothesis for the Boltzmann equation to make sense. The integral against $1-(\sigma\cdot\sigma')^2$ rather than $1-\sigma\cdot\sigma'$ might be more commonly considered but these two terms obviously have the same behaviour at the singularity. Finally, we need to ensure that the Fisher information is non-increasing at the level of the particle system and of the limit equation. A sufficient condition, related to a functional integro-differential inequality on $\mathbb{S}^2$, is given in \cite{ImbertSilvestreVillani2024}: for a kernel $b$, let $\Lambda_b\geq 0$ denote the largest constant such that for all non-negative $g$ on the sphere such that $g(\sigma)=g(-\sigma)$,
\begin{align}
    \label{eq:forfiherdecrease}
\nonumber \frac{1}{2}\iint_{(\mathbb{S}^2)^2} g(\sigma)\vert \nabla_\sigma \log g (\sigma') - \nabla_\sigma\log g(\sigma) &\vert^2_{\sigma',\sigma}b(\sigma\cdot\sigma') d\sigma d\sigma'\\
&\geq \Lambda_b \iint_{(\mathbb{S}^2)^2} \frac{\left(g(\sigma')-g(\sigma)\right)^2}{g(\sigma)+g(\sigma')}b(\sigma\cdot\sigma') d\sigma d\sigma'.
\end{align}
The object $\vert \cdot\vert_{\sigma',\sigma}^2$ is defined in Section~\ref{ssec:vectorfieldsonthesphere}. The sufficient condition for monotonicity of the Fisher information then writes $\vert \gamma\vert \leq 2\sqrt{\Lambda_b}$. We require $b$ to admit an approximating sequence compatible with this condition, in order to be able to build a solution to Kac's particle system. More precisely, we suppose that there exists a pointwise non-decreasing sequence of lower semicontinuous kernels $(b^k)_{k\geq 1}$ such that, for all $k\geq 1$,
\begin{align}
    \label{hyp:H1bis}
b^k\in L^\infty([-1,1]),&& b^k(c)=b(c) \text{ for all }c\in[-1,1-1/k], &&
 &b^k(c) \geq \rho_k \text{ for all }c\in[1-1/k,1],
\end{align}
 where $(\rho_k)_{k\geq 1}$ is a non-negative non-decreasing sequence, with $\rho_k\rightarrow +\infty$. Remark that these hypotheses imply that $0\leq b^k \leq b$ and $b^k \nearrow b$. The last one ensures that the singularity of $b$ at $c=1$ is "correctly" approximated.
We suppose that there exists some $\lambda>0$ satisfying
\begin{equation}
    \label{hyp:H1}
\forall k\geq 1, \ \vert \gamma\vert \leq 2 \sqrt{\Lambda_{b^k}(1-\lambda)}.
\end{equation}
The small extra $\lambda$ is required to be able to not only guarantee monotonicity of Fisher information but extract a fraction of the helpful dissipation terms. We refer to \cite{ImbertSilvestreVillani2024} for an in-depth discussion of the inequality \eqref{eq:forfiherdecrease} above and for several ways to compute lower bounds on $\Lambda_b$ for different classes of kernels.

We can now state our main result in a more general form:
\begin{theorem}
\label{thm:main}
Let $\gamma\in (-3,-2]$, $s\in(0,1)$ such that $-2<\gamma+2s\leq 0$. Consider a factorized collision kernel $B=\alpha b$, and suppose that they satisfy \eqref{hyp:alpha}, \eqref{hyp:H2}, \eqref{hyp:H0}, \eqref{hyp:H1bis} and \eqref{hyp:H1}.

Let $f_0 \in L^1(\mathbb{R}^3)$ have finite mass, entropy, Fisher information, and finite moment of order $\bar{\ell}$ large enough:
\begin{align}
    \label{hyp:moment}
    m_{\bar{\ell}}(f_0):=\int_{\mathbb{R}^3}\jap{v}^{\bar{\ell}} f_0(v)dv<+\infty, && \bar{\ell}>\frac{-\gamma(2-\gamma)}{\gamma + 2s + 2}.
\end{align}
Then, there exists a solution to Kac's particle system \eqref{eq:partsystintro}, and for all $t\geq 0$, the sequence $(\mu^N_t)_{N\geq 2}$ of its empirical measures converges in probability to the deterministic solution $f_t$ of the Boltzmann equation,
$$\mu^N_t \xrightarrow[]{\mathbb{P}} f_t.$$
\end{theorem}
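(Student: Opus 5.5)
The plan follows the compactness/uniqueness strategy of Fournier and Mischler, with the Fisher information of the $N$-particle law as the key a priori bound. The steps are: regularize, obtain uniform Fisher and higher-regularity estimates, prove tightness, identify every limit as a regular solution, and conclude by uniqueness.

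\textbf{Existence of the particle system.} First replace $b$ by the bounded kernels $b^k$ from \eqref{hyp:H1bis}, and truncate $\alpha$ as well, say $\alpha_n=\min(r^\gamma,n)$. For bounded kernels the system \eqref{eq:partsystintro} is a well-defined jump process with law $F^{N,k,n}_t$. Because \eqref{hyp:H1} holds uniformly in $k$, the argument of \cite{ImbertSilvestreVillani2024}, lifted to $N$ particles as in \cite{FournierMischler2025}, gives two things uniformly in $k,n$. First, $I(F^{N,k,n}_t)\le I(f_0^{\otimes N})=N I(f_0)$. Second, after keeping the $\lambda$-fraction of the dissipation, there is a time-integrated bound
\begin{align*}
\frac{1}{N}\int_0^T \sum_{i<j}\frac{1}{N-1}\int \alpha(r_{ij})\,\jap{\mathcal{I}'(F),(-\Delta_{\sigma_{ij}})^s F}\,dz_{ij}\,dr_{ij}\,d\tau\le C(f_0).
\end{align*}
Here \eqref{hyp:H2} lets me compare $b^k$ from below with $\chi_s$ up to the cutoff $\rho_k$. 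This is the step where the truncation $\alpha_n$ must be chosen so that it does not destroy the inequality: I would truncate in a way that keeps $\alpha_n'/\alpha_n$ controlled by $|\gamma|/r$. Applying Theorem~\ref{thm:key_ineq} in the $\sigma$ variable then bounds $\int \alpha(r)\,\Vert \sqrt{F}\Vert^2_{\dot H^{1+s}_\sigma}$ uniformly.

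\textbf{Singularity estimate.} Next combine this angular regularity with the Fisher bound in the $(z,r)$ directions and with moment bounds. Propagating $m_{\bar\ell}$ for the particle system is standard, using the exchangeability and the conservation of energy. The aim is to show that the singular quantity $\mathbb{E}\int_0^T |V_1-V_2|^{\gamma}\,d\tau$, and the analogous quantity with $|V_1-V_2|^{\gamma+1}$ that appears in the drift, are bounded uniformly in $N,k,n$.

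The main obstacle is that Fisher information alone only controls $|v-w|^{-2+}$-type singularities, while $\gamma\le -2$. My plan for this step is as follows. Use a Sobolev embedding on $\mathbb{S}^2$ of order $1+s$ for $\sqrt F$, which yields integrability in $\sigma$ well beyond $L^\infty$. Combine it with a radial Hardy inequality in $r$ coming from the $(z,r)$ Fisher information. Then interpolate against the $\bar\ell$-moment. The weight $r^{\gamma}$ is integrable near $0$ against $r^2\,dr$ once one gains roughly $r^{2s}$ from the angular regularity, which is exactly why $\gamma+2s>-2$ is assumed. The tail is handled by the moment condition \eqref{hyp:moment}, and the exponent there should come out of a Hölder balancing of these two gains.

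\textbf{Tightness, identification and uniqueness.} With these bounds, pass $k,n\to\infty$ using Aldous' criterion and the martingale formulation to obtain a solution of \eqref{eq:partsystintro}. The compensated small jumps are controlled through $\bar b$ and \eqref{hyp:H0}, and the resulting solution inherits the bounds. Then let $N\to\infty$: the laws of $(\mu^N_t)_{t}$ are tight in $\mathcal{D}([0,T],\mathcal{P}(\mathbb{R}^3))$. Lower semicontinuity of Fisher information and the uniform singularity bound show that every limit point is supported on weak solutions of \eqref{eq:boltzmann} with $I(f_t)\le I(f_0)$. The singular test-function terms pass to the limit by truncating near $v=w$ and using the uniform estimate of the previous step. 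Such solutions lie in $L^1_tL^\infty_v$, since in three dimensions finite Fisher information gives $\sqrt f\in H^1\subset L^6$, and the time-integrated angular dissipation upgrades this to $L^1_tL^\infty$. The uniqueness result of \cite{FournierGuerin2008} then shows that the limit is the deterministic $f_t$, so convergence in law to a constant gives convergence in probability.
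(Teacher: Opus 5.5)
There is a genuine gap, and it sits where the very soft regime is hard: identifying the limit.

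\textbf{The singularity estimate targets the wrong quantity.} The particle system never needs $\mathbb{E}\int_0^T|V_1-V_2|^{\gamma}d\tau$. The drift in \eqref{eq:partsystintro} has size $|V_i-V_j|^{\gamma+1}$. The quadratic variation of the compensated jumps and $\mathcal{A}(\varphi)$ are $O(|V_i-V_j|^{\gamma+2})$. Since $\gamma>-3$, all of these are dominated by $1+|V_i-V_j|^{-2}$, hence controlled by $\mathbb{E}|V_1-V_2|^{-2}\le I(F^N)$ (Lemma~\ref{lem:closeness}). So tightness, construction of the unregularized system and consistency of limit points already follow from the Fisher bound alone.

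\textbf{That estimate also cannot be obtained as you describe.} For $\gamma<-2$ it does not follow from Fisher information, angular dissipation and moments. The angular dissipation vanishes on pair densities that are radial in $v-w$. Take $G(v,w)=h(\frac{v+w}{2})\,|v-w|^{-2a}e^{-|v-w|^2}$, normalized, with $h$ a Gaussian and $a\in(\frac{3+\gamma}{2},\frac12)$. It has finite Fisher information, entropy and moments and zero angular dissipation, yet $\int|v-w|^{\gamma}G=+\infty$. Regularity in $\sigma$ at fixed $r$ says nothing about how much mass sits at small $r$, so there is no ``gain of $r^{2s}$''. Even a bound on $\mathbb{E}\int_0^T\!\iint|v-w|^{\gamma}f_t(dv)f_t(dw)dt$ would not give the $L^1_tL^p_v$ integrability, $p>3/(3+\gamma)$, that \cite{FournierGuerin2008} requires.

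\textbf{What is actually missing} is how the random limit $f$ acquires that integrability. Your sentence ``the time-integrated angular dissipation upgrades this to $L^1_tL^\infty$'' hides three nontrivial steps.

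(i) The dissipation is a property of the law $F^N$, while $f$ is a limit of empirical measures. Transferring it to $\mathbb{E}[\,\cdot\,]$ of a functional of $f_t\otimes f_t$ requires a functional that is lower semicontinuous, superadditive under marginals and (approximately) affine in infinite dimension. This is unclear for $\int\alpha\,\|\sqrt F\|^2_{\dot H^{1+s}_\sigma}$. The paper instead uses $\mathbb{K}^\omega_\beta$, a heat-flow superposition of $\|\sqrt{F_t}\|^2_{\dot H^2_\sigma}$ comparable to $\sum_k\int|\partial_k\partial_kF_t|^2/F_t$. It proves these properties (Proposition~\ref{prop:propsofmathbbK}) and applies Proposition~\ref{prop:key_ineq_part2} only to $f_t\otimes f_t$. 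The same device handles the truncated kernels $b^k$, to which Theorem~\ref{thm:key_ineq} does not apply. Note also that you only get bounds in expectation, e.g.\ $\mathbb{E}[I(f_t)]\le 2I(f_0)$, not pathwise bounds such as $I(f_t)\le I(f_0)$.

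(ii) For $f\otimes f$ the dissipation only controls the derivatives $\Pi(v-w)(\nabla_v-\nabla_w)$. Reaching $\int f|\nabla\log f|^{2(1+s)}\jap{v}^{\gamma-2}$ needs the nonlinear Sobolev inequality (Proposition~\ref{prop:sqrtrootineq}). It also needs an unlifting step (Proposition~\ref{prop:unliftingofJ}), which requires quantitative non-concentration of $f_t$. Since the entropy of $f$ is only bounded in expectation, the paper gets this from non-aligned triplets, whose weight $\iota_{\delta,R}$ is weakly lower semicontinuous. This is done inside an open--closed argument that uses $f=g$ up to $t_0$ (Proposition~\ref{prop:uniqueness}).

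(iii) One obtains $L^p$ with $p$ as in Lemma~\ref{lem:lpbound}, not $L^\infty$ in general. This H\"older--Sobolev step is exactly where \eqref{hyp:moment} and $\gamma+2s>-2$ enter, not the small-$r$ singularity estimate where your proposal places them.
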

 Of course, the equivalent formulation of Corollary~\ref{cor} also holds. Once again this statement implicitly contains the existence of a regular solution to the limit Boltzmann equation. This is direct since the existence of the approximation sequence $(b^k)_k$ implies that $b$ also checks \eqref{hyp:H1}, so that the existence of a regular solution is a consequence of the monotonicity of Fisher information for \eqref{eq:boltzmann}.

\begin{remark}
Theorem~\ref{thm:main} applies in particular to the case $b=\chi_s$, the kernel of the fractional Laplacian, or more generally to kernels of the form
$$b(c)=\int_0^{+\infty} \Phi_u(c) \omega(u)du,$$
where $\Phi_u$ is the heat kernel on the sphere $\mathbb{S}^2$ at time $u$, and $\omega(u)$ is some non-negative weight. The fractional Laplacian corresponds to $\omega(u)$ proportional to $u^{-1-s}$, see Section~\ref{ssec:fractlapandsobolev}. This fairly general class is related to the jump kernels of subordinate Brownian motions, and also provides satisfying approximations of the power-law kernels, we refer to \cite[Section 9]{ImbertSilvestreVillani2024} for more details. A suitable approximation sequence $(b^k)_{k\geq 1}$ can easily be built by truncating the integral above before $0$ and using cut-off functions, as we do in Appendix~\ref{app:powerlaws}. From \cite[Proposition 9.3]{ImbertSilvestreVillani2024}, we have the general bound $\Lambda_b >3$ for kernels of this type, which is enough to satisfy \eqref{hyp:H1}.
\end{remark}

\begin{remark}
Theorem~\ref{thm:maincollisions} is implied by Theorem~\ref{thm:main} once the hypotheses are checked for $B=B_q$. We build the approximation sequence $(b^k)_{k\geq 1}$ that satisfies \eqref{hyp:H1bis} and \eqref{hyp:H1} in Appendix~\ref{app:powerlaws}. The other hypotheses are direct consequences of \eqref{eq:powerlaws}. The lower semicontinuity of $b_q$ is consequence of its continuity on $[-1,1)$ and its singularity at $1$.
\end{remark}

\subsection{Related literature and proof strategy}

The Boltzmann equation has received a considerable amount of attention from mathematicians, in both homogeneous and inhomogeneous settings. Propagation of chaos has also reached beyond the scope of Kac's pioneering work, in connection with many domains other than kinetic theory. Far from an attempt at an exhaustive review, we present below the literature that is the most related to the present work. We then detail the proof strategy of this paper.

\textbf{The Boltzmann equation.} The study of the homogeneous Boltzmann equation initiated by Carleman \cite{Carleman1933} has been carried by several authors over the years, providing existence of weak or smooth solutions, propagation or generation of moments, regularity estimates, and many other properties for different classes of collision kernels. However, because it is the most singular setting, for a long time the only existing long-time solutions for very soft potentials were Villani's H-solutions \cite{Villani1998}. Their regularity was gradually improved through different techniques, see for instance the works \cite{CarlenCarvalhoLu2009,ChakerSilvestre2019,GolseImbertSilvestre2023}, but smooth solutions were yet to be built. A breakthrough happened when Guillen and Silvestre proved the monotonicity of Fisher information for the Landau equation \cite{GuillenSilvestre2023}, a result which was then adapted to the Boltzmann equation (with any physically relevant kernel) by Imbert, Silvestre and Villani \cite{ImbertSilvestreVillani2024}. This new a priori estimate, combined with previous results, is enough to build smooth solutions for nice enough initial data. In the Landau setting, Ji relaxed the assumptions on the initial data to very light and natural ones, through a precise study of the dissipation of Fisher information \cite{Ji2024a} (see also \cite{DesvillettesGoldingGualdani2024} for a different proof). In the Boltzmann case, the dissipation of the Fisher information takes a more intricate form and, to our knowledge, these terms were yet to be fruitfully used. The main novelty of this work is that we manage to exploit the dissipation thanks to new inequalities on the sphere, see Theorem~\ref{thm:key_ineq} and Section \ref{sec:functional_ineq_on_sphere}. We believe that these new techniques could be used to show existence of solutions to the Boltzmann equation with relaxed initial data as in \cite{Ji2024a}, but we relegate these consequences to a further study to preserve the present work's focus on chaos.

For the question of uniqueness, which is of particular importance in this work, we can mention, among others, the works by Tanaka \cite{Tanaka1978}, Horowitz and Karandikar \cite{HorowitzKarandikar1990}, Mischler and Wennberg \cite{MischlerWennberg1999}, Toscani and Villani \cite{ToscaniVillani1999a}, Fournier and Mouhot \cite{FournierMouhot2009}, Desvillettes and Mouhot \cite{DesvillettesMouhot2009}, that are all related to kernels without singularities, or too mild ones to cover very soft potentials. These results either rely on Wasserstein distances and probabilistic interpretations of the Boltzmann equation, as initiated by \cite{Tanaka1978}, or on energy techniques in weighted Sobolev spaces \cite{DesvillettesMouhot2009}. In the highly singular setting of very soft potentials, to the best of our knowledge, the only result is the one by Fournier and Guérin \cite{FournierGuerin2008}. It states that there exists at most one \textit{weak} solution that lies in $L_t^1 L_v^p$ with $p> 3/(\gamma+3)$. The weak formulation is classically obtained by multiplication of \eqref{eq:boltzmann} by test functions and integration by parts, we emphasize that it differs from Villani's notion of H-solution \cite{Villani1998}.

\textbf{Kac's program.} We present some previously known results on propagation of chaos for the Boltzmann equation. We also discuss results concerning the Landau equation (and its version of Kac's particle system, derived by Miot, Pulvirenti and Saffirio \cite{MiotPulvirentiSaffirio2011} and Carrapatoso \cite{Carrapatoso2015}), as it exhibits a very similar but more explicit structure. In the setting of Maxwell molecules ($\alpha=1$), propagation of chaos was studied, among others, by Kac \cite{Kac1956}, Graham and Méléard \cite{GrahamMeleard1997}, Mischler and Mouhot \cite{MischlerMouhot2012} (and \cite{Carrapatoso2015} for the Landau equation). Hard spheres and hard potentials with the singularity cut-off were treated by Grunbaum \cite{Grunbaum1971}, Sznitman \cite{Sznitman1984}, Norris \cite{Norris2016}, and by Heydecker \cite{Heydecker2022} without cut-off. The corresponding Landau equation was studied by Fournier and Guillin \cite{NicolasFournierArnaudGuillin2017}. Some of the aforementioned works provide convergence rates.

We also mention that there exists a different particle system than Kac's, namely Nanbu's \cite{Nanbu1983}. It is less natural but easier to study, and several authors have shown propagation of chaos for the Nanbu system, notably Xu \cite{Xu2018} for moderately soft potentials $\gamma \in (-1,0)$ and Salem \cite{Salem2019a} for more singular but non-physical potentials. A similar particle system approximating the Landau equation was also studied by Fournier and Hauray \cite{FournierHauray2015} in the regime $\gamma\in(-2,0)$. The technique used in \cite{FournierHauray2015} to control the singularity of soft potentials originated in the work of Fournier, Hauray and Mischler \cite{FournierHaurayMischler2014} on the 2D viscous vortex model, and the present work will follow a similar approach.

All the previously cited works are anterior to the breakthrough \cite{GuillenSilvestre2023,ImbertSilvestreVillani2024} on Fisher information. After these results, it was first observed by Carrillo and Guo \cite{CarrilloGuo2025} that the Fisher information was also non-increasing at the level of Kac's particle system for the Landau equation, which allowed the authors to prove convergence of subsequences of the particle system towards an infinite Landau hierarchy. Later, a complete propagation of chaos result for the Landau equation including the most singular Coulomb case $\gamma=-3$ (which does not make sense for the Boltzmann equation) was obtained by Feng and Wang \cite{FengWang2025} using the duality method of Bresch, Duerinckx and Jabin \cite{BreschDuerinckxJabin2025}. In parallel, the author obtained the same results using a tightness-uniqueness method as in \cite{FournierHaurayMischler2014} (itself adapted to the singular setting from Sznitman's work \cite{Sznitman1991}). Also uniform bounds on the Fisher information are enough for the tightness part, the uniqueness crucially relied on higher-regularity estimates provided by the \textit{dissipation} of Fisher information.

These positive results in the Landau setting motivate a similar program for the Boltzmann equation. Fournier and Mischler \cite{FournierMischler2025} recently showed monotonicity of the Fisher information for Kac's particle system, which allows them to prove propagation of chaos for Kac's system for $\gamma\in(-2,0)$, this result being the first one in the soft potentials case. Their proof follows a tightness-uniqueness method, and the situation is similar to the Landau setting: uniform Fisher bounds can provide tightness for the whole range $\gamma\in (-3,0)$, but uniqueness only for $\gamma\in (-2,0)$. In this work, we cover the missing range $(-3,-2]$ by exploiting the dissipation of Fisher information. This requires new inequalities to understand the regularity of the non-local dissipation terms, as well as showing suitable properties of these terms to be able to pass to the limit in the number of particles, as discussed in the next paragraph.

\textbf{Strategy of proof and key ideas.} As we mentioned above, this work will follow the tightness-uniqueness method from \cite{Sznitman1991}, adapted to the singular setting in \cite{FournierHaurayMischler2014}, and applied to the Landau and Boltzmann equations in \cite{FournierHauray2015,Tabary2026a,FournierMischler2025}. The key idea is to use empirical measures to reframe every particle system for every value of $N$ in the same common space of probability measures $\mathcal{P}(\mathbb{R}^3)$. The other key object is the deterministic function $F^N_t\in\mathcal{P}(\mathbb{R}^{3N})$ which is the law at time $t$ of the $N$-particle system. We know from \cite{FournierMischler2025} that the Fisher information of $F^N_t$ is non-increasing for all $N$. (In fact, it follows a linear equation involving a right-hand side very similar to the one of \eqref{eq:boltzmann}, see Section~\ref{ssec:estimatesfromthemastereq}.)

The first half of the tightness-uniqueness method is to use this uniform Fisher bound to show \textit{tightness} of the sequence of empirical measures $(\mu^N)_{N\geq 2}$: those are random variables taking values in the space of $\mathcal{P}(\mathbb{R}^3)$-valued \textit{càdlàg} functions. In a second half, \textit{uniqueness}, one wants to show that any cluster point $f$ of this sequence is \textit{the} regular solution to the Boltzmann equation. The first step is to establish a \textit{consistency} result: we show that any cluster point $f$ is a \textit{weak} solution of the Boltzmann equation. This is not enough to show that $f$ is unique, because to apply the results of \cite{FournierGuerin2008} we need to show the additional estimate $f\in L_t^1 L_v^p$, with $p> 3/(\gamma+3)$. For very soft potentials, this cannot be obtained from the monotonicity of the Fisher information alone.

The estimate is rather recovered from the \textit{dissipation} of the Fisher information, which itself requires to independent substeps. First, one needs to extract the regularity (in the form of a Sobolev norm) from the intricate non-local dissipation term: this is provided by Theorem~\ref{thm:key_ineq} and another inequality, both proven in Section~\ref{sec:functional_ineq_on_sphere}. The second substep is to be able to transfer this regularity, which is obtained at the level of the law $F^N$ of the particle system, to the cluster point $f$. This is not simple because $f$ is the limit of the rough empirical measures $\mu^N$ rather than of $F^N$. The framework which makes this transfer possible is detailed in the next paragraph. Combining these two steps, we obtain the required $ L_t^1 L_v^p$ estimate and conclude that $f$ is the unique regular solution, and that the whole sequence $(\mu^N)_N$ converges.

\textbf{Well-behaved functionals with respect to dimension.} We explain how to relate the regularity of the particle system and the one of its limits, which requires to consider quantities that behave appropriately as the number of particles, and hence the dimension, goes to infinity. Consider a functional $\mathbb{I}$  defined on $\mathcal{P}(\mathbb{R}^{3N})$ for any $N$ (typically, the entropy or the Fisher information), and suppose that one controls $\mathbb{I}(F^N_t)$ and wishes to control $\mathbb{I}(f_t)$. If $\pi_t\in \mathcal{P}(\mathcal{P}(\mathbb{R}^{3}))$ is the law of the random variable $f_t$, then we have the convergence of marginals (see for instance Hauray and Mischler \cite{HaurayMischler2012}):
$$\int_{\mathbb{R}^{3(N-j)}} F dv_{j+1}...dv_N=:F^{N:j}_t \xrightharpoonup[N\rightarrow +\infty]{} \pi^j_t := \int_{\mathcal{P}(\mathbb{R}^{3})} \rho^{\otimes j} \pi_t(d\rho). $$
From this convergence, we can identify the three properties we should ask of $\mathbb{I}$: \begin{itemize}
    \item \textit{Superadditivity}, meaning that $\mathbb{I}$ should decrease when taking marginals: $\mathbb{I}(F^{N:j}_t)\leq\mathbb{I}(F^{N}_t)$
    \item \textit{Lower semicontinuity on }$\mathcal{P}(\mathbb{R}^{3j})$, so that $\mathbb{I}(\pi^j_t) \leq \liminf_{N\rightarrow +\infty} \mathbb{I}(F^{N:j}_t)$
    \item \textit{Infinite-dimensional affinity:} it holds that
    $$\lim_{j\rightarrow+\infty} \mathbb{I}(\pi^j_t)= \int_{\mathcal{P}(\mathbb{R}^{3})} \mathbb{I}(\rho)\pi_t(d\rho).$$
    
\end{itemize}
Since the right-hand side above is exactly $\mathbb{E}(\mathbb{I}(f_t))$, we can successfully relate $\mathbb{I}(f_t)$ and $\mathbb{I}(F^N_t)$ by successively applying these three properties.

Observe that although lower semicontinuity is quite commonly verified, superadditivity and infinite-dimensional affinity are usually not satisfied by standard Lebesgue or Sobolev norms. The two prototypical examples of functionals satisfying these properties are the entropy and the Fisher information \cite{RobinsonRuelle1967,HaurayMischler2012, Rougerie2020}. These properties were later extended to fractional Fisher information of order $s\in(0,1)$ \cite{Salem2019,Rougerie2020}, and second order Fisher information \cite{Tabary2026a}. In this work, we will show that these properties are satisfied by a functional playing the role of a fractional Fisher information of order $1+s$, which will allow us to exploit the regularity provided by the dissipation of the Fisher information to show uniqueness of the cluster points. We refer to \cite{HaurayMischler2012,Tabary2026a} for further detail on this framework.

\begin{remark}
    Infinite-dimensional affinity finds its name in the fact that the functional $\pi \mapsto \int \mathbb{I}(\rho) \pi(d\rho) $ is affine. In practice, in this work we only need (and manage to prove) the equality up to multiplicative constants for the new functionals of interest.
\end{remark}

\subsection{Notation and layout}

The space of probability measures on $\mathbb{R}^{3N}$ for any $N\geq 1$ is denoted by $\mathcal{P}(\mathbb{R}^{3N})$ and is always endowed with the topology of weak convergence. Most of the probabilities we consider are \textit{symmetric}, \textit{i.e.} invariant under permutation of the variables $(v_1,...,v_N)$, a property which will also be referred to as \textit{exchangeability}. We call a random vector $(V^1,...,V^N)$ exchangeable if its law is symmetric. The density (with respect to the Lebesgue measure) of $F^N\in\mathcal{P}(\mathbb{R}^{3N})$, if it exists, is still denoted by $F^N$. For $F^N\in\mathcal{P}(\mathbb{R}^{3N})$, we define its entropy and Fisher information as
\begin{align}
    \label{def:entropyfisher}
    H(F^N):= \frac{1}{N}\int_{\mathbb{R}^{3N}} F^N(\mathbf{v}) \log(F^N(\mathbf{v}))d\mathbf{v},&&I(F^N):= \frac{1}{N}\int_{\mathbb{R}^{3N}} \vert \nabla \log F^N(\mathbf{v}) \vert^2 F^N(\mathbf{v}) d\mathbf{v},
\end{align}
and $+\infty$ if $F^N$ has no density. With this normalization, $H(f^{\otimes N})=H(f)$ and $I(f^{\otimes N})=I(f)$. We define the marginals of a symmetric $F^N\in\mathcal{P}(\mathbb{R}^{3N})$ by
$$F^{N:j}=\int_{\mathbb{R}^{3(N-j)}} F^N(dv_{j+1}...dv_N)$$
for any $1\leq j\leq N$.

We recall that for any Polish space $X$, \textit{i.e.} a complete separable metric space, the space of probabilities $\mathcal{P}(X)$ over $X$ is also Polish. In particular, $\mathcal{P}(\mathcal{P}(\mathbb{R}^{3}))$ is Polish. We call \textit{càdlàg} a right-continuous functions with left limits. The space $D(\mathbb{R}_+,X)$ of $X$-valued \textit{càdlàg} functions endowed with the Skorokhod ($J_1$) topology is also a Polish space.

Unless specified otherwise with an index, $\nabla$ denotes the gradient in all of the variables of the function it applies to. We write $\nabla_\sigma$, $\Delta_\sigma$ for the gradient and Laplacian on the sphere $\mathbb{S}^2$. The Fisher information on the sphere is defined in \eqref{def:sphericfisher}.

The japanese bracket is $\jap{v} = \sqrt{1+\vert v\vert^2}$ for any $v\in\mathbb{R}^3$, and the moment of order $\ell \in \mathbb{R}_+$ of a probability measure $f\in \mathcal{P}(\mathbb{R}^3)$ is 
$$m_\ell(f):= \int_{\mathbb{R}^6} f(v) \jap{v}^\ell dv.$$
The energy of a probability measure is its second moment.

We will very often use what we refer to as the $(z,r,\sigma)$ \textit{change of variables}, which is the map
$$(\mathbb{R}^3)^2\setminus\{v=w\} \ni (v,w) \mapsto (z,r,\sigma) \in \mathbb{R}^3 \times \mathbb{R}_+ \times \mathbb{S}^2,$$
with the formulas \eqref{eq:zrsigma}. It has Jacobian $dvdw=8r^2 dz dr d\sigma$, exchanging $v$ and $w$ amounts to changing $\sigma$ to $-\sigma$, and from it we can deduce the useful \textit{pre/post-collision} change of variables $dv dw d\sigma'= 8r^2 dz dr d\sigma d\sigma' = dv' dw' d\sigma.  $
We will sometimes apply this change to other variables than $v$ and $w$, in this case it will be specified which ones exactly.

The constants in proofs are allowed to change from line to line, we specify their dependence on (relevant) parameters with indices. Constants with numbered indices $c_1,c_2,...$ that appear in statements are fixed ones.
\\

The remainder of this work is divided as follows: in Section~\ref{sec:functional_ineq_on_sphere}, we introduce the needed toolbox and prove Theorem~\ref{thm:key_ineq} and another inequality which is some non-linear version of a Sobolev embedding on the sphere. In Section~\ref{sec:superadditivity}, we show the superadditivity, lower semicontinuity and affinity in infinite-dimension of functionals related to the dissipation of the Fisher information and the production of entropy. Finally, Section~\ref{sec:propchaos} contains the proof of propagation of chaos proper, using the tightness-uniqueness method. We use the results of the two previous sections to show that the cluster points of the empirical measures satisfy Fisher dissipation estimates. The main new ideas are in Section~\ref{ssec:thel1lpestimate}, where we show that these estimates imply the $L^1_t L^p_v$ integrability ensuring uniqueness.

The first Appendix~\ref{app:twoineq} proves two inequalities required for Section~\ref{sec:functional_ineq_on_sphere} and the second Appendix~\ref{app:powerlaws} proves the existence of a well-behaved approximation of the kernel $b_q$ in the case of power-law potentials.

\section{Functional inequalities on the sphere}
\label{sec:functional_ineq_on_sphere}
\subsection{Preliminaries}

\subsubsection{Vector fields on the sphere}
\label{ssec:vectorfieldsonthesphere}
Before turning to the central inequalities of this section, we introduce some useful objects related to the sphere. Although we work on the sphere $\mathbb{S}^2$, it will be clear from proofs that the results of this section hold in any dimension $d\geq 2$ (with dimension-dependent constants).

We first recall the definition of vector fields from \cite{GuillenSilvestre2023}. Fix $(e_1,e_2,e_3)$ a positively-oriented orthogonal basis of $\mathbb{R}^3$, we define for $k=1,2,3$ and $v$ in $\mathbb{R}^3$,
\begin{equation}
    \label{def:b_k}
    b_k(v):=e_k \times v.
\end{equation}
The cross product ensures that $b_k(v)$ is always orthogonal to $v$. They satisfy the following identity: if $\Pi(v)$ is the projection on $v^\perp$, then
\begin{equation}
    \label{eq:link b_k and pi}
    \vert v \vert^2 \Pi(v)=\sum_{k=1}^3 b_k(v)\otimes b_k(v).
\end{equation}
The proof of \eqref{eq:link b_k and pi} is straightforward using the cyclicity in $a,b,c$ of $(a\times b)\cdot c$: for all $x,y \in \mathbb{R}^3$:
\begin{align*}
     x \cdot b_k(v) \otimes b_k(v) y = (b_k(v)\cdot x)(b_k(v) \cdot y)= (v\times x)\cdot e_k (v \times y)\cdot e_k,
\end{align*}
so that $\sum_{k=1}^3  x \cdot b_k(v) \otimes b_k(v) y = (v\times x) \cdot (v\times y)$ is indeed independent of the basis. Moreover,
$$(v\times x) \cdot (v\times y) = (v\times \Pi(v)x) \cdot (v\times \Pi(v) y) = \vert v \vert^2 (x\cdot \Pi(v) y),$$
since $v\times$ acts on $v^\perp$ like $\vert v\vert$ times a rotation (of angle $\pi/2$). This proves \eqref{eq:link b_k and pi}.
Most of the time $(e_1,e_2,e_3)$ will be taken to be the canonical basis, but it can be convenient to be able to choose a more appropriated basis.

The $b_k$'s are useful tools to manipulate tangent vectors on the sphere. Indeed, we can identify the tangent space at any point $\sigma \in \mathbb{S}^2$ with $\sigma^\perp$, and formula \eqref{eq:link b_k and pi} ensures that the $b_k(\sigma)$ always span the tangent space. In particular, for any tangent vector $x \in \sigma^\perp$, we have 
\begin{equation}
\label{eq:normtangent}
    \vert x\vert^2 =x \cdot \Pi(\sigma)x =\sum_{k=1}^3 \vert b_k(\sigma)\cdot x \vert^2.
\end{equation}
If we want to compare two tangent vectors belonging to two different tangent spaces, let us say $x\in \sigma^\perp$ and $y\in(\sigma')^\perp$ for $\sigma,\sigma' \in \mathbb{S}^2$, it is not natural to subtract them directly. Instead, adopting the notation of \cite{ImbertSilvestreVillani2024}, we let
\begin{equation}
\label{def:difftangent}
    \vert x-y\vert^2_{\sigma,\sigma'}:=\sum_{k=1}^3(b_k(\sigma) \cdot x - b_k(\sigma')\cdot y)^2.
\end{equation}
We emphasize that this notation must be understood as a whole and that $x-y$ has no meaning on its own. The definition in \cite{ImbertSilvestreVillani2024} is different but one can check that they are equivalent. By expanding the squares, we have:
$$\vert x-y\vert^2_{\sigma,\sigma'}=\vert x\vert^2 + \vert y \vert^2 -2 \sum_{k=1}^3 (b_k(\sigma) \cdot x)(b_k(\sigma') \cdot y ) $$
and
$$\sum_{k=1}^3 (b_k(\sigma) \cdot x)(b_k(\sigma') \cdot y)=\sum_{k=1}^3 (\sigma \times x)\cdot e_k (\sigma' \times y)\cdot e_k =(\sigma \times x)\cdot(\sigma' \times y), $$
so $\vert x-y\vert^2_{\sigma,\sigma'}$ is independent of the basis chosen to build the $b_k$.

One last important property of the $b_k$'s is that they can be used to form derivation operators on the sphere. It is easily checked that the $b_k$'s are divergence-free, so we can integrate by parts the derivation $b_k\cdot\nabla_\sigma$. Moreover, the usual Laplacian $\Delta_\sigma$ on $\mathbb{S}^2$ writes
\begin{equation}
\label{eq:Lap_bkbk}
    \Delta_\sigma g(\sigma) = \sum_{k=1}^3 b_k(\sigma) \cdot \nabla_\sigma (b_k(\sigma) \cdot \nabla_\sigma g (\sigma))
\end{equation}
for any smooth $g$. Indeed, as shown in \cite{GuillenSilvestre2023}, for any smooth function $\varphi$ on the sphere, its gradient at $\sigma$ is an element of $\sigma^\perp$, so
\begin{align*}
    \int_{\mathbb{S}^2} (\Delta_\sigma g) \varphi d\sigma = - \int_{\mathbb{S}^2} \nabla_\sigma g \cdot  \nabla_\sigma \varphi d\sigma =- \int_{\mathbb{S}^2} \nabla_\sigma g \cdot  \Pi(\sigma) \nabla_\sigma \varphi d\sigma.
\end{align*}
Using formula \eqref{eq:link b_k and pi} and integration by parts, we get
$$\int_{\mathbb{S}^2} (\Delta_\sigma g) \varphi d\sigma=- \sum_{k=1}^3\int_{\mathbb{S}^2} (b_k \cdot \nabla_\sigma g) (b_k\cdot \nabla_\sigma \varphi) d\sigma =\sum_{k=1}^3 \int_{\mathbb{S}^2} b_k\cdot \nabla_\sigma(b_k \cdot \nabla_\sigma g) \varphi d\sigma,$$
which yields \eqref{eq:Lap_bkbk} since $\varphi$ is arbitrary.

\subsubsection{Fractional Laplacian and Sobolev spaces}
\label{ssec:fractlapandsobolev}
The Hilbert space $L^2(\mathbb{S}^2)$ admits an orthonormal basis of eigenvectors $(Y_\ell)_{\ell \in \mathbb{N}}$ for the operator $-\Delta_\sigma$, with non-negative non-decreasing eigenvalues $(\lambda_\ell)_{\ell \in \mathbb{N}}$: for all $\ell, m\in\mathbb{N}$,
\begin{align*}
    -\Delta_\sigma Y_\ell = \lambda_\ell  Y_\ell,  && \jap{Y_\ell,Y_m}=\delta_{\ell m}.
\end{align*}
We have $\lambda_0=0$, the eigenvalue corresponding to constant functions. For $s\in(0,1)$, the spectral definition of the fractional Laplacian $(-\Delta_\sigma)^s$ is given by its action on the basis:
\begin{align}
\label{def:fraclap_spectral}
    (-\Delta_\sigma)^s Y_\ell := \lambda_\ell^s  Y_\ell.
\end{align}
We can also give a representation of $(-\Delta_\sigma)^s$ using the heat flow. For any smooth function $g$ on $\mathbb{S}^2$, if we denote by $g_t$ the solution at time $t$ of the heat equation
$\partial_t g_t= \Delta_\sigma g_t$
with initial data $g_0=g$, then
\begin{align}
    \label{def:fraclap_sub}
    (-\Delta_\sigma)^s g = c_s\int_0^{+\infty} (g - g_t) t^{-1-s}dt,
\end{align}
where
\begin{equation}
    \label{def:c_s}
    c_s:=\int_0^{+\infty}(1-e^{-t})t^{-1-s}dt.
\end{equation}
Indeed, it is straightforward to check this formula on $Y_\ell$ since $(Y_\ell)_t=e^{-\lambda_\ell t }Y_\ell$, and the general case follows.

Finally, we can obtain a kernel representation for $(-\Delta_\sigma)^s$. Let $\Phi_t=\Phi_t(\sigma\cdot \sigma')$ be the heat kernel at time $t$, i.e. the function such that
$$g_t(\sigma)=\int_{\mathbb{S}^2}g(\sigma')\Phi_t(\sigma\cdot \sigma') d\sigma'.$$
The fact that $\Phi_t$ depends only on the angle between $\sigma$ and $\sigma'$ follows from rotational symmetry of the Laplacian. Plugging this in the representation \eqref{def:fraclap_sub} yields
\begin{align}
    \label{def:fraclap_kernel}
    (-\Delta_\sigma)^s g(\sigma)=\int_{\mathbb{S}^2}(g(\sigma)-g(\sigma'))\chi_{s}(\sigma\cdot \sigma') d\sigma', && \chi_{s}(c):=c_s\int_0^{+\infty}\Phi_t(c) t^{-1-s}dt.
\end{align}
The kernel $\chi_{s}$ does not admit an explicit formula as in the case of $\mathbb{R}^d$, but it can be checked (for instance by integrating the heat kernel estimates from \cite{NowakSjogrenSzarek2019}) that there exists $\tilde{c}_{s}>0$ such that, for all $\theta \in (0,\pi/2)$,
\begin{align}
\label{eq:fraclaplowerbound}
    \sin(\theta) \chi_{s}(\cos\theta)\geq \tilde{c}_{s} \theta^{-1-2s}
\end{align}
which will be enough for our purpose.
\\

We now turn to the definition of Sobolev spaces. As for the fractional Laplacian, we first give a spectral definition and then an integral representation. For $g\in L^2(\mathbb{S}^2)$, we write its expansion on the $Y_\ell$s as
$$g=\sum_{\ell=0}^{+\infty} \hat{g}^\ell Y_\ell.$$
For any $\nu\geq 0$, the Sobolev space $H^\nu(\mathbb{S}^2)$ is the space of functions in $L^2(\mathbb{S}^2) $ for which the seminorm
\begin{align*}
    \label{def:sobolevnorm}
    \Vert g \Vert_{\dot{H}^\nu(\mathbb{S}^2)} := \left(\sum_{\ell=0}^{+\infty} (\lambda_\ell)^{\nu}\vert \hat{g}^\ell\vert^2 \right)^{1/2}.
\end{align*}
is finite. Using the integral representation \eqref{def:fraclap_kernel} of the fractional Laplacian, we can get the following Gagliardo-type formula for the Sobolev seminorm:
\begin{lemma}
\label{lem:eqsobnorms}
    For any $g\in L^2(\mathbb{S}^2)$, for any $s\in(0,1)$, it holds that
    \begin{align*}
    \Vert g \Vert_{\dot{H}^s(\mathbb{S}^2)}^2= \frac{1}{2}\iint_{\mathbb{S}^2\times\mathbb{S}^2} \left(g(\sigma')-g(\sigma)\right)^2 \chi_s (\sigma\cdot\sigma')d\sigma d\sigma',\\
    \Vert g \Vert_{\dot{H}^{1+s}(\mathbb{S}^2)}^2= \frac{1}{2}\iint_{\mathbb{S}^2\times\mathbb{S}^2} \left\vert \nabla _\sigma g(\sigma')-\nabla_\sigma g(\sigma)\right\vert_{\sigma',\sigma}^2 \chi_s (\sigma\cdot\sigma')d\sigma d\sigma',
    \end{align*}
    where the quantity $\left\vert y-x\right\vert_{\sigma',\sigma}^2$ is defined in \eqref{def:difftangent}.
\end{lemma}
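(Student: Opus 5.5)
We prove both identities by pairing the kernel representation \eqref{def:fraclap_kernel} with the spectral definition, and then reduce the second to the first using the vector fields $b_k$.

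\textbf{First identity.} By the spectral definition, for $g$ smooth,
\[
\Vert g\Vert_{\dot H^s}^2=\sum_\ell \lambda_\ell^s|\hat g^\ell|^2=\jap{g,(-\Delta_\sigma)^s g}_{L^2}.
\]
Inserting \eqref{def:fraclap_kernel} gives $\iint g(\sigma)(g(\sigma)-g(\sigma'))\chi_s(\sigma\cdot\sigma')\,d\sigma\, d\sigma'$. This integral is absolutely convergent for smooth $g$ once we use the symmetric form, since $(g(\sigma)-g(\sigma'))^2\lesssim\theta^2$ and $\chi_s\approx\theta^{-2-2s}$. Symmetrizing in $(\sigma,\sigma')$, using that $\chi_s$ depends only on $\sigma\cdot\sigma'$, gives the stated formula. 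Strictly, one should symmetrize before splitting the integral. The cleanest route is to write everything through the subordination formula \eqref{def:fraclap_sub}: use $\jap{g,g-g_t}=\frac12\iint (g(\sigma)-g(\sigma'))^2\Phi_t(\sigma\cdot\sigma')\,d\sigma\, d\sigma'$, which holds because $\int\Phi_t=1$, and then integrate against $c_s t^{-1-s}\,dt$ using Tonelli, since everything is nonnegative. For general $g\in L^2$, both sides equal $\int_0^\infty c_s t^{-1-s}\jap{g,g-g_t}\,dt$ in $[0,\infty]$, so the identity holds with the value $+\infty$ allowed.

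\textbf{Second identity.} Apply the first identity to $h_k:=b_k\cdot\nabla_\sigma g$ for $k=1,2,3$. By the definition \eqref{def:difftangent},
\[
\sum_k (h_k(\sigma')-h_k(\sigma))^2=|\nabla_\sigma g(\sigma')-\nabla_\sigma g(\sigma)|^2_{\sigma',\sigma},
\]
so the right-hand side of the claimed formula equals $\sum_k\Vert h_k\Vert_{\dot H^s}^2$. It therefore remains to show
\[
\sum_k \Vert b_k\cdot\nabla_\sigma g\Vert_{\dot H^s}^2=\Vert g\Vert_{\dot H^{1+s}}^2=\jap{g,(-\Delta_\sigma)^{1+s}g}.
\]
The key fact is that the operators $L_k:=b_k\cdot\nabla_\sigma$ are the infinitesimal generators of rotations about $e_k$. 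They are therefore antisymmetric, and they commute with $\Delta_\sigma$ and with the heat semigroup, by rotation invariance of the Laplacian. Consequently they also commute with $(-\Delta_\sigma)^s$, via \eqref{def:fraclap_sub}. We then compute
\[
\sum_k\jap{L_kg,(-\Delta_\sigma)^sL_kg}=-\sum_k\jap{g,L_kL_k(-\Delta_\sigma)^s g}=\jap{g,(-\Delta_\sigma)(-\Delta_\sigma)^s g},
\]
where the last step uses \eqref{eq:Lap_bkbk}. This is $\Vert g\Vert_{\dot H^{1+s}}^2$.

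\textbf{Main obstacle.} The only delicate point is justifying the commutation $L_k\Delta_\sigma=\Delta_\sigma L_k$. One way is to note that $e^{\tau L_k}g=g\circ R_{k,\tau}$ for a rotation $R_{k,\tau}$, and that $\Delta_\sigma$ commutes with composition by rotations. Alternatively, the commutation follows from $L_k$ preserving each eigenspace of spherical harmonics. For nonsmooth $g\in L^2$, we argue by density: truncate the expansion to finitely many $\ell$, where both sides are finite and equal, and pass to the limit by monotone convergence. On the kernel side, monotone convergence is not immediate, so instead we use lower semicontinuity together with the Tonelli-based representation through $\Phi_t$ as in the first step. This again yields equality in $[0,\infty]$.
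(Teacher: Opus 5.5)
Your proposal is correct and follows the paper's route. For the first identity you pair $g$ with the kernel form of $(-\Delta_\sigma)^s$ and symmetrize. For the second you integrate by parts using $\Delta_\sigma=\sum_k (b_k\cdot\nabla_\sigma)^2$, commute $b_k\cdot\nabla_\sigma$ with $(-\Delta_\sigma)^s$, and apply the first identity to $b_k\cdot\nabla_\sigma g$. The only difference is how the commutation is justified: you argue abstractly by rotation invariance, whereas the paper checks it by hand from the kernel identity $b_k(\sigma)\cdot\sigma'=-b_k(\sigma')\cdot\sigma$, which is the infinitesimal form of the same invariance. Your Tonelli argument through $\Phi_t$ also handles convergence issues that the paper passes over.
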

\begin{proof}
For the first formula, we simply write
$$\Vert g \Vert_{\dot{H}^{s}(\mathbb{S}^2)}^2=\jap{g,(-\Delta_\sigma)^s g}=\iint_{\mathbb{S}^2\times\mathbb{S}^2} g(\sigma)\left(g(\sigma)-g(\sigma')\right) \chi_s(\sigma\cdot\sigma') d\sigma d\sigma'$$
by \eqref{def:fraclap_kernel} and then symmetrize the integrand by exchanging $\sigma$ and $\sigma'$.

The formula \eqref{eq:Lap_bkbk} for the Laplacian yields, once again using integration by parts,
\begin{align}
\label{eq:proofofeqsobnorms}
    \Vert g \Vert_{\dot{H}^{1+s}(\mathbb{S}^2)}^2=\jap{g,(-\Delta_\sigma)(-\Delta_\sigma)^s g}=\sum_{k=1}^3 \jap{b_k\cdot \nabla_\sigma g,b_k\cdot \nabla_\sigma (-\Delta_\sigma)^s g}
\end{align}
We then observe, as in \cite[Lemma B.3]{ImbertSilvestreVillani2024}, that the elementary relation $$b_k(\sigma) \cdot \sigma'=(e_k \times \sigma )\cdot \sigma'=(\sigma' \times e_k)\cdot \sigma =-b_k(\sigma') \cdot \sigma$$
yields
$$b_k(\sigma)\cdot\nabla_\sigma [\chi_s(\sigma\cdot\sigma')]=(b_k(\sigma)\cdot \sigma')  \chi_s'(\sigma\cdot\sigma')= -b_k(\sigma')\cdot\nabla_{\sigma'} [\chi_s(\sigma\cdot\sigma')].$$
We thus obtain
\begin{align*}
    b_k(&\sigma)\cdot \nabla_\sigma ((-\Delta_\sigma)^s g)(\sigma)\\
    &=\int_{\mathbb{S}^2}b_k(\sigma)\cdot \nabla_\sigma g(\sigma)\chi_s(\sigma\cdot \sigma') d\sigma'
    +\int_{\mathbb{S}^2}(g(\sigma)-g(\sigma'))b_k(\sigma)\cdot \nabla_\sigma [\chi_s(\sigma\cdot \sigma')]d\sigma'\\
    &=\int_{\mathbb{S}^2}b_k(\sigma)\cdot \nabla_\sigma g(\sigma)\chi_s(\sigma\cdot \sigma') d\sigma'
    -\int_{\mathbb{S}^2}(g(\sigma)-g(\sigma'))b_k(\sigma')\cdot \nabla_{\sigma'} [\chi_s(\sigma\cdot \sigma')]d\sigma'\\
    &=\int_{\mathbb{S}^2}(b_k(\sigma)\cdot \nabla_\sigma g(\sigma)-b_k(\sigma')\cdot \nabla_\sigma g(\sigma'))\chi_s(\sigma\cdot \sigma') d\sigma',
\end{align*}
by integration by parts. In other terms, the commutation
$$b_k\cdot \nabla_\sigma ((-\Delta_\sigma)^s g) =(-\Delta_\sigma)^s (b_k\cdot \nabla_\sigma g)$$
holds. Plugging this on the right-hand side of \eqref{eq:proofofeqsobnorms} and symmetrizing as for the first formula yields the second formula.
\end{proof}

\subsubsection{Two inequalities along the heat flow}

To study the dissipation of the Fisher information along the fractional heat flow, we will rely on two already known inequalities concerning the Fisher information and the usual heat flow. They can be seen as the non-fractional versions of the two main inequalities that we will prove in this section, and they play a key role in propagation of chaos in the Landau equation \cite{Tabary2026a}. Recall that $\mathcal{I}$ is the Fisher information on the sphere, see \eqref{def:sphericfisher}, and $\jap{\mathcal{I}'(g),\cdot}$ denotes the Gâteaux-derivative of $\mathcal{I}$ at $g$. Then we have the following formula for the dissipation of $\mathcal{I}$ along the heat flow:
\begin{lemma}
    \label{lem:dissip_fisher_spherheat}
    For any non-negative, smooth function $g$ on $\mathbb{S}^2$,
    $$-\jap{\mathcal{I}'(g),\Delta_\sigma g}=2\mathcal{K}(g)$$
    where
    \begin{equation}
    \label{def:mathcalK}
    \mathcal{K}(g):=\sum_{k,l=1}^3 \int_{\mathbb{S}^{2}} g(\sigma) \left\vert b_k(\sigma) \cdot\nabla_\sigma  (b_l(\sigma) \cdot\nabla_\sigma \log g(\sigma)) \right\vert^2 d\sigma.
\end{equation}
\end{lemma}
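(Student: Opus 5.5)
We compute the derivative of $\mathcal{I}$ directly. Write $u=\log g$. Using \eqref{eq:normtangent}, $\mathcal{I}(g)=\sum_k\int g\,(b_k\cdot\nabla_\sigma u)^2\,d\sigma$. For a variation $h$, $\frac{d}{d\varepsilon}\log(g+\varepsilon h)=h/g$, so
$$\jap{\mathcal{I}'(g),h}=\sum_k\int \Big(h\,(b_ku)^2+2g\,(b_ku)\,b_k(h/g)\Big)d\sigma,$$
where $b_k\varphi$ is shorthand for $b_k\cdot\nabla_\sigma\varphi$. Take $h=\Delta_\sigma g$ and write $\Delta_\sigma g/g=\Delta_\sigma u+|\nabla_\sigma u|^2=\sum_l\big(b_lb_lu+(b_lu)^2\big)$, which follows from \eqref{eq:Lap_bkbk} and \eqref{eq:normtangent}. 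Since each $b_l$ is divergence free, integrations by parts move every $b_l$ freely; every integral will have the form $\int g\times(\text{polynomial in derivatives of }u)$.

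\textbf{First term.} Integrate by parts once:
$$\int \Delta_\sigma g\,(b_ku)^2=-\sum_l\int g\,(b_lu)\,b_l\big((b_ku)^2\big)=-2\sum_l\int g\,(b_lu)(b_ku)(b_lb_ku).$$

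\textbf{Second term.} Integrate by parts to get
$$2\int g\,(b_ku)\,b_k(\Delta g/g)=-2\int g\,\big(b_kb_ku+(b_ku)^2\big)\,\Delta g/g,$$
where we used $b_k(g\,b_ku)=g\big(b_kb_ku+(b_ku)^2\big)$. Summing over $k$, this equals $-2\int g\,(\Delta g/g)^2$. This is a Bochner-type structure.

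\textbf{Goal and main obstacle.} We want to show that the total equals $-2\sum_{k,l}\int g\,(b_kb_lu)^2$. Expand $\int g(b_kb_lu)^2$ by integrating by parts one $b_k$, producing commutators $[b_k,b_l]$. The $b_k$ are the rotation generators, so
$$[b_k,b_l]=\pm\sum_m \epsilon_{klm}\,b_m$$
(the sign is to be checked). These commutator terms supply the curvature (Ricci) contribution on $\mathbb{S}^2$, and they must cancel exactly against the leftover cross terms $\sum_{k,l}\int g\,(b_lu)(b_ku)(b_lb_ku)$ and $\int g\,(\Delta u)\,|\nabla u|^2$. Carrying out this bookkeeping with the antisymmetry $\epsilon_{klm}$ is the main obstacle. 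Care is needed because $\sum_{k,l}(b_kb_lu)^2$ is not the Hessian norm: it also contains the tangential-normal parts of $b_kb_l$, and these interact with the commutators. For this reason the cancellation must be verified, not merely expected.

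\textbf{Fallback.} If the direct check becomes messy, we invoke the known computation from \cite{GuillenSilvestre2023}, where exactly this identity appears for the heat flow in the $b_k$ formalism. Alternatively, we use the pointwise formula
$$\sum_{k,l}(b_kb_lu)^2=|\nabla^2u|^2+\text{(first-order terms)}$$
on $\mathbb{S}^2$ together with Bochner's formula, whose Ricci term equals $|\nabla u|^2$.

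Finally, the sign convention: the left-hand side is $-\jap{\mathcal{I}'(g),\Delta_\sigma g}$, so the claimed identity is $-(\text{total})=2\mathcal{K}(g)$.
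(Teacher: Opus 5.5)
Your first-variation formula and your evaluation of the two pieces are correct. With $u=\log g$ and $b_k$ short for $b_k\cdot\nabla_\sigma$, you have reduced the lemma to the identity
\[
\sum_{k,l}\int_{\mathbb{S}^2} g\,(b_kb_lu)^2\,d\sigma=\sum_{k,l}\int_{\mathbb{S}^2} g\,(b_ku)(b_lu)(b_kb_lu)\,d\sigma+\int_{\mathbb{S}^2} g\,\big(\Delta_\sigma u+|\nabla_\sigma u|^2\big)^2\,d\sigma .
\]
This identity is the entire content of the lemma, and the proposal does not prove it. You call it the main obstacle, say the cancellation ``must be verified, not merely expected'', and then defer either to a citation or to a Bochner formula with unspecified ``first-order terms''. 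That fallback would also still need the integrated $\Gamma_2$ identity linking $\jap{\mathcal{I}'(g),\Delta_\sigma g}$ to $\int g\,(|\nabla^2u|^2+\mathrm{Ric}(\nabla u,\nabla u))$. So as written this is a reduction, not a proof. Your diagnosis of the difficulty is also off: no commutator terms $[b_k,b_l]$ survive that would have to cancel against the leftover cross terms.

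The one fact you are missing is that each $b_l\cdot\nabla_\sigma$ commutes with $\Delta_\sigma$. Geometrically, the $b_l$ generate rotations. Algebraically, $[b_k,b_l]=-\sum_m\epsilon_{klm}b_m$, so $\sum_k\big(b_k[b_k,b_l]+[b_k,b_l]b_k\big)=0$ by antisymmetry of $\epsilon_{klm}$ in $(k,m)$. Integrate one $b_k$ by parts in $\int g\,(b_kb_lu)^2$, sum over $k,l$, and use $\sum_k b_kb_kb_lu=b_l\Delta_\sigma u$. This gives
\[
\sum_{k,l}\int_{\mathbb{S}^2} g\,(b_kb_lu)^2\,d\sigma=-\frac12\sum_k\int_{\mathbb{S}^2} g\,(b_ku)\,b_k\big(|\nabla_\sigma u|^2\big)\,d\sigma-\sum_l\int_{\mathbb{S}^2} g\,(b_lu)\,b_l(\Delta_\sigma u)\,d\sigma .
\]
One further integration by parts, using $\sum_k b_k(g\,b_ku)=g\,(\Delta_\sigma u+|\nabla_\sigma u|^2)$, turns both this and the right-hand side of your identity into $\int g\,(\Delta_\sigma u+|\nabla_\sigma u|^2)(\Delta_\sigma u+\tfrac12|\nabla_\sigma u|^2)$. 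That completes your route. The same commutation also gives, pointwise, $\sum_{k,l}(b_kb_lu)^2=|\nabla^2u|^2+|\nabla u|^2$, so your Bochner fallback works too: the first-order term is exactly the Ricci term.

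The paper avoids this bookkeeping entirely. Rotation invariance gives $\jap{\mathcal{I}'(g),b_k\cdot\nabla_\sigma g}=0$ for every $g$. Differentiating once more in $g$ along $b_k\cdot\nabla_\sigma g$, and using $\jap{\mathcal{I}''(g)h,h}=2\int g\,|\nabla_\sigma(h/g)|^2$, yields $\jap{\mathcal{I}'(g),b_k\cdot\nabla_\sigma(b_k\cdot\nabla_\sigma g)}=-2\int g\,|\nabla_\sigma(b_k\cdot\nabla_\sigma\log g)|^2$. Summing over $k$ and expanding the norm with \eqref{eq:normtangent} gives $-2\mathcal{K}(g)$.
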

\begin{proof}
This computation is essentially done in \cite[Section 6]{GuillenSilvestre2023}. The Fisher information is convex, and a direct computation yields the formula for the second order Gâteaux derivative
$$\jap{\mathcal{I}''(g)h,h}=2\int_{\mathbb{S}^{2}} g(\sigma) \left\vert \nabla_\sigma \left(\frac{h(\sigma)}{g(\sigma)}\right) \right\vert^2 d\sigma.$$
The Fisher information has rotational symmetry, and the flow of each vector field $b_k$ is a rotation around $e_k$, so $\mathcal{I}$ is constant along this flow. This implies $\jap{\mathcal{I}'(g),b_k\cdot \nabla_\sigma g }=0$,
which differentiated again yields
$$\jap{\mathcal{I}''(g)b_k\cdot \nabla_\sigma g,b_k\cdot \nabla_\sigma g }+ \jap{\mathcal{I}'(g),b_k\cdot \nabla_\sigma(b_k\cdot \nabla_\sigma g) }=0.$$
Summing over $k$ and recalling the link \eqref{eq:Lap_bkbk} between $\Delta_\sigma$ and the $b_k$'s, we get
\begin{align*}
    \jap{\mathcal{I}'(g),-\Delta_\sigma g) } = 2\sum_{k=1}^3\int_{\mathbb{S}^{2}} g(\sigma) \left\vert \nabla_\sigma \left(b_k(\sigma)\cdot \nabla_\sigma \log g (\sigma) \right) \right\vert^2 d\sigma.
\end{align*}
Using \eqref{eq:normtangent} to write the squared norm of the tangent vector $\nabla_\sigma \left(b_k(\sigma)\cdot \nabla_\sigma \log g (\sigma) \right)$ with the $b_l$s, we get the result.
\end{proof}

We can now state the following inequalities, which are very close to the ones established in \cite[Proposition 2.1]{Tabary2026a} comparing different second-order versions of Fisher information, only in the whole space rather than on the sphere, and with derivatives in only one direction. Results of the same type also appeared much earlier in \cite{GianazzaSavareToscani2009}. We give a short proof of the two inequalities below in appendix, although without optimal constants.
\begin{prop}
\label{prop:ineq_heat_flow}
There exists two numerical constants $c_0, c_1>0$ such that, for all non-negative, smooth functions $g$ on $\mathbb{S}^2$,
\begin{align}
    \label{eq:logcontrolssqrt2}
    \mathcal{K} (g) &\geq c_0 \Vert \sqrt{g} \Vert_{\dot{H}^{2}(\mathbb{S}^{2})}^2,\\
    \label{eq:logcontrolsfisher4}
    \Vert \sqrt{g} \Vert_{\dot{H}^{2}(\mathbb{S}^{2})}^2 &\geq c_1 \mathcal{J}(g),
\end{align}
where 
\begin{equation}
    \label{def:mathcalJ}
    \mathcal{J}(g):=\int_{\mathbb{S}^{2}} g(\sigma)\vert \nabla_\sigma \log g(\sigma)\vert^4  d\sigma.
\end{equation}
\end{prop}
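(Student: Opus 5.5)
We work in the $b_k$ frame. Write $u=\log g$ and $h=\sqrt g = e^{u/2}$, and use the shorthand $\partial_k := b_k\cdot\nabla_\sigma$ for the divergence-free derivations. With this notation \eqref{def:mathcalK} reads $\mathcal{K}(g)=\sum_{k,l}\int g\,|\partial_k\partial_l u|^2$. On the other side, \eqref{eq:Lap_bkbk} gives $\Delta_\sigma=\sum_k\partial_k^2$, so $\Vert h\Vert_{\dot H^2}^2=\int|\Delta_\sigma h|^2$. The plan is to first show
\[
\Vert h\Vert_{\dot H^2}^2\le \sum_{k,l}\int|\partial_k\partial_l h|^2+C\int|\nabla_\sigma h|^2 .
\]
This is a Bochner-type identity on the sphere: expand $\int(\sum_k\partial_k^2h)^2$, integrate by parts, and control the commutators $[\partial_k,\partial_l]=\pm\partial_m$ (the Lie algebra of $so(3)$), which only produce first-order terms. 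In fact the reverse relation also holds up to lower-order terms. I would also use the elementary bound $\int|\nabla h|^2\le \Vert h\Vert_{\dot H^2}\Vert h\Vert_{L^2}$, but this is not directly homogeneous. So for lower-order terms I would keep them expressed through $\mathcal{J}$ and the Fisher information.

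\textbf{Second inequality \eqref{eq:logcontrolsfisher4}.} Here $\mathcal{J}(g)=16\int |\nabla h|^4/h^2$. I would follow the one-dimensional integration by parts of McKean. Write $|\nabla h|^2=\sum_l(\partial_l h)^2$ and
\[
\int \frac{(\partial_l h)^2(\partial_m h)^2}{h^2}=-\int \partial_m\!\Big(\frac{1}{h}\Big)\, h^{-0}\,(\partial_l h)^2\,\partial_m h\cdot h/h ,
\]
that is, integrate by parts in $\partial_m$ using $\partial_m h/h^2=-\partial_m(1/h)$. This gives terms of the form $\int h^{-1}(\partial_l h)(\partial_m\partial_l h)(\partial_m h)$ and $\int h^{-1}(\partial_l h)^2\partial_m^2 h$. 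By Cauchy–Schwarz each is bounded by $\mathcal{J}^{1/2}\big(\sum\int|\partial\partial h|^2\big)^{1/2}$. Dividing gives $\mathcal{J}\lesssim\sum_{k,l}\int|\partial_k\partial_l h|^2$, and the previous step then converts this into $\Vert h\Vert_{\dot H^2}^2$. Strictly, we need the reverse Bochner direction $\sum\int|\partial_k\partial_l h|^2\lesssim\Vert h\Vert_{\dot H^2}^2$, up to first-order terms which are absorbed since $\int|\nabla h|^2\le C\Vert h\Vert_{\dot H^2}^2$ on the orthogonal of constants.

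\textbf{First inequality \eqref{eq:logcontrolssqrt2}.} Write $\partial_k\partial_l h=\tfrac12 h\big(\partial_k\partial_l u+\tfrac12\partial_k u\,\partial_l u\big)$. This gives
\[
\sum\int|\partial_k\partial_l h|^2\lesssim \mathcal{K}(g)+\mathcal{J}(g).
\]
So it suffices to prove $\mathcal{J}\lesssim\mathcal{K}$, which is the classical bound "fourth power of the log-gradient is controlled by the log-Hessian". To show it, integrate by parts $\int g(\partial_l u)^2(\partial_m u)^2$ using $g\,\partial_m u=\partial_m g$. This moves the derivative onto $(\partial_l u)^2\partial_m u$, giving terms $\int g\,\partial_l u\,\partial_m\partial_l u\,\partial_m u$ and $\int g(\partial_l u)^2\partial_m^2u$. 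Both are bounded by $\mathcal{J}^{1/2}\mathcal{K}^{1/2}$, so $\mathcal{J}\le C\mathcal{K}$. Combining this with the first Bochner step gives \eqref{eq:logcontrolssqrt2}.

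\textbf{Main obstacle.} The delicate point is that the $b_k$ are not a frame but an overcomplete family with noncommuting derivations. The commutator terms $[\partial_k,\partial_l]$ generate first-order remainders, such as $\int g|\nabla u|^2$ in $\mathcal{K}$-type estimates and $\int|\nabla h|^2$ in $\dot H^2$ estimates, and these must be absorbed. For the first-order terms in the $h$-picture, Poincaré on the sphere applied to $\partial_k h$ (which have zero mean) gives $\int|\nabla h|^2\lesssim \sum\int|\partial\partial h|^2$. In the $u$-picture, one needs $\mathcal{I}(g)\lesssim\mathcal{K}(g)$. This follows from $\mathcal{I}(g)=4\int|\nabla h|^2\lesssim\Vert h\Vert_{\dot H^2}^2\lesssim \mathcal{K}+\mathcal{J}+\mathcal{I}$, which is circular. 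Instead, I would directly use the commutator identity to write $\partial_m u=\pm(\partial_k\partial_l u-\partial_l\partial_k u)$, so that $\int g|\partial_m u|^2\le 4\mathcal{K}$. This removes the circularity and makes all constants non-optimal but explicit.
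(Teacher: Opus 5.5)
Your proof is correct, but it is organized differently from the paper's. The paper never uses mixed derivatives or commutators. It discards the off-diagonal terms $k\neq l$ and works one rotation field at a time with exact one-dimensional McKean-type identities.

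For \eqref{eq:logcontrolssqrt2}, it expands $\partial_k\partial_k\log g=2\,\partial_k\partial_k\sqrt{g}/\sqrt{g}-\tfrac12(\partial_k\log g)^2$. One integration by parts along $\partial_k$ shows that the quartic terms add up to a nonnegative multiple of $\int g(\partial_k\log g)^4$. Hence $\mathcal{K}(g)\geq 4\sum_k\int|\partial_k\partial_k\sqrt{g}|^2$, and $\Vert\Delta_\sigma\sqrt{g}\Vert_{L^2}^2\leq 3\sum_k\int|\partial_k\partial_k\sqrt{g}|^2$ is plain Cauchy--Schwarz.

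For \eqref{eq:logcontrolsfisher4}, it uses the exact identity $\Vert\Delta_\sigma h\Vert_{L^2}^2=\sum_{k,l}\int|\partial_k\partial_l h|^2$ (Lemma 9.11 of \cite{GuillenSilvestre2023}) and keeps only $k=l$. It writes $\partial_k\partial_k\sqrt{g}$ through $g^{1/3}$, so the cross term is an exact $\partial_k$-derivative and vanishes. It concludes with $|\nabla_\sigma\log g|^4\leq 3\sum_k(\partial_k\log g)^4$. There is no absorption step and no lower-order term anywhere.

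You instead transplant the Euclidean argument. You use the pointwise relation between $\partial_k\partial_l h$ and $\partial_k\partial_l u$, together with the integration-by-parts, Cauchy--Schwarz and absorption bounds $\mathcal{J}\leq(2+\sqrt{3})^2\mathcal{K}$ and $\mathcal{J}\lesssim\sum_{k,l}\int|\partial_k\partial_l h|^2$. These check out. The price is that absorption needs $\mathcal{J}(g)<\infty$, i.e.\ first $g>0$ and then an approximation argument.

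The gain is that you handle the off-diagonal terms natively. Reading your pointwise identity backwards gives $\mathcal{K}(g)\leq 8\sum_{k,l}\int|\partial_k\partial_l h|^2+\tfrac12\mathcal{J}(g)$. Together with \eqref{eq:logcontrolsfisher4}, this immediately yields the reverse bound $\mathcal{K}(g)\lesssim\Vert\sqrt{g}\Vert_{\dot{H}^2(\mathbb{S}^2)}^2$, which the remark after the proposition says requires extra work.

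Two corrections to your write-up.

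First, the lower-order terms in your last paragraph never arise. $\Delta_\sigma=\sum_k\partial_k^2$ commutes with each $\partial_l$, by rotation invariance. Algebraically, the commutator contributions $\pm\sum_{k,m}\epsilon_{klm}(\partial_k\partial_m+\partial_m\partial_k)$, with $\epsilon$ the Levi-Civita symbol, cancel by antisymmetry. So $\int|\Delta_\sigma h|^2=-\sum_l\int\partial_l h\,\Delta_\sigma\partial_l h=\sum_{k,l}\int|\partial_k\partial_l h|^2$ holds exactly. For \eqref{eq:logcontrolssqrt2} you do not even need this: Cauchy--Schwarz on $\sum_k\partial_k^2 h$ suffices. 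Had remainders appeared, naive commutation would produce products of a second-order and a first-order derivative, not purely first-order terms. Your commutator bound on $\mathcal{I}(g)$ is correct but unnecessary.

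Second, the displayed integration by parts in your proof of \eqref{eq:logcontrolsfisher4} is garbled. The intended identity is $\int h^{-2}(\partial_l h)^2(\partial_m h)^2=\int h^{-1}\partial_m\big[(\partial_l h)^2\partial_m h\big]$, and the two terms you extract from it are the right ones.
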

\begin{remark}
One can also show that the reverse of \eqref{eq:logcontrolssqrt2} holds, i.e. $\Vert \sqrt{g} \Vert_{\dot{H}^{2}(\mathbb{S}^{2})}^2 \gtrsim \mathcal{K}(g)$, so that these two functionals quantify the same amount of regularity of $g$. This reverse inequality requires a little more work than the proof below, in order to deal with non-diagonal terms $k\neq l$ in the sum for $\mathcal{K}$. This equivalence is a second-order version of the exact equality $\mathcal{I}(g)=4\Vert \sqrt{g} \Vert_{\dot{H}^{1}(\mathbb{S}^{2})}^2$. Concerning the second inequality, it is clear that the reverse cannot hold as $\mathcal{J}$ only involves first derivatives of $g$.
\end{remark}
The goal of the next two sections is to generalize \eqref{eq:logcontrolssqrt2} and \eqref{eq:logcontrolsfisher4} to the fractional heat flow.

\subsection{Dissipation of the Fisher information along the fractional heat flow}

The goal of this section is to prove the analogue of inequality \eqref{eq:logcontrolssqrt2} for the fractional Laplacian, which is Theorem~\ref{thm:key_ineq}. We will show the inequality in two steps: we first reduce its proof to establishing an inequality involving a superposition of Sobolev seminorms along the heat flow: for $s\in (0,1)$, and a non-negative function $g$ on the sphere, we define
\begin{equation}
    \label{def:mathbfK}
    \mathbf{K}^s(g):=\int_0^{+\infty}  \Vert \sqrt{g_t} \Vert^2_{\dot{H}^{2}(\mathbb{S}^{2})}t^{-s} dt,
\end{equation}
where $(g_t)_{t\geq 0}$ solves the heat equation starting from $g_0=g$. The functional $\mathbf{K}^s$ will be more convenient for propagation of chaos than the dissipation itself, because we are able to show in Section~\ref{ssec:boltzmanns dissipation of fisher information} its nice behaviour with respect to dimension. We control it by the full dissipation thanks to the following inequality:
\begin{lemma}
\label{lem:key_ineq_part1}
    For any $s\in(0,1)$, for all non-negative, smooth functions $g$ on $\mathbb{S}^2$,
    $$ \jap{\mathcal{I}'(g),(-\Delta_\sigma)^s g}\geq \frac{2 c_s c_0}{s}  \mathbf{K}^s(g),$$
    with $c_0$ from \eqref{eq:logcontrolssqrt2} and $c_s$ from \eqref{def:c_s}.
\end{lemma}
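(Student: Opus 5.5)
The plan is to combine the subordination formula \eqref{def:fraclap_sub} with the heat-flow dissipation identity of Lemma~\ref{lem:dissip_fisher_spherheat} and then apply \eqref{eq:logcontrolssqrt2} at each time.

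First, since $\mathcal{I}'(g)$ acts linearly on directions, \eqref{def:fraclap_sub} gives
\[
\jap{\mathcal{I}'(g),(-\Delta_\sigma)^s g} = c_s\int_0^{+\infty} \jap{\mathcal{I}'(g), g-g_t}\, t^{-1-s}\,dt .
\]
Exchanging the pairing and the time integral needs some care, because of the singularity of $t^{-1-s}$ at $0$. For smooth positive $g$ one has $g-g_t = O(t)$ in $C^2$, and $g-g_t$ is bounded as $t\to\infty$. So the integral converges absolutely and the exchange is justified. If $g$ vanishes somewhere, I would first treat $g+\varepsilon$ and let $\varepsilon\to0$ at the end by lower semicontinuity.

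The key step is to bound $\jap{\mathcal{I}'(g),g-g_t}$ from below. Convexity of $\mathcal{I}$ gives $\mathcal{I}(g_t)\geq \mathcal{I}(g)+\jap{\mathcal{I}'(g),g_t-g}$, that is,
\[
\jap{\mathcal{I}'(g),g-g_t}\geq \mathcal{I}(g)-\mathcal{I}(g_t).
\]
By Lemma~\ref{lem:dissip_fisher_spherheat} applied along the heat flow, we have $\frac{d}{du}\mathcal{I}(g_u)=\jap{\mathcal{I}'(g_u),\Delta_\sigma g_u}=-2\mathcal{K}(g_u)$. Hence
\[
\mathcal{I}(g)-\mathcal{I}(g_t)=2\int_0^t\mathcal{K}(g_u)\,du\geq 2c_0\int_0^t \Vert\sqrt{g_u}\Vert^2_{\dot H^2}\,du,
\]
using \eqref{eq:logcontrolssqrt2}. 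The heat flow preserves smoothness and positivity, so these steps are legitimate.

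Plugging this in and applying Fubini (everything is nonnegative):
\[
\jap{\mathcal{I}'(g),(-\Delta_\sigma)^s g}\geq 2c_sc_0\int_0^{+\infty}\Vert\sqrt{g_u}\Vert^2_{\dot H^2}\int_u^{+\infty}t^{-1-s}\,dt\,du=\frac{2c_sc_0}{s}\int_0^{+\infty}\Vert\sqrt{g_u}\Vert^2_{\dot H^2}u^{-s}\,du,
\]
which is exactly $\frac{2c_sc_0}{s}\mathbf{K}^s(g)$.

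The main obstacle is the justification in the first step: commuting the Gâteaux derivative with the singular time integral, and handling $g$ with zeros where $\mathcal{I}'(g)$ is ill-defined. I would handle both by the $\varepsilon$-regularization together with the $O(t)$ expansion near $t=0$.
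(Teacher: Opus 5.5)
Your proof is correct and follows the paper's argument step for step: the subordination formula \eqref{def:fraclap_sub}, convexity of $\mathcal{I}$ giving $\jap{\mathcal{I}'(g),g-g_t}\geq\mathcal{I}(g)-\mathcal{I}(g_t)$, Lemma~\ref{lem:dissip_fisher_spherheat} combined with \eqref{eq:logcontrolssqrt2} along the heat flow, and Fubini with $\int_u^{\infty}t^{-1-s}\,dt=u^{-s}/s$. Your justification of exchanging the pairing with the time integral, and your $g+\varepsilon$ regularization for functions with zeros, are extra care that the paper leaves implicit.
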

\begin{proof}
    Writing the fractional Laplacian as a subordination by \eqref{def:fraclap_sub}, we have 
    $$\jap{\mathcal{I}'(g),(-\Delta_\sigma)^s g}= c_s \int_0^{+\infty} \jap{\mathcal{I}'(g),g-g_t} t^{-1-s}dt.$$
    By convexity of $\mathcal{I}$, 
    $$\jap{\mathcal{I}'(g),g-g_t} \geq \mathcal{I}(g)-\mathcal{I}(g_t),$$
    and by Lemma \ref{lem:dissip_fisher_spherheat},
    $$\mathcal{I}(g)-\mathcal{I}(g_t)=-\int_0^t \jap{\mathcal{I}'(g_\tau),\Delta_\sigma g_\tau}d\tau =2\int_0^t \mathcal{K}(g_\tau) d\tau.$$
    We then apply inequality \eqref{eq:logcontrolssqrt2} for every $\tau$ to get
    $$\jap{\mathcal{I}'(g),g-g_t}\geq 2c_0 \int_0^t \Vert \sqrt{g_\tau} \Vert_{\dot{H}^{2}(\mathbb{S}^{2})}^2 d\tau$$
    Integrating, we get
    $$\jap{\mathcal{I}'(g),(-\Delta_\sigma)^s g} \geq 2 c_s c_0 \int_0^{+\infty} \! \! \int_0^t \Vert \sqrt{g_\tau} \Vert_{\dot{H}^{2}(\mathbb{S}^{2})}^2 d\tau  t^{-1-s}dt=\! \frac{2 c_s c_0}{s}\! \int_0^{+\infty}\Vert \sqrt{g_\tau} \Vert_{\dot{H}^{2}(\mathbb{S}^{2})}^2 \tau^{-s}d\tau$$
    as claimed.
\end{proof}

In a second step, we show that $\mathbf{K}^s$ controls the right Sobolev seminorm, which is the core of the proof.

\begin{prop}
\label{prop:key_ineq_part2}
    For any $s\in(0,1)$, for all non-negative, smooth functions $g$ on $\mathbb{S}^2$,
    $$ \mathbf{K}^s(g)\geq  \frac{C_s}{1+c_1^{\frac{s-1}{2}}}\Vert \sqrt{g} \Vert_{\dot{H}^{1+s}(\mathbb{S}^{2})}^2,$$
    for some $C_s>0$ and $c_1$ from \eqref{eq:logcontrolsfisher4}.
\end{prop}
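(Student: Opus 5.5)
Write $u:=\sqrt g$ and let $u_t:=e^{t\Delta_\sigma}u$ be the heat flow started from $u$. Note that $u_t$ is \emph{not} $\sqrt{g_t}$. The plan is to first compute the analogue of $\mathbf{K}^s$ in which $\sqrt{g_t}$ is replaced by $u_t$, and then control the discrepancy between the two flows using \eqref{eq:logcontrolsfisher4}.

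\textbf{Step 1: the exact identity for $u_t$.} On the spherical harmonic $Y_\ell$ the heat flow acts by $e^{-\lambda_\ell t}$. Hence
$$\int_0^{+\infty}\Vert u_t\Vert_{\dot H^2(\mathbb{S}^2)}^2t^{-s}dt=\sum_\ell \lambda_\ell^2|\hat u^\ell|^2\int_0^{+\infty}e^{-2\lambda_\ell t}t^{-s}dt=2^{s-1}\Gamma(1-s)\,\Vert u\Vert_{\dot H^{1+s}(\mathbb{S}^2)}^2 .$$
It therefore suffices to bound $\int_0^\infty \Vert u_t\Vert_{\dot H^2}^2 t^{-s}dt$ by a constant times $\mathbf{K}^s(g)$.

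\textbf{Step 2: equation for the difference.} Set $h_t:=\sqrt{g_t}$. Since $\partial_t g_t=\Delta_\sigma g_t$, a direct computation gives
$$\partial_t h_t=\Delta_\sigma h_t+R_t,\qquad R_t:=\frac{|\nabla_\sigma h_t|^2}{h_t}=\tfrac14\sqrt{g_t}\,|\nabla_\sigma\log g_t|^2\ \ge 0 .$$
With $h_0=u$, Duhamel's formula yields $h_t=u_t+w_t$, where $w_t=\int_0^t e^{(t-\tau)\Delta_\sigma}R_\tau\,d\tau$. The source term is controlled by the previous proposition: $\Vert R_\tau\Vert_{L^2}^2=\tfrac1{16}\mathcal{J}(g_\tau)$, so by \eqref{eq:logcontrolsfisher4},
$$\Vert R_\tau\Vert_{L^2}^2\le \frac{1}{16c_1}\Vert \sqrt{g_\tau}\Vert_{\dot H^2(\mathbb{S}^2)}^2 .$$

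\textbf{Step 3: weighted maximal regularity (main obstacle).} I need the estimate
$$\int_0^\infty\Vert w_t\Vert_{\dot H^2}^2t^{-s}dt\le C'_s\int_0^\infty\Vert R_t\Vert_{L^2}^2t^{-s}dt .$$
I would prove it mode by mode. For each $\ell$,
$$\lambda_\ell \hat w^\ell(t)=\int_0^t \lambda_\ell e^{-\lambda_\ell(t-\tau)}\hat R^\ell(\tau)d\tau .$$
This is a convolution with a one-sided, radially decreasing kernel of unit $L^1$ mass, so it is pointwise bounded by the Hardy--Littlewood maximal function $M|\hat R^\ell|(t)$, uniformly in $\lambda_\ell$. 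For $s\in(0,1)$ the weight $t^{-s}$ (extended evenly to $\mathbb{R}$) is a Muckenhoupt $A_2$ weight. Hence $M$ is bounded on $L^2(t^{-s}dt)$, and summing over $\ell$ gives the desired estimate, with $C'_s$ independent of $g$.

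If one prefers an elementary route, an explicit Schur test with the test function $t^{-1/2+\epsilon}$ should also work. This is where I expect the main technical care to be needed, in particular the uniformity in $\lambda_\ell$ and the endpoint behaviour as $s\to 0$ or $s\to 1$.

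\textbf{Conclusion.} Combining the three steps with $(a+b)^2\le 2a^2+2b^2$ and $u_t=h_t-w_t$ gives
$$2^{s-1}\Gamma(1-s)\Vert u\Vert_{\dot H^{1+s}}^2\le 2\mathbf{K}^s(g)+2C'_s\int_0^\infty\Vert R_t\Vert_{L^2}^2t^{-s}dt\le \Big(2+\frac{C'_s}{8c_1}\Big)\mathbf{K}^s(g).$$
This is the claim, with a constant of the form $C_s/(1+c_1^{-1})$.

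To recover exactly the dependence $1+c_1^{\frac{s-1}{2}}$ in the statement, one would refine Step 3. Instead of bounding $\Vert w_t\Vert_{\dot H^2}$ directly, I would estimate $w$ in an intermediate norm, using only part of the smoothing, for example through an $\dot H^{1+s}$ bound. I would then interpolate this against the $L^2$ bound on $R$. The resulting powers of $c_1$ are dictated by the scaling in $\lambda_\ell$.
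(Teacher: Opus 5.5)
Your proof is correct, but Step 3 takes a different route from the paper.

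\textbf{How the paper handles the Duhamel term.} The paper does not prove a weighted maximal-regularity estimate. It uses the crude bound $e^{-\lambda_\ell(t-u)}\le 1$ together with Cauchy--Schwarz in time. This gives $|\hat h^\ell_t-e^{-\lambda_\ell t}\hat h^\ell_0|^2\le \frac{t^{s+1}}{s+1}Z_\ell$, where $Z_\ell=\int_0^{\infty}\tau^{-s}|\hat\zeta^\ell_\tau|^2d\tau$ and $\sum_\ell Z_\ell\le \mathbf{K}^s(g)/(16c_1)$. That bound is only useful for $t\lesssim\lambda_\ell^{-1}$. So before integrating against $\lambda_\ell^2t^{-s}$, the paper inserts an extra damping factor $e^{-\varepsilon\lambda_\ell t}$. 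The error term then becomes $\varepsilon^{-2}\sum_\ell Z_\ell$, while the main term only loses $(2+\varepsilon)^{s-1}$ in place of $2^{s-1}$. Choosing $\varepsilon=c_1^{-1/2}$ gives the stated factor $1+c_1^{\frac{s-1}{2}}$.

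\textbf{What each route buys.} Your exact splitting $h_t=u_t+w_t$, with an $L^2(t^{-s}dt)$ bound on $\lambda_\ell\hat w^\ell$ uniform in $\ell$, is the more structural argument. However, it only yields the dependence $1+c_1^{-1}$, which is weaker when $c_1<1$. Since $c_1$ is a fixed numerical constant (the appendix gives $c_1=1/432$), this does not matter: the two forms agree after renaming $C_s$. Your last paragraph is therefore unnecessary, and as written it is only a heuristic. If you want the sharper dependence on $c_1$, the damping factor is what delivers it.

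\textbf{Step 3 is easier than you expect.} You do not need Muckenhoupt theory. Write
$$t^{-s/2}\lambda_\ell\hat w^\ell(t)=\int_0^t\lambda_\ell e^{-\lambda_\ell(t-\tau)}\Big(\frac{\tau}{t}\Big)^{s/2}\tau^{-s/2}\hat R^\ell(\tau)\,d\tau .$$
Use $(\tau/t)^{s/2}\le 1$ for $\tau\le t$, then Young's inequality with the unit-mass kernel $\lambda_\ell e^{-\lambda_\ell x}\mathbf{1}_{x>0}$. This gives $C'_s=1$, uniformly in $\ell$ and in $s\in(0,1)$, so there is no endpoint issue. Your final bound then reads $2^{s-1}\Gamma(1-s)\Vert\sqrt g\Vert^2_{\dot H^{1+s}}\le(2+\frac{1}{8c_1})\mathbf{K}^s(g)$.

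A minor point on your maximal-function version: a one-sided kernel is not radially decreasing. You should dominate it by $\lambda_\ell e^{-\lambda_\ell|x|}$, which has mass $2$, or use the one-sided maximal function instead.
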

Remark that if the definition of $\mathbf{K}^s$ featured $(\sqrt{g})_t$ instead of $\sqrt{g_t}$, we could explictly expand $(\sqrt{g})_t$ in terms of the spherical harmonics $Y_\ell$, integrate in $t$ and exactly get the $\dot{H}^{1+s}$ seminorm of $\sqrt{g}$. The difficulty hence lies in showing that the difference between the two flows $\sqrt{g_t}$ and $(\sqrt{g})_t$ can be controlled.

\begin{proof}[Proof of Proposition~\ref{prop:key_ineq_part2}]
    We let $h_t:=\sqrt{g_t}$. Since $g_t$ solves the heat equation, then $h_t:=\sqrt{g_t}$ solves
    $$\partial_t h_t = \Delta_\sigma h_t + \frac{\vert \nabla_\sigma h_t \vert ^2}{h_t}= \Delta_\sigma h_t + \frac{1}{4}\sqrt{g_t}\vert \nabla_\sigma \log g_t \vert ^2 $$
    We will treat $\zeta_t:=\frac{1}{4}\sqrt{g_t}\vert \nabla \log g_t \vert ^2$ as a source term. The key observation is the following equality:
    $$\Vert \zeta_t\Vert^2_{L^{2}(\mathbb{S}^{2})}=\frac{1}{16}\int_{\mathbb{S}^{2}} g_t\vert \nabla_\sigma \log g_t\vert^4  d\sigma=\frac{1}{16}\mathcal{J}(g_t),$$
    where $\mathcal{J}$ was defined in \eqref{def:mathcalJ}. This implies that it is controlled by the $\dot{H}^{2}$ seminorm of $h_t$ thanks to Proposition~\ref{prop:ineq_heat_flow}:
    $$\Vert \zeta_t\Vert^2_{L^{2}(\mathbb{S}^{2})} \leq \frac{1}{ 16 c_1}\Vert h_t \Vert^2_{\dot{H}^{2}(\mathbb{S}^2)}.$$
    Integrating this against $t^{-s}$ yields:
    \begin{equation}
        \label{eq:L2zeta}
        \mathbf{K}^s(g)=\int_0^{+\infty} t^{-s} \Vert h_t\Vert^2_{\dot{H}^{2}(\mathbb{S}^{2})}dt \geq 16 c_1 \int_0^{+\infty} t^{-s} \Vert \zeta_t\Vert^2_{L^{2}(\mathbb{S}^{2})}dt.
    \end{equation}
    Recalling that we can expand any $\varphi \in L^{2}(\mathbb{S}^{2})$ as
    $$\varphi = \sum_{\ell=0}^{+\infty} \hat{\varphi}^\ell Y_\ell, $$
    we rewrite the evolution equation $\partial_t h_t = \Delta_\sigma h_t + \zeta_t$ as, for any $\ell\in\mathbb{N}$:
    $$\frac{d}{dt}\hat{h}^\ell_t = -\lambda_\ell \hat{h}^\ell_t + \hat{\zeta}^\ell_t.$$
    The Duhamel formula reads
    $$\hat{h}^\ell_t = e^{-\lambda_\ell t}\hat{h}^\ell_0 + \int_0^t e^{-\lambda_\ell (t-u)}\hat{\zeta}^\ell_u du.$$
    Our goal is to control the last term, which makes $h_t$ depart from the solution to the heat equation.
    Trivially bounding $e^{-\lambda_\ell (t-u)}$ by $1$, we get
    \begin{align*}
        \left\vert \hat{h}^\ell_t-e^{-\lambda_\ell t}\hat{h}^\ell_0\right\vert^2 &\leq \left( \int_0^t \vert \hat{\zeta}^\ell_\tau\vert  d\tau\right)^2\\
        &\leq\left( \int_0^t \tau^s d\tau\right)\left( \int_0^t \tau^{-s}\vert \hat{\zeta}^\ell_\tau\vert^2 d\tau\right)\\
        &\leq \frac{t^{s+1}}{s+1}\int_0^{+\infty} \tau^{-s}\vert \hat{\zeta}^\ell_\tau\vert^2 d\tau\\
        &=: \frac{t^{s+1}}{s+1} Z_\ell,
    \end{align*}
    by the Cauchy-Schwarz inequality. Remark that \eqref{eq:L2zeta} reformulates as the summability of the $Z_\ell$:
    $$\sum_{\ell=0}^{+\infty} Z_\ell =\int_0^{+\infty} \tau^{-s}\sum_{\ell=0}^{+\infty}\vert \hat{\zeta}^\ell_\tau\vert^2 d\tau =\int_0^{+\infty} \tau^{-s}\Vert \zeta_\tau \Vert^2_{L^{2}(\mathbb{S}^{2})} d\tau\leq \frac{1}{16c_1} \mathbf{K}^s(g).$$
    Using the inequality
    $a^2\leq 2b^2+2(a-b)^2$
    we have
    \begin{align*}
        e^{-2\lambda_\ell t}(\hat{h}^\ell_0)^2 &\leq 2(\hat{h}^\ell_t)^2 + 2\left\vert \hat{h}^\ell_t-e^{-\lambda_\ell t}\hat{h}^\ell_0\right\vert^2\\
        &\leq 2(\hat{h}^\ell_t)^2+ \frac{2}{s+1} t^{s+1} Z_\ell.
    \end{align*}
    For any $\varepsilon>0$, we multiply by
    $t^{-s}e^{-\varepsilon \lambda_\ell t} \lambda_{\ell}^2$, sum over $\ell$ and integrate over $t\in(0,+\infty)$ to get:
    \begin{align}
    \label{eq:proofkeyineq}
        \sum_{\ell=0}^{+\infty} \int_0^{+\infty}t^{-s}\lambda_\ell^2 &e^{-(2+\varepsilon)\lambda_\ell t}(\hat{h}^\ell_0)^2dt \\
        &
        \leq 2\int_0^{+\infty}t^{-s}\sum_{\ell=0}^{+\infty} e^{-\varepsilon \lambda_\ell t} \lambda_{\ell}^2(\hat{h}^\ell_t)^2dt
        +\frac{2}{s+1}\int_0^{+\infty} t\sum_{\ell=0}^{+\infty} e^{-\varepsilon \lambda_\ell t} \lambda_{\ell}^2 Z_\ell dt.\nonumber
    \end{align}
    We first show that the right-hand side is controlled by $\mathbf{K}^s(g)$. For the first term, we can drop the $e^{-\varepsilon \lambda_\ell t}\leq 1$ and we recognize the $\dot{H}^{2}$ seminorm of $h_t$:
    $$\int_0^{+\infty}t^{-s}\sum_{\ell=0}^{+\infty} e^{-\varepsilon \lambda_\ell t} \lambda_{\ell}^2(\hat{h}^\ell_t)^2dt\leq \int_0^{+\infty} t^{-s}\Vert h_t\Vert^2_{\dot{H}^{2}(\mathbb{S}^{2})}dt \leq  \mathbf{K}^s(g).$$
    For the second term, remark that the change of variables $\tau=\lambda_\ell t$ absorbs the squared eigenvalue $ \lambda_{\ell}^2$:
    \begin{align*}
        \int_0^{+\infty} t\sum_{\ell=0}^{+\infty} e^{-\varepsilon \lambda_\ell t} \lambda_{\ell}^2 Z_\ell dt = \sum_{\ell=0}^{+\infty} Z_\ell \int_0^{+\infty} \tau e^{-\varepsilon \tau}d\tau\leq \frac{1}{16\varepsilon^2 c_1} \mathbf{K}^s(g).
    \end{align*}
    To conclude, it only remains to compute the left-hand side of \eqref{eq:proofkeyineq}:
    \begin{align*}
        \sum_{\ell=0}^{+\infty} \int_0^{+\infty}t^{-s}\lambda_\ell^2e^{-(2+\varepsilon)\lambda_\ell t}(\hat{h}^\ell_0)^2dt &=\sum_{\ell=0}^{+\infty} \lambda_\ell^{1+s}(\hat{h}^\ell_0)^2\int_0^{+\infty}(\lambda_\ell t)^{-s}e^{-(2+\varepsilon)\lambda_\ell t}d(\lambda_\ell dt)\\
        &=\sum_{\ell=0}^{+\infty} \lambda_\ell^{1+s}(\hat{h}^\ell_0)^2\int_0^{+\infty}\tau^{-s}e^{-(2+\varepsilon)\tau }d\tau\\
        &=\Vert h_0\Vert^2_{\dot{H}^{1+s}(\mathbb{S}^{2})} (2+\varepsilon)^{s-1}\Gamma(1-s).
    \end{align*}
    Putting everything together, we end up with
    $$\Vert h_0\Vert^2_{\dot{H}^{1+s}(\mathbb{S}^{2})} \leq \frac{2(2+\varepsilon)^{1-s}}{\Gamma(1-s)}\left[1+\frac{1}{16 \varepsilon^2 c_1(s+1)}\right]\mathbf{K}^s(g)$$
    and the proof is over since $h_0 = \sqrt{g}$. Choosing $\varepsilon = c_1^{-1/2}$ yields the claimed scaling.
\end{proof}
Theorem~\ref{thm:key_ineq} is proven by combining Lemma~\ref{lem:key_ineq_part1} and Proposition~\ref{prop:key_ineq_part2}, which concludes this section.

\subsection{A non-linear Sobolev-type inequality}
\label{ssec:secondineq}
The goal of this section is to prove a fractional version of \eqref{eq:logcontrolsfisher4}. It is easy to guess what we should replace the functional $\mathcal{J}$ with. Inequality \eqref{eq:logcontrolsfisher4} is a control of $\mathcal{J}(g)$ by the $\dot{H}^{2}$ seminorm of $\sqrt{g}$, whereas it holds that $\mathcal{I}(g)=4\Vert \sqrt{g} \Vert_{\dot{H}^1(\mathbb{S}^2)}^2$. We hence introduce, for any $s\in(0,1)$ for any non-negative smooth function $g$ on the sphere,
\begin{equation}
    \label{def:mathbfJ}
    \mathbf{J}^s(g):= \int_{\mathbb{S}^2}g(\sigma) \vert \nabla_\sigma \log g(\sigma) \vert^{2(1+s)} d\sigma
\end{equation}
which interpolates between $\mathcal{I}$ (when $s=0$) and $\mathcal{J}$ (when $s=1$). What we prove in this section is the following inequality:
\begin{prop}
\label{prop:sqrtrootineq}
    For any $s\in(0,1)$, there exists a  constant $C_s>0$ such that, for all non-negative, smooth functions $g$ on $\mathbb{S}^2$,
\begin{equation}
    \label{eq:h1+stoj}
    \Vert \sqrt{g} \Vert_{\dot{H}^{1+s}(\mathbb{S}^2)}^2 + \Vert g \Vert_{L^{1}(\mathbb{S}^2)} \geq C_s \mathbf{J}^s(g).
\end{equation}
\end{prop}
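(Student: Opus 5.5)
We work with $h:=\sqrt{g}$. Since $\nabla_\sigma \log g = 2\nabla_\sigma h/h$, we have
$$\mathbf{J}^s(g)=2^{2+2s}\int_{\mathbb{S}^2}\frac{\vert\nabla_\sigma h\vert^{2+2s}}{h^{2s}}\,d\sigma ,\qquad \Vert g\Vert_{L^1(\mathbb{S}^2)}=\Vert h\Vert^2_{L^2(\mathbb{S}^2)} .$$
So \eqref{eq:h1+stoj} reduces to a Glaeser/Gagliardo–Nirenberg-type inequality for non-negative functions: $\int \vert\nabla h\vert^{2+2s}h^{-2s}\lesssim \Vert h\Vert^2_{\dot H^{1+s}}+\Vert h\Vert_{L^2}^2$. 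Hölder interpolation between $\mathcal{I}$ and $\mathcal{J}$ does not help, since it would require $\dot H^2$ control. Instead, I plan a pointwise argument using the Gagliardo form of $\dot H^{1+s}$ from Lemma~\ref{lem:eqsobnorms}. Define the local quantity
$$D_s(\sigma):=\int_{\mathbb{S}^2}\left\vert\nabla_\sigma h(\sigma')-\nabla_\sigma h(\sigma)\right\vert^2_{\sigma',\sigma}\chi_s(\sigma\cdot\sigma')\,d\sigma' ,$$
so that $\int D_s = 2\Vert h\Vert^2_{\dot H^{1+s}}$. The core is a pointwise lemma: for every $\sigma$,
$$\vert\nabla_\sigma h(\sigma)\vert^{1+s}\le C_s\, h(\sigma)^s\left(D_s(\sigma)^{1/2}+\Vert h\Vert_{L^2}\right)\quad\text{or}\quad \vert\nabla_\sigma h(\sigma)\vert\le C\left(h(\sigma)+D_s(\sigma)^{1/2}+\Vert h\Vert_{L^2}\right).$$
Given this lemma, raise to the appropriate power and integrate. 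In the first alternative the integrand $\vert\nabla h\vert^{2+2s}h^{-2s}$ is bounded by $C(D_s+\Vert h\Vert^2_{L^2})$. In the second (degenerate) regime we instead obtain a bound by $h^2+D_s+\Vert h\Vert_{L^2}^2$, using $\vert\nabla h\vert^{2s}h^{-2s}\lesssim 1$ there. In either case the integral is at most $C(\Vert h\Vert^2_{\dot H^{1+s}}+\Vert h\Vert^2_{L^2})$, as required.

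\emph{Proof of the pointwise lemma.} Fix $\sigma$ with $\nabla h(\sigma)\neq 0$, and let $e:=-\nabla h(\sigma)/\vert\nabla h(\sigma)\vert$. For a radius $r\in(0,r_0]$, with $r_0$ a small numerical constant to avoid curvature issues, consider the cap $B_r$ of radius $r/4$ centred at the point $\sigma_r$ reached by moving a geodesic distance $r$ from $\sigma$ in direction $e$. For $\sigma'\in B_r$, integrate $\nabla h$ along the geodesic from $\sigma$ to $\sigma'$. Non-negativity of $h$ then gives
$$0\le h(\sigma')\le h(\sigma)-c\,r\vert\nabla h(\sigma)\vert+C\,r\,\mathcal{E}(\sigma,\sigma'),$$
where $\mathcal{E}$ is the deviation of $\nabla h$ along that geodesic from its parallel transport at $\sigma$, measured through the $b_k$ as in \eqref{def:difftangent}. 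Averaging over $\sigma'$ and over the geodesics (a cone of them, to turn a line average into a solid average) should give
$$\vert\nabla h(\sigma)\vert\le C\frac{h(\sigma)}{r}+C\left(\fint_{\widetilde B_{Cr}}\vert\nabla h(\sigma')-\nabla h(\sigma)\vert^2_{\sigma',\sigma}d\sigma'\right)^{1/2}.$$
Since $\chi_s(\sigma\cdot\sigma')\gtrsim r^{-2-2s}$ on $\widetilde B_{Cr}$ by \eqref{eq:fraclaplowerbound}, the average term is at most $C r^{s}D_s(\sigma)^{1/2}$. Optimizing in $r$, I take $r=(h/D_s^{1/2})^{1/(1+s)}$ when this is at most $r_0$, which gives the first alternative. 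Otherwise I take $r=r_0$, which gives the second alternative; the $\Vert h\Vert_{L^2}$ term is only a safety margin for low frequencies.

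I expect the main obstacle to be making the averaging step rigorous on the sphere. Passing from a single geodesic (a line integral of $\nabla h$) to an $L^2$ average over a solid cap requires a Poincaré/Campanato-type argument: average over a family of geodesics fanning out from $\sigma$, with the Jacobian of the exponential map controlled for $r\le r_0$. It also requires checking that the geometric comparison of tangent vectors via \eqref{def:difftangent} matches parallel transport up to an error of order $r\vert\nabla h(\sigma)\vert$; for small $r_0$ this error is absorbed into the $-c\,r\vert\nabla h(\sigma)\vert$ term. My first attempt will be to work in a chart (normal coordinates) around $\sigma$ where $b_k\cdot\nabla$ are close to coordinate derivatives, and run the Euclidean version of the argument there.
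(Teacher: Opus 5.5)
Your overall mechanism is the paper's: non-negativity of $h$, a first-order expansion along geodesics with remainder measured by $\vert\nabla_\sigma h(\sigma')-\nabla_\sigma h(\sigma)\vert_{\sigma',\sigma}$, and optimisation of the scale $r$ between $h(\sigma)/r$ and $r^sD_s(\sigma)^{1/2}$. The pointwise bound $\vert\nabla h\vert^{2+2s}h^{-2s}\lesssim h^2+D_s$ that you aim for is also true. The step that fails is the intermediate inequality you expect the averaging to give, with an \emph{unweighted} $L^2$ average over a solid cap. That inequality is false, so no Poincaré or Campanato argument can prove it.

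The reason is that the reference value $\nabla h(\sigma)$ is a point value, and in two dimensions finite Dirichlet energy does not control point values. In a chart, take $h=h_0+v\cdot x+M-\psi$ with the following choices:
\begin{itemize}
\item $\psi$ is a smoothing of $\min(M,\log\log(e\rho_1/\vert x\vert))\mathbf{1}_{\vert x\vert<\rho_1}$, whose Dirichlet energy is at most $2\pi$ for every $M$ and which has $\nabla\psi(0)=0$;
\item $\vert v\vert$ is of order $M/r$;
\item $h_0$ is of order $Me^{-e^{M/2}}$.
\end{itemize}
Then $h\ge0$ after a cutoff outside the cap, and $\vert\nabla h(\sigma)\vert\sim M/r$, while your right-hand side stays $O(1/r)$ uniformly in $M$.

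Equivalently, the fan of geodesics produces $\int_{\mathrm{sector}}\Psi(\sigma'')\,d\sigma''/\sin\theta''$, where $\Psi(\sigma'')=\vert\nabla_\sigma h(\sigma'')-\nabla_\sigma h(\sigma)\vert_{\sigma'',\sigma}$ and $\theta''$ is the angle between $\sigma$ and $\sigma''$. The weight $1/\theta''$ is not square-integrable near $\sigma$. The repair is to skip the solid average and apply Cauchy--Schwarz to the fan integral directly against $\chi_s$. By \eqref{eq:fraclaplowerbound}, $\int_{\mathrm{sector}}d\sigma''/(\sin^2\theta''\,\chi_s(\cos\theta''))\lesssim\int_0^r\theta^{1+2s}\,d\theta\sim r^{2+2s}$. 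This gives $\vert\nabla h(\sigma)\vert\le Ch(\sigma)/r+Cr^sD_s(\sigma)^{1/2}$ directly: the singularity of the kernel is exactly what absorbs the fan Jacobian.

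Two smaller corrections. First, the parallel-transport issue you anticipate does not arise. The geodesic from $\sigma$ in direction $\xi$ is $\tau\mapsto R_\tau\sigma$, the rotation about $e_3=\sigma\times\xi$, so $\frac{d}{d\tau}h(R_\tau\sigma)=b_3(R_\tau\sigma)\cdot\nabla_\sigma h(R_\tau\sigma)$ exactly. The bound $\vert b_3(\sigma')\cdot\nabla_\sigma h(\sigma')-b_3(\sigma)\cdot\nabla_\sigma h(\sigma)\vert\le\vert\nabla_\sigma h(\sigma')-\nabla_\sigma h(\sigma)\vert_{\sigma',\sigma}$ is then immediate from \eqref{def:difftangent}, with no normal coordinates or error absorption. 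Second, the $\Vert h\Vert_{L^2}$ term in your second alternative must go. As written, that alternative does not give $\vert\nabla h\vert\lesssim h$. Choosing $r=r_0$ in the regime $D_s^{1/2}\le r_0^{-1-s}h$ gives $\vert\nabla h\vert\le Ch$, hence the bound $Ch^2$ with no extra term. As in the paper, also reduce to $g>0$ first so that $h$ is smooth.

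With these changes your argument is a valid alternative to the paper's. The paper avoids the fan entirely. It applies the one-dimensional Lemma~\ref{lem:sqrtrootineq1d} on every great circle through $\sigma$, obtaining $\vert\xi\cdot\nabla_\sigma f(\sigma)\vert^{2(1+s)}\lesssim h(\sigma)^2+W_\xi^2$ for each unit tangent $\xi$. Only afterwards does it integrate in $\xi$, recovering $\vert\nabla_\sigma f\vert^{2(1+s)}$ on the left and, through the Jacobian $\sin\theta$ and \eqref{eq:fraclaplowerbound}, $D_s(\sigma)$ on the right. Averaging over all directions after the optimisation, on the left-hand side, lets the paper use purely one-dimensional estimates. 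You average before optimising, over directions near steepest descent only, and so you need the weighted Cauchy--Schwarz above.
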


Remark that by letting $p:=2(1+s)$ and $f:=g^{\frac{1}{p}}$, we can reformulate
\begin{align}
\label{eq:reformJ}
    \mathbf{J}^s(g) =p^p \int_{\mathbb{S}^2}f^p(\sigma) \vert \nabla_\sigma \log f(\sigma) \vert^p d\sigma=p^p \int_{\mathbb{S}^2} \vert \nabla_\sigma f(\sigma) \vert^p d\sigma,
\end{align}
which is simply a $L^p$-based Sobolev seminorm.
By letting $h=\sqrt{g}$, the inequality \eqref{eq:h1+stoj} can thus be recast as
\begin{equation}
    \label{eq:sqrtrootineqreform}
    \Vert h \Vert^2_{\dot{H}^{1+s}(\mathbb{S}^2)}+\Vert h \Vert^2_{L^{2}(\mathbb{S}^2)} \geq \tilde{C}_s \int_{\mathbb{S}^2} \vert \nabla_\sigma h^{1/(1+s)}(\sigma) \vert^{2(1+s)} d\sigma 
\end{equation}
for all non-negative smooth functions $h$ on $\mathbb{S}^2$, with $\tilde{C}_s=C_s p^p$.
This is the formulation that we will prove. It is akin to a non-linear version of a Sobolev embedding: the $L^2$-integrability of the $(1+s)$-th derivatives of $h$ implies higher integrability on the first derivatives of $h^{1/(1+s)}$ (while the linear version would be higher integrability on the derivatives of $h$). In this form, this result is a direct adaptation to the sphere of the results from \cite{LionsVillani1995} in the whole space.

\begin{remark}
    The presence of the $L^1$-norm of $g$ in the left-hand side of \eqref{eq:h1+stoj} is not completely natural as it does not appear in the whole space \cite{LionsVillani1995}, nor in  \eqref{eq:logcontrolsfisher4}. It may be possible to get rid of it by exploiting the fact that both $\mathbf{J}^s$ and the $\dot{H}^{1+s}$ seminorm vanish on constants, but we did not investigate this further.
\end{remark}

We first focus on a one-dimensional lemma as done in \cite{LionsVillani1995} :
\begin{lemma}
    \label{lem:sqrtrootineq1d}
    For any $s\in(0,1)$, any smooth, positive $\varphi$ on $\mathbb{R}$,
    $$\vert (\varphi^{1/(1+s)})'(0)\vert^{2(1+s)} \leq C_s((\varphi(0))^2+W^2),$$
    where
    $$W:=\left(\int_{-\pi/2}^{\pi/2} \frac{\vert \varphi'(\theta)-\varphi'(0) \vert^2}{\vert \theta\vert^{1+2s}}d\theta\right)^\frac{1}{2}$$
    and $C_s>0$.
\end{lemma}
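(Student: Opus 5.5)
Write $a:=\varphi(0)>0$ and $d:=\varphi'(0)$. Since $(\varphi^{1/(1+s)})'(0)=\frac{1}{1+s}a^{-s/(1+s)}d$, the left-hand side equals $(1+s)^{-2(1+s)}\,|d|^{2(1+s)}a^{-2s}$. The claim is therefore equivalent to
\[
|d|^{2(1+s)}\leq C_s\, a^{2s}(a^2+W^2).
\]
I will prove this using only positivity of $\varphi$ on $[-\pi/2,\pi/2]$, a first-order Taylor expansion, and a weighted Cauchy--Schwarz bound on the remainder. This is the one-dimensional mechanism of \cite{LionsVillani1995}: a large first derivative at $0$ relative to $\varphi(0)$ would force $\varphi$ to become negative nearby, unless $\varphi'$ oscillates, which is measured by $W$.

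\textbf{Step 1 (remainder bound).} For $|\theta|\leq\pi/2$, write $\varphi(\theta)=a+d\theta+\int_0^\theta(\varphi'(u)-d)\,du$. Cauchy--Schwarz with weights $|u|^{-(1+2s)}$ and $|u|^{1+2s}$ gives
\[
\Big|\int_0^\theta(\varphi'(u)-d)\,du\Big|\leq W\Big(\int_0^{|\theta|}u^{1+2s}du\Big)^{1/2}=\frac{W\,|\theta|^{1+s}}{\sqrt{2+2s}}.
\]

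\textbf{Step 2 (positivity).} Assume $d\neq0$ and choose $\theta=-t\,\mathrm{sign}(d)$ with $t\in(0,\pi/2]$, so that $d\theta=-|d|t$. Since $\varphi(\theta)>0$, Step 1 yields
\[
|d|\,t\leq a+c_sWt^{1+s}\quad\text{for all } t\in(0,\pi/2],\qquad c_s:=(2+2s)^{-1/2}.
\]

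\textbf{Step 3 (optimize $t$ by cases).} The balancing choice is $t_*=(a/W)^{1/(1+s)}$ (read as $t_*=+\infty$ if $W=0$).

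If $t_*\leq\pi/2$, take $t=t_*$. Then $|d|\leq(1+c_s)a/t_*=(1+c_s)a^{s/(1+s)}W^{1/(1+s)}$. Raising to the power $2(1+s)$ gives $|d|^{2(1+s)}\leq C a^{2s}W^2$.

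If $t_*>\pi/2$, then $W<(2/\pi)^{1+s}a$. Take $t=\pi/2$ to get $|d|\leq\frac{2}{\pi}(a+c_sW(\pi/2)^{1+s})\leq Ca$. Hence $|d|^{2(1+s)}\leq Ca^{2+2s}=Ca^{2s}a^2$.

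In both cases the required inequality holds, which proves the lemma.

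There is no real obstacle here. The only point needing care is the choice of sign of $\theta$ so that the linear term $d\theta$ pushes $\varphi$ toward negative values, together with the restriction $t\leq\pi/2$ imposed by the domain of integration in $W$. That restriction is exactly why the extra $\varphi(0)^2$ term appears in the statement, and is handled by the second case.
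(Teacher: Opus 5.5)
Your proposal is correct and follows essentially the same route as the paper. Both arguments use a first-order Taylor expansion with the sign of $\tau$ chosen so that $\tau\varphi'(0)$ pushes $\varphi$ negative, then Cauchy--Schwarz on the remainder against the weight $|\theta|^{1+2s}$, then the choice $|\tau|=(\varphi(0)/W)^{1/(1+s)}$ with a case split on whether it exceeds $\pi/2$. The differences are only cosmetic: you reduce the claim at the outset to $|d|^{2(1+s)}\le C_s a^{2s}(a^2+W^2)$, and you take $t=\pi/2$ rather than $\tau=1$ in the degenerate case.
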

\begin{remark}
The bounds $\pm \pi/2$ in the integral defining $W$ are of course arbitrary. By applying this Lemma to $\varphi(t+\cdot)$ and integrating in $t$, we would get the equivalent of Proposition~\ref{prop:sqrtrootineq} on $\mathbb{R}$ rather than $\mathbb{S}^2$.
\end{remark}
\begin{proof}

For any $\tau\in\mathbb{R}$, 
$$ \varphi(0) + \tau \varphi'(0) + \int_0^\tau \left(\varphi'(\theta)-\varphi'(0)\right)d\theta=  \varphi(\tau) \geq 0.$$
By choosing the sign of $\tau$ according to the sign of $\varphi'(0)$, we can ensure that
$$\vert \varphi'(0)\vert \leq \frac{\varphi(0)}{\vert \tau\vert} + \int_{-\vert \tau\vert}^{\vert \tau\vert} \frac{\vert \varphi'(\theta)-\varphi'(0) \vert}{\vert \tau\vert}d\theta.$$
Using the Cauchy-Schwarz inequality,
\begin{align*}
    \int_{-\vert \tau\vert}^{\vert \tau\vert} \frac{\vert \varphi'(\theta)-\varphi'(0) \vert}{\vert \tau\vert}d\theta &\leq \frac{1}{\vert \tau\vert}\left(\int_{-\vert \tau\vert}^{\vert \tau\vert} \vert \theta\vert^{1+2s}d\theta\right)^\frac{1}{2} \left(\int_{-\vert \tau\vert}^{\vert \tau\vert} \frac{\vert \varphi'(\theta)-\varphi'(0) \vert^2}{\vert \theta\vert^{1+2s}}d\theta\right)^\frac{1}{2}\\
    &\leq C_s \vert \tau\vert^{s}\left(\int_{-\vert \tau\vert}^{\vert \tau\vert} \frac{\vert \varphi'(\theta)-\varphi'(0) \vert^2}{\vert \theta\vert^{1+2s}}d\theta\right)^\frac{1}{2}\\
\end{align*}
so that for any $\tau \in [-\pi/2,\pi/2]$,
\begin{align}
\label{eq:prooflemsqrt1}
    \vert \varphi'(0)\vert \leq \frac{\varphi(0)}{\vert \tau\vert} + C_s \vert \tau\vert^s W.
\end{align}
We want to optimize in $\tau$, if it does not require picking it too large: if
$$(\varphi(0))^{\frac{1}{1+s}}W^{-\frac{1}{1+s}}\geq \pi/2,$$
then $W \leq (\pi/2)^{-(1+s)} \varphi(0)$ so that by picking any $\tau$, for instance $\tau=1$, we get
$$\vert \varphi'(0)\vert \leq C_s \varphi(0),$$
up to a change of $C_s$. This leads to
$$\vert (\varphi^\frac{1}{1+s})'(0)\vert = \frac{1}{1+s}\vert \varphi'(0)\vert (\varphi(0))^{\frac{1}{1+s}-1}\leq C_s (\varphi(0))^\frac{1}{1+s}.$$
Otherwise,
$$(\varphi(0))^{\frac{1}{1+s}}W^{-\frac{1}{1+s}}< \pi/2,$$
so we can pick $\vert \tau\vert=(\varphi(0))^{\frac{1}{1+s}}W^{-\frac{1}{1+s}}$ in \eqref{eq:prooflemsqrt1} and claim that
$$\vert \varphi'(0)\vert \leq C_s (\varphi(0))^{\frac{s}{1+s}} W^\frac{1}{1+s}.$$
We then get
$$\vert (\varphi^\frac{1}{1+s})'(0)\vert \leq C_s W^\frac{1}{1+s}.$$
Combining both cases, we obtain
$$\vert (\varphi^\frac{1}{1+s})'(0)\vert^{2(1+s)} \leq C_s((\varphi(0))^2+W^2)$$
as desired.
\end{proof}

We can now give the proof of the main proposition.
\begin{proof}[Proof of Proposition~\ref{prop:sqrtrootineq}]
By an approximation argument we can suppose $g>0$. We let $h=\sqrt{g}$ and $f=g^{\frac{1}{2(1+s)}}=h^{\frac{1}{1+s}}$. Our goal is to prove \eqref{eq:sqrtrootineqreform}.

Let us fix a point $\sigma\in{\mathbb{S}^2}$ and a tangent direction $\xi\in \sigma^\perp$ with $\vert \xi \vert=1$. We can complete these two unit vectors to form a positively-oriented orthogonal basis of $\mathbb{R}^3$, say $(\sigma\! =\! e_1,\ \xi\! =\! e_2,\ e_3)$. For any $\tau\in \mathbb{R}$, we consider $R_\tau$ the rotation of angle $\tau$ around the axis $\mathbb{R} e_3$ (for instance, $R_{\pi/2}$ sends $\sigma$ to $\xi$). We define the $2\pi$-periodic function $\varphi(\tau):=h(R_\tau \sigma)$. With our choice of basis, we have $\frac{d}{d\tau}R_\tau \sigma = e_3 \times (R_\tau \sigma) = b_3(R_\tau \sigma)$, so that
\begin{equation}
\label{eq:proofsqrtineq1}
    \varphi'(\tau)= b_3(R_\tau \sigma) \cdot \nabla_\sigma h (R_\tau \sigma).
\end{equation}
We are going to apply Lemma~\ref{lem:sqrtrootineq1d} to $\varphi$. First, we remark that the definition \eqref{def:difftangent} of $\vert x-y\vert^2_{\sigma',\sigma}$, we have
$$\vert \varphi'(\theta)-\varphi'(0)\vert^2 =\vert b_3(R_\theta \sigma) \cdot \nabla_\sigma h (R_\theta \sigma)-b_3(\sigma) \cdot \nabla_\sigma h (\sigma) \vert^2 \leq \vert  \nabla_\sigma h (R_\theta \sigma)- \nabla_\sigma h (\sigma) \vert^2_{R_\theta \sigma,\sigma}.$$
Using this bound and since $b_3(\sigma)=\xi$, Lemma~\ref{lem:sqrtrootineq1d} gives in terms of $f$ and $h$:
$$  \left( (h(\sigma))^2 +\int_{-\pi/2}^{\pi/2} \frac{\vert  \nabla_\sigma h (R_\theta \sigma)- \nabla_\sigma h (\sigma) \vert^2_{R_\theta \sigma,\sigma}}{\vert \theta\vert^{1+2s}}d\theta \right) \geq C_s \vert \xi \cdot \nabla_\sigma f(\sigma) \vert^{2(1+s)}.$$
We integrate both sides over $\xi \in \sigma^\perp \cap \mathbb{S}^2$ and over $\sigma\in \mathbb{S}^2$ and it will yield the desired inequality \eqref{eq:sqrtrootineqreform}.

On the right-hand side, 
$$\int_{\sigma^\perp \cap \mathbb{S}^2} \vert \xi \cdot \nabla_\sigma f(\sigma) \vert^{2(1+s)}d\xi= C_s \vert \nabla_\sigma f(\sigma) \vert^{2(1+s)}$$
so that the integration in $\sigma$ yields
$$\int_{\mathbb{S}^2} \int_{\sigma^\perp \cap \mathbb{S}^2} \vert \xi \cdot \nabla_\sigma f(\sigma) \vert^{2(1+s)}d\xi d\sigma= C_s \mathbf{J}^s(g)$$
by \eqref{eq:reformJ}. This is the right-hand side of \eqref{eq:sqrtrootineqreform}.

On the left-hand side, integrating the $(h(\sigma))^2$ term yields the squared $L^2$-norm of $h$ that appears on the left-hand side of \eqref{eq:sqrtrootineqreform}. For the second term, we need to keep in mind that the rotation $R_\theta$ depends on $\xi$ (and $\sigma$). We perform the change of variables $$(\xi, \theta)\mapsto \sigma'= (\cos\theta)\sigma + (\sin\theta)\xi = R_\theta \sigma$$
which maps $(\sigma^\perp \cap \mathbb{S}^2) \times (0,\pi/2) $ to the half-sphere $\{ \sigma'\in\mathbb{S}^2 \vert \sigma \cdot \sigma' >0 \}$, and that has Jacobian
$ \sin(\theta) d\theta d\xi = d\sigma'$. We hence obtain
\begin{align*}
    \int_{\sigma^\perp \cap \mathbb{S}^2}  \int_{-\pi/2}^{\pi/2} \frac{\vert  \nabla_\sigma h (R_\theta \sigma)- \nabla_\sigma h (\sigma) \vert^2_{R_\theta \sigma,\sigma}}{\vert \theta\vert^{1+2s}}d\theta d\xi = 2\int_{ \{ \sigma'\in\mathbb{S}^2 \vert \sigma \cdot \sigma' >0 \}}\! \! \! \! \frac{\vert  \nabla_\sigma h (\sigma')- \nabla_\sigma h (\sigma) \vert^2_{\sigma',\sigma}}{\vert \theta\vert^{1+2s} \sin(\theta)}d\sigma'.
\end{align*}
The factor $2$ comes from the fact that the integral over non-positive $\theta$ is the same as the integral over positive $\theta$ but for $-\xi$ instead of $\xi$, so every point is actually counted twice.
We now recall the lower bound \eqref{eq:fraclaplowerbound} on the kernel $\chi_s$ of the fractional Laplacian, which, since $\cos (\theta)=\sigma\cdot \sigma'$ yields
$$\int_{ \{ \sigma'\in\mathbb{S}^2 \vert \sigma \cdot \sigma' >0 \}}\! \! \! \! \frac{\vert  \nabla_\sigma h (\sigma')- \nabla_\sigma h (\sigma) \vert^2_{\sigma',\sigma}}{\vert \theta\vert^{1+2s} \sin(\theta)}d\sigma' \leq \tilde{c}^{-1}_s \int_{ \mathbb{S}^2}\vert  \nabla_\sigma h (\sigma')- \nabla_\sigma h (\sigma) \vert^2_{\sigma',\sigma} \chi_s (\sigma \cdot \sigma') d\sigma'.$$
By integrating over $\sigma$ we recover the squared $\dot{H}^{1+s}$ seminorm of $h$ by Lemma~\ref{lem:eqsobnorms}. Hence we have obtained \eqref{eq:sqrtrootineqreform}.
\end{proof}

\section{Superadditivity and infinite-dimension affinity of functionals}
\label{sec:superadditivity}
\subsection{Boltzmann's dissipation of the Fisher information}
\label{ssec:boltzmanns dissipation of fisher information}
The goal of this section is to show that the regularity given by the dissipation of the Fisher information of the particle system passes to the limit $N\rightarrow \infty$. As explained in the introduction, to transfer properties from the finite $N$ particle system to the limit $N\rightarrow \infty$, we need to consider functionals that behave well with respect to marginals, conditions which essentially boil down to three key properties: \textit{lower semicontinuity} (with respect to the weak convergence of probabilities), \textit{superadditivity} and \textit{infinite-dimension affinity}. In the setting of the Landau equation, the Fisher dissipation is a second-order (in terms of derivatives) Fisher information, and a convenient framework to study the three above properties for general second order Fisher informations was developed in \cite{Tabary2026a}. The broad picture is as follows: let $\partial$ be some one-dimensional derivation operator (think of $\partial=b\cdot \nabla$ for some vector field $b$), the lower semicontinuity and superadditivity of the functional  $g\mapsto \int \vert \partial^2 \sqrt{g} \vert^2 $ is not clear, but  one can show that $\int \vert \partial^2 \sqrt{g} \vert^2 \approx \int \vert \partial^2 g \vert^2 /g$ (with explicit numerical constants), and $g \mapsto  \int \vert \partial^2 g \vert^2 /g$ is convex, lower semicontinuous and superadditive. The infinite-dimension affinity is a little more involved but $g \mapsto  \int \vert \partial^2 g \vert^2 /g$ is also \textit{approximately }affine in infinite dimension.

In the setting of the Boltzmann equation, the Fisher dissipation is of order $1+s$ rather than $2$, so we cannot expect to directly apply the results of \cite{Tabary2026a}. It is not clear what the natural formula for a Fisher information of order $1+s$ should be, however the functional $\mathbf{K}^s$ defined in \eqref{def:mathbfK} is a promising candidate: First, it is of order $1+s$. Second, it is a superposition along the heat flow of the functional $g\mapsto \Vert \sqrt{g}\Vert^2_{\dot{H}^{2}} \approx \int \vert \partial^2 \sqrt{g} \vert^2 \approx  \int \vert \partial^2 g \vert^2 /g$, and we can reasonably hope that a superposition of (approximately) lower semicontinuous, superadditive, and infinite-dimension-affine functionals will conserve these three properties.

We need some of our results (mostly lower semicontinuity) to apply to the regularized versions of Kac's particle system, so we allow more general weights than $t^{-s}$ in the definition of $\mathbf{K}^s$. We hence define, for $\omega : \mathbb{R}_+ \rightarrow \mathbb{R}_+$ a measurable weight function,
\begin{equation}
    \label{def:mathbfKomega}
    \mathbf{K}^\omega(g):=\int_0^{+\infty}  \Vert \sqrt{g_t} \Vert^2_{\dot{H}^{2}(\mathbb{S}^{2})}\omega(t) dt,
\end{equation}
for any non-negative $g$ on $\mathbb{S}^2$, where, as before, $g_t$ follows the heat flow on the sphere. Remark that the regularization entailed by the flow ensures that we can actually evaluate the above expression in $[0,+\infty]$ even if $g$ is merely a measure on $\mathbb{S}^2$.

Since $\mathbf{K}^\omega$ is defined on the sphere we need an integrated version on the full space, for any number $N$ of particles, and taking into account the interaction potential $\alpha$. To do so, we simply integrate over the remaining coordinates. We hence define, for any $s\in(0,1)$, any non-negative function $\beta:\mathbb{R}_+\rightarrow\mathbb{R}_+$, any $N\geq 2$ and any probability density $F$ on $\mathbb{R}^{3N}$:
\begin{equation}
    \label{def:mathbbK}
    \mathbb{K}_\beta^\omega (F):= \int_{\mathbb{R}^3 \times \mathbb{R}_+ \times\mathbb{R}^{3(N-2)}} \beta(r) \mathbf{K}^\omega(F) 8r^2 dz dr dv_3... dv_N,
\end{equation}
where we have performed the change of variables
$(v_1,v_2)\mapsto (z,r,\sigma)$,
seen $F$ as a probability density in the variables $(z,r,\sigma,v_3,...v_N)$
and let $$\mathbf{K}^\omega(F)=\mathbf{K}^\omega(F(z,r,\cdot,v_3,...v_N)).$$
The disintegration theorem (see for instance \cite{Dudley2002}) ensures that for almost all $z,r,v_3,...,v_N$, $F(z,r,\cdot,v_3,...,v_N))$ is a meaningful measure on $\mathbb{S}^2$, and $\mathbf{K}^\omega(F)$ is well-defined at such points.
\begin{remark}
\label{rem:defKonmeasures}
 When $F$ is a measure that does not admit a density with respect to the Lebesgue measure, we could still define $\mathbb{K}_\beta^\omega$ using the disintegration theorem to write
 $$F(dz,dr,d\sigma,dv_3,...,dv_N) = 
 g_{z,r,v_3,...,v_N}(d\sigma) h(dz, dr, dv_3,...,dv_N),$$
 where $h$ is the pushforward of $F$ by the projection forgetting the $\sigma$ variable, and $g_{z,r,v_3,...,v_N}$ is a probability on $\mathbb{S}^2$ ($h-$a.e. defined). We could then define
 $$\mathbb{K}_\beta^\omega(F)= \int_{\mathbb{R}^3 \times \mathbb{R}_+ \times\mathbb{R}^{3(N-2)}} \beta(r) \mathbf{K}^\omega(g_{z,r,v_3,...,v_N}) h(dz, dr,...,dv_N).$$
 However this cumbersome definition is useless in practice because for the propagation of chaos we will always be working with probabilities admitting a density (even stronger, their Fisher information will always be finite). 
\end{remark}
The goal of this section is to prove the following three properties of $\mathbb{K}_\beta^\omega$:

\begin{prop}
    \label{prop:propsofmathbbK}
    Consider any non-negative measurable $\omega$ and suppose that $\beta:\mathbb{R}\rightarrow \mathbb{R}$ is non-negative and lower semicontinuous. Then, for any $s\in(0,1)$ and any $N\geq 2$:
    \begin{enumerate}
        \item $\mathbb{K}_\beta^\omega$ is approximately lower semicontinuous for the weak convergence of probability measures: if the probability densities $(F^n)_n$ and $F$ satisfy $F^n \rightharpoonup F$ in $\mathcal{P}(\mathbb{R}^{3N})$, then
        $$\mathbb{K}_\beta^\omega (F) \leq 48\liminf_{n\rightarrow +\infty} \mathbb{K}_\beta^\omega (F^n).$$
        \item $\mathbb{K}_\beta^\omega$ is approximately superadditive: for any $2 \leq j \leq N$, any probability density $F$ on $\mathbb{R}^{3N}$,
        $$\mathbb{K}_\beta^\omega (F^{:j}) \leq  48\mathbb{K}_\beta^\omega (F)$$
        where
        $$F^{:j} = \int_{\mathbb{R}^{3(N-j)}} F dv_{j+1}...dv_N.$$
        \item $\mathbb{K}_\beta^\omega$ is approximately affine in infinite dimension:  Let $\nu\in\mathcal{P}(\mathcal{P}(\mathbb{R}^{3}))$ and consider the sequence of marginals $(\nu^j)_{j\geq 1}$ with
    $$\nu^j:=\int_{\mathcal{P}(\mathbb{R}^3)} \rho^{\otimes j} \nu(d\rho) \in \mathcal{P}(\mathbb{R}^{3j}).$$
    Suppose $\nu$ has finite mean Fisher information and finite mean second moment, i.e.
    \begin{align*}
        \sup_{j\geq 1} I(\nu^j)=\sup_{j\geq 1} \frac{1}{j}\int_{\mathbb{R}^{3j}}  \vert \nabla \log \nu_j \vert^2\nu_j < +\infty, &&\int_{\mathbb{R}^3} \vert v \vert^2 \nu^1(dv) < +\infty.
    \end{align*}
    Then it holds that
    \begin{align*}
         \int_{\mathcal{P}(\mathbb{R}^3)} \mathbb{K}_\beta^\omega(\rho \otimes \rho) \nu(d\rho) &\leq 768 \liminf_{j\rightarrow \infty} \mathbb{K}_\beta^\omega(\nu^j).
    \end{align*}

    \end{enumerate}
\end{prop}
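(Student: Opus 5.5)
The plan is to replace $\Vert \sqrt{g_t}\Vert^2_{\dot{H}^2}$ by an equivalent functional that is jointly convex in $g$ and so behaves well under weak limits and marginals. Then we integrate this equivalence against $\omega(t)dt$ and the outer variables.

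\textbf{Pointwise comparison.} For $g\ge 0$ on $\mathbb{S}^2$, set
$$\mathcal{L}(g):=\sum_{k,l=1}^3\int_{\mathbb{S}^2}\frac{\vert b_k\cdot\nabla_\sigma(b_l\cdot\nabla_\sigma g)\vert^2}{g}\,d\sigma.$$
Each integrand has the form $(\text{linear in }g)^2/g$, so $\mathcal{L}$ is convex, lower semicontinuous and $1$-homogeneous. I would show, with explicit constants, that
$$c\,\mathcal{L}(g)\le \Vert\sqrt g\Vert_{\dot H^2}^2\le C\,\mathcal{L}(g),\qquad C/c\le 48.$$
This is done by expanding $b_kb_l g=2\sqrt g\, b_kb_l\sqrt g+2 (b_k\sqrt g)(b_l\sqrt g)$, writing $\Vert\sqrt g\Vert_{\dot H^2}^2=\sum_{k,l}\Vert b_kb_l\sqrt g\Vert_{L^2}^2$ via \eqref{eq:Lap_bkbk}, and controlling the quartic term $\int \vert\nabla\sqrt g\vert^4/g$ by both sides. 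This is exactly the \cite{Tabary2026a}/McKean integration by parts underlying Proposition~\ref{prop:ineq_heat_flow} and \eqref{eq:logcontrolsfisher4}. I then define $\tilde{\mathbb{K}}$ as $\mathbb{K}^\omega_\beta$ with $\Vert\sqrt{g_t}\Vert^2_{\dot H^2}$ replaced by $\mathcal{L}(g_t)$. By the comparison, $\mathbb{K}$ and $\tilde{\mathbb{K}}$ agree up to factors whose product is the constant $48$, so it is enough to prove exact lower semicontinuity and superadditivity for $\tilde{\mathbb{K}}$.

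\textbf{Lower semicontinuity and superadditivity.} Heat flow in $\sigma$ commutes with the change of variables. On the full density it acts as $F\mapsto F_t$, a Markov convolution in $\sigma$ that is weakly continuous, and it commutes with integrating out $v_3,\dots,v_N$. Since $\mathcal{L}$ is $1$-homogeneous and convex, the map $F\mapsto\int\beta(r)\mathcal{L}(F_t(z,r,\cdot,\dots))\,8r^2\,dz\,dr\,dv$ is a jointly convex integral functional. It has a dual representation as
$$\sup_\psi \int \Big(2\sum_{k,l}\psi_{kl}\,b_kb_lF_t-\vert\psi\vert^2F_t\Big)\beta,$$
with smooth compactly supported test tensors $\psi$, where lower semicontinuity of $\beta$ is handled by approximating $\beta$ from below by continuous functions. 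This makes it weakly lower semicontinuous, and Fatou in $t$ gives item 1. Superadditivity follows from the same dual representation, or from Jensen: the functional of the marginal is at most the integral over $v_{j+1},\dots,v_N$, since $(x,y)\mapsto x^2/y$ is jointly convex and $1$-homogeneous. This gives item 2.

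\textbf{Infinite-dimensional affinity.} For $\nu^j$ the conditional law on the pair $(v_1,v_2)$ given $v_3,\dots,v_j$ is a mixture $\int\rho\otimes\rho\,\pi_j(d\rho\mid v_3,\dots,v_j)$. Following \cite{HaurayMischler2012,Tabary2026a}, as $j\to\infty$ the posterior $\pi_j$ concentrates on $\delta_{\rho}$ with $\rho$ distributed according to $\nu$, by de Finetti and Hewitt–Savage. The finite Fisher information and second moment give the tightness and density needed to make this rigorous. By superadditivity $\tilde{\mathbb{K}}(\nu^j)$ dominates the functional evaluated on these conditional pair laws, averaged over $v_3,\dots,v_j$. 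Lower semicontinuity along the concentrating posteriors, used in the dual formulation, then gives the limit $\int\tilde{\mathbb{K}}(\rho\otimes\rho)\nu(d\rho)$. Passing to and from $\mathbb{K}$ costs a further constant, which gives $768=16\cdot 48$.

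This last step is the main obstacle. It requires a lower semicontinuity in the randomized/conditional sense rather than for fixed measures, and care is needed with the $(z,r,\sigma)$ coordinates, the weight $\beta(r)r^2$, and the heat-flow smoothing, which must commute with conditioning. I would first try to prove it in the dual form: for fixed $\psi$ the inner expression is linear in $F$, so its value on $\nu^j$ can be averaged exactly over $\nu$ before taking suprema.
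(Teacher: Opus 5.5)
Your overall architecture is a legitimate alternative to the paper's. You replace $\Vert\sqrt{g_t}\Vert^2_{\dot{H}^{2}}$ by a convex, $1$-homogeneous functional, prove exact lower semicontinuity and superadditivity for it by duality and Jensen, and get affinity by conditioning on the other variables. However, two steps do not hold as written.

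\textbf{The comparison constant.} The McKean integration by parts only closes on the diagonal terms. With $h=\sqrt g$ and $\partial_k=b_k\cdot\nabla_\sigma$, the cross term $\int h^{-1}(\partial_k\partial_k h)(\partial_k h)^2$ equals $\frac13\int h^{-1}\partial_k\big((\partial_k h)^3\big)$, which integrates by parts into a multiple of the quartic term. Your $\mathcal{L}$ contains the mixed terms $k\neq l$. Its full cross term is $4\int h^{-1}\nabla_\sigma h\cdot\nabla_\sigma\vert\nabla_\sigma h\vert^2\,d\sigma$, and integrating this by parts produces $\int h^{-1}\vert\nabla_\sigma h\vert^2\Delta_\sigma h$, which has no sign. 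You are then forced to use Cauchy--Schwarz together with \eqref{eq:logcontrolsfisher4}. That does give a two-sided comparison, but nothing yields the ratio $48$; with $c_1=1/432$ the ratio is of order $10^2$. So items 1 and 2 are not proved with the stated constant.

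The fix is what the paper does: keep only $k=l$. By \cite[Lemma 9.11]{GuillenSilvestre2023} and \eqref{eq:Lap_bkbk}, $\Vert\sqrt g\Vert^2_{\dot{H}^{2}}\geq\sum_k\Vert\partial_k\partial_k\sqrt g\Vert^2_{L^2}\geq\frac13\Vert\sqrt g\Vert^2_{\dot{H}^{2}}$. The one-directional comparison of \cite[Proposition 2.1]{Tabary2026a} costs $64/4=16$, whence $48=3\cdot 16$. The diagonal functional $\sum_k\int\vert\partial_k\partial_k g\vert^2/g$ is still convex and $1$-homogeneous, so your duality and Jensen arguments for items 1--2 go through unchanged.

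\textbf{Item 3.} Conditioning is the right idea, but superadditivity is the wrong justification. It only gives $\tilde{\mathbb{K}}(\nu^j)\geq\tilde{\mathbb{K}}(\nu^2)$, and by convexity $\tilde{\mathbb{K}}(\nu^2)$ lies \emph{below} the average of $\tilde{\mathbb{K}}$ over the conditional pair laws. What you need is an identity:
\begin{itemize}
\item Write $\nu^j=m(\bar v)P_{\bar v}(v_1,v_2)$ with $\bar v=(v_3,\dots,v_j)$. The heat flow and the derivations act only on $(v_1,v_2)$ and the integrand is $1$-homogeneous, so $\tilde{\mathbb{K}}(\nu^j)=\int\tilde{\mathbb{K}}(P_{\bar v})\,m(\bar v)\,d\bar v$.
\item Realize $\nu^j$ as the law of $(V_1,\dots,V_j)$ for an exchangeable sequence directed by $\rho^*\sim\nu$. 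Apply Doob's martingale theorem to $\mathbb{E}[\phi(V_1,V_2)\vert V_3,\dots,V_j]$ for a countable convergence-determining family of $\phi$. Together with de Finetti, this gives $P_{(V_3,\dots,V_j)}\to\rho^*\otimes\rho^*$ weakly almost surely.
\item Exact lower semicontinuity and Fatou then give $\int\tilde{\mathbb{K}}(\rho\otimes\rho)\nu(d\rho)\leq\liminf_j\tilde{\mathbb{K}}(\nu^j)$ with constant $1$.
\end{itemize}
The final constant is therefore $48$, better than $768$. Your factor $16$ has no origin in your argument; it is the constant of \cite[Theorem 1.6 (3)]{Tabary2026a}, which the paper invokes.

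Your dual-form fallback runs the wrong way. Averaging a fixed $\psi$ over $\nu$ before taking the supremum yields $\tilde{\mathbb{K}}(\nu^j)\leq\int\tilde{\mathbb{K}}(\rho\otimes\rho)\nu(d\rho)$, which is the easy Jensen direction. The other direction requires $\psi$ to depend on $v_3,\dots,v_j$, which is the conditioning argument again.

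\textbf{Comparison with the paper.} Repaired this way, your route genuinely differs from the paper's. The paper uses \cite[Theorem 1.6 (3)]{Tabary2026a} as a black box. Since the $\sigma$-heat flow on one pair destroys the exchangeability of $(\nu^j)_t$, it pairs variables: it pushes $\nu$ forward by $\rho\mapsto\rho\otimes\rho$, so that $\tilde\nu^j=\nu^{2j}$, and runs the heat flow on every pair. It then applies that theorem on $\mathbb{R}^6$ to $\tilde\nu_t$; this is where the Fisher and moment hypotheses enter, and why they must be propagated along the flow. A Cauchy--Schwarz step finally compares the all-pairs flow with the one-pair flow. That route reuses existing results. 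Yours is self-contained, uses the hypotheses essentially only to guarantee densities, and gives a better constant in item 3. The price is redoing the duality and lower semicontinuity that \cite[Theorem 1.6 (1)--(2)]{Tabary2026a} provide.
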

\begin{remark}
 Following up on Remark~\ref{rem:defKonmeasures}, notice that in point (3) above the finiteness of the mean Fisher information of $\nu$ implies that $\nu$ only charges probabilities $\rho$ with $I(\rho)<+\infty$, hence admitting densities with respect to the Lebesgue measure, so $\mathbb{K}_\beta^\omega$ is always well-defined.
\end{remark}

The object of the following lemma is to explicitly compare $\mathbb{K}_\beta^\omega$ with a superposition of functionals of the type $g\mapsto \int \vert \partial^2 g \vert^2 /g $.
\begin{lemma}
    \label{lem:superadd}
    Suppose $\beta^{1/4}$ is smooth and compactly supported. Then, for any $s\in(0,1)$, for any $N\geq 2$, for any probability density $F$ on $\mathbb{R}^{3N}$,
    \begin{align*}
        \frac{4}{3} \mathbb{K}_\beta^\omega \leq \sum_{k=1}^3 \int_0^{+\infty} \int_{\mathbb{R}^{3N}} \frac{\left\vert  \partial_k (\partial_k F_t) \right\vert^2}{F_t}   dv_1... dv_N \omega(t)dt \leq 64 \mathbb{K}_\beta^\omega,
    \end{align*}
    where $\partial_k = B_k(v_1,v_2)\cdot \nabla_{v_1,v_2}$ for some smooth, bounded, divergence free vector fields $B_k$ on $\mathbb{R}^6$, and
    \begin{equation}
    \label{def:F_t}
        F_t(z,r,\sigma,v_3,..,v_N):=\int_{\mathbb{S}^2} F(z,r,\sigma',v_3,..,v_N) \Phi_t(\sigma\cdot\sigma')d\sigma'
    \end{equation}
    is the solution at time $t$ of the heat equation in the $\sigma$ variable. The exact expression of the vector fields $B_k$ is given in \eqref{def:B_k}.
\end{lemma}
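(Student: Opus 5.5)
The plan is to reduce everything to a pointwise (in $z,r,v_3,\dots,v_N,t$) comparison on the sphere, and then to lift it back to $\mathbb{R}^{3N}$ through the $(z,r,\sigma)$ change of variables. The natural choice of vector fields is
$$B_k(v_1,v_2):=\beta(r)^{1/4}\,(0,\dots)\text{-lift of } b_k(\sigma),$$
that is, the field on $\mathbb{R}^6$ generating the rotation of $\sigma$ around $e_k$ at fixed $z$ and $r$. Concretely, $B_k(v_1,v_2)=\beta(r)^{1/4}\,(r\,e_k\times\sigma,\,-r\,e_k\times\sigma)/r=\beta(r)^{1/4}\tfrac12\big(e_k\times(v_1-v_2),\,-e_k\times(v_1-v_2)\big)$, up to normalisation. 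Since $\beta^{1/4}$ is smooth and compactly supported (its support can be assumed away from $r=0$, or else the factor $r$ keeps it smooth), these fields are smooth and bounded. They are also divergence free, because rotations preserve Lebesgue measure and $\beta(r)^{1/4}$ is constant along the orbits. With these fields, $\partial_k F = \beta(r)^{1/4}\, b_k(\sigma)\cdot\nabla_\sigma F$ in the variables $(z,r,\sigma,v_3,\dots)$, and $\partial_k(\partial_k F)=\beta(r)^{1/2}\,b_k\cdot\nabla_\sigma(b_k\cdot\nabla_\sigma F)$, since $r$ is unchanged. The heat flow in $\sigma$ commutes with all of this. After the change of variables, with Jacobian $8r^2$, the middle term therefore equals
$$\int \beta(r)\int_0^\infty \sum_k\int_{\mathbb{S}^2}\frac{|b_k\cdot\nabla_\sigma(b_k\cdot\nabla_\sigma g_t)|^2}{g_t}\,d\sigma\,\omega(t)dt\;8r^2\,dz\,dr\,dv_3\cdots dv_N,$$
where $g=F(z,r,\cdot,v_3,\dots)$.

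It thus suffices to prove, for every smooth positive $g$ on $\mathbb{S}^2$, the two-sided inequality
$$\tfrac43\Vert\sqrt g\Vert^2_{\dot H^2}\le \sum_{k=1}^3\int_{\mathbb{S}^2}\frac{|\partial_k^2 g|^2}{g}\,d\sigma\le 64\Vert\sqrt g\Vert^2_{\dot H^2},\qquad \partial_k:=b_k\cdot\nabla_\sigma,$$
and then to apply it to $g_t$ and integrate in $t$ against $\omega$. Write $h=\sqrt g$. Then $\partial_k^2 g = 2h\partial_k^2h+2(\partial_kh)^2$, so that
$$\frac{|\partial_k^2g|^2}{g}=4\Big(\partial_k^2h+\frac{(\partial_kh)^2}{h}\Big)^2.$$
There are two ingredients. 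First, by \eqref{eq:Lap_bkbk} together with a Bochner/commutator computation on the sphere, we expect
$$\Vert h\Vert_{\dot H^2}^2=\int(\Delta_\sigma h)^2=\sum_{k,l}\int \partial_k^2h\,\partial_l^2h,$$
and we expect this to be comparable to $\sum_k\int|\partial_k^2h|^2$ up to explicit constants. The mixed terms are handled by integrating by parts using the commutation relations $[\partial_k,\partial_l]=\pm\partial_m$ of $\mathfrak{so}(3)$; these produce lower order terms like $\int|\nabla h|^2$, which are controlled by $\int(\Delta h)^2$ because $\lambda_1=2>0$. Second, the one-dimensional McKean-type identity along each orbit of $\partial_k$: integration by parts of $\int (\partial_kh)^2\partial_k^2h/h$ gives $\int (\partial_kh)^4/h^2=3\int(\partial_kh)^2\partial_k^2h/h$. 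This identity is what allows one to show that, in $\int(\partial_k^2h+(\partial_kh)^2/h)^2$, the cross term cannot cancel the square terms. Precisely, with $A=\int|\partial_k^2h|^2$, $X=\int(\partial_kh)^4/h^2$ and cross term $\tfrac23X$, we get
$$\int\Big(\partial_k^2h+\frac{(\partial_kh)^2}{h}\Big)^2=A+\tfrac53X.$$
This is at least $A$, which yields the lower bound. For the upper bound, we need $X\lesssim A$, which is the one-dimensional analogue of \eqref{eq:logcontrolsfisher4}: from $X=3\int(\partial_kh)^2\partial_k^2h/h\le 3\sqrt{XA}$ we get $X\le 9A$.

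The main obstacle is the comparison between $\int(\Delta h)^2$ and $\sum_k\int|\partial_k^2h|^2$, with clean constants compatible with $4/3$ and $64$. The non-diagonal terms $k\ne l$ and the non-commutativity of the $\partial_k$ make this delicate; this is the issue already flagged in the remark after Proposition~\ref{prop:ineq_heat_flow}. I would try first to use the identity $\sum_k \partial_k^2=\Delta_\sigma$ and Cauchy–Schwarz, which gives
$$\int(\Delta h)^2\le 3\sum_k\int|\partial_k^2h|^2,$$
and hence the lower bound with constant $4/3$ (the factor $4$ divided by $3$). For the reverse, I would expand $\sum_{k,l}\int|\partial_k\partial_lh|^2$ via commutators; this should bound it by a constant times $\int(\Delta h)^2$, giving the $64$ after the factor $4\cdot(1+\tfrac53\cdot 9)$-type bookkeeping. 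Finally, I would pass from smooth positive $g$ to general densities by approximation: the heat flow already makes $g_t$ smooth and positive for $t>0$.
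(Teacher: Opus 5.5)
Your argument is essentially correct, and it takes a different route from the paper at one point. The paper proves on $\mathbb{S}^2$ only the comparison $\frac13\Vert\sqrt g\Vert^2_{\dot H^2}\le\sum_k\int|\partial_k(\partial_k\sqrt g)|^2\le\Vert\sqrt g\Vert^2_{\dot H^2}$. It then lifts this to $\mathbb{R}^{3N}$ with the fields \eqref{def:B_k} and quotes \cite[Proposition 2.1]{Tabary2026a} on $\mathbb{R}^{3N}$ (for $G=F_t$) to pass from $\int|\partial_k\partial_k\sqrt G|^2$ to $\int|\partial_k\partial_k G|^2/G$.

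You prove that second comparison yourself, fibre by fibre on the sphere. Your identity $\int(\partial_k^2h+(\partial_kh)^2/h)^2=A+\frac53X$ (from $X=3\int(\partial_kh)^2\partial_k^2h/h$) and the bound $X\le 9A$ are correct. They give $4A\le\int|\partial_k^2 g|^2/g\le 64A$, exactly the constants of the cited proposition. Your version is self-contained: every object is a smooth positive $g_t$ on a compact manifold, so no integrability issue on $\mathbb{R}^{3N}$ arises and the regularity of $B_k$ plays no role in the inequality. The paper's version is shorter, and it is phrased directly in terms of the operators $\partial_k$ on $\mathbb{R}^{3N}$, to which the later lower semicontinuity and superadditivity results of \cite{Tabary2026a} are applied.

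The step you leave open must be settled with constant exactly one, otherwise you do not get the stated $64$. You need $\sum_k\int|\partial_k(\partial_kh)|^2\le\int(\Delta_\sigma h)^2$. The mechanism you propose would only give some constant $C>1$, hence $64C$: commutators generate first-order terms $\int|\nabla_\sigma h|^2$, which you then absorb via $\lambda_1=2$.

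In fact no absorption is needed. With $[\partial_k,\partial_l]=-\sum_m\epsilon_{klm}\partial_m$, integrate $\int\partial_k^2h\,\partial_l^2h$ by parts twice and sum over $k,l$. The two first-order terms produced are $-\int|\nabla_\sigma h|^2$ and $+\int|\nabla_\sigma h|^2$, so they cancel. This gives the exact identity $\Vert\Delta_\sigma h\Vert^2_{L^2}=\sum_{k,l}\Vert\partial_k\partial_lh\Vert^2_{L^2}$, which is \cite[Lemma 9.11]{GuillenSilvestre2023} and is what the paper uses here and in Appendix~\ref{app:twoineq}. Equivalently, pointwise $\sum_{k,l}(\partial_k\partial_lh)^2=|\nabla_\sigma^2h|^2+|\nabla_\sigma h|^2$, combined with Bochner's formula with $\mathrm{Ric}=1$. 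Dropping the terms $k\neq l$ then gives the bound with constant one.

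This direction is therefore easy. The difficulty flagged in the remark after Proposition~\ref{prop:ineq_heat_flow} concerns reversing \eqref{eq:logcontrolssqrt2} for $\mathcal{K}$, a different functional involving $\log g$.

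A minor point: your aside that "the factor $r$ keeps it smooth" is not right in general. If $\beta^{1/4}(r)=1+r$ near $0$, the field contains $|x|\,x$, which is not $C^2$. Take the support of $\beta^{1/4}$ away from $r=0$ instead; this costs nothing, since $\beta$ is only ever approximated from below.
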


\begin{remark}
\label{rem:QandLap}
We have $\partial_t F = \Delta_\sigma F$, and the Laplacian $\Delta_\sigma$ writes $\sum_{k=1}^3 b_k(\sigma)\cdot\nabla_\sigma b_k(\sigma)\cdot\nabla_\sigma$ by \eqref{eq:Lap_bkbk}. Back in the $v_1,v_2$ variables, this means that $F_t$ solves the equation
$$\partial_t F_t =\frac{1}{4}b_k(v_1-v_2)\cdot (\nabla_{v_1} - \nabla_{v_2})\left( b_k(v_1-v_2) \cdot (\nabla_{v_1} - \nabla_{v_2})F_t\right).$$
On the right-hand side, we recognize the lifted Landau operator on the variables $(v_1,v_2)$ (with constant interaction potential $\alpha=1$, \textit{i.e.} for Maxwell molecules), which is at the heart of the Landau particle system \cite{CarrilloGuo2025,FengWang2025,Tabary2026a}. In particular, it was observed, first in \cite{GuillenSilvestre2023} for two variables, and then in \cite{CarrilloGuo2025} for $N$ variables, that the Fisher information decreases along the flow of this operator.
\end{remark}

\begin{proof}
We want to apply the results of \cite{Tabary2026a} which compare different second order versions of the Fisher information. Therein, only derivatives in one direction are considered, so we first reduce to this case. Recalling the formula \eqref{eq:Lap_bkbk} and then applying \cite[Lemma 9.11]{GuillenSilvestre2023}, we have for any non-negative smooth $g$ on $\mathbb{S}^2$:
\begin{align*}
    \Vert \sqrt{g} \Vert_{\dot{H}^{2}(\mathbb{S}^{2})}^2=\Vert \Delta_\sigma \sqrt{g} \Vert_{L^{2}(\mathbb{S}^{2})}^2&=\int_{\mathbb{S}^{2}} \left\vert \sum_{k=1}^3 b_k\cdot\nabla_\sigma (b_k\cdot\nabla_\sigma \sqrt{g}) \right\vert^2 d\sigma\\
    &=\sum_{k,l=1}^3\int_{\mathbb{S}^{2}} \left\vert  b_k\cdot\nabla_\sigma (b_l\cdot\nabla_\sigma \sqrt{g}) \right\vert^2 d\sigma.
\end{align*}
Keeping only terms with $k=l$, we get
\begin{equation*}
    \Vert \sqrt{g} \Vert_{\dot{H}^{2}(\mathbb{S}^{2})}^2\geq \sum_{k=1}^3\int_{\mathbb{S}^{2}} \left\vert  b_k\cdot\nabla_\sigma (b_k\cdot\nabla_\sigma \sqrt{g}) \right\vert^2 d\sigma \geq \frac{1}{3}\Vert \sqrt{g} \Vert_{\dot{H}^{2}(\mathbb{S}^{2})}^2,
\end{equation*}
where the second inequality uses the Cauchy-Schwarz inequality $\sum_{k=1}^3 a_k^2 \geq (\sum_{k=1}^3 a_k)^2/3$. In the middle expression we only differentiate in one direction $b_k$ in each term, as desired. Applying these inequalities to the solution $g_t$ of the heat flow and integrating against $\omega(t)$ we get
$$\mathbf{K}^\omega(g)\geq \sum_{k=1}^3 \int_0^{+\infty} \int_{\mathbb{S}^{2}} \left\vert  b_k\cdot\nabla_\sigma (b_k\cdot\nabla_\sigma \sqrt{g_t}) \right\vert^2 d\sigma \  \omega(t)dt \geq \frac{1}{3} \mathbf{K}^\omega(g),$$
for any measure $g$ on $\mathbb{S}^2$ (for which $g_t$ is smooth for any $t>0$).
Now, to apply the results of \cite{Tabary2026a} we need to integrate over the whole space and not just over the sphere. Recalling that $F_t$ is the solution of the heat equation in $\sigma$, using the bounds above and putting the integral in $t$ in front, we write
\begin{align*}
   \mathbb{K}_\beta^\omega (F) &\geq \sum_{k=1}^3 \int_0^{+\infty}\!\! \int_{\mathbb{R}^3 \times \mathbb{R}_+ \times\mathbb{R}^{3(N-2)}}\!\!\! \beta(r)  \int_{\mathbb{S}^{2}} \left\vert  b_k\cdot\nabla_\sigma (b_k\cdot\nabla_\sigma \sqrt{F_t}) \right\vert^2   d\sigma 8r^2dz dr  \ dv_3... dv_N\ \omega(t)dt\\
   &\geq \frac{1}{3} \mathbb{K}_\beta^\omega (F).
\end{align*}
But by reversing the change of variables $(z,r,\sigma)\mapsto(v_1,v_2)$, and defining the vector fields
\begin{equation}
\label{def:B_k}
    B_k(v_1,v_2) =  \frac{1}{2}\left(\beta\left(\frac{\vert v_1-v_2 \vert}{2}\right)\right)^{1/4} \left[\begin{array}{c}
     b_k(v_1-v_2)  \\
     -b_k(v_1-v_2) 
\end{array}\right]\in \mathbb{R}^6,
\end{equation}
we can rewrite the middle term as
\begin{equation}
\label{eq:prooflemmasuperadd1}
    \sum_{k=1}^3 \int_0^{+\infty} \int_{\mathbb{R}^{3N}} \left\vert  B_k\cdot\nabla_{v_1,v_2} (B_k\cdot\nabla_{v_1,v_2}  \sqrt{F_t}) \right\vert^2   dv_1 ... dv_N \ \omega(t)dt.
\end{equation}
This is justified by the equalities 
$\frac{1}{2}b_k(v_1-v_2)=r b_k (\sigma)$ and $\nabla_{\sigma} = r \pi(\sigma) (\nabla_{v_1} - \nabla_{v_2})$, and the fact that $\beta(r)$ can be freely moved in and out of the derivatives in $\sigma$ (we recall that here, $\pi(\sigma)$ is the projection on $\sigma^\perp$, and that $b_k(\sigma)\in \sigma^\perp$).

A direct computation shows that $B_k$ is divergence free, and the smooth and compactly supported $\beta^{1/4}$ ensures that $B_k$ is smooth and bounded, so that now we can apply \cite[Proposition 2.1]{Tabary2026a} with the derivation operator $\partial_k = B_k(v_1,v_2)\cdot \nabla_{v_1,v_2}$.  It yields the following comparison of functionals, for all non-negative $G$ on $\mathbb{R}^{3N}$:
$$\frac{1}{64}\int_{\mathbb{R}^{3N}} \frac{\left\vert \partial_k \partial_k G\right\vert^2}{G}    dv_1...dv_N \leq \int_{\mathbb{R}^{3N}} \left\vert \partial_k \partial_k \sqrt{G}\right\vert^2   dv_1...dv_N   \leq \frac{1}{4}\int_{\mathbb{R}^{3N}} \frac{\left\vert \partial_k \partial_k G\right\vert^2}{G}    dv_1...dv_N.  $$
Using this comparison with $G=F_t$ in \eqref{eq:prooflemmasuperadd1}, we get the result.
\end{proof}
Lemma~\ref{lem:superadd} shows that (for smooth compactly supported $\beta$), $\mathbb{K}_\beta^\omega(F)$ is comparable to $$\tilde{\mathbb{K}}^\omega_\beta(F):=\sum_{k=1}^3 \int_0^{+\infty}K^{\partial_{k}}(F_t) \omega(t)dt$$
where $K^{\partial_{k}}$ is the second-order Fisher information
$$K^{\partial_{k}}(G):=\int_{\mathbb{R}^{3N}} \frac{\left\vert  \partial_k (\partial_k G) \right\vert^2}{G}   dv_1... dv_N.$$
This means that we can prove the required properties of $\mathbb{K}_\beta^\omega$ by showing them for $\tilde{\mathbb{K}}^\omega_\beta$. The latter is easier to work with since $K^{\partial_{k}}$ is exactly the functional covered by the results of \cite{Tabary2026a}.

\begin{proof}[Proof of Proposition~\ref{prop:propsofmathbbK}]
\textit{Point (1).} We begin with lower semicontinuity. Let us first assume that $\beta$ is smooth and compactly supported, so that Lemma~\ref{lem:superadd} holds. By \cite[Theorem 1.6 (1)]{Tabary2026a}, for $k=1,2,3$, $K^{\partial_{k}}$ is lower semicontinuous over $\mathcal{P}(\mathbb{R}^{3N})$. If $F^n\rightharpoonup F$, it is straightforward to check that for every $t>0$, $F^n_t\rightharpoonup F_t$, since for any continuous bounded $\varphi$, $\int F_t \varphi  =\int F \varphi_t $. By Fatou's Lemma,
\begin{align*}
\tilde{\mathbb{K}}^\omega_\beta(F) = \sum_{k=1}^3 \int_0^{+\infty}  K^{\partial_{k}}(F_t) \omega(t)dt
&\leq  \sum_{k=1}^3 \int_0^{+\infty}  \liminf_{n\rightarrow +\infty}K^{\partial_{k}}(F^n_t)\omega(t)dt\\
&\leq \liminf_{n\rightarrow +\infty} \sum_{k=1}^3 \int_0^{+\infty}  K^{\partial_{k}}(F^n_t)\omega(t)dt=\liminf_{n\rightarrow +\infty} \tilde{\mathbb{K}}^\omega_\beta(F^n).
\end{align*}
Lemma~\ref{lem:superadd} gives $ \mathbb{K}_\beta^\omega(F) \leq (3/4) \tilde{\mathbb{K}}^\omega_\beta(F) \leq(3/4) \liminf\tilde{\mathbb{K}}^\omega_\beta(F^n) \leq 48\liminf\mathbb{K}_\beta^\omega(F^n)$, so we get the result.

A general lower semicontinuous $\beta$ can be approximated from below by smooth compactly supported functions. More precisely, we can find a sequence of compactly supported $\beta_n$ such that $\beta_n^{1/4}$ is smooth and $\beta_n \nearrow \beta$ pointwise. Remark that $\mathbb{K}_\beta^\omega$ is monotone with respect to $\beta$, so by monotone convergence $\mathbb{K}_\beta^\omega = \sup_n \mathbb{K}^\omega_{\beta_n}$. Applying the previous proof to $\beta_n$, we get the lower semicontinuity of $\mathbb{K}_\beta^\omega$ as a supremum of lower semicontinuous functions.

\textit{Point (2).}  We now deal with superadditivity. By the same approximation argument we can suppose that $\beta$ is smooth and compactly supported, and it suffices to show that $\tilde{\mathbb{K}}^\omega_\beta$ is exactly superadditive. By \cite[Theorem 1.6 (2)]{Tabary2026a}, $K^{\partial_{k}}$ is superadditive. Moreover for any $2\leq j \leq N$, since the heat equation does not act on the variables $v_{j+1},...,v_N$, it holds that $(F^{:j})_t =(F_t)^{:j}$ (it is obvious from \eqref{def:F_t}). Hence
$$\tilde{\mathbb{K}}^\omega_\beta(F^{:j}) = \sum_{k=1}^3 \int_0^{+\infty}  K^{\partial_{k}}((F^{:j})_t) \omega(t)dt \leq \sum_{k=1}^3 \int_0^{+\infty}  K^{\partial_{k}}(F_t) \omega(t)dt \leq \tilde{\mathbb{K}}^\omega_\beta(F),$$
as desired.

\textit{Point (3).} Once again we can suppose that $\beta$ is smooth and compactly supported. The infinite-dimension affinity of $\tilde{\mathbb{K}}^{\omega}_\beta$ cannot be directly obtained from the one of $K^{\partial_{k}}$ because the heat flow in the $\sigma$ variable breaks the hierarchy of marginals $(\nu^j)_j$: the $(\nu^j)_t$ are not symmetric in their variables and hence cannot be the marginals of some element of $\mathcal{P}(\mathcal{P}(\mathbb{R}^3))$, and they cannot be easily symmetrized, because the heat flow in $\sigma$ mixes two variables, $v_1$ and $v_2$.

However, a simple trick allows us to avoid doing a full proof of the affinity. It consists in grouping $v_1$ and $v_2$ together as a single variable. We can consider even marginals $\nu^{2j}$ and pair the variables two by two, considering $\mathbb{R}^6$ as the new base space to build our functionals upon, rather than $\mathbb{R}^3$. It is easy to symmetrize $\nu^{2j}_t$ with respect to pairs $(v_{2i-1},v_{2i})$ of variables by simply applying the spherical heat flow to each of these pairs rather than just $(v_1,v_2)$. 

To put this idea in motion, we consider the continuous map $\mathcal{P}(\mathbb{R}^3) \ni \rho \mapsto \tilde{\rho}=\rho \otimes \rho \in\mathcal{P}(\mathbb{R}^6)$. It pushes forward $\nu\in\mathcal{P}(\mathcal{P}(\mathbb{R}^3))$ to $\tilde{\nu}\in\mathcal{P}(\mathcal{P}(\mathbb{R}^6))$, meaning that $\tilde{\nu}$ is defined by
$$\int_{\mathcal{P}(\mathbb{R}^6)} \Psi(\zeta) \tilde{\nu}(d\zeta) = \int_{\mathcal{P}(\mathbb{R}^3)} \Psi(\rho \otimes \rho) \nu(d\rho) $$
for any measurable $\Psi$ on $\mathcal{P}(\mathbb{R}^6)$. Remark that
$$\tilde{\nu}^j=\int_{\mathcal{P}(\mathbb{R}^6)} \zeta^{\otimes j} \tilde{\nu}(d\zeta)=\int_{\mathcal{P}(\mathbb{R}^3)} (\rho \otimes \rho)^{\otimes j} \nu(d\rho)=\nu^{2j},$$
so this really consists in forgetting the odd marginals by grouping the $v_i$'s two by two.
For any $\zeta\in\mathcal{P}(\mathbb{R}^6)$, we denote by $\zeta_t$ the solution of the heat flow in the $\sigma$ variable, and $\tilde{\nu}_t$ the push-forward of $\tilde{\nu}$ by $\zeta \mapsto \zeta_t$. In practice, this means that for any continuous functions $\psi$ on $\mathbb{R}^6$ and $\Psi$ on $\mathcal{P}(\mathbb{R}^6)$,
\begin{align*}
    \int_{\mathbb{R}^6} \psi d\zeta_t = \int_{\mathbb{R}^6} \psi_t d\zeta, &&  \int_{\mathcal{P}(\mathbb{R}^6)} \Psi(\zeta) \tilde{\nu}_t(d\zeta)=\int_{\mathcal{P}(\mathbb{R}^6)} \Psi(\zeta_t) \tilde{\nu}(d\zeta).
\end{align*}
We check that $\tilde{\nu}$ satisfies the same hypotheses as $\nu$: since $\tilde{\nu}^j=\nu^{2j}$, the Fisher information is bounded
$\sup_{j\geq 1} I(\tilde{\nu}^j)< \infty$, and the second moment
$$\int_{\mathbb{R}^6} (\vert v_1\vert^2 + \vert v_2\vert^2 )\tilde{\nu}^1(dv_1dv_2)\!=\!\int_{\mathbb{R}^6} \vert v_1\vert^2 \nu^2(dv_1dv_2)+\int_{\mathbb{R}^6} \vert v_2\vert^2 \nu^2 (dv_1dv_2) \! = 2\! \int_{\mathbb{R}^3}\!\vert v\vert^2 \nu^1(dv)<\infty.$$
Furthermore, these properties transfer to $\tilde{\nu}_t$ for any $t>0$. Indeed,
$$\partial_t(\tilde{\nu}_t)^j=\partial_t\left(\int_{\mathcal{P}(\mathbb{R}^6)} (\zeta_t)^{\otimes j} \tilde{\nu}(d\zeta)\right)= \int_{\mathcal{P}(\mathbb{R}^6)} \sum_{i=1}^j \Delta_{\sigma_i}(\zeta_t)^{\otimes j} \tilde{\nu}(d\zeta)=\sum_{i=1}^j \Delta_{\sigma_i}(\tilde{\nu}_t)^j $$
where $\Delta_{\sigma_i}$ is the Laplacian in the $\sigma_i$ variable, obtained by applying the $(z,r,\sigma)$ change of variables to $(v_{2i-1},v_{2i})$. We note that $(\tilde{\nu}_t)^j$ is \textit{not} the solution of the heat flow in $\sigma_1$ only, but the solution of the heat flow in $\sigma_1,...,\sigma_j$.  By Remark~\ref{rem:QandLap}
the Fisher information $I$ decreases along the flow of $\Delta_{\sigma_1}$, and since the pairs of variables are exchangeable, along the flow of $\Delta_{\sigma_i}$ for any $i$. The second moment remains constant since $\vert v_1\vert^2 + \vert v_2 \vert^2 = 2(r^2 + \vert z\vert^2)$ does not depend on $\sigma$.

Now that its hypotheses are checked, we can invoke \cite[Theorem 1.6 (3)]{Tabary2026a} with $\mathbb{R}^6$ as a base space (see the Remark below about the validity of this theorem for base spaces other than $\mathbb{R}^3$). We apply it to $\tilde{\nu}_t$ and to the functionals $K^{\partial_{k}}$ (seen on $\mathbb{R}^{6N}$ rather than $\mathbb{R}^{3N}$, hence with $k_0=1$ rather $2$ in the notation of \cite{Tabary2026a}), which implies that for all $t>0$,
\begin{equation}
    \label{eq:proofsuperadd1}
    \int_{\mathcal{P}(\mathbb{R}^6)} K^{\partial_{k}}(\zeta) \tilde{\nu}_t(d\zeta) \leq 16 \lim_{j\rightarrow\infty} K^{\partial_{k}} (\tilde{\nu}_t^j).
\end{equation}
But the left-hand side is
\begin{align*}
    \int_{\mathcal{P}(\mathbb{R}^6)} K^{\partial_{k}}(\zeta) \tilde{\nu}_t(d\zeta) =\int_{\mathcal{P}(\mathbb{R}^6)} K^{\partial_{k}}(\zeta_t) \tilde{\nu}(d\zeta)
    =\int_{\mathcal{P}(\mathbb{R}^3)} K^{\partial_{k}}((\rho \otimes \rho)_t) \nu (d\rho).
\end{align*}
Integrating against $\omega(t)$, summing over $k$ and swapping the integrals, we get:
\begin{equation}
    \label{eq:proofsuperadd2}
    \int_{\mathcal{P}(\mathbb{R}^3)}\left(\sum_{k=1}^3 \int_0^{+\infty}K^{\partial_{k}}((\rho \otimes \rho)_t)\omega(t) dt\right) \nu (d\rho)= \int_{\mathcal{P}(\mathbb{R}^3)} \tilde{\mathbb{K}}^\omega_\beta (\rho \otimes \rho) \nu (d\rho).
\end{equation}
For the right-hand side, we use the representation formula for the heat equation
$$(\tilde{\nu}_t)^j= \int_{(\mathbb{S}^2)^j} \prod_{i=1}^j \Phi_t(\sigma_i\cdot \sigma'_i) \tilde{\nu}^j d\sigma'_1... d\sigma'_j =\int_{(\mathbb{S}^2)^{j-1}}\Xi(\sigma_2\cdot \sigma'_2 ,...,\sigma_j\cdot \sigma'_j)  M^j_t   d\sigma'_2... d\sigma'_N,$$
with $M^j_t= \int_{\mathbb{S}^2} \Phi_t(\sigma_1\cdot \sigma'_1) \tilde{\nu}^j d\sigma_1$ the solution of the heat equation in the first variable $\sigma_1$ only, and $\Xi(\sigma_2\cdot \sigma'_2 ,...,\sigma_j\cdot \sigma'_j) = \prod_{i=2}^j  \Phi_t(\sigma_i\cdot \sigma'_i)$. Note that in the above expression, $\tilde{\nu}^j$ is evaluated at $(z_1,r_1,\sigma'_1, .... ,z_N,r_N,\sigma'_N )$ and $M^j_t$ at $\sigma_1$ instead of $\sigma'_1$ (but the same other variables).  Since $\partial_k$ only acts on $(v_1, v_2)$, hence not on $\sigma_i$ for $i\geq 2$,
\begin{align*}
  K^{\partial_{k}} ((\tilde{\nu}_t)^j) &=  \int_{\mathbb{R}^{6j}} \frac{\vert \partial_k \partial_k  (\tilde{\nu}_t)^j \vert^2}{\tilde{\nu}_t^j}dv_1...dv_N \\
  &=\int_{\mathbb{R}^{6j}} \frac{\left \vert  \int_{(\mathbb{S}^2)^{j-1}} \Xi\  \partial_k \partial_k M_t^j d\sigma'_2 ... d\sigma'_N \right \vert^2}{\int_{(\mathbb{S}^2)^{j-1}} \Xi M^j_t d\sigma'_2 ... d\sigma'_N}dv_1...dv_N\\
  &\leq \int_{\mathbb{R}^{6j}} \left[\int_{(\mathbb{S}^2)^{j-1}} \Xi \frac{ \vert    \partial_k \partial_k M_t^j \vert^2}{ M^j_t}d\sigma'_2 ... d\sigma'_N \right]dv_1...dv_N
\end{align*}
by using the Cauchy Schwarz inequality for the last line. The term inside the brackets is the solution of the heat flow in $\sigma_2,...,\sigma_N$ starting from $\left \vert    \partial_k \partial_k M_t^j \right \vert^2 / M^j_t$. Since this flow preserves mass, we get
$$ K^{\partial_{k}} ((\tilde{\nu}_t)^j) \leq \int_{\mathbb{R}^{6j}} \frac{\left \vert    \partial_k \partial_k M_t^j \right \vert^2}{ M^j_t} dv_1...dv_N =K^{\partial_{k}} (M_t^j).  $$
Integrating against $\omega(t)$ and summing over $k$, we obtain
\begin{equation*}
    \sum_{k=1}^3 \int_0^{+\infty}K^{\partial_{k}}((\tilde{\nu}_t)^j)\omega(t) dt \leq \sum_{k=1}^3 \int_0^{+\infty} K^{\partial_{k}} (M_t^j)\omega(t) dt = \tilde{\mathbb{K}}^\omega_\beta (\tilde{\nu}^j)
\end{equation*}
since $M^j_t$ is the solution of the heat flow\textit{ in} $\sigma_1$ \textit{only}, which is the one appearing in the definition of $\tilde{\mathbb{K}}^\omega_\beta$. Since the sequence $(K^{\partial_{k}}((\tilde{\nu}_t)^j))_j$ is non-decreasing by superadditivity, we can exchange the limit and integral:
\begin{equation}
\label{eq:proofsuperadd3}
    \sum_{k=1}^3 \int_0^{+\infty} \lim_{j\rightarrow\infty} K^{\partial_{k}}((\tilde{\nu}_t)^j)\omega(t) dt =\lim_{j\rightarrow\infty} \sum_{k=1}^3 \int_0^{+\infty}K^{\partial_{k}}((\tilde{\nu}_t)^j)\omega(t) dt \leq \lim_{j\rightarrow \infty} \tilde{\mathbb{K}}^\omega_\beta (\tilde{\nu}^{j}).
\end{equation}
This last limit exists once again because $(\tilde{\mathbb{K}}^\omega_\beta (\tilde{\nu}^{j}))_j$ is non-decreasing, since we have shown in Point (2) that $\tilde{\mathbb{K}}^\omega_\beta$ is super-additive.
By combining \eqref{eq:proofsuperadd1}, \eqref{eq:proofsuperadd2} and \eqref{eq:proofsuperadd3}, and recalling $\tilde{\nu}^{j}=\nu^{2j}$, we obtain
$$\int_{\mathcal{P}(\mathbb{R}^3)} \tilde{\mathbb{K}}^\omega_\beta (\rho \otimes \rho) \nu (d\rho) \leq 16 \lim_{j\rightarrow \infty} \tilde{\mathbb{K}}^\omega_\beta (\nu^{2j}) = 16 \lim_{j\rightarrow \infty} \tilde{\mathbb{K}}^\omega_\beta (\nu^{j})=16 \liminf_{j\rightarrow \infty} \tilde{\mathbb{K}}^\omega_\beta (\nu^{j}).$$
The limit along $(2j)$ and $(j)$ are equal since the sequence is non-decreasing. This yields point (3) using Lemma~\ref{lem:superadd}, since $48\cdot 16 = 768$.
\end{proof}
\begin{remark}
To be exact, \cite[Theorem 1.6 (3)]{Tabary2026a} was proven only with $\mathbb{R}^3$ as a base space (because it was the physically relevant dimension for the main result). However, the proof is identical in any dimension. If $\mathbb{R}^6$ is the base space, the proper normalization of the Fisher information of $F\in\mathcal{P}(\mathbb{R}^{6N})$ should be $\frac{1}{N}\int F \vert \nabla \log F \vert^2$ which is twice the natural normalization if we see $\mathbb{R}^{6N}$ as $\mathbb{R}^{3\cdot (2N)}$. Of course this does not matter. The constant $16$ can also be checked to be independent of the dimension, as it is just the constant in the following inequality holding for any $N$:
$$\frac{1}{16} \int_{\mathbb{R}^{3N}} \frac{\left\vert \partial_k \partial_k G\right\vert^2}{G}    dv_1...dv_N \leq 4 \int_{\mathbb{R}^{3N}} \left\vert \partial_k \partial_k \sqrt{G}\right\vert^2   dv_1...dv_N, $$
which is the same on $\mathbb{R}^{6N}$.
\end{remark}
\begin{remark}
 For true approximate affinity, we should also prove a reverse inequality like
 \begin{align*}
         \limsup_{j\rightarrow \infty} \mathbb{K}_\beta^\omega(\nu^j) \leq C \int_{\mathcal{P}(\mathbb{R}^3)} \mathbb{K}_\beta^\omega(\rho \otimes \rho) \nu(d\rho).
\end{align*}
It is useless for propagation of chaos, but can be proven using the same techniques as above. It relies essentially on the following fact: $K^{\partial_{k}}$ is convex by \cite[Theorem 1.6 (1)]{Tabary2026a}. If $(\nu^j)_t$ is the solution of the heat flow in $\sigma=\sigma_1$, we have
$$(\nu^j)_t = \int_{\mathcal{P}(\mathbb{R}^3)} (\rho^{\otimes 2})_t \otimes \rho^{\otimes (j-2)} \nu (d\rho),$$
and thanks to Jensen's inequality:
$$ K^{\partial_{k}}( \nu^j) \leq   \int_{\mathcal{P}(\mathbb{R}^3)} K^{\partial_{k}}\left((\rho^{\otimes 2})_t \otimes \rho^{\otimes (j-2)}\right) \nu (d\rho)=\int_{\mathcal{P}(\mathbb{R}^3)} K^{\partial_{k}}\left((\rho^{\otimes 2})_t\right) \nu (d\rho).$$
Integrating in $t$ and using Lemma~\ref{lem:superadd} we can get the result.
\end{remark}

\subsection{Boltzmann's entropy production}

One estimate in the proof of propagation of chaos will require to control the moments of the cluster points, which are weak solutions of the Boltzmann equation. Although propagation of moments holds for the Boltzmann equation, it relies on both the standard weak formulation and on the \textit{H-formulation} from \cite{Villani1998}. For the H-formulation to make sense, we need the entropy production of the solution to be integrable in time. In our case, this can be obtained by passing to the limit in the entropy estimate of the particle system.

Hence, similarly to the previous section, we will show that (a functional closely related to) the entropy production is lower semicontinuous, superadditive and infinite-dimension affine to be able to take its limit in the next Section.

We define, for any probability density $F$ on $\mathbb{R}^{3N}$ and some collision kernel $B$, the square root version of the entropy production:
\begin{equation}
    \label{def:mathbbD}
    \mathbb{D}_{B}(F) = \int_{(\mathbb{S}^2)^2\times \mathbb{R}^3 \times \mathbb{R}_+ \times\mathbb{R}^{3(N-2)}} \left(\sqrt{F'}-\sqrt{F}\right)^2 B(r, \sigma\cdot \sigma')d\sigma d\sigma' 8r^2dzdr\ dv_3... dv_N
\end{equation}
where we performed the usual $(v_1,v_2)\mapsto(z,r,\sigma)$ change of variables, and let
$$F'=F(z,r,\sigma',v_3,...,v_N)=F(v_1',v_2',v_3,...,v_N).$$
Remark that if we replace $(\sqrt{F'}-\sqrt{F})^2$ by $\frac{1}{4}(F'-F)\log(F'/F)$ we recover the usual entropy production.

The result of this section is the following analogue of Proposition~\ref{prop:propsofmathbbK}. Lower semicontinuity is classical (at least for $N=2$), as it was already used in the seminal work by Villani \cite{Villani1998}.
\begin{prop}
    \label{prop:propsofmathbbD}
    Suppose the function $B$ is non-negative and lower semicontinuous. Then, for any $N\geq 2$:
    \begin{enumerate}
        \item $\mathbb{D}_{B}$ is lower semicontinuous for the weak convergence of probability measures: if the probability densities $(F^n)_n$ and $F$ satisfy $F^n \rightharpoonup F$ in $\mathcal{P}(\mathbb{R}^{3N})$, then
        $$\mathbb{D}_{B}(F) \leq \liminf_{n\rightarrow +\infty} \mathbb{D}_{B} (F^n).$$
        \item $\mathbb{D}_{B}$ is superadditive: for any $2 \leq j \leq N$, any probability density $F$ on $\mathbb{R}^{3N}$,
        $$\mathbb{D}_{B} (F^{:j}) \leq  \mathbb{D}_{B} (F).$$
        \item $\mathbb{D}_{B}$ is affine in infinite dimension:  Let $\nu\in\mathcal{P}(\mathcal{P}(\mathbb{R}^{3}))$ with finite mean Fisher information and finite mean second moment (see Proposition~\ref{prop:propsofmathbbK}).
    Then it holds that
    \begin{align*}
         \int_{\mathcal{P}(\mathbb{R}^3)} \mathbb{D}_{B}(\rho \otimes \rho) \nu(d\rho) &= \lim_{j\rightarrow \infty} \mathbb{D}_{B}(\nu^j).
    \end{align*}
    \end{enumerate}
\end{prop}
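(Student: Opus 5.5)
The plan is to write $\mathbb{D}_B$ as a supremum of affine functionals, via the joint convexity and one-homogeneity of $(a,b)\mapsto(\sqrt a-\sqrt b)^2$. Since $(\sqrt{a}-\sqrt{b})^2 = a+b-2\sqrt{ab}$ and $-2\sqrt{ab}=\inf_{\lambda>0}\{-\lambda a - b/\lambda\}$ (with equality at $\lambda=\sqrt{b/a}$), we have
$$(\sqrt a-\sqrt b)^2=\sup_{\lambda>0}\big[(1-\lambda)a+(1-1/\lambda)b\big].$$
Undoing the pre/post-collision change of variables $dv\,dw\,d\sigma'=dv'\,dw'\,d\sigma$, I will express
$$\mathbb{D}_B(F)=\sup_{\varphi}\int_{\mathbb{R}^{3N}} F(\mathbf v)\,\Psi_\varphi(\mathbf v)\,d\mathbf v,$$
where $\Psi_\varphi(\mathbf v)=\int_{\mathbb{S}^2}\big[(1-\varphi)+(1-1/\varphi')\big]B\,d\sigma'$ and the supremum runs over continuous, compactly supported positive functions $\varphi(z,r,\sigma,\sigma',v_3,\dots)$ with $1/M\le \varphi\le M$, suitably symmetrized. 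To make the integrals finite, I first truncate $B$ into a continuous, bounded, compactly supported $B_n\nearrow B$ (possible by lower semicontinuity), which also keeps $\Psi_\varphi$ bounded and continuous. Monotone convergence then gives $\mathbb{D}_B=\sup_n\mathbb{D}_{B_n}$. Justifying this duality (density of the admissible $\varphi$, and measurability where $F=0$) is routine.

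\emph{Point (1).} Each $F\mapsto\int F\Psi_\varphi$ is weakly continuous, because $\Psi_\varphi$ is bounded and continuous. A supremum of continuous functionals is lower semicontinuous, which gives (1).

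\emph{Point (2).} I will argue by joint convexity. Since $(a,b)\mapsto(\sqrt a-\sqrt b)^2$ is convex and $1$-homogeneous, Jensen's inequality in the integration over $v_{j+1},\dots,v_N$ yields
$$\big(\sqrt{(F^{:j})'}-\sqrt{F^{:j}}\big)^2\le\int\big(\sqrt{F'}-\sqrt F\big)^2\,dv_{j+1}\cdots dv_N.$$
Here I use that the collision acts only on $(v_1,v_2)$, so $(F^{:j})'=(F')^{:j}$. Integrating this pointwise inequality gives (2) with constant $1$. The same argument shows that $\mathbb{D}_B$ is convex.

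\emph{Point (3).} For $\nu^j=\int\rho^{\otimes j}\nu(d\rho)$, the key observation is that only the first two variables are collided. Integrating out $v_3,\dots,v_j$, which is exactly what happens in the definition of $\mathbb{D}_B$ via the $F'$ structure, gives $\mathbb{D}_B(\nu^j)=\mathbb{D}_B(\nu^2)$ for all $j\ge2$? This is false: the square root does not commute with the $\nu$-mixture, so $\mathbb{D}_B(\nu^j)$ genuinely depends on $j$. By (2) the sequence $(\mathbb{D}_B(\nu^j))_j$ is non-decreasing. Convexity and Jensen give $\mathbb{D}_B(\nu^j)\le\int\mathbb{D}_B(\rho^{\otimes j})\nu(d\rho)=\int\mathbb{D}_B(\rho\otimes\rho)\nu(d\rho)$, the last equality because the extra tensor factors integrate to one. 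This is the upper bound.

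For the lower bound I plan to follow the Hauray--Mischler/Robinson--Ruelle strategy used for the Fisher information. Fix $\varphi$ and $B_n$, and evaluate the dual functional on $\nu^j$ with test functions of the form
$$\frac{1}{\lfloor j/2\rfloor}\sum_{i}\Psi_\varphi(v_{2i-1},v_{2i},\dots),$$
which pair the variables into disjoint couples, as in the $\mathbb{R}^6$ trick of Proposition~\ref{prop:propsofmathbbK}. By superadditivity-type averaging these are bounded by $\mathbb{D}_{B_n}(\nu^j)$. The idea is to choose $\varphi$ depending on the empirical measure of the remaining variables, approximating the optimizer $\sqrt{\rho\otimes\rho/(\rho\otimes\rho)'}$ for each $\rho$. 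As $j\to\infty$, the empirical measure under $\rho^{\otimes j}$ concentrates on $\rho$ by the law of large numbers, and this produces $\int\mathbb{D}_{B_n}(\rho\otimes\rho)\nu(d\rho)$ in the limit. This step is the main obstacle: it requires choosing $\varphi_\rho$ measurably and continuously in $\rho$, which I would obtain by a compactness/partition argument on $\mathcal{P}(\mathbb{R}^3)$ using the bounded second moment (tightness). The Fisher bound ensures that $\nu$ charges only densities, so that the optimizers make sense. Finally I let $n\to\infty$ by monotone convergence.
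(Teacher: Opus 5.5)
Your proposal is correct, and its main departure from the paper is in the lower bound of (3).

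\emph{Points (1), (2) and the upper bound in (3).} For (1) and for the upper bound in (3) you follow the paper: a monotone truncation $B_n\nearrow B$, $\mathbb{D}_{B_n}$ written as a supremum of weakly continuous linear functionals, then convexity and Jensen. Your test functions depend on $\sigma'$ as well as on $\mathbf{v}$, and this is in fact the right class, since the pointwise optimizer depends on both angles. For (2) you argue pointwise instead of restricting the supremum to test functions of $v_1,\dots,v_j$. You use $\int\sqrt{FF'}\,dv_{j+1}\cdots dv_N\le(\int F\,dv_{j+1}\cdots dv_N)^{1/2}(\int F'\,dv_{j+1}\cdots dv_N)^{1/2}$ together with $(F^{:j})'=(F')^{:j}$. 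Both arguments take one line.

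\emph{Lower bound in (3).} The paper sets $\psi^j=\sqrt{\nu^j}$ and invokes the abstract theorem \cite[Theorem 10.7]{Tabary2026a} for the quadratic form of the bounded self-adjoint collision operator $Q$ on $L^2(\mathbb{R}^6)$. You instead run a Robinson--Ruelle / law-of-large-numbers argument. Your route works, and closing your ``main obstacle'' needs neither the pairing into couples nor tightness:
\begin{itemize}
\item Take a countable family $(\varphi_m)$ with $\varphi_1\equiv 1$ such that $\mathbb{D}_{B_n}(\rho\otimes\rho)=\sup_m G_m(\rho)$, where $G_m(\rho):=\int\rho^{\otimes 2}\Psi_{\varphi_m}$; each $G_m$ is weakly continuous.
\item Fix $M$ and $\epsilon$. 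Collide only $(v_1,v_2)$, with test function $\varphi_{m(\mu_{\bar v})}$. Here $\mu_{\bar v}$ is the empirical measure of $v_3,\dots,v_j$, and $m(\mu)$ is a Borel $\epsilon$-argmax of $m\le M\mapsto G_m(\mu)$.
\item Under $\rho^{\otimes j}$ the pair $(V_1,V_2)$ is independent of $\bar V$. So the dual functional equals $\mathbb{E}[G_{m(\mu_{\bar V})}(\rho)]$.
\item Its liminf is at least $\max_{m\le M}G_m(\rho)-\epsilon$, because $\mu_{\bar V}\rightharpoonup\rho$ a.s.\ and only finitely many continuous $G_m$ are involved.
\item Integrate in $\nu$, then let $M\to\infty$, $\epsilon\to 0$ and $n\to\infty$.
\end{itemize}
A partition of unity in $\rho$ would also work, since $\lambda\mapsto(1-\lambda)a+(1-1/\lambda)b$ is concave.

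\emph{What each route buys.} The paper's proof is two lines given the black box, and it stays within the $L^2$ quadratic-form framework used elsewhere in the paper. Yours is self-contained. It also shows that the finite mean Fisher information and second moment assumptions play no role here, beyond ensuring that $\nu$ charges only densities.

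Finally, delete the stray sentence suggesting $\mathbb{D}_B(\nu^j)=\mathbb{D}_B(\nu^2)$.
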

\begin{proof}
With the same approximation argument as in the proof of Proposition~\ref{prop:propsofmathbbK}, we can suppose that $B$ is smooth and compactly supported. We prove the first two points by introducing a dual formulation for $\mathbb{D}_{B}$. Rewrite
$$\left(\sqrt{F'}-\sqrt{F}\right)^2=F i\left(\frac{F'}{F}\right)$$ with the convex function $i: x\mapsto(\sqrt{x}-1)^2$.
    By a double Legendre transform,
    \begin{equation}
    \label{eq:proofmathbbD}
        i(x) = \sup_{y\in\mathbb{R}} (xy-i^*(y)),
    \end{equation}
    where $i^*(y) = \sup_{x\in\mathbb{R}^*_+} (yx-i(x))$ is the convex function explicitly given by:
    $$i^*(y)=\left\{ \begin{array}{cl}
        \frac{y}{1-y} & \text{if }y\in(-\infty,1) \\
        +\infty & \text{otherwise.}
    \end{array}\right.$$
    The supremum in \eqref{eq:proofmathbbD} is attained at $y=1-x^{-1/2}$. Hence, for a continuous bounded function $\psi \in C_b(\mathbb{R}^{3N})$,  
    \begin{align*}
        \mathbb{D}_{B}(F)&= \int F i\left(\frac{F'}{F}\right)B d\sigma'd\sigma 8r^2dz dr dv_3...dv_N\\
        & \geq \int F \left(\frac{F'}{F} \psi-i^*(\psi)\right)B d\sigma'd\sigma 8r^2dz dr dv_3...dv_N\\
        &=\int \left(F' \psi- F i^*(\psi)\right)B d\sigma'd\sigma 8r^2dz dr dv_3...dv_N\\
         &=\int F\left( \psi'- i^*(\psi)\right)B d\sigma'd\sigma 8r^2dz dr dv_3...dv_N.
    \end{align*}
    The last line is obtained by exchanging $\sigma$ and $\sigma'$ in the first part of the integrand. Taking the supremum in $\psi$, we recover equality (it suffices to approximate $1-(F'/F)^{-1/2}$ by continuous functions):
    \begin{align*}
        \mathbb{D}_{B}(F)&=\sup_{\psi \in C_b(\mathbb{R}^{3N})}\int F\left( \psi'- i^*(\psi)\right)B d\sigma'd\sigma 8r^2dz dr dv_3...dv_N.
    \end{align*}
    Since B is bounded, this is a supremum of linear continuous functions for the weak convergence of probabilities, so it is l.s.c and convex. The superadditivity is obtained by remarking that the above supremum is bigger than when it is restrained to functions $\psi$ depending only on $v_1,...,v_j$, for which we get $\mathbb{D}_{B}(F^{:j})$.
    
    We now turn to point (3). First, superadditivity ensures that the sequence $\mathbb{D}_{B}(\nu^j)$ is increasing (because $\nu^{j+1:j}=\nu^j$) so the limit exists. Using the convexity of $\mathbb{D}_{B}$ proven above together with Jensen's inequality, we obtain:
    $$\mathbb{D}_{B}(\nu^j) = \mathbb{D}_{B}\left( \int_{\mathcal{P}(\mathbb{R}^3)} \rho^{\otimes j} \nu(d\rho) \right)\leq \int_{\mathcal{P}(\mathbb{R}^3)} \mathbb{D}_{B}(\rho \otimes \rho) \nu(d\rho) $$
    since the equality $\mathbb{D}_{B}(\rho^{\otimes j})=\mathbb{D}_{B}(\rho \otimes \rho)$ is immediate from the definition of $\mathbb{D}_{B}$. Taking the limit in $j$ gives a first inequality. To prove the reverse one (which is the most important one for our purpose), we rely on the abstract affinity result \cite[Theorem 10.7]{Tabary2026a}. Matching the notations therein, we let $\psi^j=\sqrt{\nu^j}\in L^2(\mathbb{R}^{3j})$ and we define the bounded linear operator on $L^2(\mathbb{R}^{6})$:
    $$Q : \psi \mapsto \int_{\mathbb{S}^2} (\psi'-\psi) B d\sigma'.$$
    Remark that this is just the Boltzmann operator associated with the collision kernel $B$, it is bounded because we assumed that so is $B$. Its self-adjointness is straightforward, and for any $\psi \in L^2(\mathbb{R}^{6})$,
    $$\langle Q\psi, \psi\rangle_{L^2(\mathbb{R}^{6})}=\frac{1}{2}\int \vert \psi'- \psi \vert^2 B d\sigma' d\sigma8r^2dz dr \geq \langle Q\vert \psi\vert , \vert \psi\vert \rangle_{L^2(\mathbb{R}^{6})},$$
    so the hypotheses of \cite[Theorem 10.7]{Tabary2026a} are verified. Hence
    $$\lim_{j\rightarrow \infty}\langle Q\psi^j, \psi^j\rangle_{L^2(\mathbb{R}^{6})} \geq \int_{\mathcal{P}(\mathbb{R}^3)} \langle Q \sqrt{\rho\otimes \rho},  \sqrt{\rho\otimes \rho}\rangle_{L^2(\mathbb{R}^{6})}  \nu(d\rho).$$
    Since $2\langle Q \sqrt{\rho\otimes \rho},  \sqrt{\rho\otimes \rho}\rangle_{L^2(\mathbb{R}^{6})} =\mathbb{D}_{B}(\rho \otimes \rho)$ and $2\langle Q\psi^j, \psi^j\rangle_{L^2(\mathbb{R}^{6})} =\mathbb{D}_{B}(\nu^j)$, this concludes the proof.
    \end{proof}

\section{Propagating chaos}
\label{sec:propchaos}
We have set up all the tools to begin the proof of propagation of chaos itself. As explained in the introduction, the recent work by Fournier and Mischler \cite{FournierMischler2025} covers the moderately soft potentials $\gamma\in(-2,0)$ using a tightness/uniqueness method, but their proof can actually cover very soft potentials $\gamma \in (-3,-2]$, at the exception of the crucial final step: the integrability given by the Fisher information bound is not enough to claim uniqueness of the cluster points. Indeed, the result by Fournier and Guérin \cite{FournierGuerin2008} states that there exists one most weak solution in $L^1_t L^p_v$ if $p>\frac{3}{3+\gamma}$, a condition which cannot be reached using only Fisher information and a Sobolev embedding. This is where the main novelty of the present work lies: the estimates established in Section~\ref{sec:functional_ineq_on_sphere} can be applied to the dissipation of the Fisher information of the particle system, which will yield the needed higher integrability. Another difference with moderately soft potentials is that the monotonicity of Fisher information for both the particle system and the limit equation is harder to guarantee. Moreover, in this singular setting, it seems that the only reasonable way to construct solutions of Kac's particle system is by using an approximating system with a regularized kernel. To rigorously prove anything on the singular limit system, we need to prove it at the regularized level. This is why we require the existence of suitable approximations $b^k$ of the angular part $b$ of the kernel, which guarantees that formal a priori estimates can be made rigorous.

We will work mostly at the regularized level, essentially because the Itô formula is simpler in this setting. In the following to subsections we show the well-posedness of the regularized system and prove entropy production and Fisher dissipation estimates, uniform in the number of particles and the regularization. They in turn allow us to show tightness of empirical measures in Subsection~\ref{ssec:tightness}. We use (a very close argument to) the tightness with respect to the regularization to build solutions of the unregularized system, to which we transfer the tightness in the number of particles. We then study cluster points $f$ of the system as $N\rightarrow+\infty$, and want to show their uniqueness. By using the properties of the functionals studied in Section~\ref{sec:superadditivity}, we can transfer the estimates from the particle system to $f$. We show that it is a weak solution of the Boltzmann equation. Using the new inequalities from Section~\ref{sec:functional_ineq_on_sphere}, the control on the Fisher dissipation will lead to the $L^1_t L^p_v$ estimate guaranteeing that the weak solution is the unique regular one.

\subsection{The regularized particle system}
\label{ssec:regularizedpartsys}
In this section we introduce and study a regularized version of Kac's particle system, for which wellposed-ness is straightforward. As explained above, it will not only be used to build solutions to Kac's particle system, but it will also be easier to obtain technical estimates in this regularized setting. For any $k\geq 1$, which will be our regularization parameter throughout this section, we define
\begin{align}
    \label{def:regB}
    B^k(r,\sigma\cdot \sigma'):= \alpha^k(r) b^k\left(\sigma\cdot \sigma'\right) &&\alpha^k(r):=(r^2+\frac{1}{k^2})^{\gamma/2},
\end{align}
where $(b^k)_{k\geq 1}$ is the approximation sequence satisfying \eqref{hyp:H1bis} and \eqref{hyp:H1}. We will also use the notation
% $B^n(v-w,\sigma'):=B^n(\vert v-w\vert/2, ((v-w)/\vert v-w\vert)\cdot\sigma')$ and 
$B^k_{ij}(\mathbf{v},\sigma'):=B^k(\vert v_i-v_j\vert/2, ((v_i-v_j)/\vert v_i-v_j\vert)\cdot\sigma')$ for $\mathbf{v}\in \mathbb{R}^{3N}$.

We also introduce a sequence of regularized initial data $(f_0^k)_{k\geq 1}$ such that each $f_0^k$ is smooth with Gaussian lower and upper bounds, and $f_0^k \rightharpoonup f_0$ as $k\rightarrow +\infty$. We can furthermore impose the bounds $H(f_0^k)\leq 2H(f_0)$, $I(f_0^k)\leq 2I(f_0)$, and $m_2(f^k_0)\leq 2 m_2(f_0)$.

For $N(N-1)/2$ independent Poisson measures $(\Pi^N_{ij})_{1\leq i<j \leq N}$ on $\mathbb{R}_+\times \mathbb{S}^2\times \mathbb{R}_+$ with intensity $\frac{1}{N-1} d\tau d\sigma' dx$, and an initial condition $\mathbf{V}^{N,k}_0 $ with law $(f_0^k)^{\otimes N}$, independent of each other, we are interested in the particle system
\begin{align}
    \label{eq:regpartsys}
        \mathbf{V}^{N,k}(t) = &\mathbf{V}^{N,k}_0\\
        &+\sum_{i<j}\int_0^t\!\iint_{\mathbb{S}^2\times \mathbb{R}_+} \left((\mathbf{V}^{N,k})'_{ij}(\tau^-) - \mathbf{V}^{N,k}(\tau^-)\right) \mathbf{1}_{x<B^k_{ij}(\mathbf{V}^{N,k}(\tau^-),\sigma')} \Pi^N_{ij}(d\tau,d\sigma',dx).\nonumber
\end{align}
A solution of the system is required to be a \textit{càdlàg} function and to be adapted to some filtration where the $\Pi^N_{ij}$ are Poisson. 

\begin{prop}
    \label{prop:wellposednesspartreg}
    Let $N\geq2$ and $k\geq 1$, the regularized particle system \eqref{eq:regpartsys} admits a unique exchangeable solution $\mathbf{V}^{N,k}$ on $\mathbb{R}_+$. It satisfies the following conservation of momentum and energy: a.s., for all times $t\geq 0$, 
    \begin{align}
        \sum_{i=1}^N V^{N,k}_i(t) = \sum_{i=1}^N V^{N,k}_{0,i},  &&
         \sum_{i=1}^N \vert V^{N,k}_i(t)\vert^2 = \sum_{i=1}^N \vert V^{N,k}_{0,i}\vert ^2
         \label{eq:conservpartsyst}
    \end{align}
    Moreover, the law $F^{N,k}_t\in \mathcal{P}(\mathbb{R}^{3N})$ of $\mathbf{V}^{N,k}(t)$ is a smooth symmetric probability density, with Gaussian lower and upper bounds, and solves the following regularized Boltzmann Master Equation:
    \begin{align}
\label{def:regboltzmannmastereq}
    \partial_t F^{N,k}_t(\mathbf{v}) &= \frac{1}{N-1} \sum_{1\leq i<j \leq N} Q^k_{ij}F^{N,k}_t(\mathbf{v})
\end{align}
where the operator $Q^k_{ij}$ is given by
\begin{align*}
    Q^k_{ij}F(\mathbf{v}):&= \int_{\mathbb{S}^2} \left[F(\mathbf{v}'_{ij})- F(\mathbf{v})\right] B^k(v_i-v_j,\sigma') d\sigma'.
\end{align*}
\end{prop}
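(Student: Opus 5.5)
The regularized kernel $B^k$ is bounded. Indeed, $\alpha^k(r)=(r^2+k^{-2})^{\gamma/2}\leq k^{-\gamma}$ since $\gamma<0$, and $b^k\in L^\infty([-1,1])$ by \eqref{hyp:H1bis}. Hence the total jump rate of the system is at most $\frac{N}{2}\cdot 4\pi k^{-\gamma}\Vert b^k\Vert_\infty$, uniformly in the state. We therefore construct the solution pathwise, jump by jump.

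\textbf{Construction, uniqueness and conservation.} After thinning, the restriction of each $\Pi^N_{ij}$ to $\{x<\Vert B^k\Vert_\infty\}$ has finitely many atoms on each $[0,T]$. Between consecutive atoms the process is constant. At an atom $(\tau,\sigma',x)$ of $\Pi^N_{ij}$ we apply the collision $(V_i,V_j)\mapsto(z_{ij}+r_{ij}\sigma',z_{ij}-r_{ij}\sigma')$ if $x<B^k_{ij}(\mathbf{V}(\tau^-),\sigma')$, and do nothing otherwise. This defines a càdlàg adapted process, and any other solution must coincide with it atom by atom, which gives uniqueness. When $V_i=V_j$ the collision is trivial, so the undefined $\sigma_{ij}$ causes no trouble. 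Each collision preserves $z_{ij}$ and $r_{ij}$, hence $V_i+V_j$ and $\vert V_i\vert^2+\vert V_j\vert^2$, which yields \eqref{eq:conservpartsyst} almost surely for all $t$. For exchangeability: the initial law $(f_0^k)^{\otimes N}$ is symmetric, and the family $(\Pi^N_{ij})$ is invariant in law under relabeling (using $\sigma'\mapsto-\sigma'$ for reversed pairs, and $B^k$ depends only on $\sigma_{ij}\cdot\sigma'$). Uniqueness in law then gives that $F^{N,k}_t$ is symmetric.

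\textbf{Master equation.} For a test function $\varphi\in C_b(\mathbb{R}^{3N})$, we apply the Itô formula for jump processes, or equivalently compensate the finite-intensity Poisson integrals. This gives
$$\frac{d}{dt}\mathbb{E}\varphi(\mathbf{V}(t))=\frac{1}{N-1}\sum_{i<j}\mathbb{E}\int_{\mathbb{S}^2}\left[\varphi(\mathbf{V}'_{ij})-\varphi(\mathbf{V})\right]B^k_{ij}(\mathbf{V},\sigma')d\sigma'.$$
We then use the pre/post-collision change of variables $dv_idv_jd\sigma'=dv_i'dv_j'd\sigma$, together with the symmetry of $B^k$ under $\sigma\leftrightarrow\sigma'$ (with $r$ unchanged). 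This identifies the generator's adjoint with $\frac{1}{N-1}\sum Q^k_{ij}$, so $F^{N,k}$ is a weak (measure) solution of \eqref{def:regboltzmannmastereq}. Since $Q^k:=\frac{1}{N-1}\sum_{i<j}Q^k_{ij}$ is a bounded operator on $L^1$ (and on weighted $L^\infty$), the linear equation is well posed. Uniqueness of measure solutions gives $F^{N,k}_t=e^{tQ^k}(f_0^k)^{\otimes N}$, which has a density.

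\textbf{Bounds and smoothness (main obstacle).} Write $Q^k=\mathcal{G}-\Lambda$, with gain part $\mathcal{G}F=\frac1{N-1}\sum\int F(\mathbf{v}'_{ij})B^k\,d\sigma'\geq0$ and loss multiplier $\Lambda(\mathbf{v})=\frac1{N-1}\sum_{i<j}\alpha^k(r_{ij})\int b^k\,d\sigma'$.
\begin{itemize}
\item Lower bound: $\Lambda\leq C_{N,k}$, and Duhamel gives $F_t\geq e^{-C_{N,k}t}F_0$, so the Gaussian lower bound of $f_0^k$ propagates.
\item Upper bound: let $M(\mathbf{v})=e^{-a\vert\mathbf{v}\vert^2}$, which is invariant under collisions by energy conservation, so $\mathcal{G}M\leq C_{N,k}M$. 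A comparison/Grönwall argument on $F_t/M$ in $L^\infty$ (the gain operator is positive and bounded there) gives $F_t\leq Ce^{Ct}M$.
\item Smoothness: there is no regularization from the equation. Smoothness must be propagated from $f_0^k$. Differentiating the equation, and using that $B^k$ has a smooth radial part, $\nabla_{\mathbf{v}}$ commutes with the collision map up to terms controlled by $\nabla\alpha^k$. This gives a closed linear system for derivatives, bounded in weighted $L^\infty$.
\end{itemize}
The delicate point is that $b^k$ is only lower semicontinuous and bounded, not smooth. The step I would try first is therefore to work in the $(z,r,\sigma)$ variables. Derivatives in $z$, $v_3,\dots,v_N$ and $r$ pass through harmlessly, since the collision map is $\sigma\mapsto\sigma'$ at fixed $(z,r)$. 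Derivatives in $\sigma$ fall on $b^k(\sigma\cdot\sigma')$ after the change of variables; these can be moved onto $F(\sigma')$ by the rotation trick used in Lemma~\ref{lem:eqsobnorms} ($b_l(\sigma)\cdot\nabla_\sigma$ acting on the kernel equals minus $b_l(\sigma')\cdot\nabla_{\sigma'}$), so the $\sigma$-derivatives commute with the collision operator. If smoothness turns out to be delicate, I would settle for the regularity that is actually needed later (Lipschitz/$C^2$ with weights), which this commutation argument provides.
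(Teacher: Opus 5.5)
Your construction by thinning, the atom-by-atom uniqueness, the conservation laws, the exchangeability argument (relabeling with $\sigma'\mapsto-\sigma'$ for reversed pairs), the weak Master equation with uniqueness of measure solutions, and the Gaussian lower and upper bounds are sound and essentially match the paper. For the upper bound, the paper uses a maximum principle: Gaussians are stationary, $Q^k_{ij}M=0$, and combined with positivity this also removes your $e^{Ct}$ factor.

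The gap is the smoothness step, which the statement asserts and which the later entropy and Fisher computations rely on. First, your claim that $\nabla_{\mathbf{v}}$ commutes with the collision operator up to terms involving $\nabla\alpha^k$ is false in the direction $u=v_i-v_j$ at fixed $z$. The operator $KF(u)=\int_{\mathbb{S}^2}F(|u|\sigma')\,b^k(\hat u\cdot\sigma')\,d\sigma'$ commutes with rotations of $u$ but not with translations. For $F=u_1$ one gets $KF=\mu_1u_1$ with $\mu_1=\int_{\mathbb{S}^2}(\sigma\cdot\sigma')b^k(\sigma\cdot\sigma')\,d\sigma'$. So $\partial_{u_1}KF=\mu_1$, while $K(\partial_{u_1}F)=\mu_0:=\int_{\mathbb{S}^2}b^k(\sigma\cdot\sigma')\,d\sigma'>\mu_1$, and $\alpha^k$ plays no role in this discrepancy.

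Second, your fallback to $(z,r,\sigma)$ derivatives does give exact commutation for $\nabla_z$, $\nabla_{v_l}$ ($l\neq i,j$) and $b_l(\sigma)\cdot\nabla_\sigma$, by rotation invariance and with no regularity needed on $b^k$. It also gives commutation up to $(\alpha^k)'$ for $\partial_r$. But bounds on all these polar derivatives only yield smoothness on $\{v_i\neq v_j\}$, because the chart degenerates on the diagonals: $|v_i-v_j|$ has all such derivatives bounded and is not differentiable at $v_i=v_j$. So neither $C^\infty$ nor even $C^2$ across the diagonals follows, and settling for weaker regularity does not prove the statement.

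The missing idea is to commute with an elliptic Cartesian operator rather than with first-order fields. Take the Laplacian in $(v_i,v_j)$, which in the $(z,r,\sigma)$ variables reads $\frac12\Delta_z+\frac12(\partial_r^2+\frac2r\partial_r+r^{-2}\Delta_\sigma)$. The zonal operator commutes with each piece: with $\Delta_\sigma$ because the kernel depends only on $\sigma\cdot\sigma'$, and with the rest because it acts at fixed $(z,r)$. Multiplication by $\alpha^k(|v_i-v_j|/2)$, a smooth function with bounded derivatives, only produces lower-order bounded commutators. Equivalently, on each spherical-harmonic sector in $u$ the zonal operator is multiplication by a Funk--Hecke eigenvalue, and these sectors are invariant under $\Delta_u$. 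Hence $Q^k_{ij}$ is bounded on $H^m(\mathbb{R}^{3N})$ for every $m$, the semigroup $e^{tQ^k}$ propagates all the Sobolev regularity of $(f^k_0)^{\otimes N}$, and Sobolev embedding gives smoothness. This is the paper's argument.
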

\begin{proof}

Since $B^k$ is bounded by some $C_k$ by hypothesis \eqref{hyp:H1bis}, in the jump term of \eqref{eq:regpartsys} we can integrate over $x\in[0,C_k]$ rather than $\mathbb{R}_+$. This means we can consider, for any $T>0$, the restricted Poisson measures $\Pi^N_{ij}\left(\cdot \cap ( [0,T]\times \mathbb{S}^2\times [0,C_n])\right)$. These have a.s. a finite number of jumps, so we can solve for $\mathbf{V}^{N,k}$ by simply summing the jumps. Since every jump conserves momentum and energy, we obtain \eqref{eq:conservpartsyst}.

With a finite number of jumps, the Itô formula for some test function $\varphi(\mathbf{v})$ writes
    \begin{align*}
        \varphi(\mathbf{V}^{N,k}(t)) &=\varphi(\mathbf{V}^{N,k}_0)\\
        &+  \sum_{i<j}\int_0^t\!\iint_{\mathbb{S}^2\times \mathbb{R}_+}\! \! \! \left[\varphi((\mathbf{V}^{N,k})'_{ij}(\tau^-))- \varphi(\mathbf{V}^{N,k}(\tau^-))\right] \mathbf{1}_{x<B^k_{ij}(\mathbf{V}^{N,k}(\tau^-),\sigma')} \Pi^N_{ij}(d\tau,d\sigma',dx).
    \end{align*}
    Taking expectations, we readily get
    \begin{align*}
        \int_{\mathbb{R}^{3N}}\!\!\varphi(\mathbf{v})F^{N,k}_t(d\mathbf{v})=&\int_{\mathbb{R}^{3N}}\!\! \varphi(\mathbf{v})F^{N,k}_0(d\mathbf{v})+\frac{1}{N-1}\sum_{i<j}\int_0^t\!\! \int_{\mathbb{R}^{3N}} Q^k_{ij}\varphi(\mathbf{v}) F^{N,k}_\tau(d\mathbf{v}) d\tau,
\end{align*}
meaning that $F^{N,k}_t$ is a weak solution of \eqref{def:regboltzmannmastereq}. The formula also implies $F^{N,k}\in C(\mathbb{R}_+,\mathcal{P}(\mathbb{R}^3))$. The linearity of the equation and the boundedness of the kernel $B^k$ in the operator $Q^k_{ij}$ easily ensure that this weak solution is unique. But there exists a smooth solution: the operator $Q^k_{ij}$ is bounded on $L^2(\mathbb{R}^{3N})$, and using the $(v_i,v_j)\mapsto(z,r,\sigma)$ change of variables, one can check that it commutes with the Laplacian (by \cite[Lemma 2.1]{ImbertSilvestreVillani2024}, it commutes with the spherical part of the Laplacian $\Delta_\sigma$, and does not act on the other variables). This implies that $Q^k_{ij}$ is bounded on Sobolev spaces of arbitrarily high order so the smoothness of the initial condition can be propagated. The equation also enjoys a maximum principle, which propagates the Gaussian bounds. To this end, observe that any Gaussian $g(\mathbf{v})=e^{-c\vert v\vert^2}$ is a function of the energy $\mathbf{v}^2$ only, so it satisfies $g(\mathbf{v}'_{ij})=g(\mathbf{v})$ for any $i,j$ and $\sigma'$, and hence $Q^k_{ij} g = 0$, and $g$ is a constant solution.
\end{proof}

Before using the approximated system to show existence of solutions for the true Kac's particle system, we study the (regularized) Boltzmann Master equation and derive uniform (in $N$ and $k$) entropy and Fisher information estimates.

\subsection{Estimates from the Boltzmann Master equation}
\label{ssec:estimatesfromthemastereq}

In this section we study the production of entropy and the dissipation of the Fisher information of the regularized Master Equation \eqref{def:regboltzmannmastereq}. The regularity of the solution $F^{N,k}$ ensures that the computations below are rigorous. These estimates are a cornerstone of our proof, as we will use them to build solutions to the unregularized particle system, and then transfer them to the laws of the newly built solutions as $k\rightarrow+\infty$, and ultimately to the cluster points of the system as $N\rightarrow +\infty$.

%  Let $F^N_t\in \mathcal{P}(\mathbb{R}^{3N})$ be the law of a particle system $\mathbf{V}^N(t)$ built in Proposition~\ref{prop:wellposednesspart}. We want to obtain entropy and Fisher information estimates on $F^N_t$. Taking the formal limit of the regularized Boltzmann equation \eqref{def:regboltzmannmastereq}, $F^N$ should solve the following \textit{Boltzmann Master Equation}
% \begin{align}
%     \label{def:boltzmannmastereq}
%     \partial_t F^N_t = \frac{1}{N-1}\sum_{1\leq i<j \leq N} Q_{ij} F^N_t, &&Q_{ij}F(\mathbf{v}):= \int_{\mathbb{S}^2} \left[F(\mathbf{v}'_{ij})- F(\mathbf{v})\right] B(v_i-v_j,\sigma') d\sigma'.
% \end{align}
% Still at the formal level, this equation unsurprisingly conserves mass, momentum and energy, and both the entropy and the Fisher information are non-increasing along this equation. Hence we can hope to recover uniform bounds (in $N$ and in $t$) on $F^N_t$, and also exploit the dissipation terms. However, $F^N$ only solves the Master Equation above in a weak form, and the equation is itself singular, so it is not clear that formal a priori estimates can be made rigorous. Since $F^N_t$ is a weak limit of smooth solutions to the regularized equation, we will rather work at this level and then pass to the limit.

We begin with the entropy:
\begin{prop}
    \label{prop:entropyestimate}
    Recall that for any $N\geq2$, $k\geq 1$, $F^{N,k}_t$ is the law of the unique solution $\mathbf{V}^{N,k}(t)$ of the particle system built in Proposition~\ref{prop:wellposednesspartreg}. For all $t\geq 0$, the following estimate holds:
    \begin{equation}
        \label{eq:entropyestimate}
        H(F^{N,k}_t) + \int_0^t \mathbb{D}_{B^k}(F^{N,k}_\tau) d\tau \leq 2 H(f_0),
    \end{equation}
    where $\mathbb{D}_{B^k}$ was defined in \eqref{def:mathbbD}.
\end{prop}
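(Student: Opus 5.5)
We differentiate the entropy along the regularized master equation \eqref{def:regboltzmannmastereq}. The plan is to compute $\frac{d}{dt}H(F^{N,k}_t)$, identify it with minus a multiple of the usual entropy production, and bound that from below by $\mathbb{D}_{B^k}$. Proposition~\ref{prop:wellposednesspartreg} gives smoothness and Gaussian two-sided bounds, so $\log F^{N,k}_t$ grows at most quadratically and the computations below are justified.

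\emph{Entropy identity.} Write $F=F^{N,k}_t$. Mass is conserved, so
$$\frac{d}{dt}H(F)=\frac{1}{N(N-1)}\sum_{i<j}\int Q^k_{ij}F\,\log F\,d\mathbf{v}.$$
By exchangeability all pairs contribute equally, and the sum has $N(N-1)/2$ terms, so it suffices to treat $(i,j)=(1,2)$ with total prefactor $\frac12$. We pass to $(z,r,\sigma)$ coordinates for $(v_1,v_2)$ and use the pre/post-collisional change of variables $\sigma\leftrightarrow\sigma'$, under which $B^k$ is symmetric. This gives the standard symmetrization
$$\int Q^k_{12}F\log F=-\frac12\int (F'-F)(\log F'-\log F)\,B^k\,d\sigma d\sigma'\,8r^2dzdr\,dv_3\dots dv_N.$$
Hence
$$\frac{d}{dt}H(F)=-\frac14\int (F'-F)\log(F'/F)B^k.$$
The Gaussian bounds make these integrals absolutely convergent, since $B^k$ is bounded.

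\emph{Comparison with $\mathbb{D}_{B^k}$.} We use the elementary inequality $(x-y)(\log x-\log y)\ge 4(\sqrt x-\sqrt y)^2$, valid for $x,y>0$. It follows from $\log x-\log y=2(\log\sqrt x-\log\sqrt y)$ together with $(a^2-b^2)(\log a-\log b)\ge 2(a-b)^2$. The latter holds because $(\log a-\log b)/(a-b)\ge 2/(a+b)$, which is the logarithmic-mean versus arithmetic-mean inequality. Applying it pointwise gives
$$\frac{d}{dt}H(F)\le -\int (\sqrt{F'}-\sqrt F)^2B^k=-\mathbb{D}_{B^k}(F),$$
matching the remark after \eqref{def:mathbbD}. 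Integrating in time yields $H(F^{N,k}_t)+\int_0^t\mathbb{D}_{B^k}(F^{N,k}_\tau)d\tau\le H(F^{N,k}_0)$.

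\emph{Conclusion.} Since $F^{N,k}_0=(f_0^k)^{\otimes N}$ and we normalize by $1/N$, we have $H(F^{N,k}_0)=H(f_0^k)\le 2H(f_0)$ by the choice of regularized data. This is \eqref{eq:entropyestimate}.

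The only delicate point is the bookkeeping of constants: the factor $\frac1N$ in $H$, the $\frac{1}{N-1}$ in the equation, and the factor $\frac12$ from symmetrization must combine so that the coefficient in front of $\mathbb{D}_{B^k}$ is at least $1$. If the count comes out smaller, the statement still holds up to an absolute constant. I would double-check it by running the computation for $N=2$ alone.
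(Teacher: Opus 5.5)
Your proof is correct and follows essentially the same route as the paper: reduce to the pair $(1,2)$ by exchangeability, symmetrize in $\sigma\leftrightarrow\sigma'$ in the $(z,r,\sigma)$ variables, apply $(x-y)\log(x/y)\ge 4(\sqrt{x}-\sqrt{y})^2$, then integrate in time and use $H(f_0^k)\le 2H(f_0)$. Your closing worry about constants is unnecessary: your computation already gives exactly $\frac12\cdot\frac12\cdot 4=1$ in front of $\mathbb{D}_{B^k}$, so no constant is lost.
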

\begin{proof}
    We fix $N$ and $k$, drop the superscript and write $F=F^{N,k}$. The computation is almost identical to the classical proof of the H-theorem. We differentiate in time, using the conservation of mass $\int_{\mathbb{R}^N} \partial_t F=0$. Recalling that our entropy is normalized by $N$, we obtain:
    \begin{align*}
        \frac{d}{dt} H(F_t)&= \frac{1}{N} \int_{\mathbb{R}^{3N}} \partial_t F_t(\mathbf{v}) \log F_t(\mathbf{v}) d\mathbf{v}\\
        &=\frac{1}{N(N-1)} \sum_{i<j} \int_{\mathbb{R}^{3N}} \left(Q^k_{ij} F_t(\mathbf{v})\right) \log F_t(\mathbf{v}) d\mathbf{v}.
    \end{align*}
    All the terms in the sum are the same because of the symmetry of $F$ in its $N$ variables. Hence, using the change of variables $(v_1,v_2)\mapsto (z,r,\sigma)$, which maps $\mathbf{v}'_{12}$ to $(z,r,\sigma',v_3,...,v_N)$, we get
    \begin{align*}
        \frac{d}{dt} H&(F_t)\\
        \leq& \frac{1}{2}\int_{\mathbb{R}^{3N}} \left(Q^k_{12} F_t(\mathbf{v})\right) \log F_t(\mathbf{v}) d\mathbf{v}\\
        =&\frac{1}{2}\int_{\mathbb{R}^{3N}} \int_{\mathbb{S}^2} \left[F_t(\mathbf{v}'_{12})- F_t(\mathbf{v})\right] \log F_t(\mathbf{v}) B^k_{12}(\mathbf{v},\sigma') d\sigma'  d\mathbf{v}\\
        =&\frac{1}{2}\int_{\mathbb{R}_+\times \mathbb{R}^{3N-3}} \left[\int_{(\mathbb{S}^2)^2} \left[F_t'- F_t\right] \log F_t b^k(\sigma'\cdot\sigma) d\sigma'  d\sigma \right]\alpha^k(r) 8r^2 dr dzdv_3..dv_N.
    \end{align*}
    where $F_t'= F_t(z,r,\sigma',v_3,...,v_N)$. Symmetrizing in $\sigma$ and $\sigma'$, the inner integrand rewrites:
    \begin{align*}
        \int_{(\mathbb{S}^2)^2} \left[F_t'- F_t\right] \log F_t b^k(\sigma'\cdot\sigma) d\sigma'  d\sigma
        &=- \frac{1}{2}\int_{(\mathbb{S}^2)^2} \left[F_t'- F_t\right] \log \left(\frac{F_t'}{F_t}\right) b^k(\sigma'\cdot\sigma) d\sigma'  d\sigma.
    \end{align*}
    Using the inequality $(a-b)\log(a/b) \geq 4 (\sqrt{b}-\sqrt{a})^2$ for $a,b>0$, we exactly get
    $$\frac{d}{dt} H(F_t) \leq -\mathbb{D}_{B^k}(F_t)$$
    which we integrate in time to get the result, since $H(F^{N:k}_0)=H((f^{k}_0)^{\otimes N})=H(f^{k}_0)\leq 2H(f_0)$ by hypothesis on $f^k_0$ (see the beginning of Section~\ref{ssec:regularizedpartsys}).
    % Let us fix $k\leq n$, since $B^k\geq B^n$ (by hypothesis on $(b^k)_k$ and by construction of $(\alpha^k)_k$), we have $\mathbb{D}_{B^k}\geq \mathbb{D}_{B^n}$. By lower semicontinuity of the entropy and of $\mathbb{D}_{B^k}$ (Point (1) in Proposition~\ref{prop:propsofmathbbD}) and Fatou's Lemma, for almost all $t\geq 0$,
    % $$H(F^N_t) + \int_0^t \mathbb{D}_{B^k}(F^N_\tau) d\tau \leq \liminf_n H(F_t) + \liminf_n \int_0^t \mathbb{D}_{B^k}(F_\tau) d\tau \leq 2 H(f_0).$$
    % This is the claimed result \eqref{eq:entropyestimate} with $B^k$ instead of $B$. We simply let $k\rightarrow \infty$ and conclude by monotone convergence.
\end{proof}
We now turn to the dissipation Fisher information. It was observed in \cite{FournierMischler2025} that the Fisher information is non-increasing along the Boltzmann Master Equation, essentially because of the $N=2$ case treated in \cite{ImbertSilvestreVillani2024}. We will do the proof again because we want to extract some of the dissipation terms: these terms will be crucial to derive the additional regularity needed for uniqueness with very soft potentials. At the regularized level, we cannot obtain an estimate of the same strength as if we were working with the true $b$ because its singularity actually helps us. However, we can make sure that the optimal estimate is recovered as $k\rightarrow +\infty$.
\begin{prop}
    \label{prop:fisherestimate}
    For any $k\geq 1$, we let $\beta^k(r) := \alpha^k(r) r^{-2}$.
    There exists a non-negative non-increasing sequence $u_k \xrightarrow[k\rightarrow +\infty]{} 0$ such that, if we define
     $\omega^k(u) := \min(u^{-s},u_k^{-s})$,
    then for every $N\geq 2$, $k\geq 1$, and time $t\geq 0$, the following estimate holds:
    \begin{equation}
        \label{eq:fisherestimate}
        I(F^{N,k}_t) + c_3\int_0^t \mathbb{K}_{\beta^k}^{\omega^k}(F^{N,k}_\tau) d\tau \leq 2 I(f_0).
    \end{equation}
    Here $c_3>0$ is independent of $N$ and $k$, and $\mathbb{K}_{\beta}^{\omega}$ was defined in \eqref{def:mathbbK}.
\end{prop}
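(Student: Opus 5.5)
We will follow the computation of \cite{FournierMischler2025} and \cite{ImbertSilvestreVillani2024}, keeping a fraction $\lambda$ of the dissipation. We fix $N,k$, write $F=F^{N,k}$, and differentiate $I(F_t)$ using \eqref{def:regboltzmannmastereq}. By symmetry every pair $(i,j)$ contributes the same, so
\begin{align*}
\frac{d}{dt}I(F_t)=\frac{1}{2}\jap{I'(F_t),Q^k_{12}F_t}\cdot N\cdot\frac{1}{N}.
\end{align*}
Next we change variables $(v_1,v_2)\mapsto(z,r,\sigma)$ and split $\nabla_{v_1,v_2}$ into its $z$, $r$ and spherical components. As in \cite{ImbertSilvestreVillani2024} (and \cite{FournierMischler2025} for extra variables $v_3,\dots,v_N$), $Q^k_{12}$ commutes with the $z$, $v_3,\dots,v_N$ derivatives and with rotations. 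Hence the non-spherical parts give non-positive contributions by convexity, except for the radial term produced by $\partial_r\alpha^k$. What remains, fiberwise in $(z,r,v_3,\dots)$, is
\begin{align*}
-\alpha^k(r)r^{-2}\,\jap{\mathcal{I}'(g),L_{b^k}g}\quad\text{plus a cross term}\quad \lesssim \frac{|(\alpha^k)'|^2}{\alpha^k}\iint\frac{(g'-g)^2}{g+g'}b^k,
\end{align*}
where $g=F(z,r,\cdot,\dots)$ and $L_{b^k}$ is the nonlocal operator with kernel $b^k$. Since $|r(\alpha^k)'|\le|\gamma|\alpha^k$, the cross term is absorbed through \eqref{eq:forfiherdecrease} using \eqref{hyp:H1}. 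Because $|\gamma|^2\le 4\Lambda_{b^k}(1-\lambda)$, a portion $\lambda\alpha^k r^{-2}\jap{\mathcal{I}'(g),L_{b^k}g}$ survives with a good sign. Here $g$ need not be even in $\sigma$, and we handle this exactly as in \cite{ImbertSilvestreVillani2024}, by symmetrizing $\sigma\to-\sigma$.

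The next step is to bound $\jap{\mathcal{I}'(g),L_{b^k}g}$ from below by $\mathbf{K}^{\omega^k}(g)$. By \eqref{hyp:H1bis}, $b^k=b\ge c_b\chi_s$ on $[-1,1-1/k]$ and $b^k\ge\rho_k$ near $1$. We would like a subordination-type kernel below $b^k$, of the form
\begin{align*}
\chi^k(c):=c_s\int_0^\infty\Phi_t(c)\,\omega^k(t)\,t^{-1}\,dt,
\end{align*}
truncated at small times $u_k$. It should satisfy $c\,\chi^k\le b^k$ for a constant $c$ independent of $k$. The choice is $u_k\to0$ slowly enough that $\chi^k$ is bounded by a multiple of $\min(\chi_s,\rho_k)$. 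This works because the truncated superposition has height roughly $u_k^{-1-s}$ near $c=1$, so we take $u_k^{-1-s}\lesssim\rho_k$, while away from the diagonal $\chi^k\lesssim\chi_s$. Since $\jap{\mathcal{I}'(g),\cdot}$ applied to a difference operator is monotone in the kernel (the IVS representation as an integral with non-negative integrand $g|\nabla\log g'-\nabla\log g|^2_{\sigma',\sigma}$ plus a nonnegative term), we get $\jap{\mathcal{I}'(g),L_{b^k}g}\ge c\jap{\mathcal{I}'(g),L_{\chi^k}g}$. We then rerun the proof of Lemma~\ref{lem:key_ineq_part1}: we use convexity, Lemma~\ref{lem:dissip_fisher_spherheat} and \eqref{eq:logcontrolssqrt2} with the weight $\omega^k(t)t^{-1}$ in place of $t^{-1-s}$, and integrate $\int_0^t d\tau$ against it. This yields $\int_0^\infty\|\sqrt{g_\tau}\|^2_{\dot H^2}\int_\tau^\infty\omega^k(t)t^{-1}dt\,d\tau\gtrsim\mathbf{K}^{\omega^k}(g)$, using $\int_\tau^\infty\min(t^{-s},u_k^{-s})t^{-1}dt\ge s^{-1}\omega^k(\tau)$.

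Finally we integrate over $z,r,v_3,\dots$ with the Jacobian $8r^2$. The weight becomes $\beta^k=\alpha^kr^{-2}$, matching \eqref{def:mathbbK}. Integrating in time and using $I(F_0)=I(f^k_0)\le2I(f_0)$ gives \eqref{eq:fisherestimate}.

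The main obstacle is the comparison $b^k\gtrsim\chi^k$ uniformly in $k$, that is, constructing $u_k$ and checking heat-kernel bounds for $\Phi_t$ (small-time Gaussian behaviour) near $c=1$. A secondary difficulty is verifying that the IVS dissipation formula is indeed monotone in the kernel, so that replacing $b^k$ by the smaller $\chi^k$ is legitimate.
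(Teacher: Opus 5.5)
Your route is the paper's. It uses the Imbert--Silvestre--Villani decomposition, with the radial remainder absorbed through \eqref{eq:forfiherdecrease} and \eqref{hyp:H1} so that $\lambda D_{spherical}$ survives. It then places a subordinated kernel below $b^k$, uses monotonicity of the spherical dissipation in the kernel, and reruns Lemma~\ref{lem:key_ineq_part1}. But the comparison kernel as you define it fails. With weight $\omega^k(t)t^{-1}$ on all of $(0,\infty)$, you have $\omega^k(t)t^{-1}=u_k^{-s}t^{-1}$ for $t<u_k$. Since $\Phi_t(1)\sim(4\pi t)^{-1}$ as $t\to0$, this gives $\int_0^{u_k}\Phi_t(1)t^{-1}dt=+\infty$, and by Fatou $\chi^k(c)\to+\infty$ as $c\to1$ (it blows up like $u_k^{-s}\theta^{-2}$, a logarithmic-Laplacian-type singularity). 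Since $b^k\in L^\infty$ by \eqref{hyp:H1bis}, no constant $c>0$ gives $c\,\chi^k\le b^k$. Your claim that the height near $c=1$ is of order $u_k^{-1-s}$ is false for this kernel. The error is the correspondence ``$t^{-s}\to\omega^k(t)$, hence $t^{-1-s}\to\omega^k(t)t^{-1}$''. The Fubini step of Lemma~\ref{lem:key_ineq_part1} only needs a weight $\kappa^k$ with $\int_\tau^\infty\kappa^k(t)\,dt\gtrsim\omega^k(\tau)$. The right choice is $\kappa^k=-\frac1s(\omega^k)'=t^{-1-s}\mathbf 1_{t\ge u_k}$, which vanishes on $(0,u_k)$.

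Take $\tilde b^k(c):=c_bc_s\int_{u_k}^\infty\Phi_t(c)t^{-1-s}dt$. This is what your phrase ``truncated at small times'' and the height $u_k^{-1-s}$ implicitly describe, and it is exactly the paper's kernel. With it, everything closes:
\begin{itemize}
\item On $[-1,1-1/k]$, $\tilde b^k\le c_b\chi_s\le b^k$.
\item $\tilde b^k(c)\le\tilde b^k(1)$ because $\Phi_t(c)\le\Phi_t(1)$, which follows from the semigroup property and Cauchy--Schwarz, or from monotonicity of $\Phi_t$.
\item Define $u_k$ by the exact equation $c_bc_s\int_{u_k}^\infty\Phi_t(1)t^{-1-s}dt=\rho_k$. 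Then $\tilde b^k\le\rho_k\le b^k$ on $[1-1/k,1]$, no small-time heat-kernel asymptotics are needed, and $u_k\searrow0$ because $\rho_k\nearrow\infty$.
\item The time integral is then $\int_{\max(\tau,u_k)}^\infty t^{-1-s}dt=s^{-1}\omega^k(\tau)$ exactly.
\end{itemize}
Monotonicity in the kernel is immediate: the spherical dissipation is $\iint g\,|\nabla_\sigma\log g'-\nabla_\sigma\log g|^2_{\sigma',\sigma}\,b$, which is linear in $b$ with a non-negative integrand.

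Two minor points. First, $g=F(z,r,\cdot,\bar v)$ is already even in $\sigma$ by exchangeability of $F^{N,k}_t$, so no symmetrization is needed before applying \eqref{eq:forfiherdecrease}. Second, the prefactor should read $\frac{d}{dt}I(F_t)=\frac N2\jap{I'(F_t),Q^k_{12}F_t}$; combined with the $\frac1N$ normalization of $I$, this is what makes $c_3$ independent of $N$.
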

% \begin{prop}
%     \label{prop:fisherestimate}
%     Let $\beta^N(r) := \alpha^N(r)/(r^2)$. For $u>0$, we consider the weight $\omega^N(u)$ defined using the value $u_N>0$ such that:
%     \begin{align*}
%         \omega^N(u)=\min(u^{-s},u_N^{-s}), &&\int_{u_N}^{+\infty} \Vert \Phi_u\Vert_{L^\infty(\mathbb{S}^2)} u^{-1-s}du=(c_b \eta_N)^{-1}.
%     \end{align*}
%     There exists $c_3>0$ (independent of $N$), such that for all times $t\geq 0$, the following estimate holds:
%     \begin{equation}
%         \label{eq:fisherestimate}
%         I(F^N_t) + c_3\int_0^t \mathbb{K}_{\beta^N}^{\omega^N}(F^N_\tau) d\tau \leq I(f_0),
%     \end{equation}
%     where $\mathbb{K}_{\beta^N}^{\omega^N}$ was defined in \eqref{def:mathbbK}. We recall that $\Phi_u$ is the heat kernel on the sphere at time $u$ and $c_b$ is the constant in \eqref{hyp:H2}.
% \end{prop}
\begin{proof}
As in the previous proof, we fix $N$ and $k$, and first derive the estimate for the approximating sequence $F=F^{N,k}$ solving the regularized Master Equation \eqref{def:regboltzmannmastereq}.
    % \textit{Step 1.} In this step we show that the weight can be written $\omega^N(u)= \int_u^{+\infty} \kappa^N(y)dy $, where $\kappa^N(u):= s u^{-1-s}\mathbf{1}_{u\geq u_N}$ in turn satisfies
    % \begin{equation}
    % \label{eq:prooffisherestimate}
    %     b^N(\cos(\theta)) \geq \frac{c_b}{s} \tilde{b}^N(\cos(\theta)) :=\frac{c_b}{s} \int_0^{+\infty} \Phi_u(\cos(\theta)) \kappa^N(u)du,
    % \end{equation}
    % for all $\theta\in(0,\pi)$. Indeed, the right-hand side is smaller than $\eta_N^{-1}$ by definition of $u_N$. Combining hypothesis \eqref{hyp:H2} on $b$ and the kernel expression of the fractional Laplacian \eqref{def:fraclap_kernel}, we also have
    % $$b(\cos(\theta))\geq \frac{c_b}{s} \int_0^{+\infty} \Phi_u(\cos(\theta)) s u^{-1-s} dt,$$
    % and the right-hand-side is greater than the one of \eqref{eq:prooffisherestimate} since $s u^{-1-s}\geq \kappa^N(u)$. Since $b^N=\min(b, \eta_N^{-1})$, we get \eqref{eq:prooffisherestimate}.
    
    \textit{Step 1.} In this step we want to extract some of the dissipation of the Fisher information, i.e. reach an inequality of the form $\frac{d}{dt}I(F_t)\leq -D$ for some $D\geq 0$ of higher regularity than $I$. Let us differentiate in time the Fisher information, recalling that $\jap{\cdot,\cdot}$ are used to denote Gâteaux-derivatives. The same symmetry argument as in the previous proof yields:
    $$\frac{d}{dt}I(F_t) \leq \frac{1}{N-1}\sum_{i<j}\jap{I'(F_t),Q^k_{ij}F^k_t} = \frac{N}{2}\jap{I'(F_t),Q^k_{12}F_t}.$$
    We cut the Fisher information on the $N$ variables in two parts:
    $$I(F_t)=I_{12}(F_t)+I_{\geq 3}(F_t):=\frac{1}{N}\int_{\mathbb{R}^{3N}}\!\! \frac{\vert \nabla_{v_1,v_2} F_t\vert^2}{F_t}dv_1...dv_N+\frac{1}{N}\int_{\mathbb{R}^{3N}}\!\! \frac{\vert \nabla_{v_3,...,v_N} F_t\vert^2}{F_t}dv_1...dv_N.$$
    It is shown in \cite{FournierMischler2025} that $\jap{I'_{\geq 3}(F_t),Q^k_{12}F_t}\leq 0$, essentially using that $I_{\geq 3}$ and $Q^k_{12}$ act on different variables. The term $\jap{I'_{12}(F_t),Q^k_{12}F_t}$ is the Fisher dissipation of the $2$-particle system, integrated over the remaining variables $\bar{v}:=v_3,...,v_N$. More precisely, if we fix such a $\bar{v}$ and consider the function $F^{\bar{v}}:=F_t(\cdot,\bar{v})$ on $\mathbb{R}^6$, we have $N I_{12}(F_t)=2\int I(F^{\bar{v}}) d\bar{v}$, leading to
    $$\frac{N}{2}\jap{I'_{12}(F_t),Q^k_{12}F_t} = \int_{\mathbb{R}^{3(N-2)}} \jap{I'(F^{\bar{v}}),Q^k_{12}F^{\bar{v}}} d\bar{v}.$$
    From \cite[Proof of Theorem 1.2]{ImbertSilvestreVillani2024}, we have 
    $$4\jap{I'(F^{\bar{v}}),Q^k_{12}F^{\bar{v}}} d\bar{v}=-D_{parallel}-D_{spherical}+R.$$
    (We have a factor $4$ instead of $2$ because our $2$-particle Fisher information is normalized by $2$ and the one in \cite{ImbertSilvestreVillani2024} is not.) Here $D_{parallel}\geq 0$ (the precise expression does not matter), and with the change of variables $(v_1,v_2)\mapsto(z,r,\sigma)$ we have
    \begin{align*}
        D_{spherical}&=2\int_{(\mathbb{S}^2)^2\times\mathbb{R}_+\times \mathbb{R}^3} \alpha^k(r) F^{\bar{v}}\vert \nabla_\sigma \log (F^{\bar{v}})' - \nabla_\sigma\log F^{\bar{v}} \vert^2_{\sigma',\sigma}b^k(\sigma\cdot\sigma') d\sigma' d\sigma dr dz,\\
    R&=\int_{(\mathbb{S}^2)^2\times\mathbb{R}_+\times \mathbb{R}^3} \frac{r^2((\alpha^k)')^2}{\alpha^k} \frac{((F^{\bar{v}})'-F^{\bar{v}})^2}{F'^{\bar{v}}+F^{\bar{v}}} b^k(\sigma\cdot\sigma')  d\sigma' d\sigma drdz.
    \end{align*}
    Here $(F^{\bar{v}})'=F^{\bar{v}}(z,r,\sigma')$ (while $(\alpha^k)'$ is the derivative of $\alpha^k$). By definition of $\alpha^k$ and hypothesis \eqref{hyp:H1},
    $$\frac{r(\alpha^k)'}{\alpha^k}= \frac{r^2 \vert \gamma\vert}{\frac{1}{k^2}+r^2}\leq \vert \gamma\vert \leq 2\sqrt{\Lambda_{b^k}(1-\lambda)},  $$
    hence
    $$R\leq 4 \Lambda_{b^k}(1-\lambda)\int_{(\mathbb{S}^2)^2\times\mathbb{R}_+\times \mathbb{R}^3} \alpha^k \frac{((F^{\bar{v}})'-F_{\bar{v}})^2}{(F^{\bar{v}})'-+F_{\bar{v}}} b^k(\sigma\cdot\sigma')  d\sigma' d\sigma dr dz.$$
    Applying the inequality \eqref{eq:forfiherdecrease} for every value of $z$ and $r$ in the integrand of $R$, we obtain the bound $R\leq D_{spherical}(1-\lambda)$. We end up with the following dissipation:
    \begin{equation}
    \label{eq:prooffisherestimate2}
        \frac{d}{dt }I(F_t) \leq \int_{\mathbb{R}^{3(N-2)}} \jap{I'(F^{\bar{v}}),Q^k_{12}F^{\bar{v}}}  d\bar{v}\leq -\frac{\lambda}{4}\int_{\mathbb{R}^{3(N-2)}} D_{spherical}d\bar{v}.
    \end{equation}
    
    \textit{Step 2.} We prove a somewhat intricate but useful lower bound on $b^k$. Essentially, we want to recover the singularity of the fractional Laplacian as $k\rightarrow +\infty$. Recall that by hypothesis \eqref{hyp:H2}, $b\geq c_b \chi_s$ where $\chi_s$ is the kernel of the fractional Laplacian. By \eqref{hyp:H1bis}, $b^k(c)=b(c)$ for $c\in [-1,1-1/k]$ and $b^k(c)\geq \rho_k $ for $c\in [1-1/k,1]$. We define 
    $u_k>0$ such that $$c_b c_s \int_{u_k}^{+\infty} \Phi_u(1) u^{-1-s}du =\rho_k,$$
    with $c_s$ the constant defined in \eqref{def:c_s}. This is possible because the integral on $(0,+\infty)$ diverges. Then we define
    \begin{align*}
        \tilde{b}^k(c):=\frac{c_b c_s}{s} \int_{0}^{+\infty} \Phi_u(c) \kappa^k(u)du,&& \kappa^k(u)=s u^{-1-s} \mathbf{1}_{u\geq u_k}.
    \end{align*}
    Since the heat kernel is increasing (see \cite[Corollary 1]{NowakSjogrenSzarek2019}), $\tilde{b}^k(c)\leq \rho_k$, and by \eqref{def:fraclap_kernel} we have $\tilde{b}^k(c)\leq c_b \chi_s(c)$. All of this implies $\tilde{b}^k(c)\leq b^k(c)$ for all $c$. Since $\rho_k\nearrow +\infty$, we indeed have $u_k\searrow 0$.
    
    \textit{Step 3.} We now want to relate $D_{spherical}$ to $\mathbb{K}^{\omega^k}_{\beta^k}$. This is very similar to the proof of Lemma~\ref{lem:key_ineq_part1}. We know from \cite{ImbertSilvestreVillani2024} that $D_{spherical}$ is the dissipation of the spherical part of the Fisher information: if we let, for any $G$ on $\mathbb{R}^2$ (written in the $(z,r,\sigma)$ variables):
    $$I_\sigma(G) := \int_{\mathbb{S}^2\times\mathbb{R}_+\times \mathbb{R}^3 } \frac{\vert \nabla_\sigma G \vert^2}{G} d\sigma dr dz,$$
    then by \cite[Lemma 5.1 and Lemma 4.6]{ImbertSilvestreVillani2024},
    $$D_{spherical}=2\jap{I_\sigma'(F^{\bar{v}}), Q^k_{12}F^{\bar{v}}}.$$
    The term $D_{spherical}$ is also obviously monotonous with respect to $b^k$, so its value can only decrease if we replace $b^k$ by the smaller $\tilde{b}^k$ built in Step 2. Moreover $\tilde{b}^k$ is itself a superposition of heat kernels. Hence,
    $$D_{spherical}\geq \frac{2 c_s c_b}{s} \int_0^{+\infty} \jap{I_\sigma'(F^{\bar{v}}), Q^u_{12} F^{\bar{v}}}\kappa^k(u) du,$$
    where $Q^u_{12}$ is defined as $Q^k_{12}$ but with the collision kernel $\alpha^k \Phi_u$ instead of $B^k=\alpha^k b^k$. In the $(z,r,\sigma)$ variables, it writes
    $$Q^u_{12}G(z,r,\sigma) = \alpha^k(r)\int_{\mathbb{S}^2} \left[G(z,r,\sigma')-G(z,r,\sigma)\right] \Phi_u(\sigma\cdot\sigma') d\sigma' =\alpha^k(r)( G_u(z,r,\sigma) - G(z,r,\sigma)),$$
    where $G_u$ is the solution at time $u$ of the heat flow in the variable $\sigma$. But since we can further decompose $I_\sigma (F)= \int_{\mathbb{R}_+\times\mathbb{R}^3 } \mathcal{I}(F(z,r,\cdot))drdz,$ where $\mathcal{I}$ is the Fisher information on the sphere, we obtain
    \begin{align*}
        \jap{I_\sigma'(F), Q^u_{12} F} &= \int_{\mathbb{R}_+\times\mathbb{R}^3 }\alpha^k(r) \jap{\mathcal{I}'(F^{\bar{v}}(z,r,\cdot)),F^{\bar{v}}_u(z,r,\cdot) - F^{\bar{v}}(z,r,\cdot)}dr dz\\
        &\geq \int_{\mathbb{R}_+\times\mathbb{R}^3 }\alpha^k(r) \left[\mathcal{I}(F^{\bar{v}}(z,r,\cdot)) - \mathcal{I}(F^{\bar{v}}_u(z,r,\cdot))\right]dr dz
    \end{align*}
    by convexity of the Fisher information $\mathcal{I}$. Using Lemma~\ref{lem:dissip_fisher_spherheat} to write $\mathcal{I}(F^{\bar{v}}) - \mathcal{I}(F^{\bar{v}}_u)$ as $2 \int_0^u \mathcal{K}(F^{\bar{v}}_y)dy$, we get
    \begin{align*}
        D_{spherical}&\geq \frac{4c_s c_b}{s} \int_0^{+\infty} \int_{\mathbb{R}_+\times\mathbb{R}^3 } \alpha^k(r) \left(\int_0^u \mathcal{K}(F^{\bar{v}}_y(z,r,\cdot))dy\right) dr dz \  \kappa^k(u) du\\
        &=\frac{4c_s c_b}{s}  \int_{\mathbb{R}_+\times\mathbb{R}^3 }  \alpha^k(r)\int_0^{+\infty} \mathcal{K}(F^{\bar{v}}_y(z,r,\cdot)) \left(\int_y^{+\infty}\kappa^k(u)  du\right)  dy dr dz,
    \end{align*}
    by exchanging the integrals. Finally, using inequality \eqref{prop:ineq_heat_flow} to bound $\mathcal{K}(F^{\bar{v}}_y(z,r,\cdot))$ from below by $c_0\Vert \sqrt{F^{\bar{v}}_y(z,r,\cdot)}\Vert^2_{\dot{H}^{2}(\mathbb{S}^2)}$, and using that $\int_y^{+\infty}\kappa^k(u)  du=\omega^k(y)$, we get
    \begin{align*}
        D_{spherical}&\geq \frac{4c_s c_b c_0}{s} \int_{\mathbb{R}_+\times\mathbb{R}^3 }  \alpha^k(r)\int_0^{+\infty} \Vert \sqrt{F^{\bar{v}}_y}(z,r,\cdot)\Vert^2_{\dot{H}^{2}(\mathbb{S}^2)} \omega^k(y) dy dr dz\\
        &= \frac{4c_s c_b c_0}{s} \int_{\mathbb{R}_+\times\mathbb{R}^3 }  \alpha^k(r) \mathbf{K}^{\omega^k}(F^{\bar{v}}(z,r,\cdot)) dr dz\\
        &= \frac{c_s c_b c_0}{2s} \int_{\mathbb{R}_+\times\mathbb{R}^3 }  \beta^k(r) \mathbf{K}^{\omega^k}(F^{\bar{v}}(z,r,\cdot)) 8 r^2 dr dz,
    \end{align*}
    where $\beta^k(r)=\alpha^k(r)/r^2$. The second line is by definition \eqref{def:mathbfKomega} of $\mathbf{K}^{\omega^k}$, and the third line is just to introduce the correct Jacobian.
    Recalling that $F^{\bar{v}}(z,r,\cdot)=F(z,r,\cdot,\bar{v})$ and the end of Step 1 \eqref{eq:prooffisherestimate2}, we integrate over the remaining variables $\bar{v}$ to get:
    $$\frac{d}{dt}I(F_t)\leq -\frac{\lambda c_s c_b c_0}{8s} \int_{\mathbb{R}_+\times\mathbb{R}^{3(N-1)} }  \beta^k(r) \mathbf{K}^{\omega^k}(F_t) 8r^2 dr dz d\bar{v} =-\frac{\lambda c_s c_b c_0}{8s} \mathbb{K}^{\omega^k}_{\beta^k}(F_t),$$
    by the definition \eqref{def:mathbbK} of $\mathbb{K}^{\omega^k}_{\beta^k}$. Integrating and using $I(F_0)=I(F^{N,k}_0)\leq 2I(f_0)$, we conclude.
\end{proof}

\subsection{Some technical estimates}

Our next goal is to show tightness of the empirical measures, but we first need to gather some estimates.

A central property in the sequel is the following:
if, at a given time, we control the Fisher information of the law of a particle system, then we can control the closeness of two particles. This technique was initiated in \cite{FournierHaurayMischler2014}, and later applied to the Boltzmann and Landau equations \cite{FournierHauray2015,FournierMischler2025,Tabary2026a}. The estimate writes:
\begin{lemma}[Lemma 5.1 in \cite{Tabary2026a}]
\label{lem:closeness}
    For any $N\geq 2$, any symmetric probability $F^N\in\mathcal{P}(\mathbb{R}^{3N})$, and any random variable $(V^1,...,V^N)$ with law $F^N$,
    $$\mathbb{E}\left[ \left\vert V^N_1 - V^N_2\right\vert^{-2} \right] \leq I(F^N).$$
\end{lemma}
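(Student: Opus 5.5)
The plan is to reduce to a one-dimensional Hardy-type inequality along the direction $v_1-v_2$, using the $(z,r,\sigma)$ change of variables. By exchangeability, the statement only involves the two-particle marginal $F^{N:2}$. The Fisher information is superadditive, with the normalization \eqref{def:entropyfisher}, so $I(F^{N:2})\leq I(F^N)$. Hence it suffices to prove
$$\int_{\mathbb{R}^6}\frac{G(v_1,v_2)}{\vert v_1-v_2\vert^2}\,dv_1dv_2\leq \frac{1}{2}\int_{\mathbb{R}^6}\frac{\vert\nabla G\vert^2}{G}\,dv_1dv_2 = I(G)$$
for every probability density $G$ on $\mathbb{R}^6$. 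We may assume $I(G)<\infty$, and hence $\sqrt{G}\in H^1(\mathbb{R}^6)$.

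Next I change variables to $z=(v_1+v_2)/2$ and $x=v_1-v_2\in\mathbb{R}^3$, and set $h=\sqrt{G}$. The gradient splits as
$$\vert\nabla_{v_1}h\vert^2+\vert\nabla_{v_2}h\vert^2=\tfrac12\vert\nabla_z h\vert^2+2\vert\nabla_x h\vert^2\geq 2\vert\nabla_x h\vert^2 .$$
For each fixed $z$, the classical Hardy inequality in $\mathbb{R}^3$ gives
$$\int\frac{h^2}{\vert x\vert^2}\,dx\leq \left(\frac{2}{3-2}\right)^2\int\vert\nabla_x h\vert^2\,dx=4\int\vert\nabla_x h\vert^2\,dx .$$
Integrating in $z$, and noting that the Jacobians cancel on both sides, yields
$$\int\frac{G}{\vert v_1-v_2\vert^2}\leq 4\int\vert\nabla_x h\vert^2\leq 2\int\vert\nabla_{v_1,v_2}\sqrt G\vert^2=\frac12\int\frac{\vert\nabla G\vert^2}{G}=I(G).$$
The last two equalities use $\vert\nabla\sqrt G\vert^2=\vert\nabla G\vert^2/(4G)$ and the normalization of $I$ by the number of variables, which here is $2$.

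The only points needing care are two approximation steps. The first is justifying Hardy's inequality for the merely $H^1$ function $h(z,\cdot)$, which holds for a.e. $z$ by Fubini. The second is checking superadditivity of $I$ under marginals, which is standard from the convexity of $(a,b)\mapsto\vert a\vert^2/b$ and Jensen's inequality. I expect no real obstacle: the constant works out exactly to $1$, since the factor $4$ from Hardy is matched by the factor $2$ from the chain rule and the $1/2$ normalization.
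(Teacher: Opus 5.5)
Your proof is correct and complete. The paper does not prove this lemma itself; it imports it from \cite{Tabary2026a}. Your route gives exactly the constant $1$: monotonicity of the normalized Fisher information under marginals of symmetric laws, then the three-dimensional Hardy inequality in the relative variable $x=v_1-v_2$ for a.e.\ fixed $z$, combined with $|\nabla_{v_1}h|^2+|\nabla_{v_2}h|^2\geq 2|\nabla_x h|^2$. The marginal step can even be skipped: freeze $v_3,\dots,v_N$ as extra parameters and use exchangeability to identify $\frac12\int(|\nabla_{v_1}F^N|^2+|\nabla_{v_2}F^N|^2)/F^N$ with $I(F^N)$.
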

Of course, the particles being exchangeable, the choice of $V^N_1$ and $V^N_2$ is arbitrary. The closeness of particles is in turn the key quantity to estimate in order to show tightness of the particle system.

We also need fairly classical estimates related to the kernels $B^k$ and $B$. The term $\mathcal{A}(\varphi)$ below will appear in the weak formulation of the Boltzmann equation. In the singular setting, integrals must be evaluated using principal values.
\begin{lemma}
\label{lem:approxA}
For any $\varphi \in C^2_b(\mathbb{R}^3)$, for any $(v,w)\in\mathbb{R}^6$, define
\begin{equation}
\label{def:mathcalA}
\mathcal{A}(\varphi)(v,w) := \frac{1}{2}\alpha(r) \lim_{\epsilon\rightarrow 0} \int_{\mathbb{S}^2} \left[\varphi(v')-\varphi(v) +\varphi(w')-\varphi(w)\right]\mathbf{1}_{\sigma\cdot\sigma'\leq 1-\epsilon}b(\sigma\cdot \sigma')  d\sigma',
\end{equation}
with $r,\sigma$ given by the usual change of variables applied to $(v,w)$. Also define $\mathcal{A}^k(\varphi)$ as $\mathcal{A}(\varphi)$, but with $\alpha^k$ and $b^k$ (in this case the principal value is useless). Then
\begin{align}
    \label{eq:techestimateonA}
    \vert \mathcal{A}(\varphi)(v,w)\vert \leq C\Vert \nabla^2 \varphi \Vert_{L^\infty(\mathbb{R}^3)} \bar{b} \vert v-w\vert^{\gamma+2}, &&  \vert \mathcal{A}^k(\varphi)(v,w)\vert \leq C\Vert \nabla^2 \varphi \Vert_{L^\infty(\mathbb{R}^3)} \bar{b} \vert v-w\vert^{\gamma+2}.
\end{align}
Finally, we have the approximation result
\begin{align}
    \label{eq:approxA}
   \left\vert\mathcal{A}^k(\varphi)(v,w)-\mathcal{A}(\varphi)(v,w)\right\vert \leq\varepsilon_k C_{\varphi,\bar{b}} \left(\vert v-w\vert^{-1} + 1\right),
\end{align}
     where $\varepsilon_k \rightarrow 0$. We recall that $\bar{b}$ is defined in \eqref{hyp:H0}.
\end{lemma}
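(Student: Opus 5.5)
The plan is to Taylor-expand $\varphi$ at $v$ and $w$ and use the symmetry $v'-v=r(\sigma'-\sigma)=-(w'-w)$. Because the two first-order terms cancel, the integrand is second order in $r|\sigma'-\sigma|$.

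Writing $v'=v+r(\sigma'-\sigma)$ and $w'=w-r(\sigma'-\sigma)$, we have
\begin{align*}
\varphi(v')-\varphi(v)+\varphi(w')-\varphi(w) = r(\sigma'-\sigma)\cdot\big(\nabla\varphi(v)-\nabla\varphi(w)\big) + E,
\end{align*}
with $|E|\le r^2|\sigma'-\sigma|^2\Vert\nabla^2\varphi\Vert_\infty$. The first-order term is not integrable against $b$ for each $\sigma$ separately, so I handle it by symmetry. Since $b$ depends only on $\sigma\cdot\sigma'$, the truncated integral $\int(\sigma'-\sigma)\mathbf 1_{\sigma\cdot\sigma'\le1-\epsilon}b\,d\sigma'$ is parallel to $\sigma$. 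Its value is $-\sigma\int(1-\sigma\cdot\sigma')\mathbf 1_{\sigma\cdot\sigma'\le 1-\epsilon}b\,d\sigma'$, which converges to $-2\bar b\,\sigma$ as $\epsilon\to0$. This is where the principal value matters, and it shows the limit exists.

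For the first-order term, note that $r\sigma\cdot(\nabla\varphi(v)-\nabla\varphi(w))=\tfrac12(v-w)\cdot(\nabla\varphi(v)-\nabla\varphi(w))$. This is bounded by $\tfrac12|v-w|^2\Vert\nabla^2\varphi\Vert_\infty$, so its contribution is $\lesssim\bar b\,r^2\Vert\nabla^2\varphi\Vert_\infty$. For the remainder, $|\sigma'-\sigma|^2=2(1-\sigma\cdot\sigma')$, so $\int|E|b\le 4\bar b\,r^2\Vert\nabla^2\varphi\Vert_\infty$ by dominated convergence. Multiplying by $\tfrac12\alpha(r)=\tfrac12 r^\gamma$ and using $r=|v-w|/2$ gives the first bound in \eqref{eq:techestimateonA}.

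The same argument applies verbatim to $\mathcal A^k$. Since $b^k\le b$, the corresponding quantity $\bar b^k$ is at most $\bar b$, and $\alpha^k(r)\le r^\gamma$ because $\gamma<0$.

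For \eqref{eq:approxA}, I split
\begin{align*}
\mathcal A^k-\mathcal A = \tfrac12(\alpha^k-\alpha)\,\mathcal T_{b^k} + \tfrac12\alpha\,(\mathcal T_{b^k}-\mathcal T_{b}),
\end{align*}
where $\mathcal T_b$ denotes the principal-value integral. By the expansion above, $|\mathcal T_{b}|\le C_\varphi\bar b\,r^2$, and I also have the cruder bound $|\mathcal T_b|\le C_\varphi\bar b\, r$ from a first-order expansion (using $|\sigma'-\sigma|\lesssim 1-\sigma\cdot\sigma'$ after symmetrization, together with $\Vert\nabla\varphi\Vert_\infty$). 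The main obstacle is choosing these interpolations so that exactly the weight $|v-w|^{-1}+1$ comes out, and I handle each piece as follows.

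For the second piece, the two kernels differ only on $\{\sigma\cdot\sigma'>1-1/k\}$. There $|\sigma'-\sigma|^2\lesssim 1/k$, and I bound the difference by $C_\varphi r^2\eta_k$, where $\eta_k:=\int(1-\sigma\cdot\sigma')\mathbf 1_{\sigma\cdot\sigma'>1-1/k}\,b\,d\sigma'\to0$. This gives $r^{\gamma+2}\eta_k$. For the first piece, I use $|\alpha^k-\alpha|\le r^\gamma$ when $r\le k^{-1/2}$, and $|\alpha^k-\alpha|\lesssim r^{\gamma}/(k^2r^2)$ otherwise. Both pieces carry powers $r^{\gamma+2}$ or $r^{\gamma+1}$, which blow up at $r=0$ since $\gamma+2\le0$. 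So in the region $r\le\delta_k$ I do not bound the difference of the kernels; instead I bound $|\mathcal A^k|+|\mathcal A|$ separately, using the $|v-w|^{-1}$ weight.

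I would try to reconcile these estimates by a first-order cancellation in $\mathcal T$ at small $r$: expand $\nabla\varphi(v)-\nabla\varphi(w)$ only to first order with a $C^2$ modulus, which gives $\mathcal T=O(r^2)$ and also $O(r)$. If this fails to produce $r^{-1}$ directly, I would accept a bound of the form $\varepsilon_k C(|v-w|^{\gamma+2}\mathbf 1_{r\ge\delta_k}+\cdots)$ and adjust $\delta_k\to0$ slowly, so that the region $r<\delta_k$ is absorbed into the weight wherever it is used later.
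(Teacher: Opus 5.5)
Your proof of \eqref{eq:techestimateonA} is correct and is the paper's argument made explicit. The paper cites the azimuthally averaged second-order Taylor bound $C\theta^2\Vert\nabla^2\varphi\Vert_\infty|v-w|^2$. Your observation that the truncated integral of $\sigma'-\sigma$ is parallel to $\sigma$ is the same cancellation, and it has the bonus of showing that the principal value exists. Your bound $\tfrac12\alpha\,|\mathcal{T}_{b^k}-\mathcal{T}_b|\le C_\varphi\eta_k r^{\gamma+2}$, obtained by running that argument on the nonnegative kernel $b-b^k$, is also fine.

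The gap is in \eqref{eq:approxA}: you do not close it. Your fallback (keeping a cutoff $\mathbf{1}_{r\ge\delta_k}$ to be absorbed ``wherever it is used later'') would prove a different statement. The obstruction you see is not real, because you overlook that $\gamma>-3$: the target weight $|v-w|^{-1}$ is strictly more singular at $v=w$ than $|v-w|^{\gamma+2}$.

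First, $-1<\gamma+2\le 0$ gives $r^{\gamma+2}\le 1+r^{-1}$. This finishes the kernel-difference piece immediately; no blow-up issue remains there.

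Second, for the potential-difference piece, for every $r$,
\[
0\le\alpha-\alpha^k=r^\gamma\bigl(1-(1+(kr)^{-2})^{\gamma/2}\bigr)\le r^\gamma\min\bigl(1,\tfrac{|\gamma|}{2}(kr)^{-2}\bigr).
\]
Combined with $|\mathcal{T}_{b^k}|\le C_\varphi\bar b\,r^2$, the piece is at most $C_{\varphi,\bar b}\,r^{\gamma+2}\min\bigl(1,(kr)^{-2}\bigr)$. On $\{r\le 1/k\}$, $r^{\gamma+2}=r^{\gamma+3}r^{-1}\le k^{-(\gamma+3)}r^{-1}$. On $\{r\ge 1/k\}$, $k^{-2}r^{\gamma}=k^{-2}r^{\gamma+1}r^{-1}\le k^{-(\gamma+3)}r^{-1}$, using $\gamma+1<0$. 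This is exactly the paper's estimate $|\alpha^k-\alpha|\le C r^{-3}k^{-(\gamma+3)}$, which it obtains by writing $|\alpha^k-\alpha|\,r^3=k^{-(\gamma+3)}h(kr)$ with $h$ bounded.

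So $\varepsilon_k=\max(k^{-(\gamma+3)},\eta_k)$ works, with no cutoff and no modification of the statement. Even your separate small-$r$ bound would have closed for the same reason: $|\mathcal{A}^k|+|\mathcal{A}|\lesssim r^{\gamma+2}\le\delta_k^{\gamma+3}r^{-1}$ on $\{r\le\delta_k\}$.

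Finally, the auxiliary bound $|\mathcal{T}_b|\le C_\varphi\bar b\,r$ is unjustified. Since $1-\sigma\cdot\sigma'=\tfrac12|\sigma'-\sigma|^2$, the inequality $|\sigma'-\sigma|\lesssim 1-\sigma\cdot\sigma'$ fails near $\sigma'=\sigma$. It is also not needed.
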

\begin{proof}
Choose a spherical coordinate $(\theta,\phi)$ system for $\sigma'$, with $\sigma$ as the north pole $\theta=0$. We have $\sigma\cdot\sigma'=\cos(\theta)$. Performing a Taylor expansion up to second order as in \cite[Section 4]{Villani1998}, for any $\theta \in (0,\pi)$,
    $$\left\vert \int_0^{2\pi} \left[\varphi(v')-\varphi(v) +\varphi(w')-\varphi(w)\right] d\phi \right\vert \leq C \theta^2 \Vert \nabla^2 \varphi \Vert_{L^\infty(\mathbb{R}^3)}\vert v-w\vert^2.$$
    Integrating in $\theta$, and using that $\theta^2\leq \pi(1-\cos \theta) $ on $[0,\pi]$,
    \begin{align*}
        &\left\vert \int_{\mathbb{S}^2} \left[\varphi(v')-\varphi(v) +\varphi(w')-\varphi(w)\right]\mathbf{1}_{\sigma\cdot\sigma'\leq 1-\epsilon}b(\sigma\cdot \sigma')  d\sigma'\right\vert\\
        &\leq  C\Vert \nabla^2 \varphi \Vert_{L^\infty(\mathbb{R}^3)} \vert v-w\vert^2\int_0^\pi \theta^2 b(\cos\theta)\sin\theta d\theta \\
        &\leq C\Vert \nabla^2 \varphi \Vert_{L^\infty(\mathbb{R}^3)} \vert v-w\vert^2\int_{\mathbb{S}^2} (1- \cos\theta ) b(\cos\theta )d\sigma'=C\Vert \nabla^2 \varphi \Vert_{L^\infty(\mathbb{R}^3)} \bar{b}\vert v-w\vert^2, 
    \end{align*}
    up to a change of $C$. Multiplying by $\alpha(r)=2^{-\gamma}\vert v-w\vert^\gamma$ we get \eqref{eq:techestimateonA} for $\mathcal{A}\varphi$. The same holds for $\mathcal{A}^k\varphi$ since $b^k\leq b$ by hypothesis \eqref{hyp:H1bis}.
    
    For the approximation result, by hypothesis \eqref{hyp:H1bis} we have $b=b^k$ if $\sigma\cdot\sigma'\leq 1- \frac{1}{k}$, meaning that combined with $b^k\leq b$ it holds that
    \begin{align*}
        \bigg\vert\lim_{\epsilon\rightarrow 0} \int_{\mathbb{S}^2} &\left[\varphi(v')-\varphi(v) +\varphi(w')-\varphi(w)\right]\mathbf{1}_{\sigma\cdot\sigma'\leq 1-\epsilon}(b-b^k)(\sigma\cdot \sigma')  d\sigma' \bigg\vert\\
        &\leq 2\bigg\vert\lim_{\epsilon\rightarrow 0} \int_{\mathbb{S}^2} \left[\varphi(v')-\varphi(v) +\varphi(w')-\varphi(w)\right]\mathbf{1}_{1-1/k \leq \sigma\cdot\sigma'\leq 1-\epsilon}b(\sigma\cdot \sigma')  d\sigma'\bigg\vert.
    \end{align*}
    The same proof as before with $b\mathbf{1}_{1-1/k \leq \sigma\cdot\sigma'}$ instead of $b$ yields
    \begin{align*}
        \left\vert \lim_{\epsilon\rightarrow 0} \int_{\mathbb{S}^2} \left[\varphi(v')-\varphi(v) +\varphi(w')-\varphi(w)\right]\mathbf{1}_{\sigma\cdot\sigma'\geq 1-\epsilon}(b-b^k)(\sigma\cdot \sigma')  d\sigma'\right\vert \leq C_\varphi \tilde{b}^k \vert v-w\vert^2 ,
    \end{align*}
    where $\tilde{b}^k= \int_{\sigma \cdot \sigma'\geq 1-1/k} (1-\sigma\cdot\sigma')b(\sigma\cdot\sigma')d\sigma' \rightarrow 0$ as $k\rightarrow +\infty$. Moreover, we have
    \begin{align*}
        \left\vert \alpha^k(r) - \alpha(r) \right\vert r^3 = \left\vert \left(\frac{r^2+1/k^2}{r^2}\right)^{\gamma/2}-1\right\vert r^{\gamma+3} = h\left(kr\right)k^{-\gamma-3},
    \end{align*}
    where the function $h(x)=\left\vert(1+x^{-2})^{\gamma/2}-1\right\vert x^{3+\gamma}$ is bounded. This implies
    $$\left\vert \alpha^k(r) - \alpha(r) \right\vert \leq  \Vert h \Vert_{L^\infty}r^{-3}k^{-\gamma-3}=C \vert v-w\vert^{-3}k^{-\gamma-3},$$
    with $k^{-\gamma-3}\rightarrow 0$. Comparing $\mathcal{A}^k(\varphi)$ and $\mathcal{A}(\varphi)$ by first comparing the $\alpha$ and then the $b$, we get
    $$\left\vert\mathcal{A}^k(\varphi)(v,w)-\mathcal{A}(\varphi)(v,w)\right\vert \leq C_{\varphi,\bar{b}}\vert v-w\vert^{-1} k^{-\gamma-3} + C_\varphi \tilde{b}_k \vert v-w\vert^{\gamma+2}.$$
    Using that $\gamma>-3$ and $\vert v-w\vert^{\gamma+2}\leq 1 + \vert v-w\vert^{-1}$, we get the result.
\end{proof}

Finally, we briefly recall some definitions on \textit{càdlàg} processes and the Aldous tightness criterion \cite{Aldous1978} that we will use. First we fix a metric on $\mathcal{P}(\mathbb{R}^3)$. There exists a sequence $(\varphi_n)_{n\geq 1}$ such that for every $n\geq 1$, $\varphi_n \in C^2_0(\mathbb{R}^3)$ with bounded derivatives, vanishing at infinity, and
\begin{equation}
\label{eq:phi_nbounds}
    \Vert \varphi_n \Vert_{L^\infty(\mathbb{R}^3)} + \Vert \nabla \varphi_n \Vert_{L^\infty(\mathbb{R}^3)}+ \Vert \nabla^2 \varphi_n \Vert_{L^\infty(\mathbb{R}^3)}\leq 1,
\end{equation}
such that the distance
\begin{equation}
\label{def:distance}
    d(\mu,\nu):= \sum_{n\geq 1} 2^{-n} \left\vert \int_{\mathbb{R}^3} \varphi_n (d\mu-d\nu) \right\vert
\end{equation}
metricizes the weak convergence of measures on $\mathcal{P}(\mathbb{R}^3)$. It suffices to choose $(\varphi_n)_{n\geq 1}$ such that its span is dense in $C_0(\mathbb{R}^3)$ for the uniform topology, see \cite[Chapter 6]{Villani2009}. The space $\mathcal{P}(\mathbb{R}^3)$ with this metric is Polish, \textit{i.e.} complete and separable.

The space $\mathcal{D}=D(\mathbb{R}_+, \mathcal{P}(\mathbb{R}^3))$ is the space of probability-valued \textit{càdlàg} functions, \textit{càdlàg} meaning that they are right-continuous and admit left-limits at every $t\in\mathbb{R}_+$. We endow $\mathcal{D}$ with the usual Skorokhod ($J_1$) topology (its definition is the same as for $\mathbb{R}^d$-valued functions but replacing the norm on $\mathbb{R}^d$ with the distance $d$, see Remark 1.10.2 in \cite{JacodShiryaev1987}). It makes it a Polish space. We will often use that the map $\mathcal{D}\ni \rho \mapsto \rho(t)\in \mathcal{P}(\mathbb{R}^3)$ is continuous at $\rho$ if $\rho$ has no jump at $t$. Aldous \cite{Aldous1978} provided a nice criterion for tightness of $\mathbb{R}$-valued \textit{càdlàg} processes, which easily extends to our setting:
\begin{prop}
    Consider a sequence $(\nu^n)_{n\geq 1}$ of $\mathcal{P}(\mathbb{R}^3)$-valued càdlàg stochastic process (adapted to some filtration $\mathcal{F}^n$ of their underlying probability space). For $T,\delta>0$, consider $\mathcal{T}^n_{\delta,T}$ the set of all couples $(S,S')$ of $\mathcal{F}^n$-stopping times which satisfy $S\leq S'\leq S+\delta$ and $S,S'\leq T$. Suppose that for all $\varepsilon>0$ and $T>0$,
    \begin{equation}
        \label{eq:aldouscriterion}
        \limsup_{\delta \rightarrow 0} \limsup_{n\rightarrow \infty} \sup_{S,S'\in \mathcal{T}^n_{\delta,T}} \mathbb{P}(d(\nu^n(S'), \nu^n(S)) \geq \varepsilon) = 0
    \end{equation}
    and that there exists a compact set $K \subset \mathcal{P}(\mathbb{R}^3)$ such that
    \begin{equation}
    \label{eq:aldouscriterion2}
    \limsup_{n\rightarrow \infty}\mathbb{P}(\forall t \in [0,T], \nu^n(t)\in K )\geq 1-\varepsilon.
    \end{equation}
    Then the sequence $(\nu^n)_n$ is tight in $\mathcal{D}$. Moreover, any cluster point $\nu$ of $(\nu^n)_n$ satisfies for all $t> 0$:
    $$\mathbb{P}(\nu(t^-)=\nu(t))=1,$$
    i.e. it almost surely has no jump at $t$.
\end{prop}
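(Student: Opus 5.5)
This extension of Aldous' criterion is proved by reducing to real-valued processes through the countable family $(\varphi_n)$ defining the metric $d$ in \eqref{def:distance}, and then using Jakubowski's criterion for tightness in $D(\mathbb{R}_+,E)$ for a Polish space $E$. Jakubowski's criterion asks for two things. First, a compact containment condition: for every $T,\varepsilon$ there is a compact $K\subset\mathcal{P}(\mathbb{R}^3)$ with $\mathbb{P}(\nu^n(t)\in K\ \forall t\le T)\ge 1-\varepsilon$ for $n$ large. This is exactly \eqref{eq:aldouscriterion2}. Second, a family $\mathbb{F}$ of continuous real functions on $E$ that separates points and is closed under addition, such that for each $h\in\mathbb{F}$ the real processes $h(\nu^n)$ are tight in $D(\mathbb{R}_+,\mathbb{R})$. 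For $\mathbb{F}$ I take the finite sums of maps $\mu\mapsto \int\varphi_m\,d\mu$. This family separates points because the span of $(\varphi_m)$ is dense in $C_0(\mathbb{R}^3)$, and it is closed under addition by construction.

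The main step is to check the tightness of $X^n:=\sum_{m\in M}\int\varphi_m d\nu^n$ for a finite set $M$. Two ingredients are needed. First, $|X^n(t)|\le |M|$ by \eqref{eq:phi_nbounds}, so the one-dimensional marginals are uniformly bounded. Second, for stopping times $(S,S')\in\mathcal{T}^n_{\delta,T}$ we have
\[
|X^n(S')-X^n(S)|\le \sum_{m\in M}\Big|\int\varphi_m\,(d\nu^n(S')-d\nu^n(S))\Big|\le \Big(\sum_{m\in M}2^m\Big)\, d(\nu^n(S'),\nu^n(S)).
\]
Hence \eqref{eq:aldouscriterion} with $\varepsilon$ replaced by $\varepsilon/\sum_{m\in M} 2^m$ gives the classical real-valued Aldous condition. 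Aldous' theorem \cite{Aldous1978} then yields tightness of $(X^n)_n$ in $D(\mathbb{R}_+,\mathbb{R})$, and Jakubowski's criterion gives tightness of $(\nu^n)_n$ in $\mathcal{D}$. One subtlety is that Aldous' original statement uses pairs $S\le S'\le S+\delta$ with $S'$ a stopping time, whereas here both are bounded by $T$. This is standard: one works on $[0,T]$ for each $T$, using the version in \cite{JacodShiryaev1987} (Theorem VI.4.5), which is stated with exactly this supremum over stopping times.

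For the continuity statement, let $\nu$ be a cluster point and fix $t>0$. It suffices to show $\mathbb{P}(d(\nu(t^-),\nu(t))>\eta)=0$ for every $\eta>0$. Consider the open set of paths $\{\rho\in\mathcal{D}:\ \sup_{u\in(t-\delta,t+\delta)} d(\rho(u),\rho(t-\delta))>\eta\}$, or more conveniently the l.s.c. functional $\rho\mapsto \sup_{u\in(t-\delta,t+\delta)} d(\rho(u^-),\rho(u))$, which controls jumps near $t$ in the Skorokhod topology. By the Portmanteau theorem, its probability of exceeding $\eta$ under $\nu$ is bounded by the $\liminf$ of the same probabilities under $\nu^n$.

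This is the main obstacle: the hypothesis only controls increments at pairs of stopping times, not suprema. I would handle it as in the real-valued proof in \cite{Aldous1978}. Let $S$ be the first time in $[t-\delta,t+\delta]$ at which $d(\nu^n(u),\nu^n(t-\delta))>\eta/2$; this is a stopping time, and $S\wedge(t+\delta)$ together with $t-\delta$ forms an admissible pair with gap at most $2\delta$. By the triangle inequality, the probability of a jump larger than $\eta$ in the window is bounded by $\mathbb{P}(d(\nu^n(S),\nu^n(t-\delta))\ge \eta/2)$. This is small by \eqref{eq:aldouscriterion} after taking $\limsup_n$ and then $\delta\to0$. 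Letting $\delta\to0$ and then $\eta\to0$ gives $\mathbb{P}(\nu(t^-)=\nu(t))=1$.
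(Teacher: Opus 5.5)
Your argument is correct, but it differs from the paper's in both halves.

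\emph{Tightness.} The paper re-runs the proof of Jacod--Shiryaev VI.4.5 with norms replaced by $d$, obtaining the modulus condition VI.3.21(ii) directly in $\mathcal{P}(\mathbb{R}^3)$. You instead reduce to real processes through Jakubowski's criterion. Since $\mu\mapsto\int\varphi_m\,d\mu$ is $2^m$-Lipschitz for $d$, the real-valued Aldous theorem applies exactly as stated and nothing has to be re-proved. One small fix: let your family contain sums with repetitions, e.g.\ $2\int\varphi_1\,d\mu$, so that it really is closed under addition. The constants $|M|$ and $\sum_{m\in M} 2^m$ then just become weighted sums.

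\emph{Absence of fixed jumps.} The paper works on the limit $\nu$. It uses that the set $J$ of fixed discontinuity times of $\nu$ is countable, so that evaluations at $t,t'\notin J$ pass to the limit. It applies the hypothesis only with \emph{deterministic} pairs of times, and then squeezes $\nu(\tau^-)$ and $\nu(\tau)$ between times outside $J$. You work on $\nu^n$ instead. The functional $\rho\mapsto\sup_{u\in(t-\delta,t+\delta)}d(\rho(u^-),\rho(u))$ is indeed lower semicontinuous for $J_1$, because a jump of the limit at $u$ is the limit of jumps of the approximants at the times $\lambda_n(u)$. Portmanteau then reduces matters to the largest jump of $\nu^n$ in a window, which you control with a hitting time.

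This route bypasses the countable set $J$, but it genuinely needs the stopping-time form of the hypothesis, whereas the paper's only uses deterministic pairs. There is also a technicality: the hitting time $S$ of the open set $\{d(\cdot,\nu^n(t-\delta))>\eta/2\}$ is a priori only an optional time. Either assume $\mathcal{F}^n$ is right-continuous, or approximate $S$ from above by the $\mathcal{F}^n$-stopping times $(\lfloor 2^kS\rfloor+1)2^{-k}$ and use right-continuity of $\nu^n$. Finally, the first set of paths you mention is not open, since $\rho\mapsto\rho(t-\delta)$ is discontinuous at paths jumping at $t-\delta$. You are right to use the jump functional instead.
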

\begin{proof}
    The proof of the criterion done in \cite[VI, 4a, Proof of Theorem 4.5]{JacodShiryaev1987} in the case of $\mathbb{R}^d$-valued processes can be adapted to $\mathcal{P}(\mathbb{R}^3)$ by replacing norms by the distance $d$. It shows that \eqref{eq:aldouscriterion} implies \cite[condition 3.21 (ii)]{JacodShiryaev1987} (which is the \textit{càdlàg} generalization of the equicontinuity condition in Ascoli's theorem). Combined with the second condition of uniform pointwise compactness (which replaces uniform pointwise boundedness in the infinite-dimensional setting), we classically get that $(\nu^n)_n$ is tight.
    
    For the property of a cluster point $\nu$, we use that
    $$J:=\{ t\in \mathbb{R}_+ \vert \mathbb{P}(\nu(t^-)\neq \nu(t))>0\}$$
    is countable, see \cite[VI, Lemma 3.12]{JacodShiryaev1987}. For any $t\in \mathbb{R}_+ \setminus J$, the map $\mathcal{D}\ni\rho \mapsto \rho(t)$ is a.s. continuous at $\nu$, so $\nu^n(t)$ converges in law to $\nu(t)$. Since the set of pairs of probabilities $$\{(\rho , \rho') \in (\mathcal{P}(\mathbb{R}^3))^2 \vert d(\rho , \rho')> \varepsilon \}$$ is open, the portemanteau theorem ensures that for any $t,t'\in \mathbb{R}_+ \setminus J$, any $\varepsilon>0$,
    $$\mathbb{P}(d(\nu(t'), \nu(t))>\varepsilon )\leq \liminf_{n\rightarrow\infty}\mathbb{P}(d(\nu^n(t'), \nu^n(t))>\varepsilon ).$$
    Fix $\tau \in \mathbb{R}$, from the above and the condition \eqref{eq:aldouscriterion}, we deduce that
    $$\sup_{\substack{t,t'\in  \mathbb{R}_+ \setminus J\\ t\leq t'\leq t+\delta \leq \tau +1}}\mathbb{P}(d(\nu(t'), \nu(t))>\varepsilon )\xrightarrow[\delta\rightarrow 0]{} 0.$$
    To show $\mathbb{P}(\nu(\tau^-)=\nu(\tau))=1$ it suffices to show $\mathbb{P}(d(\nu(\tau^-),\nu(\tau))>\varepsilon)=0$ for every $\varepsilon$. But if we pick two sequences $t_n,t'_n \in \mathbb{R}_+ \setminus J$ both converging to $\tau$, one from below and one from above, we have
    $$\{d(\nu(\tau^-),\nu(\tau))>\varepsilon\} \subset \bigcup_{k\geq 1} \bigcap_{n\geq k} \{ d(\nu(t_n),\nu(t'_n))>\varepsilon/2 \}.$$
    We have $\mathbb{P}(d(\nu(t_n),\nu(t'_n))>\varepsilon/2)\rightarrow 0 $ as $n\rightarrow \infty$ so the intersection has probability $0$, which concludes.
\end{proof}

With these results in hand, we can now turn to tightness.

\subsection{Tightness of the particle system}
\label{ssec:tightness}
In this section we consider the empirical measures of the particle system and show that they form a tight sequence. This section is close to the moderately soft case treated in \cite{FournierMischler2025}, where tightness is shown at the level of trajectories. We define the (random) empirical measures of the particle systems
\begin{align}
    \mu^{N, k} \in \mathcal{D}, && \mu^{N,k}(t) := \frac{1}{N} \sum^N_{i=1} \delta_{V^{N,k}_i(t)} \in \mathcal{P}(\mathbb{R}^3).
\end{align}
The \textit{càdlàg} behaviour of $\mu^{N,k}$ is a direct consequence of $\mathbf{V}^{N,k}$ being \textit{càdlàg} and of the fact $\delta_{a_n} \rightharpoonup \delta_a$ if $a_n\rightarrow a$.
\begin{prop}
    \label{prop:tightness}
    The set of random variables $\{\mu^{N,k}\}_{N\geq 2, k\geq 1}$ is tight in $\mathcal{D}$, and for every $t> 0$, any cluster point $f$ as a.s. no jump at $t$.
\end{prop}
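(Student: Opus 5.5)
We apply the Aldous-type criterion stated just above to the family $\{\mu^{N,k}\}$. Tightness of a family indexed by $(N,k)$ means tightness of every sequence drawn from it, so it suffices to check \eqref{eq:aldouscriterion} and \eqref{eq:aldouscriterion2} uniformly in $(N,k)$. The cluster-point statement (no jump at fixed $t$) is then given directly by the second part of that proposition.

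\textbf{Compact containment.} We take $K_A:=\{\rho\in\mathcal{P}(\mathbb{R}^3): m_2(\rho)\le A\}$, which is compact for weak convergence by Prokhorov. Energy is conserved a.s. along the regularized dynamics by \eqref{eq:conservpartsyst}, so for all $t$
$$m_2(\mu^{N,k}(t))=m_2(\mu^{N,k}(0))=\frac1N\sum_i \jap{V^{N,k}_{0,i}}^2 .$$
By Markov's inequality and $m_2(f_0^k)\le 2m_2(f_0)$, we get
$$\mathbb{P}(\exists t,\ \mu^{N,k}(t)\notin K_A)\le 2m_2(f_0)/A,$$
and this is small for $A$ large, uniformly in $N,k$.

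\textbf{Aldous condition.} Since $d$ is a weighted sum with weights $2^{-n}$, it suffices to control, for each fixed $\varphi=\varphi_n$ satisfying \eqref{eq:phi_nbounds}, the quantity $\langle\mu^{N,k}(S')-\mu^{N,k}(S),\varphi\rangle$ for stopping times $S\le S'\le S+\delta\le T+\delta$. The tail of the sum is bounded by $2^{-n_0}$. By Itô's formula for the finite-jump system \eqref{eq:regpartsys}, using the compensator,
$$\langle \mu^{N,k}(t),\varphi\rangle=\langle \mu^{N,k}(0),\varphi\rangle+\frac{1}{N(N-1)}\sum_{i\neq j}\int_0^t \mathcal{A}^k(\varphi)(V_i,V_j)\,d\tau+M^{N,k}_t ,$$
where $M^{N,k}$ is a martingale. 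For the drift, \eqref{eq:techestimateonA} gives $|\mathcal{A}^k\varphi|\le C\bar b\,|V_i-V_j|^{\gamma+2}$. Since $\gamma+2\in(-1,0]$, we have $|x|^{\gamma+2}\le 1+|x|^{-1}\le 2+|x|^{-2}$. The drift increment over $[S,S']$ is therefore bounded by
$$\int_{S}^{S+\delta}\frac{1}{N(N-1)}\sum_{i\ne j}\bigl(2+|V_i-V_j|^{-2}\bigr)d\tau\le \int_0^{T+1}\mathbf 1_{[S,S+\delta]}(\tau)\,X(\tau)\,d\tau .$$
Its expectation is controlled as follows: $\mathbb{E}\int_0^{T+1} X^{1}$ is bounded using Lemma~\ref{lem:closeness} together with Proposition~\ref{prop:fisherestimate}, since $I(F^{N,k}_\tau)\le 2I(f_0)$. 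To get smallness in $\delta$, I would use Hölder with a power $p\in(1,2/(\gamma+2+\ldots))$. Simpler: bound $|x|^{\gamma+2}\le \eta^{\gamma+2}+\eta^{\gamma}\,|x|^{2}\ldots$. The cleaner route is to split $|x|^{\gamma+2}\le \epsilon^{\gamma+2-(-2)}|x|^{-2}+\epsilon^{\gamma+2}\mathbf 1$ for $|x|\le\epsilon$ and $|x|\ge\epsilon$ respectively. This gives
$$\mathbb{E}[\text{drift}]\le C\bigl(\delta\,\epsilon^{\gamma+2}+\epsilon^{\gamma+4}(T+1)\,I(f_0)\bigr),$$
which tends to $0$ by choosing $\epsilon$ then $\delta$. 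For $\gamma+2=0$ the integrand is simply bounded.

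For the martingale, the optional stopping theorem gives
$$\mathbb{E}|M_{S'}-M_S|^2=\mathbb{E}\int_S^{S'}\frac{1}{N^2(N-1)}\sum_{i<j}\int(\Delta\varphi)^2B^k\,d\sigma'\,d\tau ,$$
with $(\Delta\varphi)^2\le C\min(1,|V_i-V_j|^2\theta^2)$. This gives a rate bounded by
$$\frac{C}{N}\cdot\frac{1}{N(N-1)}\sum\alpha(r)\,r^2\bar b\lesssim \frac{C}{N}\bigl(1+|V_i-V_j|^{-2}\bigr),$$
whose expectation is $O((T+1)/N)$ by the same Fisher bound. Chebyshev then concludes \eqref{eq:aldouscriterion}, uniformly in $k$.

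The main obstacle is the singular drift term for $\gamma\in(-3,-2]$. There $|x|^{\gamma+2}$ is singular, and smallness in $\delta$ must come from uniform-in-time integrability of $|V_1-V_2|^{-2}$ furnished by the Fisher estimate. The key point is that the exponent $\gamma+2>-2$, which leaves room for the $\epsilon$-splitting above. The other delicate point is that the Fisher bound must hold uniformly in both $N$ and $k$; this is exactly Proposition~\ref{prop:fisherestimate}.
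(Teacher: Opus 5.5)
\textbf{Gap in the martingale estimate.} Your architecture otherwise matches the paper's proof: the Aldous criterion, compact containment from energy conservation and Markov's inequality, and Itô's formula split into the compensated martingale and the compensator $\frac{2}{N(N-1)}\sum_{i<j}\int\mathcal{A}^k(\varphi)$. The drift is handled as in the paper, via $|\mathcal{A}^k\varphi|\le C\bar{b}|V_i-V_j|^{\gamma+2}$ and Lemma~\ref{lem:closeness} with the uniform Fisher bound. Your $\epsilon$-splitting of the drift is a valid substitute for the paper's Hölder step; choosing $\epsilon=\sqrt{\delta}$ recovers the same rate $\delta^{(\gamma+4)/2}$.

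The problem is that you bound $\mathbb{E}|M_{S'}-M_S|^2$ only by $C(T+1)(1+I(f_0))/N$, which is not small in $\delta$. Chebyshev then gives $\mathbb{P}(|M_{S'}-M_S|\ge\eta)\le C/(N\eta^2)$. This suffices along sequences with $N\to\infty$, but the statement claims tightness of the whole family $\{\mu^{N,k}\}_{N\ge2,k\ge1}$. That includes $\{\mu^{N,k}\}_{k\ge1}$ for each fixed $N$, and the result is used exactly in that form. Proposition~\ref{prop:tightness2} needs one compact set uniform in both $N$ and $k$ in order to pass $k\to\infty$ at fixed $N$. For fixed $N$ your bound does not vanish as $\delta\to0$, so \eqref{eq:aldouscriterion} is not verified. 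So ``Chebyshev then concludes, uniformly in $k$'' does not actually deliver the required uniformity in $N$.

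\textbf{Fix.} Do not discard the time window in the quadratic variation, and apply your own splitting $|x|^{\gamma+2}\le\epsilon^{\gamma+2}+\epsilon^{\gamma+4}|x|^{-2}$ there as well. This gives
\[
\mathbb{E}|M_{S'}-M_S|^2\le\frac{C\bar{b}}{N}\cdot\frac{1}{N(N-1)}\sum_{i<j}\mathbb{E}\int_S^{S+\delta}|V_i-V_j|^{\gamma+2}\,d\tau\le\frac{C}{N}\Big(\delta\,\epsilon^{\gamma+2}+\epsilon^{\gamma+4}(T+1)\,I(f_0)\Big),
\]
where the last term uses Lemma~\ref{lem:closeness} and Proposition~\ref{prop:fisherestimate} at fixed times. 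This is small in $\delta$ uniformly in $N\ge2$ and $k\ge1$. With $\epsilon=\sqrt{\delta}$ it is exactly the paper's martingale bound $\delta^{(\gamma+4)/4}/\sqrt{N}$, and with it your argument becomes complete.

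Separately, remove the abandoned fragments in the drift paragraph (the partial Hölder exponent and the ``$\eta^{\gamma}|x|^2$'' bound) so that only the splitting argument remains.
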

\begin{proof}
    Let us use the Aldous criterion. Instead of picking a sequence $\mu^n=\mu^{N_n,k_n}$, we prove conditions \eqref{eq:aldouscriterion} and \eqref{eq:aldouscriterion2} uniformly for all $N$ and $k$. Let $\varepsilon,T>0$.
    
    \textit{Step 1.} We first show the second condition \eqref{eq:aldouscriterion2}. Writing the energy conservation \eqref{eq:conservpartsyst} in terms of empirical measures, a.s.
    $$\int \vert v \vert^2 d\mu^{N,k}(t) = \frac{1}{N} \sum_{i=1}^N \vert V^{N,k}_i(t) \vert^2 = \int \vert v \vert^2 d\mu^{N,k}(0).$$
    Since the initial conditions $V^{N,k}_{i,0}$ are $f^k_0$-distributed, and $m_2(f^k_0)\leq 2 m_2(f_0)$, Markov's inequality yields
    $$\mathbb{P}\left( \int \vert v\vert^2 d\mu^{N,k}(0) \geq R \right) \leq \frac{1}{R}\mathbb{E}\left[\int \vert v\vert^2 d\mu^{N,k}(0)\right] \leq \frac{2 m_2(f_0)}{R}.$$
    Hence, if we consider the compact set $K_R:=\{\nu\in \mathcal{P}(\mathbb{R}^3) \vert \int \vert v\vert^2 d\nu \leq R \}$, we have
    $$\mathbb{P}(\forall t \in [0,T], \mu^{N,k}(t) \in K_M )=\mathbb{P}(\mu^{N,k}(0) \in K_M )\geq 1-\varepsilon$$ for all $N,k$ if we choose $R=2 m_2(f_0)/\varepsilon$, so \eqref{eq:aldouscriterion2} is satisfied.

    \textit{Step 2.} We check the first condition \eqref{eq:aldouscriterion}. Consider stopping times $S,S' \in \mathcal{T}^{N,k}_{\delta,T}$. We first unravel the definitions: by Markov's inequality
    \begin{align}
    \label{eq:prooftightness1}
        \mathbb{P}(d(\mu^{N,k}(S'), \mu^{N,k}(S)) \geq \varepsilon) &\leq \frac{1}{\varepsilon} \mathbb{E}\left[d(\mu^{N,k}(S'), \mu^{N,k}(S)\right]\nonumber \\
        &=\frac{1}{\varepsilon}\sum_{n\geq 1} 2^{-n} \mathbb{E}\left\vert \int_{\mathbb{R}^3} \varphi_n (d\mu^{N,k}(S')-d\mu^{N,k}(S)) \right\vert.
    \end{align}
    We can write the difference inside the expectation using Itô's formula. Letting $\varphi=\varphi_n$ to unclutter notations, we have
    \begin{align}
    \label{eq:prooftightness2}
        \int_{\mathbb{R}^3} \varphi &(d\mu^{N,k}(S')-d\mu^{N,k}(S))\nonumber\\
        =& \frac{1}{N}\sum_{i=1}^N \varphi(V^{N,k}_i(S'))-\varphi(V^{N,k}_i(S))\nonumber\\
        =&\frac{1}{N} \sum_{i<j} \int_{S}^{S'} \iint_{\mathbb{S}^2\times\mathbb{R}_+}\bigg[ \varphi((V^{N,k}_i)'(\tau^-)) - \varphi(V^{N,k}_i(\tau^-))\nonumber\\
        &+\varphi((V^{N,k}_j)'(\tau^-)) - \varphi(V^{N,k}_j(\tau^-))\bigg] \mathbf{1}_{x<B^k_{ij}(\mathbf{V}^{N,k}(\tau^- ),\sigma')} \Pi_{ij}^N(d\tau, d\sigma' ,dx)\nonumber\\
        =&:\frac{1}{N} \sum_{i<j} \int_{S}^{S'} \iint_{\mathbb{S}^2\times\mathbb{R}_+}\Psi_{ij}(\tau^-,\sigma',x) \Pi_{ij}^N(d\tau, d\sigma' ,dx),
    \end{align}
    where we commit the following slight abuse of notation: $(V^{N,k}_i)'$ is the $i$-th component of $(\mathbf{V}^{N,k})'_{ij}$, but the notation does not contain who the collision partner $V^{N,k}_j$ is.
    
    We let 
    $$\Psi_{ij}(\tau^-,\sigma',x):=\big[ \varphi((V^{N,k}_i)'(\tau^-)) - \varphi(V^{N,k}_i(\tau^-))
        +\varphi((V^{N,k}_j)'(\tau^-)) - \varphi(V^{N,k}_j(\tau^-))\big] \mathbf{1}_{x<B^k_{ij}(\mathbf{V}^{N,k}(\tau^- ),\sigma')}$$
    be the integrand above, and cut the Poisson measure in two according to the decomposition
    $$\Pi_{ij}^N = \tilde{\Pi}_{ij}^N + \frac{d\tau d\sigma' dx}{N-1},$$
    where $\tilde{\Pi}_{ij}^N$ is the compensated  measure.
    This means that the Itô formula \eqref{eq:prooftightness2} rewrites:
    \begin{align*}
        \int_{\mathbb{R}^3} \varphi &(d\mu^{N,k}(S')-d\mu^{N,k}(S))\nonumber\\
        &=\frac{1}{N} \sum_{i<j} \int_{S}^{S'} \iint_{\mathbb{S}^2\times\mathbb{R}_+}\Psi_{ij} \tilde{\Pi}_{ij}^N(d\tau, d\sigma' ,dx) + \frac{1}{N(N-1)} \sum_{i<j} \int_{S}^{S'} \iint_{\mathbb{S}^2\times\mathbb{R}_+}\Psi_{ij} d\tau d\sigma' dx.
    \end{align*}
    We now plug in the following Lemma we will prove later:
    \begin{lemma}
    \label{lem:prooftightness}
    With the above notation, we have the following two estimates
    \begin{align}
    \label{eq:lem:prooftightness}
        \frac{1}{N}\mathbb{E}\left\vert \sum_{i<j}\int_{S}^{S'} \iint \Psi_{ij} \tilde{\Pi}_{ij}^N\right\vert &\leq C_{\bar{b},T,I(f_0),\gamma} \frac{\delta^{(\gamma +4)/ 4}}{\sqrt{N}}, \\
    \label{eq:lem:prooftightness2}
        \frac{1}{N(N-1)}\mathbb{E}\left\vert \sum_{i<j}\int_{S}^{S'} \! \iint \Psi_{ij} dt d\sigma ' dx \right\vert &\leq C_{\bar{b},T,I(f_0),\gamma} \delta^{(\gamma +4)/ 2}.
    \end{align}
    \end{lemma}
    Using this Lemma, we get
    $$\mathbb{E}\left\vert \int_{\mathbb{R}^3} \varphi_n (d\mu^{N,k}(S')-d\mu^{N,k}(S)) \right\vert\leq C_{\bar{b},T,I(f_0),\gamma} \left[ \frac{\delta^{(\gamma +4)/ 4}}{\sqrt{N}}+ \delta^{(\gamma +4)/ 2}\right] \leq C_{\bar{b},T,I(f_0),\gamma} \delta^{(\gamma +4)/ 4}.$$
    The constant is uniform in $\varphi_n$, so going back to \eqref{eq:prooftightness1}, we can sum over the $\varphi_n$ to get
    $$\mathbb{P}(d(\nu^N(S'), \nu^N(S)) \geq \varepsilon)  \leq \frac{C_{\bar{b},T,I(f_0),\gamma}}{\varepsilon}   \delta^{(\gamma +4)/ 4}.$$
    This goes to $0$ as $\delta\rightarrow 0$, uniformly in $N$, $k$, $S$, $S'$, so the Aldous criterion is checked and the proof is concluded.
    \end{proof}
    \begin{proof}[Proof of Lemma~\ref{lem:prooftightness}]
    We begin with the estimate on the compensated Poisson measure \eqref{eq:lem:prooftightness}. Using $\mathbb{E}\vert X\vert \leq \sqrt{\mathbb{E}(X^2)}$ and the independence of Poisson measures,
    \begin{align}
    \label{eq:lem:prooftightness3}
         \frac{1}{N}\mathbb{E}\left\vert \sum_{i<j}\int_{S}^{S'}  \iint \Psi_{ij} \tilde{\Pi}_{ij}^N\right\vert 
        &\leq \frac{1}{N}\sqrt{\mathbb{E}\left( \sum_{i<j}\int_{S}^{S'}  \iint \Psi_{ij} \tilde{\Pi}_{ij}^N\right)^2}= \frac{1}{N}\sqrt{\sum_{i<j} \mathbb{E}\left( \int_{S}^{S'} \! \iint \Psi_{ij} \tilde{\Pi}_{ij}^N\right)^2}.
    \end{align}
     We fix $i<j$. Since $S'\leq S+\delta$, we have, using the Itô isometry,
    \begin{align*}
       \mathbb{E} \left(\int_{S}^{S'} \iint_{\mathbb{S}^2\times\mathbb{R}_+} \left\vert \Psi_{ij} \right\vert \tilde{\Pi}_{ij}^N\right)^2 &\leq \mathbb{E} \left(\int_{S}^{S+\delta} \iint_{\mathbb{S}^2\times\mathbb{R}_+} \left\vert \Psi_{ij} \right\vert \tilde{\Pi}_{ij}^N\right)^2\\
        &=\mathbb{E} \left[\int_{S}^{S+\delta} \iint_{\mathbb{S}^2\times\mathbb{R}_+} \left\vert \Psi_{ij}(\tau,\sigma',x) \right\vert^2 \frac{d\tau d\sigma' dx}{N-1}\right].
    \end{align*}
    Using the change of variables $(V^{N, k} _i(\tau),V^{N, k}_j(\tau))\mapsto (z,r,\sigma)$, we have
    $$\vert (V^{N, k})'_i - V^{N, k}_i \vert = \vert z + r\sigma' - z - r\sigma \vert = r \vert \sigma' - \sigma \vert,$$
    and the same with $ V^{N, k} _j$, so a first-order Taylor expansion yields:
    $$\left\vert \Psi_{ij}(\tau,\sigma',x) \right\vert^2 \leq C \Vert \nabla \varphi\Vert^2_{L^\infty}\mathbf{1}_{x<B^k_{ij}(\mathbf{V}^N(\tau ),\sigma')} r^2\vert \sigma'-\sigma \vert^2, $$
    and $\Vert \nabla \varphi\Vert^2_{L^\infty}\leq 1$ by \eqref{eq:phi_nbounds}. Integrating over $\sigma'$ and $x$, recalling that $B^k_{ij}(\mathbf{V}^{N,k}(\tau ),\sigma') = \alpha^k(r) b^k(\sigma'\cdot \sigma)$, we get
    \begin{align*}
        \iint_{\mathbb{S}^2\times\mathbb{R}_+} \left\vert \Psi_{ij}(\tau,\sigma',x) \right\vert^2 d\sigma' dx &\leq C \alpha^k(r)r^2 \int_{\mathbb{S}^2} \vert \sigma'-\sigma\vert^2 b^k(\sigma'\cdot \sigma) d\sigma'\\
        &\leq C r^{\gamma+2} \int_{\mathbb{S}^2} 2(1-\sigma'\cdot\sigma)b^k(\sigma'\cdot \sigma)d\sigma'\\
        &\leq C \bar{b}  \vert V^{N,k}_j(\tau)-V^{N,k}_i(\tau)\vert^{\gamma+2}
    \end{align*}
    because $b^k\leq b$, and $r=\frac{1}{2}\vert V^N_j(\tau)-V^N_i(\tau)\vert$. Plugging this above, using the Hölder inequality in time and expectation, we obtain:
    \begin{align*}
        \mathbb{E} &\left[\int_{S}^{S+\delta} \iint_{\mathbb{S}^2\times\mathbb{R}_+} \left\vert \Psi_{ij}(\tau^-,\sigma',x) \right\vert^2 \frac{d\tau d\sigma' dx}{N-1}\right]\\
        &\leq \frac{C \bar{b}}{N-1} \mathbb{E} \left[\int_{S}^{S+\delta} \vert V^{N,k}_j(\tau)-V^{N,k}_i(\tau)\vert^{\gamma+2} d\tau\right]\\
        &\leq \delta^{(\gamma +4)/ 2}\frac{C \bar{b}}{N-1} \left(\mathbb{E} \left[\int_{0}^{T} \vert V^{N,k}_j(\tau)-V^{N,k}_i(\tau)\vert^{-2} d\tau\right]\right)^{\frac{-\gamma-2}{2}}.
    \end{align*}
    We now apply Lemma~\ref{lem:closeness} with $F^{N,k}_\tau$ to bound
    $$\mathbb{E} \vert V^{N,k}_j(\tau)-V^{N,k}_i(\tau)\vert^{-2} \leq I(F^{N,k}_\tau).$$
    But $I(F^{N,k}_\tau)\leq 2I(f_0)$ by Proposition~\ref{prop:fisherestimate}, so
    $$\mathbb{E} \left[\int_{S}^{S+\delta} \iint_{\mathbb{S}^2\times\mathbb{R}_+} \left\vert \Psi_{ij}(\tau^-,\sigma',x) \right\vert^2 \frac{d\tau d\sigma' dx}{N-1}\right] \leq \delta^{(\gamma +4)/ 2}\frac{C \bar{b}}{N-1} \left(T I(f_0)\right)^{\frac{-\gamma-2}{2}}.$$
    Going back to \eqref{eq:lem:prooftightness3}, we get:
    $$\frac{1}{N}\mathbb{E}\left\vert \sum_{i<j}\int_{S}^{S'}  \iint \Psi_{ij} \tilde{\Pi}_{ij}^N\right\vert  \leq \frac{C_{\bar{b},T,I(f_0),\gamma}}{N}\sqrt{\sum_{i<j} \frac{\delta^{(\gamma +4)/ 2}}{N-1}} \leq C_{\bar{b},T,I(f_0),\gamma} \frac{\delta^{(\gamma +4)/ 4}}{\sqrt{N}},$$
    as desired.
    
    For the estimate on the intensity measure \eqref{eq:lem:prooftightness2}, we apply \eqref{eq:techestimateonA} from Lemma~\ref{lem:approxA} with $(v,w)=(V^{N,k}_i(\tau),V^{N,k}_j(\tau))$ to get:
    \begin{align*}
         \Big\vert \iint_{\mathbb{S}^2\times\mathbb{R}_+} \Psi_{ij}&(\tau,\sigma',x)  d\sigma' dx\Big\vert\\
         &=  \alpha^k(r) \left\vert \int_{\mathbb{S}^2} \bigg[ \varphi((V^{N,k}_i)') - \varphi(V^{N,k}_i)+\varphi((V^{N,k}_j)') - \varphi(V^{N,k}_j)\bigg] b^k(\sigma\cdot \sigma') d\sigma'\right\vert\\
         &=\mathcal{A}^k\varphi(V^{N,k}_i,V^{N,k}_j)\\
         &\leq  C \bar{b} \vert V^{N,k}_j(\tau)-V^{N,k}_i(\tau)\vert^{\gamma+2} ,
    \end{align*}
    which is the same bound we had before. We can hence conclude identically that
    $$\mathbb{E}\left\vert \int_{S}^{S'} \! \iint \Psi_{ij} dt d\sigma ' dx \right\vert \leq C\delta^{(\gamma +4)/ 2}\bar{b} \left(T I(f_0)\right)^{\frac{-\gamma-2}{2}},$$
    so
    $$\frac{1}{N(N-1)}\mathbb{E}\left\vert \sum_{i<j}\int_{S}^{S'} \! \iint \Psi_{ij} dt d\sigma ' dx \right\vert \leq C_{\bar{b},T,I(f_0),\gamma} \delta^{(\gamma +4)/ 2},$$
    as wanted.
\end{proof}

\subsection{The true particle system}

We now build solutions to true Kac's particles, without regularization. The singularity of $b$ is too strong to directly make sense of the integral below with the uncompensated Poisson measure, so we use the decomposition $\Pi^N_{ij}=\tilde{\Pi}^N_{ij}+\frac{d\tau d\sigma' dx}{N-1}$. Recall that the initial condition $\mathbf{V}^N_0$ is a $f_0^{\otimes N}$-distributed measure, independent of the Poisson measures.
\begin{prop}
    \label{prop:wellposednesspart}
    Let $N\geq2$. There exists a càdlàg exchangeable solution $(\mathbf{V}^N(t))_{t\in \mathbb{R}_+}$ of the stochastic jump problem
    \begin{align}
        \label{eq:partsyst}
        \mathbf{V}^N&(t) = \mathbf{V}^N_0\nonumber\\
        &+  \sum_{i<j}\int_0^t\!\iint_{\mathbb{S}^2\times \mathbb{R}_+}\! \! \! \left((\mathbf{V}^N)'_{ij}(\tau^-) - \mathbf{V}^N(\tau^-)\right) \mathbf{1}_{x<B(V^N_i(\tau^-)-V^N_j(\tau^-),\sigma')} \tilde{\Pi}^N_{ij}(d\tau,d\sigma',dx)\\
        &+\frac{2^{-\gamma-1}\bar{b}}{N-1} \sum_{i<j}\int_0^t \left\vert V^N_j(\tau)-V^N_i(\tau) \right\vert^\gamma \left(V^N_j(\tau)-V^N_i(\tau)\right) (e_i-e_j), \nonumber
    \end{align}
     which almost surely conserves momentum and energy. The solution $\mathbf{V}^N $ is the limit in law of the solutions $\mathbf{V}^{N,k}$ of the regularized particle system \eqref{eq:regpartsys} as $k\rightarrow +\infty$, up to a subsequence. Moreover, still up to this subsequence, for all $t\geq 0$, the law $F^{N,k}_t$ weakly converges to the law $F^N_t$ of $\mathbf{V}^N (t)$.
\end{prop}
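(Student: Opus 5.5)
Fix $N$ and work with the regularized solutions $\mathbf{V}^{N,k}$ of Proposition~\ref{prop:wellposednesspartreg}, now started from the true initial data $f_0^{\otimes N}$. This is harmless: the Fisher and entropy bounds only use $I(f_0),H(f_0)$. The plan is a compactness argument in the Skorokhod space $D(\mathbb{R}_+,\mathbb{R}^{3N})$ followed by identification of the limit as a solution of \eqref{eq:partsyst}, in the spirit of \cite{FournierMischler2025}.

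\textbf{Tightness and extraction.} First show that $(\mathbf{V}^{N,k})_{k}$ is tight. Since $N$ is fixed, the estimates of Lemma~\ref{lem:prooftightness} transfer directly from $\varphi$ to each coordinate $\varphi(V_i)$. The increments over stopping times $S\le S'\le S+\delta$ are then controlled by $C\delta^{(\gamma+4)/4}$, using Lemma~\ref{lem:closeness} and the uniform bound $I(F^{N,k}_t)\le 2I(f_0)$ from Proposition~\ref{prop:fisherestimate}. Pointwise compactness follows from energy conservation \eqref{eq:conservpartsyst}, and the Aldous criterion gives tightness. Along a subsequence $\mathbf{V}^{N,k}\to\mathbf{V}^N$ in law, and the limit has no fixed-time jumps, so $F^{N,k}_t\rightharpoonup F^N_t$ for every $t$. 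Exchangeability passes to the limit, and so do the conservation laws: they hold a.s.\ for every $k$, and the maps $\mathbf{v}\mapsto\sum v_i,\sum|v_i|^2$ are continuous. Lower semicontinuity of $I$ gives $I(F^N_t)\le 2I(f_0)$.

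\textbf{Identification via a martingale problem.} By Skorokhod representation, realize the convergence almost surely. For $\varphi\in C^2_b(\mathbb{R}^{3N})$, Itô's formula for the regularized system, rewritten with the compensated measure, shows that
$$M^{k}_t=\varphi(\mathbf{V}^{N,k}(t))-\varphi(\mathbf{V}^{N,k}_0)-\frac{1}{N-1}\sum_{i<j}\int_0^t \mathcal{L}^k_{ij}\varphi(\mathbf{V}^{N,k}(\tau))d\tau$$
is a martingale. Here $\mathcal{L}^k_{ij}\varphi$ is the integral of $\varphi(\mathbf{v}'_{ij})-\varphi(\mathbf{v})$ against $B^k$. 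Split this generator as follows:
\begin{itemize}
\item a second-order Taylor part, which is an $\mathcal{A}^k$-type term bounded by $|v_i-v_j|^{\gamma+2}$ as in Lemma~\ref{lem:approxA};
\item the first-order drift $\nabla\varphi\cdot(\text{mean jump})$. Its mean jump equals $\alpha^k(r)\, r\int(\sigma'-\sigma)b^k\,d\sigma'$, which converges to $-2^{-\gamma-1}\bar b|v_i-v_j|^\gamma(v_i-v_j)$ in the $(i,j)$ directions.
\end{itemize}
Together these two parts reproduce the generator of \eqref{eq:partsyst} in the limit. To pass to the limit in the time integrals, use the approximation bound \eqref{eq:approxA} (with its analogue for the drift), which gives an error of order $\varepsilon_k(1+|v_i-v_j|^{-1})$. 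Then use uniform integrability of $|v_i-v_j|^{\gamma+2}$, $|v_i-v_j|^{\gamma+1}$ and $|v_i-v_j|^{-1}$ in $L^1(dt\,d\mathbb{P})$. This integrability follows from the $L^1_t$ bound on $\mathbb{E}|V_i-V_j|^{-2}$ of Lemma~\ref{lem:closeness}, since all these exponents exceed $-2$ (recall $\gamma+1>-2$). Since the limit generator is singular only on the null diagonal set, the limit process satisfies the martingale problem for the generator of \eqref{eq:partsyst}.

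\textbf{From the martingale problem to the SDE.} Conclude by the standard representation theorem for jump martingale problems (e.g.\ \cite[Ch.~II/III]{JacodShiryaev1987}), which yields Poisson measures on an enlarged space for which \eqref{eq:partsyst} holds, as in \cite{FournierMischler2025}. The compensated integral makes sense because its quadratic variation is bounded by $\int\mathbb{E}|V_i-V_j|^{\gamma+2}<\infty$.

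The main obstacle is the identification step. First, the drift term of order $|v_i-v_j|^{\gamma+1}$ is the most singular one and must be shown uniformly integrable near collisions using only the Fisher bound. Second, continuity of the limit generator fails on the diagonal $\{v_i=v_j\}$. I would handle both with a cutoff $\mathbf{1}_{|v_i-v_j|>\eta}$: the generator is continuous away from the diagonal, and the remainder is bounded by $\eta^{\gamma+3}\,\mathbb{E}|V_i-V_j|^{-2}$ uniformly in $k$, which vanishes as $\eta\to0$.
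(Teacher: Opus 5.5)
Your proposal follows the paper's route. It gets Aldous tightness of $(\mathbf{V}^{N,k})_{k}$ in $D(\mathbb{R}_+,\mathbb{R}^{3N})$ from energy conservation and the uniform Fisher bound via Lemma~\ref{lem:closeness}. It identifies cluster points through the mean-jump term $\alpha^k(r)\,r\int_{\mathbb{S}^2}(\sigma'-\sigma)b^k(\sigma\cdot\sigma')\,d\sigma'$, which yields the drift of \eqref{eq:partsyst}. It then uses absence of fixed-time jumps to get $F^{N,k}_t\rightharpoonup F^N_t$. The paper delegates the identification step, which you spell out as a martingale problem followed by a Poisson representation, to \cite[Theorem 4.1]{FournierMischler2025}.

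Three details need correcting; none is serious.

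First, do not restart the regularized system from $f_0^{\otimes N}$. The statement concerns \eqref{eq:regpartsys} with data $(f_0^k)^{\otimes N}$. The regularity of $F^{N,k}$ in Proposition~\ref{prop:wellposednesspartreg}, which makes the Fisher computation of Proposition~\ref{prop:fisherestimate} rigorous, comes from that smooth, Gaussian-bounded data. Proposition~\ref{prop:estimatesbis} later needs the dissipation bound for exactly these $F^{N,k}$. Nothing is gained by switching, since $f_0^k\rightharpoonup f_0$ already makes the limit initial law $f_0^{\otimes N}$.

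Second, Lemma~\ref{lem:prooftightness} does not transfer verbatim to single coordinates. Its bound $|v_i-v_j|^{\gamma+2}$ relies on the symmetric combination $\varphi(v')-\varphi(v)+\varphi(w')-\varphi(w)$. For $\varphi(V_i)$ alone, the mean jump survives and contributes $|V_i-V_j|^{\gamma+1}$. Hölder against $\int_0^T\mathbb{E}|V_i-V_j|^{-2}\,d\tau$ then gives increments of order $\delta^{(\gamma+3)/2}$ rather than $\delta^{(\gamma+4)/4}$, which still suffices because $\gamma>-3$.

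Third, the drift approximation error coming from $|\alpha^k-\alpha|\,r$ is of order $k^{-(\gamma+3)}|v_i-v_j|^{-2}$, not $\varepsilon_k(1+|v_i-v_j|^{-1})$. It is still controlled by the uniform bound on $\mathbb{E}|V_i-V_j|^{-2}$.
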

\begin{proof}
The construction is identical to \cite[Theorem 4.1]{FournierMischler2025} except that our approximation sequence for $b$ is different (in \cite{FournierMischler2025}, the simpler $b^k=\min(b,k)$ is used). We explain how to rewrite the regularized particle system \eqref{eq:regpartsys} in a similar form as \eqref{eq:partsyst}, by rewriting the integral in \eqref{eq:regpartsys} using the decomposition of the Poisson measures. We keep the compensated part untouched, and the intensity part writes:
\begin{align*}
    \sum_{i<j}\int_0^t&\!\iint_{\mathbb{S}^2\times \mathbb{R}_+} \left((\mathbf{V}^{N,k})'_{ij}(\tau) - \mathbf{V}^{N,k}(\tau)\right) \mathbf{1}_{x<B^k_{ij}(\mathbf{V}^{N,k}(\tau),\sigma')} \frac{d\tau d\sigma' dx}{N-1}\\
    &=\frac{1}{N-1} \sum_{i<j}\int_0^t\!\int_{\mathbb{S}^2} \left((\mathbf{V}^{N,k})'_{ij}(\tau) - \mathbf{V}^{N,k}(\tau)\right) B^k_{ij}(\mathbf{V}^{N,k}(\tau),\sigma') d\tau d\sigma'.
\end{align*}
We fix $i<j$ and $\tau$, and apply the change of variables $(V^{N,k}_i(\tau),V^{N,k}_j(\tau))\mapsto (z,r,\sigma)$. We have the identities
\begin{align*}
    (V^{N,k})'_i - V^{N,k}_i = z+r\sigma'-z-r\sigma=r(\sigma'-\sigma), && (V^{N,k})'_j - V^{N,k}_j = -r(\sigma'-\sigma),
\end{align*}
so we get
\begin{align*}
    \int_{\mathbb{S}^2} \left((\mathbf{V}^{N,k})'_{ij}(\tau) - \mathbf{V}^{N,k}(\tau)\right) B^k_{ij}(\mathbf{V}^{N,k}(\tau),\sigma')  d\sigma'&=\alpha^k(r)\left(\int_{\mathbb{S}^2} r\left(\sigma'-\sigma\right) b^k(\sigma\cdot\sigma')  d\sigma'\right)(e_i-e_j).
\end{align*}
Writing the integral over $\sigma'$ using spherical coordinates $(\theta,\phi)$ with $\sigma$ as the north pole $\theta=0$,
\begin{align*}
   \int_{\mathbb{S}^2}\left(\sigma'-\sigma\right) b^k(\sigma\cdot\sigma')  d\sigma'=\int_{0}^\pi \left( \int_{0}^{2\pi} \left(\sigma'-\sigma\right) d\phi\right) b^k(\cos \theta )  \sin \theta d\theta.
\end{align*}
In the inner integral, the contribution of the part of $\sigma'$ that is orthogonal to $\sigma$ cancels out, so we get $\int_{0}^{2\pi} \left(\sigma'-\sigma\right) d\phi= (\int_{0}^{2\pi} \left(\sigma'\cdot \sigma-1\right)d\phi )\sigma$. Plugging this in, we obtain
\begin{align*}
   \int_{\mathbb{S}^2}\left(\sigma'-\sigma\right) b^k(\sigma\cdot\sigma')  d\sigma'=\left( \int_{\mathbb{S}^2} (\sigma'\cdot\sigma -1) b^k(\sigma'\cdot\sigma  )  d\sigma'\right)\sigma=\bar{b}^k \sigma,
\end{align*}
where we defined $\bar{b}^k=\int_{\mathbb{S}^2} (1-\sigma'\cdot\sigma ) b^k(\sigma'\cdot\sigma  )  d\sigma'$ analogously to \eqref{hyp:H0}. We hence obtain
\begin{align*}
    \int_{\mathbb{S}^2} \left((\mathbf{V}^{N,k})'_{ij}(\tau) - \mathbf{V}^{N,k}(\tau)\right) &B^k_{ij}(\mathbf{V}^{N,k}(\tau),\sigma')  d\sigma'\\
    &=-\alpha^k(r) \bar{b}^k r\sigma   (e_i-e_j)\\
    &=2^{-\gamma-1}\bar{b}^k\left\vert V^{N,k}_i(\tau)-V^{N,k}_j(\tau)\right\vert^{\gamma}(V^{N,k}_j(\tau)-V^{N,k}_i(\tau))(e_i-e_j).
\end{align*}
This means that the regularized particle system verifies:
\begin{align*}
        \mathbf{V}^{N,k}&(t) = \mathbf{V}^{N,k}_0\nonumber\\
        &+  \sum_{i<j}\int_0^t\!\iint_{\mathbb{S}^2\times \mathbb{R}_+}\! \! \! \left((\mathbf{V}^{N,k})'_{ij}(\tau^-) - \mathbf{V}^{N,k}(\tau^-)\right) \mathbf{1}_{x<B^k(V^{N,k}_i(\tau^-)-V^{N,k}_j(\tau^-),\sigma')} \tilde{\Pi}^N_{ij}(d\tau,d\sigma',dx)\\
        &+\frac{2^{-\gamma-1}\bar{b}^k}{N-1} \sum_{i<j}\int_0^t \left\vert V^{N,k}_j(\tau)-V^{N,k}_i(\tau) \right\vert^\gamma \left(V^{N,k}_j(\tau)-V^{N,k}_i(\tau)\right) (e_i-e_j). \nonumber
\end{align*}
From this form and the uniform Fisher information bounds on the law $F^{N,k}$, we can easily check the tightness of the sequence $(\mathbf{V}^{N,k})_{k\geq 1}$ in $D(\mathbb{R}_+,\mathbb{R}^{3N})$, by by the Aldous criterion, following the same proof as in the previous section (see also \cite[Lemma 6.2]{FournierMischler2025}). We also obtain that for any $t\geq 0$, any cluster point almost surely does not jump at $t$. Using that $b^k\rightarrow b$ (hence $\bar{b}^k\rightarrow \bar{b}$) and that $\alpha^k\rightarrow \alpha$, we check that any cluster point $\mathbf{V}^{N}$ solves~\eqref{eq:partsyst}, see \cite[Theorem 4.1]{FournierMischler2025}. Since $\mathbf{V}^N$ almost surely has no jump at $t$, the evaluation map at time $t$ is a.s. continuous at $\mathbf{V}^N$, and the law $F^{N,k}_t$ of $\mathbf{V}^{N,k}(t)$ converges to the law $F^N_t$ of $\mathbf{V}^N(t)$.
\end{proof}
Recall that the empirical measure $\mu^{N} \in \mathcal{D}$ is given by $\mu^{N}(t) := \frac{1}{N} \sum^N_{i=1} \delta_{V^{N}_i(t)}$. The tightness of the regularized empirical measures directly yields:
\begin{prop}
    \label{prop:tightness2}
    The sequence of random variables $(\mu^N)_{N\geq 2}$ is tight in $\mathcal{D}$, and any cluster point $f$ is also a cluster point of the regularized system $(\mu^{N,k})_{N,k}$ as $N,k\rightarrow +\infty$.
\end{prop}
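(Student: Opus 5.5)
The plan is to transfer the uniform-in-$(N,k)$ tightness of Proposition~\ref{prop:tightness} to the true system using the coupling in Proposition~\ref{prop:wellposednesspart}, namely that for each fixed $N$, $\mathbf{V}^{N,k}\to\mathbf{V}^N$ in law along a subsequence $k\to\infty$. The first step is to observe that the map $D(\mathbb{R}_+,\mathbb{R}^{3N})\ni \mathbf{x}\mapsto \frac1N\sum_i\delta_{x_i(\cdot)}\in\mathcal{D}$ is continuous for the Skorokhod topologies. A common time change for $\mathbf{x}$ yields the same time change for the empirical measure, and $d(\frac1N\sum\delta_{a_i},\frac1N\sum\delta_{b_i})\le \frac1N\sum|a_i-b_i|$ by \eqref{eq:phi_nbounds}. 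By the continuous mapping theorem, $\mu^{N,k}\to\mu^N$ in law in $\mathcal{D}$ as $k\to\infty$ along the subsequence, for every fixed $N$.

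For tightness, Proposition~\ref{prop:tightness} gives, for each $\varepsilon>0$, a compact $\mathcal{K}_\varepsilon\subset\mathcal{D}$ with $\mathbb{P}(\mu^{N,k}\notin\mathcal{K}_\varepsilon)\le\varepsilon$ for all $N,k$. Rather than a compact, I would work with a closed neighbourhood, or simply use that $\mathcal{K}_\varepsilon$ is closed. By the portmanteau theorem applied to the open set $\mathcal{D}\setminus\mathcal{K}_\varepsilon$,
\[
\mathbb{P}(\mu^N\notin\mathcal{K}_\varepsilon)\le\liminf_{k}\mathbb{P}(\mu^{N,k}\notin\mathcal{K}_\varepsilon)\le\varepsilon,
\]
uniformly in $N$, which is tightness of $(\mu^N)_N$.

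For the cluster-point statement, let $\pi$ be the law of a cluster point $f$, the limit of $\mathrm{Law}(\mu^{N_n})$. Fix a metric $\mathbf{d}$ metrizing weak convergence on $\mathcal{P}(\mathcal{D})$, for instance Lévy–Prokhorov, using that $\mathcal{D}$ is Polish. For each $n$, choose $k_n\ge n$ in the subsequence from Proposition~\ref{prop:wellposednesspart} such that $\mathbf{d}(\mathrm{Law}(\mu^{N_n,k_n}),\mathrm{Law}(\mu^{N_n}))\le 1/n$. Then $\mathrm{Law}(\mu^{N_n,k_n})\to\pi$ with $N_n,k_n\to\infty$, so $f$ is a cluster point of the regularized family. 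This diagonal extraction is what makes the subsequent estimates, proven at the regularized level, available for $f$. The absence of jumps at fixed $t$ then also follows from Proposition~\ref{prop:tightness}.

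The only delicate point is the continuity of the empirical-measure map in the Skorokhod $J_1$ topology. The subtlety is that $J_1$ on $D(\mathbb{R}_+,\mathbb{R}^{3N})$ uses a single time change for all coordinates, unlike the product of $J_1$ topologies. This holds because the tightness in Proposition~\ref{prop:wellposednesspart} is stated in $D(\mathbb{R}_+,\mathbb{R}^{3N})$ itself, so the needed continuity is direct.
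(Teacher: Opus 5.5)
Your proposal is correct and follows the paper's proof: it takes uniform tightness of $\{\mu^{N,k}\}$ from Proposition~\ref{prop:tightness}, applies the portmanteau theorem on the closed compact set to pass to $\mu^N$ along the subsequence of Proposition~\ref{prop:wellposednesspart}, and then chooses $k_l$ diagonally so that any cluster point is a limit of $\mu^{N_l,k_l}$. What you add is the step the paper leaves implicit, namely that $\mu^{N,k}\to\mu^N$ in law follows from $\mathbf{V}^{N,k}\to\mathbf{V}^N$ in $D(\mathbb{R}_+,\mathbb{R}^{3N})$ by the continuous mapping theorem; this works because the empirical-measure map is Lipschitz for $d$ and $J_1$ on $D(\mathbb{R}_+,\mathbb{R}^{3N})$ uses a common time change for all coordinates.
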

\begin{proof}
By Proposition~\ref{prop:tightness}, the set $\{\mu^{N,k}\}$ is tight, \textit{i.e.} for any $\varepsilon>0$ there exists a compact set $K\subset \mathcal{D} $ such that for all $N\geq 2$ , $k\geq 1$, $\mathbb{P}(\mu^{N,k} \in K)\geq 1-\varepsilon$. Proposition~\ref{prop:wellposednesspart} ensures that for every $N$, $\mathbf{V}^N$ is the limit in law of a subsequence of $(\mathbf{V}^{N,k})_k$. Since $K$ is closed, the portemanteau theorem ensures that $\mathbb{P}(\mu^{N} \in K) \geq 1-\varepsilon$, so $(\mu^{N})_N$ is tight. Let $f$ be a cluster point of this sequence, say along the subsequence $(N_l)_{l\geq 1}$, for every $l$ we can pick a $k_l$ large enough so that the law of $\mu^{N_l,k_l}$ converges to the law of $f$ in the Polish space $\mathcal{P}(\mathcal{D})$.
\end{proof}

Finally, we transfer the entropy and Fisher estimates of $F^{N,k}$ to $F^N$. As announced, we recover a full estimate on the Fisher dissipation rather than a truncated one.
\begin{prop}
\label{prop:estimatesbis}
    Define $\beta(r):=\alpha(r)r^{-2} =r^{\gamma-2}$, and the weight $\omega(u):=u^{-s}$. For any $N\geq 2$, the following estimates hold for all $t\geq 0$:
    \begin{equation}
        \label{eq:entropyestimatetrue}
        H(F^{N}_t) + \int_0^t \mathbb{D}_{B}(F^{N}_\tau) d\tau \leq 2 H(f_0),
    \end{equation}
    \begin{equation}
        \label{eq:fisherestimatetrue}
        I(F^{N}_t) + \frac{c_3}{48}\int_0^t \mathbb{K}_{\beta}^{\omega}(F^{N}_\tau) d\tau \leq 2 I(f_0),
    \end{equation}
with $c_3$ from Proposition \ref{prop:fisherestimate}. 
\end{prop}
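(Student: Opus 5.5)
The plan is to pass to the limit $k\to+\infty$, along the subsequence given by Proposition~\ref{prop:wellposednesspart}, in the regularized estimates of Proposition~\ref{prop:entropyestimate} and Proposition~\ref{prop:fisherestimate}. Throughout, we use the weak convergence $F^{N,k}_t\rightharpoonup F^N_t$ for every $t\geq0$ provided by that proposition. The pointwise-in-time terms are handled by classical lower semicontinuity: the entropy $H$ and the Fisher information $I$ are l.s.c. for weak convergence on $\mathcal{P}(\mathbb{R}^{3N})$. This gives $H(F^N_t)\leq\liminf_k H(F^{N,k}_t)$ and $I(F^N_t)\leq \liminf_k I(F^{N,k}_t)$.

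For the time integrals, we fix an index $k_0$ and use monotonicity in the kernel. For the entropy term, $b^k$ is pointwise non-decreasing in $k$ by \eqref{hyp:H1bis}, and $\alpha^k=(r^2+k^{-2})^{\gamma/2}$ is non-decreasing in $k$ since $\gamma<0$. Hence $B^k\geq B^{k_0}$ for $k\geq k_0$, and so $\mathbb{D}_{B^k}\geq \mathbb{D}_{B^{k_0}}$. The kernel $B^{k_0}$ is non-negative and lower semicontinuous: $b^{k_0}$ is l.s.c. and $\alpha^{k_0}$ is continuous. Proposition~\ref{prop:propsofmathbbD} (1) therefore gives $\mathbb{D}_{B^{k_0}}(F^N_\tau)\leq\liminf_k \mathbb{D}_{B^{k_0}}(F^{N,k}_\tau)$. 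Fatou's lemma in $\tau$, combined with \eqref{eq:entropyestimate}, then yields
\[H(F^N_t)+\int_0^t\mathbb{D}_{B^{k_0}}(F^N_\tau)\,d\tau\leq \liminf_k\Big(H(F^{N,k}_t)+\int_0^t\mathbb{D}_{B^{k}}(F^{N,k}_\tau)\,d\tau\Big)\leq 2H(f_0).\]
Since $B^{k_0}\nearrow B$, letting $k_0\to\infty$ and applying monotone convergence gives \eqref{eq:entropyestimatetrue}.

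The Fisher estimate follows the same scheme, and this is where the factor $48$ enters. Here $\beta^k=\alpha^k r^{-2}$ is non-decreasing in $k$, and $\omega^k=\min(u^{-s},u_k^{-s})$ is non-decreasing in $k$ because $u_k\searrow0$. Since $\mathbb{K}^\omega_\beta$ is monotone in both $\beta$ and $\omega$, we have $\mathbb{K}^{\omega^k}_{\beta^k}\geq \mathbb{K}^{\omega^{k_0}}_{\beta^{k_0}}$ for $k\geq k_0$. The function $\beta^{k_0}$ is continuous and non-negative, so Proposition~\ref{prop:propsofmathbbK} (1) applies and gives
\[\mathbb{K}^{\omega^{k_0}}_{\beta^{k_0}}(F^N_\tau)\leq 48\liminf_k \mathbb{K}^{\omega^{k_0}}_{\beta^{k_0}}(F^{N,k}_\tau).\]
Combining this with Fatou in time and with \eqref{eq:fisherestimate}, we get
\[I(F^N_t)+\frac{c_3}{48}\int_0^t\mathbb{K}^{\omega^{k_0}}_{\beta^{k_0}}(F^N_\tau)\,d\tau\leq \liminf_k\Big(I(F^{N,k}_t)+c_3\int_0^t \mathbb{K}^{\omega^{k}}_{\beta^{k}}(F^{N,k}_\tau)\,d\tau\Big)\leq 2I(f_0).\]
Here the l.s.c. of $I$ is used for the first term, and the sum of liminfs is bounded by the liminf of the sum. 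Finally we let $k_0\to\infty$. Since $\beta^{k_0}\nearrow r^{\gamma-2}=\beta$ and $\omega^{k_0}\nearrow u^{-s}=\omega$ pointwise, monotone convergence, applied first inside $\mathbf{K}^\omega$ in the heat-flow time variable and then in the $(z,r,\bar v)$ and $\tau$ integrals, gives \eqref{eq:fisherestimatetrue}.

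The main point requiring care is that the l.s.c. statements are applied with a \emph{fixed} regularized kernel. This is why we freeze $k_0$ and use monotonicity in $k$, rather than applying l.s.c. directly to functionals whose kernel varies with $k$. We must also check that Fatou is legitimate: the integrands $\tau\mapsto \mathbb{K}(F^{N,k}_\tau)$ are non-negative and measurable, which follows from the smoothness of $F^{N,k}$, and the pointwise-in-$\tau$ weak convergence holds for every $\tau$, not only almost every $\tau$.
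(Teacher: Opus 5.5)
Your proposal is correct and follows the paper's proof essentially step for step: you freeze $k_0$, use that $B^k$, $\beta^k$ and $\omega^k$ are non-decreasing in $k$, apply the (approximate) lower semicontinuity of Proposition~\ref{prop:propsofmathbbD}~(1) and Proposition~\ref{prop:propsofmathbbK}~(1) with the frozen kernel together with Fatou's lemma, and then let $k_0\to\infty$ by monotone convergence. Two harmless precisions: $H$ is lower semicontinuous along $F^{N,k}_t\rightharpoonup F^N_t$ because of the uniform-in-$k$ energy bound coming from \eqref{eq:conservpartsyst}, not for bare weak convergence; and $\beta^{k_0}$ blows up at $r=0$, so it is lower semicontinuous rather than continuous, which is exactly the hypothesis Proposition~\ref{prop:propsofmathbbK} requires.
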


\begin{proof}
We prove the Fisher information estimate, the proof for the entropy is identical. By Proposition~\ref{prop:tightness}, up to a subsequence (common for all $t\geq 0$), $F^{N,k}_t\rightharpoonup F^N_t$ as $k\rightarrow +\infty$. We fix $k_0\geq 1$. For all $k\geq k_0$, the quantities from Proposition~\ref{prop:fisherestimate} satisfy
\begin{align*}
        \omega^{k_0}(u) \leq \omega^k(u) \xrightarrow[k\rightarrow +\infty]{} \omega(u), && \beta^{k_0}(r) \leq \beta^k(r) \xrightarrow[k\rightarrow +\infty]{} \beta(r),
\end{align*}
Hence, using $\mathbb{K}_{\beta^{k_0}}^{\omega^{k_0}}\leq \mathbb{K}_{\beta^{k}}^{\omega^{k}}$ for any $k\geq k_0$, Proposition~\ref{prop:fisherestimate} yields
$$I(F^{N,k}_t) + c_3\int_0^t \mathbb{K}_{\beta^{k_0}}^{\omega^{k_0}}(F^{N,k}_\tau) d\tau \leq 2 I(f_0).$$
By lower semicontinuity of $I$ (see \cite{HaurayMischler2012}) and approximate lower semicontinuity $\mathbb{K}_{\beta^{k_0}}^{\omega^{k_0}}$ (Point (1) in Proposition~\ref{prop:propsofmathbbK}), and then Fatou's Lemma,
$$I(F^{N}_t) + \frac{c_3}{48}\int_0^t \mathbb{K}_{\beta^{k_0}}^{\omega^{k_0}}(F^{N}_\tau) d\tau \leq \liminf_{k\rightarrow+\infty}I(F^{N,k}_t) + c_3\int_0^t\liminf_{k\rightarrow+\infty} \mathbb{K}_{\beta^{k_0}}^{\omega^{k_0}}(F^{N,k}_\tau) d\tau\leq  2 I(f_0).$$
We then let $k_0\rightarrow\infty$ and conclude by monotone convergence.
\end{proof}
\subsection{Behaviour of the cluster points: Weak solutions, H-solutions, moments}

\subsubsection{Limit Entropy and Fisher information estimates}

Using the tightness provided by Proposition~\ref{prop:tightness2}, we can now study cluster points as $N\rightarrow +\infty$ of the sequence $(\mu^{N})_{N}$. We hence suppose that $\mu^{N}$ converges in law to a random variable $f$ with values in the càdlàg space $\mathcal{D}$. We want to show that $f$ is a weak solution to the Boltzmann equation, and gather sufficient regularity estimates to claim its uniqueness. 

We begin by showing that the entropy and Fisher information estimates of the particle system can be transferred to $f$ \textit{in expectation}. This is possible thanks to the superadditivity and affinity in infinite dimension of the functionals we studied in Section 2. One can check that a.s., $f(0)$ is the initial condition $f_0$ we imposed so there is no ambiguous notation if we write $f_t$ instead of $f(t)$, see Lemma~\ref{lem:initialcond} below.
\begin{prop}
\label{prop:limitestimates}
    Consider $f=(f_t)_t\in \mathcal{D}$ a cluster point of $(\mu^{N})_{N}$.
    The following estimates hold for all $t\geq 0$:
    \begin{align}
        \label{eq:limitentropy}
        \mathbb{E}\left[H(f_t) + \int_0^t \mathbb{D}_{B}(f_\tau \otimes f_\tau) d\tau \right] &\leq 2H(f_0),\\
        \label{eq:limitfisher}
        \mathbb{E}\left[I(f_t) + c_4\int_0^t \mathbb{K}_{\beta}^{\omega}(f_\tau \otimes f_\tau) d\tau \right] &\leq2 I(f_0),
    \end{align}
    with $c_4=\frac{c_3}{48^3\cdot 768}$, $c_3$ from Proposition \ref{prop:fisherestimate}. 
\end{prop}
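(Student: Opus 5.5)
The proof transfers the uniform bounds \eqref{eq:entropyestimatetrue} and \eqref{eq:fisherestimatetrue} on $F^N_t$ to the cluster point $f$. It uses the three properties of $\mathbb{D}_B$ and $\mathbb{K}^\omega_\beta$ in Propositions~\ref{prop:propsofmathbbD} and \ref{prop:propsofmathbbK}, together with the classical facts for $H$ and $I$ from \cite{HaurayMischler2012}. Let $N_l$ be the subsequence along which $\mu^{N_l}\to f$ in law in $\mathcal{D}$. For fixed $t$, the evaluation map is a.s.\ continuous at $f$ by Proposition~\ref{prop:tightness}. Hence $\mu^{N_l}_t$ converges in law to $f_t$; call $\pi_t\in\mathcal{P}(\mathcal{P}(\mathbb{R}^3))$ the law of $f_t$. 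By exchangeability and \cite{HaurayMischler2012}, for every fixed $j$ the marginals $F^{N_l:j}_t$ converge weakly to $\pi^j_t=\int\rho^{\otimes j}\pi_t(d\rho)$. This holds for all $t$ outside the countable set of fixed jump times, and by the no-jump property in fact for every $t$.

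\textbf{Fixed time, the $H$ and $I$ terms.} Superadditivity gives $I(F^{N:j}_t)\le I(F^N_t)\le 2I(f_0)$. Lower semicontinuity then gives $I(\pi^j_t)\le 2I(f_0)$, and the infinite-dimensional affinity of $I$ \cite{HaurayMischler2012} gives $\mathbb{E}[I(f_t)]=\lim_j I(\pi^j_t)$. The entropy term is handled identically. This also shows that $\pi_t$ has finite mean Fisher information. Its mean second moment is finite by energy conservation and Fatou. So the hypotheses of point (3) of Propositions~\ref{prop:propsofmathbbK} and \ref{prop:propsofmathbbD} hold for $\nu=\pi_\tau$, for every $\tau$.

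\textbf{Time-integrated dissipation terms.} Fix $j$ and $\tau$. By approximate superadditivity and approximate lower semicontinuity,
$$\mathbb{K}^\omega_\beta(\pi^j_\tau)\le 48\liminf_l \mathbb{K}^\omega_\beta(F^{N_l:j}_\tau)\le 48^2\liminf_l\mathbb{K}^\omega_\beta(F^{N_l}_\tau).$$
We integrate over $[0,t]$ and use Fatou's lemma together with \eqref{eq:fisherestimatetrue}. This gives $\int_0^t\mathbb{K}^\omega_\beta(\pi^j_\tau)d\tau\le 48^2\cdot\frac{48}{c_3}\cdot 2I(f_0)$, the factor $48/c_3$ coming from the constant in \eqref{eq:fisherestimatetrue}. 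We then apply Fatou in $j$ together with point (3) of Proposition~\ref{prop:propsofmathbbK}:
$$\int_0^t\mathbb{E}[\mathbb{K}^\omega_\beta(f_\tau\otimes f_\tau)]d\tau\le 768\int_0^t\liminf_j\mathbb{K}^\omega_\beta(\pi^j_\tau)d\tau\le 768\liminf_j\int_0^t\mathbb{K}^\omega_\beta(\pi^j_\tau)d\tau.$$
Tonelli lets us move the expectation outside the time integral. Combining these bounds yields a constant of the form $c_3/(48^3\cdot 768)$. To get the sum $I+c_4\int\mathbb{K}$ bounded jointly by $2I(f_0)$, rather than each term separately, the steps are run on the combined quantity $I(F^N_t)+\frac{c_3}{48}\int_0^t\mathbb{K}^\omega_\beta(F^N_\tau)\,d\tau$. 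All steps (superadditivity, l.s.c., Fatou, affinity) preserve sums. The constant is degraded by at most $48^2\cdot 768$ on the integral term while the $I$ term keeps constant $1$, which yields the stated $c_4$. The entropy estimate \eqref{eq:limitentropy} follows the same way with exact constants, since $\mathbb{D}_B$ is exactly superadditive, l.s.c.\ and affine.

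\textbf{Main obstacle.} The delicate point is measurability and the interchange of limits in time. We need $\tau\mapsto\mathbb{K}^\omega_\beta(\pi^j_\tau)$ to be measurable, and the weak convergence of marginals to hold for a.e.\ $\tau$ along one common subsequence. I would handle this with the a.s.\ no-jump property at every fixed $t$, which gives convergence in law of $\mu^{N_l}_\tau$ for every $\tau$. Measurability follows because these functionals are l.s.c.\ and $\tau\mapsto\pi^j_\tau$ is Borel, since $f$ is càdlàg.
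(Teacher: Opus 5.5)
Your proposal is correct and is essentially the paper's proof. You start from \eqref{eq:fisherestimatetrue} on the combined quantity, apply superadditivity (factor $48$), then approximate lower semicontinuity plus Fatou along $F^{N:j}_\tau \rightharpoonup \pi^j_\tau$ (another $48$), then Fatou in $j$ and approximate affinity ($768$), which recovers exactly $c_4=c_3/(48^3\cdot 768)$; the entropy case is identical. You are slightly more careful than the paper in verifying the hypotheses of point (3) of Proposition~\ref{prop:propsofmathbbK}. The one inaccuracy is in your measurability remark: $\mathbb{K}^\omega_\beta$ is only \emph{approximately} lower semicontinuous, so the measurability of $\tau\mapsto\mathbb{K}^\omega_\beta(\pi^j_\tau)$ needs a separate routine argument, a point the paper does not discuss at all.
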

\begin{proof}
     Since by Proposition~\ref{prop:tightness2}, $f$ is also a cluster point of the regularized system, by Proposition~\ref{prop:tightness}, we know that for any $t> 0$, a.s. $f$ has no jump at $t$. This implies that the law $\hat{F}^N_t$ of $\mu^N(t)$ converges to the law $\pi_t$ of $f_t$ (up to a subsequence common for all $t$). If we define the hierarchy
    $$\pi^j_t := \int_{\mathcal{P}(\mathbb{R}^3)} \rho^{\otimes j} \pi_t(d\rho),$$
    by \cite[Theorem 5.3, (1)]{HaurayMischler2012} the weak convergence $\hat{F}^N_t \rightharpoonup \pi_t$ implies the convergence of marginals
    \begin{equation}
        \label{eq:prooflimitestimates1}
        F^{N:j}_t \rightharpoonup \pi^j_t
    \end{equation}
    for all $j\geq 1$. Let us now prove \eqref{eq:limitfisher}, the proof of \eqref{eq:limitentropy} being identical. Starting from \eqref{eq:fisherestimatetrue}, we use the superadditivity of the Fisher information (see for instance \cite{HaurayMischler2012, Rougerie2020}) and the approximate superadditivity of $\mathbb{K}^{\omega^k}_{\beta^k}$ (Proposition~\ref{prop:propsofmathbbK}, point (2)), to get that for any $j \leq N$:
    $$I(F^{N:j}_t) + \frac{c_3}{48^2}\int_0^t \mathbb{K}_{\beta}^{\omega}(F^{N:j}_\tau) d\tau \leq 2I(f_0).$$
    By (approximate) lower semicontinuity (Proposition~\ref{prop:propsofmathbbK} point (1)), Fatou's Lemma and using the limit \eqref{eq:prooflimitestimates1},
    $$I(\pi^j_t) + \frac{c_3}{48^3}\int_0^t \mathbb{K}_{\beta}^{\omega}(\pi^j_\tau) d\tau \leq 2I(f_0).$$
    Letting $j\rightarrow\infty$ and by Fatou's Lemma once again, we get 
    \begin{equation*}
        \lim_{j\rightarrow\infty}I(\pi^j_t) + \frac{c_3}{48^3}\int_0^t \liminf_{j\rightarrow\infty}\mathbb{K}_{\beta}^{\omega}(\pi^j_\tau) d\tau \leq 2I(f_0).
    \end{equation*}
    Using affinity in infinite dimension (Proposition~\ref{prop:propsofmathbbK} point (3) for $\mathbb{K}_{\beta}^{\omega}$, see \cite{HaurayMischler2012, Rougerie2020} for the Fisher information), we get
    $$\int_{\mathcal{P}(\mathbb{R}^3)} I(\rho) \pi_t(d\rho) + \frac{c_3}{48^3 \cdot 768} \int_0^t\int_{\mathcal{P}(\mathbb{R}^3)} \mathbb{K}_{\beta}^{\omega}(\rho \otimes \rho) \pi_\tau(d\rho). $$
    Since $\pi_t$ is the law of $f_t$, we are done.
\end{proof}

\subsubsection{Almost sure weak solutions}

From the Fisher estimate \eqref{eq:limitfisher}, we can obtain that $\mathbb{E}[\int_0^T I(f_t)dt]\leq 2TI(f_0)$ for any $T\geq 0$, which means that a.s. the Fisher information of $f$ lies in $L^1$ locally. Even though the Fisher information is \textit{bounded} at the level of the particle system, we cannot recover much better than $L^1$, as observed in \cite{FournierMischler2025,Tabary2026a}. However, this bound is sufficient to make sense of the weak formulation of the Boltzmann equation, that $f$ almost surely satisfies. The proof is once again close to \cite[Section 6]{FournierMischler2025} and \cite[Section 7]{Tabary2026a} where respectively the Boltzmann and Landau equations are considered.

\begin{defi}
A weak solution to the Boltzmann equation with initial data $f_0$ is a function
$$f=(f_t)_t \in L^1_{loc}(\mathbb{R}_+, L^3(\mathbb{R}^3))\cap L^\infty(\mathbb{R}_+, L^1(\mathbb{R}^3))$$
such that for any $\varphi \in C^2_b(\mathbb{R}^3)$ and any $t\geq 0$:
\begin{align}
    \label{eq:weaksol}
    \int_{\mathbb{R}^3} \varphi(v) f_t(v)dv =  \int_{\mathbb{R}^3} \varphi(v) f_0(v)dv + \int_0^t \int_{\mathbb{R}^6} \mathcal{A}(\varphi)(v,w) f_\tau(v)f_\tau(w)dvdwd\tau,
\end{align}
where $\mathcal{A}(\varphi)$ is defined in \eqref{def:mathcalA}.
\end{defi}
\begin{remark}
The weak formulation is classically obtained from the strong one by integrating it against $\varphi$ and symmetrizing accordingly. It is not trivial that the term involving $\mathcal{A}(\varphi)$ above makes sense. Using Lemma~\ref{lem:approxA}, it holds that $\mathcal{A}(\varphi)\lesssim \vert v-w\vert^{2+\gamma}$, and the $L^1_t L^3_v$ bound on $f$ can then be used to show that the integral of $\mathcal{A}(\varphi)$ against $f$ is well-defined, as we will see in the proof of Lemma~\ref{lem:awsol4} below.
\end{remark}

The goal of this section is the following:
\begin{prop}
\label{prop:asweaksol}
    Let $f$ be a cluster point of $(\mu^N)_N$, then almost surely,
    $f$ is a weak solution of the Boltzmann equation with initial condition $f_0$. Moreover, a.s. $f\in C(\mathbb{R}_+,\mathcal{P}(\mathbb{R}^3))$, (it has no jumps), and a.s. its energy is bounded: $\sup_{t\in \mathbb{R}_+}m_2(f_t) \leq  m_2 (f_0)$.
\end{prop}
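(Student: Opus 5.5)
We follow the tightness/consistency scheme of \cite[Section 6]{FournierMischler2025}, working with the regularized systems $\mu^{N_l,k_l}$ that converge in law to $f$ (Proposition~\ref{prop:tightness2}), since Itô's formula is simplest there. First, the integrability and moment statements. By Proposition~\ref{prop:limitestimates}, $\mathbb{E}\int_0^T I(f_t)dt\leq 2TI(f_0)$, so a.s. $I(f_t)\in L^1_{loc}(\mathbb{R}_+)$. By the Sobolev embedding, $\Vert f\Vert_{L^3}\lesssim \Vert\sqrt f\Vert_{L^6}^2\lesssim I(f)$, hence a.s. $f\in L^1_{loc}L^3$. Mass is trivially preserved. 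For the energy, the set $\{\rho\in\mathcal{D}:\ m_2(\rho(t))\leq R \ \forall t\}$ is closed in $\mathcal{D}$ because $m_2$ is l.s.c. Since $m_2(\mu^{N,k}(t))=m_2(\mu^{N,k}(0))$ and $m_2(\mu^{N,k}(0))\to m_2(f_0)$ in probability (law of large numbers, with $f_0^k\rightharpoonup f_0$ and second moments controlled), the portmanteau theorem gives $\sup_t m_2(f_t)\leq m_2(f_0)$ a.s. Similarly, $f_0=f(0)$ a.s., since $\mu^{N,k}(0)\to f_0$ in probability (Lemma~\ref{lem:initialcond}).

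Second, consistency. For $\varphi\in C^2_b$ and $t\geq0$, we define on $\mathcal{D}$ the functional
$$\Gamma_\varphi^t(\rho):=\Big\vert\int\varphi\, d\rho_t-\int\varphi\, d\rho_0-\int_0^t\!\!\int\mathcal{A}(\varphi)\,d\rho_\tau\otimes d\rho_\tau\, d\tau\Big\vert ,$$
and show $\mathbb{E}[\Gamma^t_\varphi(f)]=0$ for $\varphi$ in the countable family $(\varphi_n)$ (plus constants). Density together with dominated convergence then gives all $\varphi\in C^2_b$, and continuity in $t$ extends the identity from a dense set of $t$ to all $t$. For $\mu^{N,k}$, Itô's formula as in \eqref{eq:prooftightness2} writes $\int\varphi\,d\mu^{N,k}_t-\int\varphi\,d\mu^{N,k}_0$ as a martingale, of $L^1$ size $O(N^{-1/2})$ by Lemma~\ref{lem:prooftightness}, plus
$$\frac{1}{N(N-1)}\sum_{i\neq j}\int_0^t\mathcal{A}^k(\varphi)(V_i,V_j)\,d\tau .$$
This is $\int_0^t\int\mathcal{A}^k\varphi\, d\mu^{N,k}_\tau\otimes d\mu^{N,k}_\tau$ minus the diagonal, and $\mathcal{A}^k$ vanishes on the diagonal. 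We then replace $\mathcal{A}^k$ by $\mathcal{A}$ using \eqref{eq:approxA}. The error is $\varepsilon_k\,\mathbb{E}\int_0^t(1+\vert V_1-V_2\vert^{-1})$, which is bounded through Lemma~\ref{lem:closeness} and the uniform Fisher bound, so it vanishes.

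Third, passage to the limit, which is the main obstacle: $\mathcal{A}(\varphi)$ is unbounded near $v=w$, of size $\vert v-w\vert^{\gamma+2}$ with $\gamma+2\in(-1,0]$. We cut it off: $\mathcal{A}_\eta:=\mathcal{A}(\varphi)\chi(\vert v-w\vert/\eta)$ with $\chi$ vanishing near $0$. This cut-off is continuous and bounded off the diagonal (continuity of $\mathcal{A}(\varphi)$ away from $v=w$ follows from the Taylor argument of Lemma~\ref{lem:approxA}). The corresponding truncated functional $\Gamma^{t,\eta}_\varphi$ is then continuous at a.s. every path, since $f$ has no jump at $t$, and bounded after a further truncation by $\min(\cdot,1)$. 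Hence
$$\mathbb{E}\min(\Gamma^{t,\eta}_\varphi(f),1)=\lim_{l}\mathbb{E}\min(\Gamma^{t,\eta}_\varphi(\mu^{N_l,k_l}),1).$$
The removed singular part is bounded, uniformly in $N,k$, by
$$C\,\mathbb{E}\int_0^t\vert V_1-V_2\vert^{\gamma+2}\mathbf{1}_{\vert V_1-V_2\vert<\eta}\leq C\eta^{\gamma+4}\,\mathbb{E}\int_0^t\vert V_1-V_2\vert^{-2}\leq C\eta^{\gamma+4}tI(f_0)$$
(using $\gamma+4>0$), by Lemma~\ref{lem:closeness}. For $f$, the analogous bound comes from $\mathbb{E}\int_0^t\iint\vert v-w\vert^{-2}f_\tau f_\tau\leq C\,\mathbb{E}\int_0^t I(f_\tau)$, which also makes the weak formulation meaningful. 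Letting $\eta\to0$ yields $\mathbb{E}\min(\Gamma^t_\varphi(f),1)=0$.

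Finally, continuity: every term on the right-hand side of \eqref{eq:weaksol} is continuous in $t$, since $\int\vert\mathcal{A}(\varphi)\vert f_\tau f_\tau\in L^1_{loc}$. Hence $t\mapsto\int\varphi\,df_t$ is continuous for all $\varphi_n$, and so $f\in C(\mathbb{R}_+,\mathcal{P}(\mathbb{R}^3))$ a.s.
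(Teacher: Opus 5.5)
Your proof is correct and follows the paper's overall scheme. That scheme is: the Itô formula on the regularized system $\mu^{N,k_N}$, a martingale term of size $O(N^{-1/2})$ by Lemma~\ref{lem:prooftightness}, replacement of $\mathcal{A}^k(\varphi)$ by $\mathcal{A}(\varphi)$ via \eqref{eq:approxA} and Lemma~\ref{lem:closeness}, truncation of the singular kernel, passage to the limit, then continuity in time and the energy bound.

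Where you differ is in how the singularity is truncated. The paper clips values, $\mathcal{A}^\varepsilon=\max(-\varepsilon^{-1},\min(\mathcal{A}(\varphi),\varepsilon^{-1}))$. This kernel does not vanish (and is typically discontinuous) at $v=w$. The paper therefore has to work with the off-diagonal measures $\nu^N$, which in turn forces the Skorokhod representation theorem. It controls the clipping error by $\varepsilon\vert\mathcal{A}(\varphi)\vert^2\lesssim\varepsilon\vert v-w\vert^{2(\gamma+2)}$ together with the $L^1_tL^3_v$ bound.

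Your cutoff $\chi(\vert v-w\vert/\eta)$ instead produces a kernel in $C_b(\mathbb{R}^6)$ that vanishes near the diagonal. This has three consequences:
\begin{itemize}
\item the diagonal of $\mu\otimes\mu$ contributes nothing;
\item $\rho\mapsto\int_0^t\!\int\mathcal{A}_\eta\,d\rho_\tau^{\otimes 2}d\tau$ is continuous on $D(\mathbb{R}_+,\mathcal{P}(\mathbb{R}^3))$ itself, so the plain continuous mapping theorem suffices;
\item the error $\eta^{\gamma+4}\vert v-w\vert^{-2}$ is controlled uniformly, by Lemma~\ref{lem:closeness} for the particles and by $\iint\vert v-w\vert^{-2}f(v)f(w)\,dv\,dw\leq I(f)$ for $f$.
\end{itemize}
This is a more economical route. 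Its only cost is continuity of $\mathcal{A}(\varphi)$ off the diagonal, which the paper's argument also uses implicitly.

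Two small points need fixing.

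First, the energy step. The claim that $m_2(\mu^{N,k}(0))\to m_2(f_0)$ in probability does not follow from $f_0^k\rightharpoonup f_0$ and $m_2(f_0^k)\leq 2m_2(f_0)$. Weak convergence only gives $m_2(f_0)\leq\liminf_k m_2(f_0^k)$, and the triangular-array law of large numbers also needs uniform integrability of $\vert v\vert^2$ under $f_0^k$. The simplest fix is to run this step with the unregularized $\mu^N$. It also converges in law to $f$, conserves energy almost surely by Proposition~\ref{prop:wellposednesspart}, and has i.i.d.\ $f_0$ initial data. Alternatively, impose $m_2(f_0^k)\to m_2(f_0)$ when choosing $f_0^k$. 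The paper's own argument with $\tilde\mu^N$ has the same weak point.

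Second, the choice of test functions. The metric family $(\varphi_n)$ only has span dense in $C_0$ for the sup norm, and that does not control $\mathcal{A}(\varphi)$. To extend the weak formulation to all $\varphi\in C^2_b$, you need a countable family dense in $C^2_0$ for the $C^2$ norm, as in the paper. This should be followed by a cutoff approximation with uniformly bounded $C^2$ norms. For continuity of $t\mapsto f_t$, the $\varphi_n$ alone are indeed enough.
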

\begin{remark}
    Observe that saying that almost surely $f$ has no jump at all is a lot stronger than what we know from Proposition~\ref{prop:tightness}, which is that for any $t\geq 0$, almost surely $f$ does not jump at $t$.
\end{remark}
Let us fix $f$ a cluster point of $(\mu^N)_N$ for the remainder of this section. We omit the subsequence along which $\mu^N$ converges to $f$ in law. We follow the proof organization of \cite{Tabary2026a} for the Landau equation. We first address the initial condition:
\begin{lemma}
\label{lem:initialcond}
 Almost surely $f(0)=f_0$.
\end{lemma}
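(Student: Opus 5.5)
The plan is to identify the law of $f(0)$ by combining convergence of the initial empirical measures with the fact that $f$ is a.s. continuous at time $0$ in the Skorokhod sense. The evaluation map $\mathcal{D}\ni\rho\mapsto\rho(0)\in\mathcal{P}(\mathbb{R}^3)$ is continuous for the $J_1$ topology, since Skorokhod time changes fix the origin, so no "no jump at $t$" argument is needed. Along the subsequence where $\mu^N\to f$ in law in $\mathcal{D}$, the continuous mapping theorem gives $\mu^N(0)\to f(0)$ in law in $\mathcal{P}(\mathbb{R}^3)$. It then suffices to show that $\mu^N(0)\to f_0$ in probability. The limit in law would then be the Dirac mass at the deterministic $f_0$, which means $f(0)=f_0$ almost surely.

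The key step is the convergence of $\mu^N(0)=\frac1N\sum_{i=1}^N\delta_{V^N_{0,i}}$ with i.i.d. $f_0$-distributed entries. I would use the metric $d$ from \eqref{def:distance}. For each fixed $\varphi_n$, the variance bound gives
$$\mathbb{E}\left\vert\int\varphi_n\,d(\mu^N(0)-f_0)\right\vert\leq \sqrt{\operatorname{Var}(\varphi_n(V_{0,1}))/N}\leq N^{-1/2},$$
using $\Vert\varphi_n\Vert_\infty\leq1$ from \eqref{eq:phi_nbounds}. Summing against $2^{-n}$ yields $\mathbb{E}[d(\mu^N(0),f_0)]\leq N^{-1/2}\to0$, so by Markov's inequality $\mu^N(0)\to f_0$ in probability. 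Alternatively, I could invoke the law of large numbers (Varadarajan's theorem) for empirical measures.

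The expected obstacle is only a subtlety of construction. $\mu^N$ here is built from the true particle system of Proposition~\ref{prop:wellposednesspart}, obtained as a limit of regularized systems whose initial data follow $(f_0^k)^{\otimes N}$. I therefore have to make sure that $\mathbf{V}^N(0)$ really is $f_0^{\otimes N}$-distributed. This holds because $F^{N,k}_0=(f_0^k)^{\otimes N}\rightharpoonup f_0^{\otimes N}$ and, by Proposition~\ref{prop:wellposednesspart}, $F^{N,k}_t\rightharpoonup F^N_t$ for all $t$, including $t=0$. If instead one prefers to work with $f$ as a cluster point of $\mu^{N,k}$ (Proposition~\ref{prop:tightness2}), the same argument goes through. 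The variance bound is unchanged, and $d(f_0^{k},f_0)\to0$ as $k\to\infty$ handles the drift of the initial law, via the triangle inequality $d(\mu^{N,k}(0),f_0)\leq d(\mu^{N,k}(0),f^k_0)+d(f^k_0,f_0)$.
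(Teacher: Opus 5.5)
Your proposal is correct and follows the same route as the paper: continuity of $\rho\mapsto\rho(0)$ on $\mathcal{D}$ gives $\mu^N(0)\to f(0)$ in law, while a law of large numbers for the i.i.d.\ $f_0$-distributed initial velocities gives $\mu^N(0)\to f_0$, which forces $f(0)=f_0$ almost surely. The differences are minor. The paper uses the strong law of large numbers for all $\varphi_n$ simultaneously rather than your $N^{-1/2}$ variance bound; your bound gives convergence in probability directly and avoids any triangular-array subtlety. The paper also takes for granted that $\mathbf{V}^N(0)$ has law $f_0^{\otimes N}$, which you rightly verify via $F^{N,k}_0=(f_0^k)^{\otimes N}\rightharpoonup f_0^{\otimes N}$.
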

\begin{proof} Since the $V^N_{i,0}$ are i.i.d with law $f_0$, for any test function $\varphi$
$$\int_{\mathbb{R}^3}\varphi(v) \mu^N(0)(dv)= \frac{1}{N}\sum_{i=1}^{N}\varphi(V^N_{i,0}) \xrightarrow[]{a.s.} \int_{\mathbb{R}^3}\varphi(v) f_0(dv),$$
by the Law of Large Numbers. We can make this hold a.s. for all $\varphi_n$ in the definition \eqref{def:distance} of the distance on $\mathcal{P}(\mathbb{R}^3)$ simultaneously. Hence, the random variable $\mu^N(0)$ converges a.s. to $f_0$ in $\mathcal{P}(\mathbb{R}^3)$, in particular it converges in law. But $\mu^N(0)$ converges in law to $f(0)$ because $\mu^N$ converges in law to $f$ in $\mathcal{D}$ and the map $\mathcal{D}\ni \rho\mapsto \rho(0)$ is continuous. Hence $f(0)=f_0$.
\end{proof}
In what follows, we will write $f_t$ rather than $f(t)$, and the above lemma shows that a.s. there is no conflict of notation between the chosen deterministic $f_0$ and the random one. We now show that $f$ satisfies the required regularity:
\begin{lemma}
\label{lem:wsolregularity}
Almost surely $f=(f_t)_t \in L^1_{loc}(\mathbb{R}_+, L^3(\mathbb{R}^3))\cap L^\infty(\mathbb{R}_+, L^1(\mathbb{R}^3))$.
\end{lemma}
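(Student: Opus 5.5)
The plan is to deduce both properties from the limit Fisher estimate \eqref{eq:limitfisher} of Proposition~\ref{prop:limitestimates} together with a Sobolev embedding. The $L^\infty_t L^1_v$ part is immediate: $f_t$ is a.s. a probability measure for every $t$, so its total mass is $1$ as soon as it has a density. Existence of a density for a.e. $t$ follows from finiteness of $I(f_t)$, which holds a.s. for a.e. $t$ by the $L^1_t$ bound below.

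For the $L^1_{loc}(\mathbb{R}_+,L^3)$ part, fix $T>0$. From \eqref{eq:limitfisher}, integrating in $t\in[0,T]$ (or simply using the bound at each $t$ together with Fubini–Tonelli, which is legitimate since $t\mapsto I(f_t)$ is measurable as a lower semicontinuous functional composed with the càdlàg, hence measurable, map $t\mapsto f_t$), I get
$$\mathbb{E}\left[\int_0^T I(f_t)\,dt\right]\leq 2T I(f_0)<\infty,$$
so a.s. $\int_0^T I(f_t)dt<\infty$ for every integer $T$, hence for all $T$. Then I would use the classical consequence of the Sobolev embedding $\dot H^1(\mathbb{R}^3)\subset L^6(\mathbb{R}^3)$ applied to $\sqrt{f_t}$:
$$\Vert f_t\Vert_{L^3}=\Vert \sqrt{f_t}\Vert_{L^6}^2\leq C\Vert\nabla\sqrt{f_t}\Vert_{L^2}^2=\frac{C}{4}I(f_t).$$
This makes sense because $I(f_t)<\infty$ implies $\sqrt{f_t}\in H^1$; the embedding holds for $H^1$ functions whose square is integrable, as $\sqrt{f_t}\in L^2$ with norm $1$. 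Integrating in time gives $\int_0^T\Vert f_t\Vert_{L^3}dt\leq \frac C4\int_0^T I(f_t)dt<\infty$ a.s.

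The only delicate points are measurability issues and the precise quantifier ``almost surely, for all $T$.'' The first is handled by lower semicontinuity of $I$ with respect to weak convergence together with the càdlàg structure of $f$. The second is handled by taking a countable intersection over $T\in\mathbb{N}$. I do not expect a genuine obstacle here: the statement is a direct corollary of the uniform-in-$N$ Fisher bound passed to the limit. Note that only the Fisher part of \eqref{eq:limitfisher} is used, and the dissipation term is dropped since it is nonnegative.
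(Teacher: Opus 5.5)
Your proposal is correct and follows the paper's proof essentially verbatim: you integrate the Fisher part of \eqref{eq:limitfisher} over $[0,n]$ to get a.s. $\int_0^n I(f_t)\,dt<\infty$ (hence densities of mass one for a.e.\ $t$), apply the Sobolev embedding to $\sqrt{f_t}$, and intersect the almost sure events over $n\in\mathbb{N}$. Your two deviations are harmless refinements: you use the homogeneous embedding to get $\Vert f_t\Vert_{L^3}\leq C\, I(f_t)$ where the paper writes $\lesssim 1+I(f_t)$, and you add an explicit justification of the joint measurability of $(\omega,t)\mapsto I(f_t)$ needed for Fubini.
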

\begin{proof}
    Integrating in time the Fisher estimate \eqref{eq:limitfisher} on $[0,n]$ (and forgetting the dissipation term), we get that a.s. the Fisher information of $f$ lies in $L^1([0,n])$. This implies in particular that $f$ admits a density for a.e. $t\in[0,n]$, and since $f$ is a probability $\Vert f\Vert_{L^1(\mathbb{R}^3)}=1$. The Sobolev embedding $ H^1(\mathbb{R}^3) \hookrightarrow L^6(\mathbb{R}^3)$ yields $\Vert f \Vert_{L^3(\mathbb{R}^3)}$
    $\lesssim 1+I(f)=1+4\Vert \nabla \sqrt{f} \Vert^2_{L^2(\mathbb{R}^3)}$, so that a.s. $f \in L^1([0,n], L^3(\mathbb{R}^3))$. We get the result by finding an almost sure event such that the estimate holds simultaneously for all $n$.
\end{proof}
Our next goal is to obtain the weak formulation \eqref{eq:weaksol} from the Itô formula. Since the latter is easier to write in the regularized setting (and we already studied it for the tightness proof), we will use that $f$ is also a cluster point of $(\mu^{N,k})_{N,k}$ as $N,k\rightarrow +\infty$ by Proposition~\ref{prop:tightness2}. For simplicity, we suppose that the subsequence is of the form $(N,k_N)$, i.e. that for some sequence $k_N  \xrightarrow[N\rightarrow+\infty]{}+\infty$, we have
\begin{align}
    \label{eq:subsequencereg}
    \tilde{\mu}^N:= \mu^{N,k_N}  \xrightarrow[N\rightarrow+\infty]{} f \text{ in law, in }\mathcal{D}.
\end{align}
We also let $\tilde{\mathbf{V}}^N=\mathbf{V}^{N,k}$. We fix $\varphi \in C^2_b(\mathbb{R}^3)$, $t\geq0$, and formally define the following functional, which amounts to testing the weak formulation of the Boltzmann equation: for $\nu =(\nu_t)_t \in D(\mathbb{R}_+, \mathcal{P}(\mathbb{R}^6))$ such that the terms below make sense,
\begin{equation}
    \label{def:mathcalF}
    \mathcal{F}_{\varphi,t}(\nu):=\int_{\mathbb{R}^6} \varphi(v) \nu_t(dvdw) -  \int_{\mathbb{R}^6} \varphi(v) \nu_0(dvdw) - \int_0^t \int_{\mathbb{R}^6} \mathcal{A}(\varphi)(v,w) \nu_\tau(dvdw)d\tau
\end{equation}
Observe that $f$ satisfies the weak formulation \eqref{eq:weaksol} if and only if $\mathcal{F}_{\varphi,t}(f\otimes f)=0$ for all $\varphi$ and all $t$. We will see that the empirical measures almost satisfy the formulation. We define a version of the tensor product $\tilde{\mu}^N\otimes \tilde{\mu}^N$ without diagonal terms:
\begin{align}
    \label{def:nuN}
    \nu^N\in D(\mathbb{R}_+, \mathcal{P}(\mathbb{R}^6))  && \nu^N_t = \frac{2}{N(N-1)}\sum_{i<j} \delta_{(V^{N,k_N}_i(t),V^{N,k_N}_j(t))}(dvdw).
\end{align}
This idea was picked up from \cite{FournierMischler2025} to avoid the diagonal terms which behave badly because of the singularity of $\mathcal{A}(\varphi)$ at $v=w$. It is easy to see that if $\tilde{\mu}^N_t\rightharpoonup f_t$, then $\nu^N_t \rightharpoonup f_t\otimes f_t$ in $\mathcal{P}(\mathbb{R}^6)$ by checking the convergence on the dense subalgebra of $C_b(\mathbb{R}^6)$ generated by functions with separated variables. The following Lemma translates that $\tilde{\mu}^N$ approximates a solution to the Boltzmann equation:
\begin{lemma}
\label{lem:F(nuN)to0}
     It holds that $$\mathbb{E}\left\vert \mathcal{F}_{\varphi,t}(\nu^N) \right\vert\xrightarrow[N\rightarrow +\infty]{} 0.$$
\end{lemma}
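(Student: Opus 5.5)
We would start from the Itô formula for the regularized system $\tilde{\mathbf{V}}^N=\mathbf{V}^{N,k_N}$, exactly as in the proof of Proposition~\ref{prop:tightness}, with $S=0$ and $S'=t$. Since $\nu^N_t$ has first marginal $\tilde{\mu}^N_t$ (by exchangeability of the pair sum, $\int\varphi(v)\nu^N_t(dvdw)=\frac{1}{N}\sum_i\varphi(\tilde V^N_i(t))$), the first two terms of $\mathcal{F}_{\varphi,t}(\nu^N)$ equal $\int\varphi\, d(\tilde\mu^N_t-\tilde\mu^N_0)$. The Itô formula splits this into a compensated martingale part and a drift part. The drift part is
$$\frac{1}{N(N-1)}\sum_{i<j}\int_0^t \mathcal{A}^{k_N}(\varphi)(\tilde V^N_i,\tilde V^N_j)\,d\tau\cdot 2\cdot\tfrac12 = \int_0^t\int_{\mathbb{R}^6}\mathcal{A}^{k_N}(\varphi)\,d\nu^N_\tau\,d\tau,$$
after checking normalizations (the factor $\frac12$ in \eqref{def:mathcalA} matches the two-particle sum).

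This gives
$$\mathcal{F}_{\varphi,t}(\nu^N)=M^N_t+\int_0^t\int_{\mathbb{R}^6}\big(\mathcal{A}^{k_N}(\varphi)-\mathcal{A}(\varphi)\big)\,d\nu^N_\tau\,d\tau .$$
For the martingale term $M^N_t$, the estimate \eqref{eq:lem:prooftightness} with $\delta=t$ directly yields $\mathbb{E}|M^N_t|\le C_{\bar b,t,I(f_0),\gamma}\,t^{(\gamma+4)/4}N^{-1/2}\to0$. This uses only the bound $I(F^{N,k}_\tau)\le 2I(f_0)$ from Proposition~\ref{prop:fisherestimate} and Lemma~\ref{lem:closeness}.

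For the approximation error we use \eqref{eq:approxA}:
$$\mathbb{E}\Big|\int_0^t\!\!\int (\mathcal{A}^{k_N}-\mathcal{A})(\varphi)\,d\nu^N_\tau d\tau\Big|\le \varepsilon_{k_N}C_{\varphi,\bar b}\int_0^t \mathbb{E}\big[|\tilde V^N_1(\tau)-\tilde V^N_2(\tau)|^{-1}+1\big]d\tau .$$
The expectation over pairs reduces to the pair $(1,2)$ by exchangeability. By Cauchy–Schwarz and Lemma~\ref{lem:closeness}, $\mathbb{E}|\tilde V^N_1-\tilde V^N_2|^{-1}\le \sqrt{I(F^{N,k_N}_\tau)}\le\sqrt{2I(f_0)}$. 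Hence this term is $O(\varepsilon_{k_N}t)\to0$ because $k_N\to\infty$.

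The main point to watch is that no diagonal terms appear. This is exactly why $\nu^N$ is defined without them, since $\mathcal{A}(\varphi)$ and $\mathcal{A}^k(\varphi)$ are singular (or at least badly bounded uniformly in $k$) at $v=w$, while the $|v-w|^{-1}$ and $|v-w|^{\gamma+2}$ singularities are controlled by the uniform Fisher bound only for distinct particles. A secondary technical point is the rigor of the Itô formula, together with the identification of the intensity integral with $\mathcal{A}^{k}$ without principal value; both are fine at the regularized level since $B^k$ is bounded, so everything is a finite sum of jumps. Combining the two estimates gives $\mathbb{E}|\mathcal{F}_{\varphi,t}(\nu^N)|\to0$.
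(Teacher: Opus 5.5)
Your proposal is correct and follows the paper's proof essentially step for step: Itô formula, the compensated part bounded by \eqref{eq:lem:prooftightness} with $S=0$, $S'=t$, the intensity part identified with $\int_0^t\!\int\mathcal{A}^{k_N}(\varphi)\,d\nu^N_\tau\,d\tau$, and the replacement error controlled by \eqref{eq:approxA} and Lemma~\ref{lem:closeness}, the only difference being that you bound $\mathbb{E}\vert\tilde V^N_1-\tilde V^N_2\vert^{-1}$ by Cauchy--Schwarz where the paper uses $1+x^{-1}\leq 2+x^{-2}$. One caveat, which applies equally to the paper's text: the identity $\int\varphi(v)\,\nu^N_t(dvdw)=\int\varphi\,d\tilde\mu^N_t$ is a pathwise statement and cannot come from exchangeability (with the sum over $i<j$ in \eqref{def:nuN}, $\tilde V^N_N$ never enters the first marginal), so $\nu^N_t$ should be read as $\frac{1}{N(N-1)}\sum_{i\neq j}\delta_{(\tilde V^N_i(t),\tilde V^N_j(t))}$, which changes nothing else in the argument since $\mathcal{A}(\varphi)$ and $\mathcal{A}^{k}(\varphi)$ are symmetric in $(v,w)$.
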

\begin{proof}[Proof of Lemma~\ref{lem:F(nuN)to0}]
    Notice that $\int_{\mathbb{R}^6} \varphi(v) \nu^N_t(dvdw)= \int_{\mathbb{R}^3} \varphi(v) \tilde{\mu}^N_t(dv)$. The Itô formula, that we already wrote in \eqref{eq:prooftightness2}, yields:
    \begin{align*}
        \mathcal{M}(t):=\int_{\mathbb{R}^3} \varphi(v) (\nu^N_t(dvdw)\!-\!\nu^N_0(dvdw))
        =\frac{1}{N}\! \sum_{i<j} \int_{0}^{t}\! \iint_{\mathbb{S}^2\times\mathbb{R}_+}\!\!\Psi_{ij}(\tau^-,\sigma',x) \Pi_{ij}^N(d\tau, d\sigma' ,dx),
    \end{align*}
    where
    $$\Psi_{ij}(\tau^-,\sigma',x):=\big[ \varphi((\tilde{V}^{N}_i)'(\tau^-)) - \varphi(\tilde{V}^{N}_i(\tau^-))
        +\varphi((\tilde{V}^{N}_j)'(\tau^-)) - \varphi(\tilde{V}^{N}_j(\tau^-))\big] \mathbf{1}_{x<B^{k_N}_{ij}(\tilde{\mathbf{V}}^{N}(\tau^- ),\sigma')}.$$
    We commit again the abuse of notation $(\tilde{V}^{N}_i)'$ for the $i$-th component of $(\tilde{\mathbf{V}}^{N})'_{ij}$. We cut $\Pi^N_{ij}=\tilde{\Pi}^N_{ij}+\frac{d\tau d\sigma' dx}{N-1}$, leading to $\mathcal{M}(t)=\mathcal{N}(t)+\mathcal{O}(t)$. The technical Lemma~\ref{lem:prooftightness} with $S=0$, $S'=t$ rewrites:
    \begin{align*}
        \mathbb{E}\left\vert \mathcal{N}(t)\right\vert =\frac{1}{N}\mathbb{E}\left\vert \sum_{i<j}\int_{0}^{t} \iint \Psi_{ij} \tilde{\Pi}_{ij}^N\right\vert &\leq  \frac{C_{\bar{b},T,I(f_0),\gamma,t}}{\sqrt{N}}\xrightarrow[N\rightarrow+\infty]{} 0.
    \end{align*}
    The intensity measure part is
    \begin{align*}
        \mathcal{O}(t)=\frac{1}{N(N-1)} \sum_{i<j} \int_{0}^{t}& \iint_{\mathbb{S}^2\times\mathbb{R}_+}\Psi_{ij}(\tau^-,\sigma',x) d\tau d\sigma' dx\\
        =\frac{1}{N(N-1)} \sum_{i<j} \int_{0}^{t} &\int_{\mathbb{S}^2}\bigg[\varphi((\tilde{V}^N_i)'(\tau)) - \varphi(\tilde{V}^N_i(\tau))\\
        & +\varphi((\tilde{V}^N_j)'(\tau))- \varphi(\tilde{V}^N_j(\tau))\bigg] B^{k_N}_{ij}(\tilde{\mathbf{V}}^N(\tau),\sigma')d\tau d\sigma' \\
        =\frac{2}{N(N-1)} \sum_{i<j} \int_{0}^{t}& \mathcal{A}^k(\varphi)(\tilde{V}^N_i(\tau),\tilde{V}^N_j(\tau)) d\tau,
    \end{align*}
    recalling that $\mathcal{A}^k(\varphi)$ is defined in Lemma~\ref{lem:approxA}, as $\mathcal{A}(\varphi)$ but with $\alpha^k,b^k$. But by definition \eqref{def:nuN} of the measure $\nu^N_\tau$, we have
    \begin{align*}
        \mathcal{O}(t)
        &=\int_0^t \int_{\mathbb{R}^6}\mathcal{A}^k(\varphi)(v,w) \nu^N_\tau(dvdw) d\tau\\
        &=\int_0^t\int_{\mathbb{R}^6} \mathcal{A}(\varphi)(v,w) \nu^N_\tau(dvdw) d\tau + \mathcal{P}(t). 
    \end{align*}
    We want to show $\mathbb{E}\vert \mathcal{P}(t)\vert \rightarrow 0$, which will conclude since the first term is the one appearing in $\mathcal{F}_{\varphi,t}$. For this we apply \eqref{eq:approxA} from Lemma~\ref{lem:approxA}, so that
    \begin{align*}
        \mathbb{E}\left\vert \mathcal{P}(t) \right\vert &\leq C_{\varphi,\bar{b}}\varepsilon_{k_N}\int_0^t\mathbb{E}\left[ \int_{\mathbb{R}^6}(1+\vert v-w\vert^{-1})\nu^N_\tau(dvdw)\right]d\tau\\
        &\leq \frac{C_{\varphi,\bar{b}}\varepsilon_{k_N}}{N(N-1)} \sum_{i<j} \int_0^t \mathbb{E}\left[ 1+\vert \tilde{V}^N_i(\tau)-\tilde{V}^N_j(\tau)\vert^{-1}\right] d\tau.
    \end{align*}
    Applying $1+x^{-1} \leq 2+ x^{-2}$, Lemma~\ref{lem:closeness} and Proposition~\ref{prop:fisherestimate} give us the bound
    $$\mathbb{E}\left[ 1+\vert \tilde{V}^N_i(\tau)-\tilde{V}^N_j(\tau)\vert^{-1}\right] \leq 2+I(F^{N,k_N}_\tau) \leq 2+2I(f_0).$$
    We hence get $\mathbb{E}\vert \mathcal{P}(t)\vert \leq C_{\varphi,\bar{b},I(f_0),t} \varepsilon_{k_N}\rightarrow 0$ as $N\rightarrow \infty$.
    Since $\mathcal{F}_{\varphi,t}(\nu^N)=\mathcal{N}(t)+\mathcal{P}(t)$, we have the result.
\end{proof}

At this point we would like to say that $\nu^N \rightharpoonup f\otimes f$ so that $\mathbb{E}\vert \mathcal{F}_{\varphi,t}(\nu^N)\vert\rightarrow \mathbb{E}\vert\mathcal{F}_{\varphi,t}(f\otimes f)\vert$, but we face a hurdle: the functional $\mathcal{F}_{\varphi,t}$ is not continuous on $D(\mathbb{R}_+,\mathcal{P}(\mathbb{R}^6))$ so this is not a simple test of the weak convergence. What we can do is define, for any small $\varepsilon>0$, the regularized functional \begin{equation}
    \label{def:mathcalFreg}
    \mathcal{F}^\varepsilon_{\varphi,t}(\nu):=\int_{\mathbb{R}^6} \varphi(v) \nu_t(dvdw) -  \int_{\mathbb{R}^6} \varphi(v) \nu_0(dvdw) - \int_0^t \int_{\mathbb{R}^6}\mathcal{A}^\varepsilon(\varphi)(v,w) \nu_\tau(dvdw)d\tau
\end{equation}
where $\mathcal{A}(\varphi)$ was replaced by the bounded version $\mathcal{A}^\varepsilon(\varphi):=\max(-\varepsilon^{-1}, \min(\mathcal{A}(\varphi),\varepsilon^{-1}))$.
Then, the convergence can be proven for $\mathcal{F}^\varepsilon_{\varphi,t}$:
\begin{lemma}
\label{lem:awsol3}
For any $\varepsilon>0$,
 $\mathbb{E}\left\vert \mathcal{F}^\varepsilon_{\varphi,t}(\nu^N) \right\vert \xrightarrow[N\rightarrow +\infty]{} \mathbb{E}\left\vert \mathcal{F}^\varepsilon_{\varphi,t}(f\otimes f) \right\vert$,
 where $(f\otimes f)_t = f_t \otimes f_t$.
\end{lemma}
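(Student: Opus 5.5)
The plan is to deduce the claim from the continuous mapping theorem applied to the bounded functional $\nu\mapsto \vert\mathcal{F}^\varepsilon_{\varphi,t}(\nu)\vert$. The truncation makes it bounded: $\vert\mathcal{A}^\varepsilon(\varphi)\vert\le\varepsilon^{-1}$, so $\vert\mathcal{F}^\varepsilon_{\varphi,t}(\nu)\vert\le 2\Vert\varphi\Vert_{L^\infty}+t\varepsilon^{-1}$ for every $\nu\in D(\mathbb{R}_+,\mathcal{P}(\mathbb{R}^6))$. It therefore suffices to show two things: (a) $\nu^N\to f\otimes f$ in law in $D(\mathbb{R}_+,\mathcal{P}(\mathbb{R}^6))$; (b) the functional is continuous at almost every realization of $f\otimes f$. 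Convergence of expectations of a bounded, a.s.-continuous functional then follows.

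For (a), I would use that $\rho\mapsto\rho\otimes\rho$ is continuous from $\mathcal{P}(\mathbb{R}^3)$ to $\mathcal{P}(\mathbb{R}^6)$, hence induces a continuous map on the Skorokhod spaces (apply the same time changes). This gives $\tilde\mu^N\otimes\tilde\mu^N\to f\otimes f$ in law by \eqref{eq:subsequencereg}. Moreover,
$$\nu^N_t-\tilde\mu^N_t\otimes\tilde\mu^N_t=\tfrac{1}{N-1}\Big(\tilde\mu^N_t\otimes\tilde\mu^N_t-\tfrac1N\textstyle\sum_i\delta_{(\tilde V^N_i,\tilde V^N_i)}\Big),$$
so for any bounded test function the difference is $O(1/N)$ uniformly in $t$. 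Hence the Skorokhod distance between $\nu^N$ and $\tilde\mu^N\otimes\tilde\mu^N$ tends to $0$ deterministically, and (a) follows by Slutsky.

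For (b), fix a deterministic $\nu\in D(\mathbb{R}_+,\mathcal{P}(\mathbb{R}^6))$ with three properties: $\nu$ has no jump at $t$; $\nu_\tau$ does not charge the diagonal $\{v=w\}$ for a.e. $\tau\in[0,t]$; and $\nu^n\to\nu$ in Skorokhod. Two facts give the needed convergences. First, Skorokhod convergence implies $\nu^n_\tau\rightharpoonup\nu_\tau$ at every continuity point $\tau$ of $\nu$, hence for all but countably many $\tau$, in particular at $0$ and $t$. Second, $\mathcal{A}(\varphi)$ is continuous on $\mathbb{R}^6\setminus\{v=w\}$, so $\mathcal{A}^\varepsilon(\varphi)$ is bounded and continuous off the diagonal. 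The continuity of $\mathcal{A}(\varphi)$ follows from the Taylor bound in the proof of Lemma~\ref{lem:approxA}: the truncated integrals over $\{\sigma\cdot\sigma'\le 1-\epsilon\}$ converge locally uniformly on compact subsets off the diagonal, with a tail controlled by $\int_{\sigma\cdot\sigma'>1-\epsilon}(1-\sigma\cdot\sigma')b\,d\sigma'\to0$. With these facts:
\begin{itemize}
\item the two boundary terms $\int\varphi\,d\nu^n_t$ and $\int\varphi\,d\nu^n_0$ converge;
\item for a.e. $\tau$, $\int\mathcal{A}^\varepsilon(\varphi)\,d\nu^n_\tau\to\int\mathcal{A}^\varepsilon(\varphi)\,d\nu_\tau$, by the weak convergence for test functions that are bounded and continuous off a $\nu_\tau$-null closed set (portmanteau);
\item the time integral converges by dominated convergence, using the bound $\varepsilon^{-1}$.
\end{itemize}

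It remains to check that $f\otimes f$ a.s. has these properties. For $t>0$, $f$ a.s. has no jump at $t$ by Propositions~\ref{prop:tightness} and~\ref{prop:tightness2}; for $t=0$ there is nothing to check, since the evaluation at $0$ is always continuous. By Lemma~\ref{lem:wsolregularity}, a.s. $I(f_\tau)<\infty$ for a.e. $\tau$, so $f_\tau$ has a density and $f_\tau\otimes f_\tau$ gives zero mass to the Lebesgue-null diagonal. The main obstacle I expect is precisely the discontinuity of $\mathcal{A}^\varepsilon(\varphi)$ at $v=w$. Since $\gamma+2\le0$, $\mathcal{A}(\varphi)$ behaves like $\vert v-w\vert^{\gamma+2}$ times a direction-dependent quadratic form and has no limit at the diagonal, so truncation alone does not make it continuous. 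This forces the a.s.-continuity argument above instead of plain continuity, together with a careful verification of local uniform convergence of the principal value away from the diagonal.
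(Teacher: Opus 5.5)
Your proof is correct, but it takes a different route from the paper.

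\textbf{The paper's route.} The paper uses the Skorokhod representation theorem to upgrade $\tilde\mu^N\to f$ to almost sure convergence in $\mathcal{D}$. It then argues pathwise, using only time marginals:
\begin{itemize}
\item $\nu^N_\tau\to f_\tau\otimes f_\tau$ at every non-jump time of $f$;
\item dominated convergence on $[0,t]$, using $|\mathcal{A}^\varepsilon(\varphi)|\le\varepsilon^{-1}$;
\item dominated convergence under the expectation.
\end{itemize}
It therefore never needs convergence in law of the pair process $\nu^N$ in $D(\mathbb{R}_+,\mathcal{P}(\mathbb{R}^6))$. The remark after the lemma explicitly considers your continuous-mapping route and rejects it because of the tensorization step.

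\textbf{Your route.} Your handling of the tensorization is fine. Lifting $\rho\mapsto\rho\otimes\rho$ to Skorokhod space with the same time changes works because the closure of the range of a c\`adl\`ag path on $[0,T]$ is compact, so continuity of the map is automatically uniform near it. Alternatively, note that $\rho\mapsto\rho\otimes\rho$ is $2$-Lipschitz for the bounded-Lipschitz metric. The diagonal correction is $O(1/N)$ in total variation, uniformly in time, so Slutsky applies. Your identity for $\nu^N_t-\tilde\mu^N_t\otimes\tilde\mu^N_t$ holds for the symmetrized $\nu^N_t=\frac{1}{N(N-1)}\sum_{i\neq j}\delta_{(\tilde V^N_i,\tilde V^N_j)}$, not the literal $i<j$ display. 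The paper's own computations (for instance $\int\varphi(v)\,\nu^N_t(dvdw)=\int\varphi\,d\tilde\mu^N_t$) also require that reading.

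\textbf{What each buys.} Your approach avoids changing the probability space and gives a reusable path-space convergence statement. The paper's approach avoids any Skorokhod analysis of $\mathcal{P}(\mathbb{R}^6)$-valued paths.

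\textbf{Your part (b) is more complete than the paper.} It makes explicit a point the paper passes over. For $\gamma+2\le 0$, $\mathcal{A}^\varepsilon(\varphi)$ is bounded but discontinuous at $\{v=w\}$. So the convergence $\int\mathcal{A}^\varepsilon(\varphi)\,d\nu^N_\tau\to\int\mathcal{A}^\varepsilon(\varphi)\,d(f_\tau\otimes f_\tau)$ genuinely requires two facts:
\begin{itemize}
\item continuity of $\mathcal{A}(\varphi)$ off the diagonal;
\item $f_\tau\otimes f_\tau(\{v=w\})=0$ for a.e. $\tau$, which you correctly extract from Lemma~\ref{lem:wsolregularity}.
\end{itemize}
The paper's pathwise argument tacitly uses this same convergence. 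For the same reason, the remark's claim that $\mathcal{F}^\varepsilon_{\varphi,t}$ is continuous ``at any point that does not jump at $t$'' only holds under your additional non-charging hypothesis.
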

\begin{proof}
    The statement only depends on the laws of $\nu^N$ and $f\otimes f$. By the Skorokhod representation theorem (the space $\mathcal{D}$ being Polish hence separable), we can suppose that $\tilde{\mu}^N\rightarrow f$ almost surely in $\mathcal{D}$ rather than only in law. Then, since $f$ almost surely does not jump at $t$ by Proposition~\ref{prop:tightness}, $\tilde{\mu}^N_t$ converges to $f_t$, so that $\nu^N_t$ converges to $f_t\otimes f_t$ almost surely in $\mathcal{P}(\mathbb{R}^6)$. The same holds at time $0$, hence the first two terms of $\mathcal{F}^\varepsilon_{\varphi,t}(\nu^N)$ almost surely converge. Finally, outside of the countable jump times of $f$, $\nu^N_\tau$ converges to $f_\tau \otimes f_\tau$. so a.s. $\int \mathcal{A}^\varepsilon(\varphi)(v,w) \nu^N_\tau(dvdw)$ converges to $\int\mathcal{A}^\varepsilon(\varphi)(v,w) f_\tau\otimes f_\tau(dvdw)$ almost everywhere in time, and a dominated convergence on $[0,t]$ using boundedness of $\mathcal{A}^\varepsilon$ ensures that almost surely, $\int_0^t \int \mathcal{A}^\varepsilon(\varphi)(v,w) \nu^N_\tau(dvdw)d\tau$ converges. Hence $\left\vert \mathcal{F}^\varepsilon_{\varphi,t}(\nu^N) \right\vert  \rightarrow \left\vert \mathcal{F}^\varepsilon_{\varphi,t}(f\otimes f) \right\vert$ almost surely and a dominated convergence under the expectation concludes. 
\end{proof}
\begin{remark}
    One might have expected the following proof without using  Skorokhod's theorem: first showing that $\mathcal{F}^\varepsilon_{\varphi,t}$ is continuous on $D(\mathbb{R}_+,\mathcal{P}(\mathbb{R}^6))$ at any point $\nu$ that does not jump at $t$, which is easily checked, and then use that $\nu^N$ converges in law to $f\otimes f$ to test the convergence using $\vert \mathcal{F}^\varepsilon_{\varphi,t}\vert$. However, showing the convergence of $\nu^N$ to $f\otimes f$ by 'tensorizing' that $\tilde{\mu}^N\rightarrow f$ is not as trivial as the pointwise in time version $\nu^N_t \rightarrow f_t\otimes f_t$: it requires to explicitly investigate the Skorokhod distance on $D(\mathbb{R}_+,\mathcal{P}(\mathbb{R}^6))$, which we would rather avoid.
\end{remark}
A final Lemma shows that we can indeed replace $\mathcal{F}_{\varphi,t}$ by $\mathcal{F}^\varepsilon_{\varphi,t}$.
\begin{lemma}
\label{lem:awsol4}
It holds $$\sup_{N} \mathbb{E}\left\vert \mathcal{F}_{\varphi,t}(\nu^N) -\mathcal{F}^\varepsilon_{\varphi,t}(\nu^N) \right\vert + \mathbb{E}\left\vert \mathcal{F}_{\varphi,t}(f\otimes f) -\mathcal{F}^\varepsilon_{\varphi,t}(f\otimes f) \right\vert \xrightarrow[\varepsilon \rightarrow 0]{} 0.$$
\end{lemma}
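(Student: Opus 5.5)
Both differences reduce to the time integral of $\mathcal{A}(\varphi)-\mathcal{A}^\varepsilon(\varphi)$, since the first two terms of $\mathcal{F}_{\varphi,t}$ and $\mathcal{F}^\varepsilon_{\varphi,t}$ coincide. By \eqref{eq:techestimateonA}, $|\mathcal{A}(\varphi)(v,w)|\leq C_\varphi |v-w|^{\gamma+2}$. Since $\gamma+2\leq 0$, the truncation is active only where $|v-w|$ is small, namely $|v-w|\leq \delta_\varepsilon:=(C_\varphi\varepsilon)^{1/|\gamma+2|}\to 0$ when $\gamma<-2$. When $\gamma=-2$, $\mathcal{A}(\varphi)$ is bounded, and the difference vanishes for $\varepsilon$ small. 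In all cases
$$|\mathcal{A}(\varphi)-\mathcal{A}^\varepsilon(\varphi)|(v,w)\leq C_\varphi |v-w|^{\gamma+2}\mathbf{1}_{|v-w|\leq \delta_\varepsilon}\leq C_\varphi \delta_\varepsilon^{\gamma+4}|v-w|^{-2},$$
using $|v-w|^{\gamma+2}=|v-w|^{\gamma+4}|v-w|^{-2}$ and $\gamma+4>0$.

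For the particle term, I integrate this bound against $\nu^N_\tau$ and take expectations. Exchangeability gives
$$\mathbb{E}\left|\mathcal{F}_{\varphi,t}(\nu^N)-\mathcal{F}^\varepsilon_{\varphi,t}(\nu^N)\right|\leq C_\varphi\delta_\varepsilon^{\gamma+4}\int_0^t \mathbb{E}\,|\tilde V^N_1(\tau)-\tilde V^N_2(\tau)|^{-2}d\tau.$$
Lemma~\ref{lem:closeness} together with Proposition~\ref{prop:fisherestimate} bounds the expectation by $2I(f_0)$, uniformly in $N$. The supremum over $N$ is therefore at most $C_{\varphi}\,t\,I(f_0)\,\delta_\varepsilon^{\gamma+4}\to 0$. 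This is exactly why the diagonal-free measure $\nu^N$ is used.

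For the limit term, I need $\mathbb{E}\int_0^t\iint |v-w|^{-2}f_\tau(v)f_\tau(w)\,dv\,dw\,d\tau<\infty$. This does not follow from a.s. estimates alone, but it does follow from the expected Fisher bound \eqref{eq:limitfisher}. The one-particle form of Lemma~\ref{lem:closeness}, applied to $f_\tau\otimes f_\tau$ (a symmetric law on $\mathbb{R}^6$ whose Fisher information with our normalization is $I(f_\tau)$), gives $\iint|v-w|^{-2}f_\tau f_\tau\leq I(f_\tau)$. Hence the expectation is at most $\int_0^t\mathbb{E}[I(f_\tau)]d\tau\leq 2tI(f_0)$, and the same $\delta_\varepsilon^{\gamma+4}$ factor makes it vanish. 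This also shows that $\mathcal{F}_{\varphi,t}(f\otimes f)$ is well defined a.s. The main point to check is that this pointwise use of Lemma~\ref{lem:closeness} on $f_\tau\otimes f_\tau$ is legitimate, together with measurability in $\tau$. For the latter, $\tau\mapsto I(f_\tau)$ is measurable by lower semicontinuity of $I$ along the càdlàg path $f$.
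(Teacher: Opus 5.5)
Your proof is correct, but it takes a somewhat different route from the paper's.

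\textbf{The paper's route.} The paper does not localize in $|v-w|$. It uses the Chebyshev-type bound
\[
|\mathcal{A}(\varphi)-\mathcal{A}^\varepsilon(\varphi)|\le |\mathcal{A}(\varphi)|\mathbf{1}_{|\mathcal{A}(\varphi)|\ge\varepsilon^{-1}}\le\varepsilon|\mathcal{A}(\varphi)|^2\le C_{\varphi}\,\varepsilon\,|v-w|^{2(2+\gamma)},
\]
and then handles the two terms with different tools.
\begin{itemize}
\item For $f\otimes f$, it uses that $|\cdot|^{2(2+\gamma)}\in L^{3/2}_{loc}$ (since $\gamma>-3$). It then applies H\"older against $\Vert f_\tau\Vert_{L^3}$, the Sobolev embedding $\Vert f\Vert_{L^3}\lesssim 1+I(f)$, and \eqref{eq:limitfisher}.
\item For $\nu^N$, it bounds $|v-w|^{2(2+\gamma)}\le 1+|v-w|^{-2}$ and invokes Lemma~\ref{lem:closeness}.
\end{itemize}

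\textbf{Your route.} You invert \eqref{eq:techestimateonA} to confine the truncation error to $\{|v-w|\le\delta_\varepsilon\}$ and dominate it there by $\delta_\varepsilon^{\gamma+4}|v-w|^{-2}$. A single inequality then controls both terms: Lemma~\ref{lem:closeness}, applied to the particle law and also with $N=2$ to $f_\tau\otimes f_\tau$. This last step is legitimate. The lemma holds for any symmetric law with $N=2$, $I(f\otimes f)=I(f)$ with the paper's normalization, and it amounts to Hardy's inequality.

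\textbf{Comparison.} Your argument is more unified and skips the H\"older/Sobolev step. It even gives the better rate $\varepsilon^{(\gamma+4)/|\gamma+2|}$, since $\gamma+4>|\gamma+2|$ when $\gamma>-3$. The paper's squaring trick avoids any case distinction at $\gamma=-2$, where your $\delta_\varepsilon$ is undefined (you correctly note the difference then vanishes for small $\varepsilon$). It also ties the estimate to the $L^1_tL^3_v$ regularity built into the paper's definition of weak solution. Your measurability remark about $\tau\mapsto I(f_\tau)$ is also fine.
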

\begin{proof}
    We do it for $f$, the proof is similar for any $\nu^N$. We have
    \begin{align*}
       \vert \mathcal{F}_{\varphi,t}(f\otimes f)  -\mathcal{F}^\varepsilon_{\varphi,t}(f\otimes f)\vert &\leq \int_0^t \int_{\mathbb{R}^6} \vert \mathcal{A}(\varphi) \vert  \mathbf{1}_{\vert \mathcal{A}(\varphi)\vert \geq \varepsilon^{-1}}f_\tau\otimes f_\tau d\tau\\
       &\leq \varepsilon \int_0^t \int_{\mathbb{R}^6} \vert \mathcal{A}(\varphi) \vert^{2} f_\tau\otimes f_\tau d\tau \\
       &\leq C_{\varphi,\bar{b}}\varepsilon \int_0^t \int_{\mathbb{R}^6} \vert v-w \vert^{2(2+\gamma)} f_\tau(dv) f_\tau(dw) d\tau,
    \end{align*}
    using \eqref{eq:techestimateonA} from Lemma~\ref{lem:approxA} for the last line. Since $\gamma+2>-1$, $\vert v - \cdot \vert^{2(2+\gamma)}$ lies in $L^\frac{3}{2}_{loc}(\mathbb{R}^3)$, we get after applying Hölder's inequality for the integral in $w$ over $\{\vert w-w\vert \leq 1\}$:
    \begin{align*}
       \vert \mathcal{F}_{\varphi,t}(f\otimes f)  -\mathcal{F}^\varepsilon_{\varphi,t}(f\otimes f)\vert &\leq C_{\varphi,\bar{b},\gamma}\varepsilon \int_0^t \int_{\mathbb{R}^3} (1+\Vert f_\tau \Vert_{L^{3}(\mathbb{R}^3)})f_\tau(dv) d\tau\\
       &= C_{\varphi,\bar{b},\gamma}\varepsilon \int_0^t (1+\Vert f_\tau \Vert_{L^{3}(\mathbb{R}^3)}) d\tau.
    \end{align*}
    Taking expectations and using the Sobolev embedding to bound the $L^3$ norm by the Fisher information,
    \begin{align*}
        \mathbb{E}\left\vert \mathcal{F}_{\varphi,t}(f\otimes f) -\mathcal{F}^\varepsilon_{\varphi,t}(f\otimes f) \right\vert \leq C_{\varphi,\bar{b},\gamma}\varepsilon \int_0^t 1+\mathbb{E}\left[I(f_\tau)\right]d\tau \leq  C_{\varphi,\bar{b},t,I(f_0)}\varepsilon 
    \end{align*}
    by Proposition~\ref{prop:limitestimates}. For $\nu^N$, we proceed similarly, but using Lemma~\ref{lem:closeness} to control uniformly in $N$ the expectation of the integral $\int_{\mathbb{R}^6} \vert v-w \vert^{2(1+\delta)} \nu_\tau(dvdw) $.
\end{proof}
We can now conclude the
\begin{proof}[Proof of Proposition~\ref{prop:asweaksol}]
By Lemma~\ref{lem:wsolregularity}, a.s. $f$ satisfies the required regularity. Then for any $\varepsilon>0$,
\begin{align*}
        \mathbb{E}\vert \mathcal{F}_{\varphi,t}(f^{\otimes 2}) \vert
        \leq& \mathbb{E}\vert \mathcal{F}_{\varphi,t}(f^{\otimes 2}) -\mathcal{F}_{\varphi,t}^\varepsilon(f^{\otimes 2}) \vert\\ &+\mathbb{E}\vert  \mathcal{F}_{\varphi,t}^\varepsilon(f^{\otimes 2})\vert-\mathbb{E}\vert  \mathcal{F}_{\varphi,t}^\varepsilon(\nu^N)\vert\\
        &+\mathbb{E}\vert  \mathcal{F}_{\varphi,t}^\varepsilon(\nu^N)-\mathcal{F}_{\varphi,t}(\nu^N)\vert\\
        &+\mathbb{E}\vert  \mathcal{F}_{\varphi,t}(\nu^N)\vert.
\end{align*}
Letting $N\rightarrow \infty$ makes the second and last line vanish by Lemmas~\ref{lem:F(nuN)to0} and \ref{lem:awsol3}, while the first and third lines are under control by Lemma~\ref{lem:awsol4}. Letting $\varepsilon\rightarrow 0$ shows that $\mathbb{E}\vert \mathcal{F}_{\varphi,t}(f^{\otimes 2}) \vert = 0$, so that almost surely $\mathcal{F}_{\varphi,t}(f^{\otimes 2})=0$. We now need to make this hold for all $\varphi$ and all times simultaneously. The space $C^2_b(\mathbb{R}^3)$ is not separable, but $C^2_0(\mathbb{R}^3)$ is. We can hence make sure that a.s., $\forall k ,\forall l, \mathcal{F}_{\varphi_k,t_l}(f^{\otimes 2})=0$ for the $(\varphi_k)_k$ forming a dense subset of $C^2_0(\mathbb{R}^3)$, and $(t_l)_l$ dense in $\mathbb{R}_+$. Using that $f$ is right-continuous, we get the weak formulation for any $t$ by approaching it from the right. Two approximation arguments in \eqref{eq:weaksol} show that it holds for any $\varphi\in C^2_0(\mathbb{R}^3)$, and then any $\varphi\in C^2_b(\mathbb{R}^3)$. Hence a.s. $f$ solves the Boltzmann equation.

Moreover, since $t\mapsto \int_0^t \int \mathcal{A}(\varphi)f_\tau \otimes f_\tau d\tau$ is a.s. absolutely continuous, we see that $t\mapsto \int \varphi f_t$ is continuous so $f\in C(\mathbb{R}_+,\mathcal{P}(\mathbb{R}^3))$. For the energy, we proceed as in \cite[Proof of Theorem 6.1, Step 2.3]{FournierMischler2025}: consider the functionals
$$\mathcal{E}^{n,\varepsilon}(\rho):= \left(\sup_{t\in[0,n]} \int_{\mathbb{R}^3} \min(\vert v \vert^2,\varepsilon^{-1}) \rho_t(dv)-\int_{\mathbb{R}^3} \vert v \vert^2 f_0(dv)\right)_+,$$
which are continuous on $\mathcal{D}$ at any point $\rho$ that does not jump at $n$. Since $f$ a.s. does not, we get $\mathbb{E}[\mathcal{E}^{n,\varepsilon}(f)] = \lim_{N\rightarrow +\infty} \mathbb{E}[\mathcal{E}^{n,\varepsilon}(\tilde{\mu}^N)]$. But a.s. the energy of $\tilde{\mu}^N$ is conserved by \eqref{eq:conservpartsyst}, so that $$\mathcal{E}^{n,\varepsilon}(\tilde{\mu}^N)\leq \left(\int \vert v \vert^2 \tilde{\mu}^N_0(dv)-\int \vert v \vert^2 f_0(dv)\right)_+,$$
which goes to $0$ almost surely by the law of large numbers. Hence $\mathcal{E}^{n,\varepsilon}(f)=0$ a.s. and choosing a sequence $\varepsilon_k \rightarrow 0$ and letting $n\rightarrow \infty$ concludes.
\end{proof}
\begin{remark}
    In the case of moderately soft potentials $\gamma>-2$, the uniqueness result \cite{FournierGuerin2008} can be applied with the $L_t^1 L^3_v$ regularity of $f$, and the propagation of chaos can be concluded here. In particular the refined Fisher information dissipation estimates are useless. However, for very soft potentials we need much more integrability, that is crucially provided by the dissipation term and the new inequalities of Section~\ref{sec:functional_ineq_on_sphere}.
\end{remark}

\subsubsection{H-solution and moments}

We briefly explain in this section how to propagate moments of the random weak solution $f$. This will be used in the next Section to obtain a final integrability estimate. We also refer to \cite{Tabary2026a} where the same issue is addressed for the Landau equation.

It is known that moments of weak solutions of the Boltzmann equation are propagated (and grow at most linearly in time) \cite{CarlenCarvalhoLu2009a}. However, the proof of \cite{CarlenCarvalhoLu2009a} (see also \cite[Section 25.3]{Villani2025} for a recent review) uses both the weak formulation \eqref{eq:weaksol} from the previous paragraph, and the formulation of \textit{H-solutions} from \cite{Villani1998}. H-solutions are another weak formulation of the Boltzmann and Landau equation, for which no $L^1_t L^p_v$ regularity is required, but the H-theorem is embedded in the definition: H-solutions should satisfy the entropy estimate $H(f_t) + \int_0^t \mathbb{D}_B(f_\tau \otimes f_\tau )d\tau \leq H(f_0)$. In our case, $f$ only satisfies this estimate \textit{in expectation}, see Proposition~\ref{prop:limitestimates}, so $f$ cannot be strictly speaking a H-solution. However, what really matters in the proof from \cite{CarlenCarvalhoLu2009a} is that $\int_0^t \mathbb{D}_B(f_\tau)d\tau <+\infty$: it ensures that the H-formulation from \cite{Villani1998} can be given sense, and is the only bound necessary for propagation of moments.

Moreover, one can check that even though the authors in \cite{CarlenCarvalhoLu2009a} consider the \textit{true} entropy production $$\int_{\mathbb{R}^6} \left[f(v')f(w')-f(v)f(w)\right]\log\left( \frac{f(v')f(w')}{f(v)f(w)}\right) B dvdw$$
rather than $\mathbb{D}_B$, which is defined with $(\sqrt{f(v')f(w')}-\sqrt{f(v)f(w)})^2$, the true entropy production can always be replaced by $\mathbb{D}_B$ wherever it appears, see inequality (2.10) therein. Similarly, Villani only uses $\mathbb{D}_B$ (see \cite[Eq. (44)]{Villani1998}). This discussion leads to:

\begin{prop}[Theorem 1 in \cite{CarlenCarvalhoLu2009a}]
    \label{prop:moments}
    Let $f=(f_t)_t$ be a cluster point of $(\mu^N)_N$. Almost surely, for any $\ell$, there exists a (random) constant $c_{\ell}>0$ such that, for all $t\geq 0$,
    $$m_\ell(f_t)= \int_{\mathbb{R}^6} f_t(v) \jap{v}^\ell dv \leq c_{\ell}(1+t).$$
    The constant $c_{\ell}$ depends on $\bar{b},\gamma,\ell, m_\ell(f_0)$ and the random but a.s. finite $\int_0^t \mathbb{D}_B(f_\tau^{\otimes2}) d\tau$.
\end{prop}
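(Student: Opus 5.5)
The plan is to reduce to Theorem 1 of \cite{CarlenCarvalhoLu2009a} by checking, on an almost sure event, every hypothesis used in its proof. Fix the almost sure event on which the conclusions of Proposition~\ref{prop:asweaksol} hold: $f$ is a weak solution in the sense of \eqref{eq:weaksol}, $f\in C(\mathbb{R}_+,\mathcal{P}(\mathbb{R}^3))$, $f(0)=f_0$, and $\sup_t m_2(f_t)\leq m_2(f_0)$. Intersect it with the event where $\int_0^n \mathbb{D}_B(f_\tau\otimes f_\tau)\,d\tau<\infty$ and $\int_0^n I(f_\tau)\,d\tau<\infty$ for every integer $n$. This second event is almost sure because \eqref{eq:limitentropy} and \eqref{eq:limitfisher} bound the expectations. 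For the entropy part we also need $H(f_t)$ bounded below; this follows from the uniform second moment bound by the classical estimate $H(\rho)\geq -C(1+m_2(\rho))$. Taking a countable intersection over $n$ gives finiteness of the entropy production on every bounded interval.

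Next, on this event, we would verify that $f$ satisfies the H-formulation of \cite{Villani1998}. Villani's H-formulation writes the collision term tested against $\varphi$ through the splitting $f'f'_*-ff_*=(\sqrt{f'f'_*}-\sqrt{ff_*})(\sqrt{f'f'_*}+\sqrt{ff_*})$. Cauchy--Schwarz then bounds it by $\sqrt{\mathbb{D}_B(f\otimes f)}$ times a term controlled by $\|\nabla\varphi\|_\infty$, the quantity $\bar b$ and $\int |v-w|^{\gamma+2} ff_*$, following \cite[Eq. (44)]{Villani1998}. Since we already have the weak formulation with the $L^1_tL^3_v$ integrability, both formulations coincide for smooth test functions. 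Finiteness of $\int_0^t\mathbb{D}_B$ is exactly what lets us extend the identity to the unbounded, truncated moment test functions $\varphi=\min(\langle v\rangle^\ell,R)$ in the form used in \cite{CarlenCarvalhoLu2009a}.

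Then we would run the Carlen--Carvalho--Lu argument verbatim. Test against $\langle v\rangle^\ell$ using a Povzner-type inequality. Split the collision integral into a grazing part, handled through the H-formulation and $\mathbb{D}_B$ (replacing the true entropy production by $\mathbb{D}_B$ as in their inequality (2.10)), and a non-grazing part, handled by the weak formulation. The singular factor $|v-w|^{\gamma}$ is absorbed using energy boundedness together with $\int\mathbb{D}_B$ and $\int I(f_\tau)$. This yields a differential inequality $\frac{d}{dt}m_\ell\leq C(1+\sqrt{D(t)})$ in integrated form, hence the linear growth $m_\ell(f_t)\leq c_\ell(1+t)$ with $c_\ell$ depending on $\bar b,\gamma,\ell,m_\ell(f_0)$ and $\int_0^t\mathbb{D}_B$.

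The main obstacle is the bookkeeping of this transfer. We must check that every place in \cite{CarlenCarvalhoLu2009a} where the H-theorem is used requires only the finiteness of $\int_0^t\mathbb{D}_B$, and not the pathwise monotone entropy inequality, which we only have in expectation. We must also make the resulting constant depend measurably on this random quantity, so that a single almost sure event works simultaneously for all $\ell$ (take $\ell$ rational or integer, and use monotonicity in $\ell$).
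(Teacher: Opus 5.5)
Your proposal follows the same route as the paper. On the almost sure event given by Propositions~\ref{prop:asweaksol} and~\ref{prop:limitestimates} (weak solution, bounded energy, $\int_0^n\mathbb{D}_B(f_\tau^{\otimes 2})\,d\tau<\infty$ for every integer $n$), the H-formulation of \cite{Villani1998} makes sense, and \cite[Theorem 1]{CarlenCarvalhoLu2009a} applies with $\mathbb{D}_B$ in place of the entropy production, simply without bounding its time integral by $H(f_0)$. Your lower bound $H(f_t)\geq -C(1+m_2(f_0))$, used to pass from the expectation estimate \eqref{eq:limitentropy} to almost sure finiteness, is a detail the paper leaves implicit; the use of $\int_0^t I(f_\tau)\,d\tau$ is unnecessary and would add it to the dependence of $c_\ell$; and since the cited theorem is deterministic, the almost sure event needs no dependence on $\ell$.
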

\begin{proof}
    By Proposition~\ref{prop:asweaksol} and Proposition~\ref{prop:limitestimates}, we can make sure that almost surely, $f$ is a weak solution of the Boltzmann equation with uniformly in time bounded energy, and satisfying the estimate $\int_0^n \mathbb{D}_B(f_\tau^{\otimes2}) d\tau<+\infty$ for every integer $n\geq 0$ simultaneously. This implies that the H-formulation makes sense (see \cite[Section 6]{Villani1998}), and by \cite[Theorem 1]{CarlenCarvalhoLu2009a} we obtain the result. We simply do not bound $\int_0^t \mathbb{D}_B(f_\tau^{\otimes2}) d\tau$ by $H(f_0)$ at the very end of \cite[Section 5]{CarlenCarvalhoLu2009a}.
\end{proof}

\subsection{The $L^1_t L^p_v$ estimate for uniqueness of the cluster points}
\label{ssec:thel1lpestimate}

The uniqueness result by Fournier and Guérin \cite{FournierGuerin2008} states that there can be at most one weak solution (in the sense of the formulation \ref{eq:weaksol}) in the space $L^1_t L^p_v$ for $p>3/(3+\gamma)$. To prove that $(\mu^N)_N$ can have only one cluster point, we hence aim at showing such an estimate. It will be provided by the yet unused but highly regular dissipation term in the Fisher information estimate \eqref{eq:limitfisher}. 

We first apply the inequalities of Section~\ref{sec:functional_ineq_on_sphere} to the dissipation of the Fisher information:
\begin{lemma}
\label{lem:Jasfinite}
Consider $f=(f_t)_t$ a cluster point of $(\mu^N)_N$. Let $F_t = f_t \otimes f_t$, and $\bar{\beta}(r):=(1+r^2)^{(\gamma-2)/2}$. We let
$$\mathbb{J}^s_{\bar{\beta}}(F_t):=\int_{\mathbb{R}^3\times \mathbb{R}_+} \bar{\beta}(r)\mathbf{J}^s( F_t(z,r,\cdot)) 8r^2dzdr.$$
Then almost surely, $\int_0^t \mathbb{J}^s_{\bar{\beta}}(F_\tau) d\tau<+\infty$ for all $t\geq 0$. (We recall that $\mathbf{J}^s$ is defined in \eqref{def:mathbfJ}, and $z,r,\sigma$ are obtained by the usual change of variables)
\end{lemma}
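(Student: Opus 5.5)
The plan is to combine, pointwise in $(z,r)$, Lemma~\ref{lem:key_ineq_part1}'s companion Proposition~\ref{prop:key_ineq_part2} with the non-linear Sobolev inequality Proposition~\ref{prop:sqrtrootineq}, and then integrate against the weight $\bar\beta(r)8r^2$ using the expectation bound \eqref{eq:limitfisher} of Proposition~\ref{prop:limitestimates}. Concretely, for fixed $(z,r)$ let $g=F_\tau(z,r,\cdot)$, a non-negative function on $\mathbb{S}^2$ (smooth enough a.e., or handled by approximation since both sides are l.s.c./defined via the heat flow). Proposition~\ref{prop:sqrtrootineq} gives
\begin{equation*}
C_s\mathbf{J}^s(g)\leq \Vert \sqrt{g}\Vert^2_{\dot H^{1+s}(\mathbb{S}^2)}+\Vert g\Vert_{L^1(\mathbb{S}^2)},
\end{equation*}
and Proposition~\ref{prop:key_ineq_part2} bounds $\Vert \sqrt{g}\Vert^2_{\dot H^{1+s}}\lesssim_s \mathbf{K}^s(g)$, with $\mathbf{K}^s=\mathbf{K}^\omega$ for $\omega(u)=u^{-s}$. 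Hence $\mathbf{J}^s(g)\lesssim_s \mathbf{K}^\omega(g)+\Vert g\Vert_{L^1(\mathbb{S}^2)}$.

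Next integrate against $\bar\beta(r)8r^2dzdr$. Since $\gamma-2<0$, we have $\bar\beta(r)=(1+r^2)^{(\gamma-2)/2}\leq \min(1,r^{\gamma-2})\le \beta(r)=r^{\gamma-2}$, so the $\mathbf{K}$ term is bounded by $\mathbb{K}^\omega_\beta(F_\tau)$ from \eqref{def:mathbbK} (with $N=2$). The $L^1$ term gives $\int \bar\beta(r)\Vert F_\tau(z,r,\cdot)\Vert_{L^1(\mathbb{S}^2)}8r^2dzdr\leq \int_{\mathbb{R}^6}F_\tau\,dvdw=1$, using $\bar\beta\le1$ and the Jacobian $dvdw=8r^2dzdrd\sigma$. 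Thus
\begin{equation*}
\int_0^t \mathbb{J}^s_{\bar\beta}(F_\tau)d\tau\leq C_s\Big(t+\int_0^t\mathbb{K}^\omega_\beta(f_\tau\otimes f_\tau)d\tau\Big).
\end{equation*}
By \eqref{eq:limitfisher}, $\mathbb{E}\int_0^t\mathbb{K}^\omega_\beta(f_\tau\otimes f_\tau)d\tau\le 2I(f_0)/c_4<\infty$, so the right-hand side is a.s. finite for each fixed $t$; taking $t=n$ over all integers $n$ and using monotonicity in $t$ gives a single almost sure event on which the bound holds for all $t\ge0$.

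The only delicate point is justifying the pointwise application of the sphere inequalities, which were stated for smooth non-negative $g$: $f_\tau$ a.s. has finite Fisher information for a.e. $\tau$, so $F_\tau(z,r,\cdot)$ is merely in a Sobolev-type class for a.e. $(z,r)$. I would handle this by applying the inequalities to $g_\epsilon$ (heat flow at small time $\epsilon$, which is smooth and positive), noting $\mathbf{K}^s(g_\epsilon)\le \mathbf{K}^s(g)$ by the definition \eqref{def:mathbfK} after a shift of time variable (since $u^{-s}$ is decreasing, $\int_\epsilon^\infty \Vert\sqrt{g_u}\Vert^2 (u-\epsilon)^{-s}du$ requires care; alternatively use that $\Vert\sqrt{g_u}\Vert_{\dot H^2}$ can be compared directly), and passing $\epsilon\to0$ in $\mathbf{J}^s(g_\epsilon)$ by Fatou/lower semicontinuity of $\mathbf{J}^s$, which is the $L^p$ gradient norm of $g^{1/p}$ by \eqref{eq:reformJ}. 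This regularization step is where I expect the main technical friction.
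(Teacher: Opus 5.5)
Your argument is the paper's proof. You chain Propositions~\ref{prop:key_ineq_part2} and~\ref{prop:sqrtrootineq} pointwise in $(z,r)$, integrate against $\bar\beta(r)8r^2\,dz\,dr$ using $\bar\beta\le\beta$, $\bar\beta\le 1$ and the unit mass of $F_\tau$, and conclude from \eqref{eq:limitfisher} at integer times $t=n$. The paper does not discuss the regularity issue you raise at all.

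If you do want to close that issue, note that your time shift cannot give $\mathbf{K}^s(g_\epsilon)\le\mathbf{K}^s(g)$, since the shifted weight $(u-\epsilon)^{-s}$ is larger than $u^{-s}$. No such comparison is needed, however. The proof of Proposition~\ref{prop:key_ineq_part2} only uses the smooth positive $g_t$ for $t>0$, plus $\sqrt{g_t}\to\sqrt g$ in $L^2$ (because $\Vert\sqrt a-\sqrt b\Vert_{L^2}^2\le\Vert a-b\Vert_{L^1}$), so it already holds for any non-negative $g\in L^1(\mathbb{S}^2)$. Proposition~\ref{prop:sqrtrootineq} extends by applying it to the heat-flow regularization of $h=\sqrt g$ rather than of $g$; this regularization does not increase $\Vert h\Vert_{\dot H^{1+s}}$ or $\Vert h\Vert_{L^2}$. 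You then pass to the limit using lower semicontinuity of $\int\vert\nabla u\vert^{2(1+s)}$ under the $L^{2(1+s)}$ convergence of $u=h^{1/(1+s)}$.
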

\begin{proof}
    By Proposition~\ref{prop:limitestimates}, almost surely $\int_0^n \mathbb{K}^\omega_\beta(F_\tau)d\tau<+\infty$ for all integers $n\geq 0$, so for all $t\geq 0$. Because $\bar{\beta}(r) \leq \beta(r)=r^{\gamma-2}$, we have $\mathbb{K}^\omega_{\bar{\beta}}\leq \mathbb{K}^\omega_{\beta}$. Moreover $\omega(u)=u^{-s}$ so that $\mathbf{K}^\omega = \mathbf{K}^s $. Chaining together the inequalities from Propositions~\ref{prop:key_ineq_part2} and \ref{prop:sqrtrootineq}, we have
    $$ \mathbf{K}^s(F_t(z,r,\cdot))+ \Vert F_t(z,r,\cdot)\Vert_{L^1(\mathbb{S}^2)} \geq C_s \mathbf{J}^s(F_t(z,r,\cdot))$$
    for some $C_s>0$. Integrating against $\bar{\beta}(r)8r^2dzdr$ and using that $1\geq \bar{\beta}$, we get
    \begin{align*}
        \mathbb{K}^\omega_{\bar{\beta}}(F_t) + \Vert F_t \Vert_{L^1(\mathbb{R}^6)} = \iint_{\mathbb{R}^3\times \mathbb{R}_+}\!\! \bar{\beta}(r)\mathbf{K}^s( F_t(z,r,\cdot)) 8r^2dzdr + \iint_{\mathbb{R}^3\times \mathbb{R}_+}\!\!  \Vert F_t(z,r,\cdot)\Vert_{L^1(\mathbb{S}^2)}  8r^2&dzdr \\
        \geq C_s \iint_{\mathbb{R}^3\times \mathbb{R}_+} \!\! \bar{\beta}(r)\mathbf{J}^s( F_t(z,r,\cdot)) 8r^2dzdr=&C_s \mathbb{J}^s_{\bar{\beta}}(F_t),
    \end{align*}
    which concludes by integrating in time, since $\Vert F_t \Vert_{L^1(\mathbb{R}^6)}=1$ because $f_t$ is a probability.
\end{proof}

The above estimate yields some regularity of the gradient in the $\sigma$ variable of the tensor product $F=f\otimes f$: the functional $\mathbb{J}^s_{\bar{\beta}}$ looks like $\int F \vert \nabla_\sigma \log F \vert^{2(1+s)})$ up to weights. We want to show that it implies the same regularity on the full gradient of $f$, \textit{i.e.} control $\int f \vert \nabla \log f \vert^{2(1+s)})$. As seen in Section~\ref{ssec:secondineq}, this functional rewrites as a Sobolev seminorm on $f^{1/2(1+s)}$ which, by classical embeddings, will give us the high $L^p$ regularity we are seeking. This kind of estimate is now fairly classical in the literature, first with entropy production estimates by Desvillettes \cite{Desvillettes2014a} and later Fisher dissipation estimates by Ji \cite{Ji2024} (both for the Landau equation), and similar but more involved non-local versions for the Boltzmann equation, for instance by Chaker and Silvestre \cite{ChakerSilvestre2022}. All these estimates rely on the finite entropy of $f$ to quantitatively prevent its mass from being too concentrated (which makes the estimates fail). In our setting, we do not have uniform in time entropy bounds, for essentially the same reason that we do not have uniform-in-time Fisher information ones. As in the Landau setting \cite{Tabary2026a}, we will instead rely on the \textit{triplets of non-aligned points} introduced in \cite{FournierHauray2015}, which allow us to replace the entropy by another quantity having the remarkable advantage of being continuous for the weak topology on $\mathcal{P}(\mathbb{R}^3)$.

The definition is as follows:
\begin{defi}\cite[Definition 6.1]{FournierHauray2015}
\label{def:deltanonaligned}
    Let $\delta>0$. We say that a triplet of points $\bar{v}=(v_1,v_2,v_3)\in(\mathbb{R}^3)^3$ is $\delta$-non-aligned if
    \begin{equation}
        \label{eq:defdeltana1}
        \vert v_2-v_1\vert \geq 6\sqrt{\delta}
    \end{equation}
    and
    \begin{equation}
        \label{eq:defdeltana2}
        \vert (\Pi(v_2-v_1))( v_3-v_1 )\vert  \geq 24\delta + 2\sqrt{\delta}\vert v_3-v_1\vert.
    \end{equation}
    where $\Pi(v_2-v_1)$ is the projection on the plane $(v_2-v_1)^\perp$:
\end{defi}
The exact expression on the right-hand sides of \eqref{eq:defdeltana1} and \eqref{eq:defdeltana1} will be irrelevant for our purpose. What matters is that any probability measure with finite entropy and finite energy must charge three non-aligned points not too far from the origin. To quantify this, 
%we fix a smooth bump function $h$ such that $0\leq h \leq 1$, $h=1$ on $B(0,1)$ and $h=0$ outside $B(0,3/2)$.
for any $0<\delta<1$, $R>100$, we define the set $\mathcal{N}_{\delta,R}:=\{ \bar{v}=(v_1,v_2,v_3)\in B(0,R)^3 \vert \bar{v}\text{ is }\delta \text{-non-aligned}\}$ (which is non-empty with this choice of $R$). For any probability measure $\mu \in \mathcal{P}(\mathbb{R}^3) $, we define
\begin{equation}
    \label{def:iota}
    \iota_{\delta,R}(\mu):= \sup_{\bar{v}\in \mathcal{N}_{\delta,R}} \min_{k=1,2,3} \int_{B(v_k,\delta)} \mu(dv).
\end{equation}
which quantifies the largest weight that $\mu$ puts on $\delta$-non-aligned points in $B(0,R)$. Then, we have the following bound:
\begin{lemma}\cite[Adapted from Lemma 6.3]{FournierHauray2015}
    \label{lem:entropyimpliesnonaligned}
    Let $H^*>0$ and $E^*>0$. There exists $\delta\in(0,1)$, $R>0$ and $\kappa>0$ (depending only on $H^*$ and $E^*$) such that for any $\mu\in\mathcal{P}(\mathbb{R}^3)$ satisfying $H(\mu)\leq H^*$ and $\int\vert v \vert^2 \mu(dv) \leq E^*$, 
    \begin{equation}
    \label{eq:entropyimpliesnonaligned}
\iota_{\delta,R}(\mu)\geq \kappa.
    \end{equation}
\end{lemma}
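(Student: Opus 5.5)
The plan is a quantitative non-concentration argument: finite entropy rules out mass on small balls, finite energy rules out mass far away, and the remaining mass, spread over a bounded region, must contain three well-separated, non-collinear clusters. Throughout, write $H(\mu)=\int \mu\log\mu$, with $\mu$ having a density since $H(\mu)<\infty$.

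\textbf{Step 1: small sets carry little mass.} For any measurable $A$ with Lebesgue measure $|A|$ and any $M>1$, I split $\mu(A)\le M|A| + \int_{\mu>M}\mu$. The second term is at most $\frac{1}{\log M}\int_{\mu>M}\mu\log\mu$. The negative part of $\mu\log\mu$ is controlled by the energy through the classical bound $\int \mu(\log\mu)_-\le C(1+E^*)$, so this term is at most $(H^*+C(1+E^*))/\log M$. Choosing $M$ large and then $|A|$ small gives $\eta(\epsilon)>0$, depending only on $H^*,E^*$, such that $|A|\le\eta(\epsilon)$ implies $\mu(A)\le\epsilon$.

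\textbf{Step 2: localization.} By Chebyshev, $\mu(B(0,R_0)^c)\le E^*/R_0^2\le 1/4$ for $R_0=2\sqrt{E^*}$. Hence $\mu(B(0,R_0))\ge 3/4$, and I fix $R=\max(R_0+1,101)$.

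\textbf{Step 3: choosing three non-aligned points.} Cover $B(0,R_0)$ by $n\lesssim (R_0/\delta)^3$ cubes of side $\delta/\sqrt3$, each contained in a ball of radius $\delta$ around its center. I want to pick centers $v_1,v_2,v_3$ of cubes that each carry mass at least $\kappa$ and form a $\delta$-non-aligned triple.

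First, the set of points within distance $6\sqrt\delta+\delta$ of $B(0,R_0)$-points near a given $v_1$ has volume $\lesssim \delta^{3/2}$. For $\delta$ small this is below $\eta(1/8)$, so it carries mass at most $1/8$.

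Second, given $v_1,v_2$, the forbidden region for $v_3$ is the set where $\vert\Pi(v_2-v_1)(v_3-v_1)\vert < 24\delta+2\sqrt\delta\,\vert v_3-v_1\vert$. Inside $B(0,R_0+1)$ this is a cylinder around the line through $v_1,v_2$ of radius $\lesssim \sqrt\delta (R_0+1)+\delta$. Enlarged by $\delta$, its volume is $\lesssim R_0^3\delta$, which for $\delta$ small is below $\eta(1/8)$.

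Now call a cube heavy if its mass is at least $\kappa := 1/(8n)$. Non-heavy cubes carry total mass at most $1/8$, so heavy cubes inside $B(0,R_0)$ carry mass at least $3/4-1/8$. I pick any heavy cube for $v_1$. The heavy mass lying outside the exclusion region around $v_1$ is at least $5/8-1/8>0$, so there is a heavy cube giving $v_2$, up to adjusting constants by $\delta$-shifts. Likewise the heavy mass outside the cylinder is at least $1/2-1/8>0$, which yields $v_3$.

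Since each chosen cube sits inside $B(v_k,\delta)$, we get $\min_k\mu(B(v_k,\delta))\ge\kappa$, which gives \eqref{eq:entropyimpliesnonaligned}. The constant $\kappa$ depends on $\delta$ and $R_0$, hence only on $H^*$ and $E^*$.

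\textbf{Main obstacle.} The only delicate point is the geometry in Step 3. The forbidden regions must have measure $o(1)$ as $\delta\to0$, uniformly over chosen points in $B(0,R)$. The cube centers must also satisfy the strict inequalities \eqref{eq:defdeltana1}--\eqref{eq:defdeltana2} even after replacing a point by nearby points in its cube. I would handle both by building a margin of order $\delta$ into the excluded sets, exactly as in \cite[Lemma 6.3]{FournierHauray2015}.
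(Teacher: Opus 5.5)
Your proof is correct, but it follows a different route from the paper. The paper's proof is a one-line appeal to \cite[Lemma 6.3]{FournierHauray2015}, which directly supplies a $\delta$-non-aligned triple in $B(0,R)^3$ whose $\delta$-balls each carry mass at least $\kappa$. You instead rebuild the statement from scratch with a quantitative non-concentration argument:
\begin{enumerate}
\item Entropy and energy together give equi-integrability, via $\int\mu(\log\mu)_-\le E^*+C$.
\item Chebyshev localizes mass $3/4$ in $B(0,R_0)$.
\item A pigeonhole over a grid of $n\sim(1+R_0/\delta)^3$ cubes produces heavy cubes of mass at least $1/(8n)$, carrying total mass at least $5/8$.
\item Both forbidden regions have Lebesgue measure $o(1)$ as $\delta\to0$, so each carries $\mu$-mass at most $1/8$. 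The first is a ball of radius $O(\sqrt\delta)$ around $v_1$. The second is the cone-like set violating \eqref{eq:defdeltana2}, which inside $B(0,R_0+1)$ lies in a cylinder of radius $O(\sqrt\delta(1+R_0))$.
\end{enumerate}
What your route buys is that the uniformity is visible: $\delta$, $R$ and $\kappa$ depend only on $(H^*,E^*)$. This is exactly the ``adapted'' content of the statement, and it is what Proposition~\ref{prop:uniqueness} needs, since there a single $\kappa$ must serve for every $f_t$ with $t\in[0,t_0]$. The citation is shorter but leaves the reader to check that the reference's constants are uniform in this sense.

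A few tidy-ups would help:
\begin{itemize}
\item The sentence excluding $v_2$ is garbled. What you need is that every heavy cube whose center lies within $6\sqrt\delta$ of $v_1$ is contained in $B(v_1,6\sqrt\delta+\delta)$, a set of measure $O(\delta^{3/2})$.
\item Each exclusion can be subtracted from the full heavy mass $5/8$; there is no need to chain the subtractions.
\item The inequalities in Definition~\ref{def:deltanonaligned} are non-strict, so the only margin required is the $\delta/2$-enlargement. It guarantees that heavy cubes with a forbidden center lie inside the enlarged forbidden set.
\item Write $n\lesssim(1+R_0/\delta)^3$ so that small $E^*$ is also covered.
\end{itemize}
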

\begin{proof}
    By \cite[Lemma 6.3]{FournierHauray2015}, there exists three $\delta$-non-aligned points $\bar{v}=(v_1,v_2,v_3)\in B(0,R)^3$ such that $\int_{B(v_k,\delta)}\mu(dw) \geq \kappa$ for $k=1,2,3$.
    %The other inequality is simply because $h\left(\frac{\cdot-v_k}{\delta}\right)\geq \mathbf{1}_{B(v_k,\delta)}$.
\end{proof}
A key property of $\iota_{\delta,R}$ is that it is lower semi-continuous on $\mathcal{P}(\mathbb{R}^3)$. Indeed, by the portemanteau theorem, for any fixed $v_k$, $\mu\mapsto \int_{B(v_k,\delta)} \mu(dv)$ is lower semicontinuous because $B(v_k,\delta)$ is open, so the minimum over $k$ is lower semicontinuous and so is the supremum over $\bar{v}$. This means that a lower bound $\kappa$ at a point can be extended to a lower bound $\kappa/2$ on a neighborhood. This is not the case with more classical entropy-based bounds from the literature \cite{Desvillettes2014a,Silvestre2016,Ji2024} and will crucial later on. The main way to exploit a $\delta$-non-aligned triplet in estimates is through the following lemma:
\begin{lemma}\cite[adapted from Lemma 6.2]{FournierHauray2015}
    \label{lem:conenonaligned}
    Let $\bar{v}=(v_1,v_2,v_3)\in B(0,R)^3$ be $\delta$-non-aligned.
    Let $v\in\mathbb{R}^3$ and $\xi$ a unit vector and define the cone centered on $v$ with axis $\xi$:
    $$C=\left\{ w\in \mathbb{R}^3 \;\bigg\vert\; \frac{\vert \Pi(\xi)(v-w)\vert}{\vert v-w\vert} \leq \frac{\delta}{2+R+\vert v\vert} \right\}.$$
    There exists $k\in\{1,2,3\}$ such that $C$ does not intersect $B(v_k,2\delta)$.
\end{lemma}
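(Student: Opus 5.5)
The plan is a contradiction argument: if the cone $C$ meets all three balls $B(v_k,2\delta)$, then all three $v_k$ lie in a thin tube around the axis of $C$. Points in such a tube are nearly collinear, which is incompatible with \eqref{eq:defdeltana1}--\eqref{eq:defdeltana2}.

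\emph{Step 1: the balls that meet $C$ lie near the axis.} Let $L=v+\mathbb{R}\xi$ be the axis of $C$. For any $w\in C$ we have $\mathrm{dist}(w,L)=\vert \Pi(\xi)(v-w)\vert\leq \frac{\delta\,\vert v-w\vert}{2+R+\vert v\vert}$. Suppose $w\in C\cap B(v_k,2\delta)$. Since $v_k\in B(0,R)$ and $\delta<1$, we get $\vert v-w\vert\leq \vert v\vert+R+2\delta<2+R+\vert v\vert$, hence $\mathrm{dist}(w,L)<\delta$. Therefore $\mathrm{dist}(v_k,L)<3\delta$, which we write as $\vert\Pi(\xi)(v_k-v)\vert<3\delta$. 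Assume for contradiction that this holds for $k=1,2,3$.

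\emph{Step 2: $v_2-v_1$ is almost parallel to $\xi$.} Step 1 gives $\vert \Pi(\xi)(v_2-v_1)\vert\leq \vert \Pi(\xi)(v_2-v)\vert+\vert \Pi(\xi)(v_1-v)\vert<6\delta$, and similarly $\vert \Pi(\xi)(v_3-v_1)\vert<6\delta$. Let $\theta$ be the angle between the lines spanned by $v_2-v_1$ and $\xi$. By \eqref{eq:defdeltana1},
$$\sin\theta=\frac{\vert\Pi(\xi)(v_2-v_1)\vert}{\vert v_2-v_1\vert}<\frac{6\delta}{6\sqrt\delta}=\sqrt\delta .$$

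\emph{Step 3: contradiction with \eqref{eq:defdeltana2}.} We use the elementary fact that for unit vectors $a,b$, the operator norm $\Vert\Pi(a)-\Pi(b)\Vert$ equals the sine of the angle between them; this follows from a two-dimensional computation in $\mathrm{span}(a,b)$. Hence
$$\vert\Pi(v_2-v_1)(v_3-v_1)\vert\leq \vert\Pi(\xi)(v_3-v_1)\vert+\sin\theta\,\vert v_3-v_1\vert<6\delta+\sqrt\delta\,\vert v_3-v_1\vert .$$
The right-hand side is strictly smaller than $24\delta+2\sqrt\delta\vert v_3-v_1\vert$, which contradicts \eqref{eq:defdeltana2}. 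So at least one ball $B(v_k,2\delta)$ does not meet $C$.

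The only step that needs real care is Step 3, namely the projection-difference identity together with the bound on $\sin\theta$. The rest is bookkeeping of constants, and the constants $6\sqrt\delta$, $24\delta$, $2\sqrt\delta$ in Definition~\ref{def:deltanonaligned} leave ample room.
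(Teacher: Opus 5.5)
Your argument is correct. Each step checks out, including the identity $\Vert\Pi(a)-\Pi(b)\Vert=\sin\theta$ and the final strict comparison $6\delta+\sqrt{\delta}\,\vert v_3-v_1\vert<24\delta+2\sqrt{\delta}\,\vert v_3-v_1\vert$. Your use of $\delta<1$ is legitimate, since \eqref{eq:defdeltana2} together with $\vert\Pi(v_2-v_1)(v_3-v_1)\vert\leq\vert v_3-v_1\vert$ already forces $2\sqrt{\delta}<1$.

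The paper gives no proof of its own and simply defers to Step 1 of the proof of Lemma 6.2 in \cite{FournierHauray2015}. Your argument (all three centres lie in a thin tube around the axis, so $v_2-v_1$ is nearly parallel to $\xi$ and $v_3$ is too close to the line through $v_1,v_2$) is a self-contained version of that geometric step, with room to spare in the constants.
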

\begin{proof}
    This lemma is exactly Step 1 in the proof of \cite[ Lemma 6.2]{FournierHauray2015}.
\end{proof}

Our first application of $\delta$-non-aligned triplets is to extract a control on $I(f)^{1+s}$ using $\mathbb{J}^s_{\bar{\beta}}(f\otimes f)$. This is only an intermediate result that we need to later obtain an enhanced estimate.
%We consider a smooth function for now and will resort to mollification when needed.
\begin{lemma}
\label{lem:JcontrolsI1+s}
Let $f$ be a probability density on $\mathbb{R}^3$ and $F= f \otimes f$. It holds that, for any $0<\delta<1$ and $R>100$,
$$\big( \iota_{\delta,R}(f) I(f)\big)^{1+s} \leq c_{5} \left[1+  \mathbb{J}^s_{\bar{\beta}}(F) \ (m_{(2-\gamma)/s}(f))^s\right],$$
with $c_5>0$ depending on $\delta,R,s,\gamma$.
\end{lemma}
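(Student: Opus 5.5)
We relate the spherical gradient of $F=f\otimes f$ to the full gradient of $f$. In the $(z,r,\sigma)$ variables with $v=z+r\sigma$, $w=z-r\sigma$, we have $\nabla_\sigma = r\,\Pi(\sigma)(\nabla_v-\nabla_w)$. Hence
$$\nabla_\sigma\log F = r\,\Pi(\sigma)\big(\nabla\log f(v)-\nabla\log f(w)\big).$$
Undoing the change of variables (Jacobian $8r^2dzdr\,d\sigma = dvdw$), we get
$$\mathbb{J}^s_{\bar\beta}(F)=\iint \bar\beta\Big(\tfrac{|v-w|}{2}\Big)\Big(\tfrac{|v-w|}{2}\Big)^{2(1+s)}\Big|\Pi(v-w)\big(\nabla\log f(v)-\nabla\log f(w)\big)\Big|^{2(1+s)} f(v)f(w)\,dv\,dw.$$
The goal is to bound $\int f(v)|\nabla\log f(v)|^2\,dv$ from below by extracting $\nabla\log f(v)$ alone, with the $w$-integral acting as a "mass" factor that $\iota_{\delta,R}(f)$ controls from below.

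\textbf{Step 1 (pointwise in $v$).} Fix $v$ and set $\xi=\nabla\log f(v)/|\nabla\log f(v)|$. The projection $\Pi(v-w)\nabla\log f(v)$ is small only if $v-w$ is nearly parallel to $\xi$, that is, only if $w$ lies in the cone $C$ of Lemma~\ref{lem:conenonaligned}. Outside $C$,
$$|\Pi(v-w)\xi|\geq \frac{\delta}{2+R+|v|}.$$
Take a $\delta$-non-aligned triplet almost realizing $\iota_{\delta,R}(f)$. By Lemma~\ref{lem:conenonaligned}, some ball $B(v_k,\delta)$ avoids $C$ and carries mass at least $\iota_{\delta,R}(f)/2$. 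For $w$ in that ball, $|v-w|\le |v|+R+1$, so $\bar\beta(|v-w|/2)\gtrsim \langle v\rangle^{\gamma-2}$. Also $|v-w|\ge c\,\delta/\langle v\rangle$ on the complement of the cone, and I would try to ensure this by enlarging the cone near the apex, or by shrinking the ball and removing a small neighbourhood of $v$ when $v$ itself is near $v_k$. On this ball I use $|a-b|^{p}\ge 2^{1-p}|a|^p-|b|^p$. This gives, for each $v$,
$$\iota\,c(\delta,R)\,\langle v\rangle^{\gamma-2-c}\,|\nabla\log f(v)|^{2(1+s)} \le \int \bar\beta\,r^{2(1+s)}|\Pi(\cdots)|^{2(1+s)}f(w)\,dw+\text{(cross term in }\nabla\log f(w)).$$

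\textbf{Step 2 (the cross term).} The cross term $\iint_{w\in B(v_k,\delta)}|\nabla\log f(w)|^{2(1+s)}f(w)f(v)\,dv\,dw$ is not obviously controlled, and this is the main obstacle. My first attempt is to symmetrize: the integrand of $\mathbb{J}$ is symmetric in $(v,w)$, so one can split according to which of $|\Pi\nabla\log f(v)|$ and $|\Pi\nabla\log f(w)|$ is larger, and in each region only the larger one is retained. Equivalently, use $\max(|a|,|b|)\le |a-b|+\min(|a|,|b|)$ on the set where the two projections are comparable. If that fails, I will instead apply the cone argument directly to $a-b$ and accept a weaker quantity, namely $I(f)$ weighted by the mass in the ball, since only $\iota I$ is needed.

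\textbf{Step 3 (Hölder).} Integrate against $f(v)\,dv$ to obtain
$$\iota\int f\,\langle v\rangle^{-m}|\nabla\log f|^{2(1+s)}\lesssim 1+\mathbb{J}^s_{\bar\beta}(F),$$
with a polynomial weight $m$ of order $(2-\gamma)$. Then apply Hölder with exponents $1+s$ and $(1+s)/s$:
$$I(f)=\int f|\nabla\log f|^2 \le \Big(\int f\langle v\rangle^{-m}|\nabla\log f|^{2(1+s)}\Big)^{\frac{1}{1+s}}\Big(\int f\langle v\rangle^{m/s}\Big)^{\frac{s}{1+s}}.$$
Raising to the power $1+s$ and multiplying by $\iota^{1+s}\le\iota$ (since $\iota\le1$) gives
$$(\iota I)^{1+s}\lesssim\big(1+\mathbb{J}^s_{\bar\beta}(F)\big)\,m_{m/s}(f)^s,$$
which matches the moment $m_{(2-\gamma)/s}$ in the statement. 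The additive $1$ absorbs the region where the integrand is small, and the weights are bookkept so that $m=2-\gamma$ exactly. This requires checking that the cone-angle factor $\langle v\rangle^{-2(1+s)}$ can be absorbed, possibly by using the slack between $r^{2(1+s)}\bar\beta$ and $\langle v\rangle^{\gamma-2}$.
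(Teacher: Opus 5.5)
Your Step 2 is a genuine gap, and neither proposed repair can close it. The obstruction is the Maxwellian cancellation. For $M(v)\propto e^{-|v|^2/2}$ one has $\nabla\log M(v)-\nabla\log M(w)=-(v-w)$, so $\mathbb{J}^s_{\bar\beta}(M\otimes M)=0$. Yet $a:=\Pi(v-w)\nabla\log M(v)$ and $b:=\Pi(v-w)\nabla\log M(w)$ are equal and nonzero.

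Consequently the cross term $|b|^{2(1+s)}$ produced by $|a-b|^p\ge 2^{1-p}|a|^p-|b|^p$ is exactly as large as your main term. Splitting via $\max(|a|,|b|)\le|a-b|+\min(|a|,|b|)$ gains nothing either, since $|a|=|b|$. No inequality that is pointwise in $(v,w)$ can exploit this cancellation; it requires an integrated identity.

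The other available tool is to discard $\{|\nabla\log f(w)|\ge M\}$ with $M^2\sim I(f)/\iota_{\delta,R}(f)$ by Chebyshev. That is exactly Proposition~\ref{prop:unliftingofJ}, and it leaves an error $(I(f)/\iota_{\delta,R}(f))^{1+s}$. Combined with your Step 3 it gives $\iota\,I^{1+s}\lesssim m_{(2-\gamma)/s}(f)^s\bigl[\mathbb{J}^s_{\bar\beta}(F)+(I/\iota)^{1+s}\bigr]$. Here $I^{1+s}$ reappears on the right with a coefficient that cannot be absorbed. This is precisely why the paper proves the present lemma first, as an a priori control of $I(f)^{1+s}$ feeding into Proposition~\ref{prop:unliftingofJ}.

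Your fallback of applying the cone lemma to $a-b$ also fails. The axis $\nabla\log f(v)-\nabla\log f(w)$ varies with $w$, whereas Lemma~\ref{lem:conenonaligned} selects a ball for one fixed axis.

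The missing idea is to reverse the order of your Steps 1 and 3: apply H\"older first, and run the cone argument at the quadratic level. Write $a(x)=|x|^2\Pi(x)$ and use exponents $1+s$ and $(1+s)/s$, together with $\bar\beta(r)^{-1/s}\lesssim\jap{v}^{(2-\gamma)/s}+\jap{w}^{(2-\gamma)/s}$. This gives $\iint F\,a(v-w):[(\nabla_v-\nabla_w)\log F]^{\otimes 2}\,dv\,dw\le c\,\mathbb{J}^s_{\bar\beta}(F)^{1/(1+s)}\,m_{(2-\gamma)/s}(f)^{s/(1+s)}$.

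The left side is the Landau entropy production for Maxwell molecules. Because it is quadratic, the square expands exactly. The cross term equals $-2\iint a(v-w):\nabla f(v)\otimes\nabla f(w)\,dv\,dw=-12$ after integrating by parts in both $v$ and $w$, using $\partial_{v_i}\partial_{w_j}a_{ij}(v-w)=6$. This identity is what encodes the Maxwellian cancellation, and it is the origin of the additive $1$ in the statement.

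The diagonal term $2\iint f(v)f(w)\,a(v-w):[\nabla\log f(v)]^{\otimes 2}$ is then bounded below by $c_{\delta,R}\,\iota_{\delta,R}(f)I(f)$, using your cone argument with $\xi=\nabla\log f(v)/|\nabla\log f(v)|$. Note that $v\in C$, so $|v-w|\ge\delta$ holds automatically on the selected ball $B(v_k,\delta)$; no modification of the cone is needed.

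Raising $2c_{\delta,R}\,\iota_{\delta,R}(f) I(f)-12\lesssim\mathbb{J}^s_{\bar\beta}(F)^{1/(1+s)}m_{(2-\gamma)/s}(f)^{s/(1+s)}$ to the power $1+s$ gives the statement exactly, with right-hand side $1+\mathbb{J}^s_{\bar\beta}(F)\,m_{(2-\gamma)/s}(f)^s$. Your Step 3, even granting Step 2, would only give the weaker form $(1+\mathbb{J}^s_{\bar\beta}(F))\,m_{(2-\gamma)/s}(f)^s$.
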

\begin{proof}
    \textit{Step 1.} Combining that $\nabla_\sigma G = r\Pi(\sigma)((\nabla_v - \nabla_w) G)$, that $\sigma$ is parallel to $v-w$, and that $r=\vert v-w\vert/2$, we have
    $$\vert \nabla_\sigma G\vert^2= \frac{\vert v-w\vert^2}{4} \Pi(v-w):\left[(\nabla_v - \nabla_w) G \right]^{\otimes 2} =\frac{1}{4}a(v-w):\left[(\nabla_v - \nabla_w) G \right]^{\otimes 2}$$
    where $a(x):=\vert x\vert^2 \Pi(x) = \vert x \vert^2 \Id - x\otimes x$, and $A:B=\Tr(AB)$ for $A,B$ symmetric matrices.
    With $G=\log F$ and recalling the definition \eqref{def:mathbfJ} of $\mathbf{J}^s$ , we obtain
    \begin{align}
    \label{eq:proofI1+s0}
        \mathbb{J}^s_{\bar{\beta}}(F)
        &=\int_{\mathbb{S}^2\times \mathbb{R}^3\times \mathbb{R}_+} \bar{\beta}(r)F \vert \nabla_\sigma \log F\vert^{2(1+s)}d\sigma 8r^2dzdr\nonumber\\
        &=\frac{1}{4^{1+s}}\int_{\mathbb{R}^6} \bar{\beta}(r) F \left( a(v-w):\left[(\nabla_v - \nabla_w) \log F\right]^{\otimes 2}\right)^{1+s} dvdw.
    \end{align}
    Multiplying and dividing the integrand below by $\bar{\beta}(r)^{\frac{1}{1+s}}$, and using the Hölder inequality with exponents $1+s$ and $(1+s)/s$, we get
    \begin{align}
    \label{eq:proofI1+s1}
        \int_{\mathbb{R}^6} \!F a(v-w)\!:\!\left[(\nabla_v - \nabla_w) \log F \right]^{\otimes 2} dv dw &\leq c_s\! \left(\mathbb{J}^s_{\bar{\beta}}(F)\right)^{\frac{1}{1+s}} \left(\int_{\mathbb{R}^6} \bar{\beta}(r)^{-1/s} F dv dw\right)^{\frac{s}{1+s}}\nonumber\\
        &\leq c_{s,\gamma}\! \left(\mathbb{J}^s_{\bar{\beta}}(F)\right)^{\frac{1}{1+s}} \left(m_{(2-\gamma)/s}(f)\right)^{\frac{s}{1+s}}
    \end{align}
    since $\bar{\beta}(r)^{-1/s}= (r^2+1)^{(2-\gamma)/2s} \leq c_s (\jap{v}^{(2-\gamma)/s} +\jap{w}^{(2-\gamma)/s})$.
     It only remains to show that the left-hand side controls $\iota_{\delta,R}(f) I(f)$ and then elevate everything to the power $1+s$. But this left-hand side is actually the entropy production of the Landau equation with Maxwell molecules ($\alpha=1$), so this is a well-known result. We briefly prove it again in the new framework of $\delta$-non-aligned points.
     
     \textit{Step 2.} Writing $(\nabla_v - \nabla_w) \log F= \nabla_v \log f(v) - \nabla_w \log f(w)$, expanding the tensor-square, using symmetry in $v$ and $w$ and combining $f$ and the gradient of its logarithm, we have
     \begin{align*}
         \int_{\mathbb{R}^6} F a(v-w):\left[(\nabla_v - \nabla_w) \log F \right]^{\otimes 2} dv dw
         =&2\int_{\mathbb{R}^6} f(v)f(w) a(v-w):\left[\nabla_v \log f(v)\right]^{\otimes 2} dv dw\\
         &- 2\int_{\mathbb{R}^6} a(v-w):\left[\nabla_v f(v) \otimes \nabla_w f(w)\right] dv dw.
     \end{align*}
     Integrating by parts the second term and using that $\nabla_w\cdot \nabla_v\cdot a(v-w) = 6$ show that it is equal to $12$.
     Now, choose $\bar{v}$ a $\delta$-non-aligned triplet in $B(0,R)^3$, $\xi$ a unit vector. The cone $C$ centered at $v$ from Lemma~\ref{lem:conenonaligned} does not intersect one of the balls $B(v_k,2\delta)$, say the $k=1$ one. We also have
     \begin{equation}
     \label{eq:proofI1+s2}
         a(x): \xi^{\otimes 2}=\vert x\vert^2 - (x \cdot \xi)^2 = \vert \Pi(\xi) x\vert^2,
     \end{equation}
     hence
     \begin{align*}
         \int_{\mathbb{R}^3} f(w) a(v-w):\xi^{\otimes 2} dw \geq \int_{B(v_1,\delta)} f(w) \vert \Pi(\xi)( v-w)\vert^2 dw
     \end{align*}
     By definition of $C$, for all $w\in B(v_1,\delta)$, $$\vert \Pi(\xi) (v-w)\vert^2 \geq \frac{\delta^2 \vert v-w\vert^2}{(2+R+\vert v\vert)^2 }\geq c_{\delta,R}.$$ The second bound is because $\vert v-w\vert \geq \delta$ since $v\in C$ cannot belong to $B(v_1,2\delta)$, and because there is no deterioration as $v \rightarrow \infty$ since $w$ remains in $B(v_1,\delta)\subset B(0,R+1)$. This implies, by taking the supremum over all $\bar{v}\in \mathcal{N}_{\delta,R}$, that
     $$\int_{\mathbb{R}^3} f(w) a(v-w):\xi^{\otimes 2} dw \geq c_{\delta,R} \iota_{\delta,R}(f).$$
     Picking $\xi=\nabla_v\log f(v) / \vert \nabla_v\log f(v)\vert $ for each $v$ and integrating in $v$, we get
     \begin{align*}
         \int_{\mathbb{R}^6} f(v)f(w) a(v-w):\left[\nabla_v \log f(v)\right]^{\otimes 2} dv dw &\geq  c_{\delta,R} \iota_{\delta,R}(f) \int_{\mathbb{R}^3}  f(v) \vert \nabla_v\log f(v)\vert^2 dv\\
         &=c_{\delta,R} \iota_{\delta,R}(f) I(f)
     \end{align*}
    Recalling \eqref{eq:proofI1+s1}, 
    $$2c_{\delta,R} \iota_{\delta,R}(f) I(f) -12 \leq c_{s,\gamma}\! \left(\mathbb{J}^s_{\bar{\beta}}(F)\right)^{\frac{1}{1+s}} \left(m_{(2-\gamma)/s}(f)\right)^{\frac{s}{1+s}}$$
    which concludes. 
\end{proof}

With this intermediate estimate in hand, we can improve the proof above by not using the Hölder inequality, to get a control of $\int f \vert  \nabla \log f \vert^{2(1+s)}$ (up to some polynomial weight), which is an improvement over $I(f)^{1+s}$. Observe that the right-hand side features $I(f)^{1+s}$, explaining the need for the previous result.
\begin{prop}
    \label{prop:unliftingofJ}
    Let $f$ be a probability density on $\mathbb{R}^3$ and $F= f \otimes f$. It holds that, for any $0<\delta<1$ and $R>100$,
    $$ \iota_{\delta,R}(f) \int_{\mathbb{R}^3} f(v) \vert\nabla_v \log f(v)\vert^{2(1+s)} \jap{v}^{\gamma-2} dv \leq c_{6} \left[ \mathbb{J}^s_{\bar{\beta}}(F) + \left(\frac{I(f)}{\iota_{\delta,R}(f)}\right)^{1+s}\right],$$
with $c_6$ depending on $\delta,R,s,\gamma$.
\end{prop}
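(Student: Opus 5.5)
Start again from the expression \eqref{eq:proofI1+s0} of $\mathbb{J}^s_{\bar{\beta}}(F)$, but keep the power $1+s$ rather than applying Hölder right away. Write $p=1+s$, $X(v):=\nabla_v\log f(v)$, $Y(w):=\nabla_w \log f(w)$, and use $a(x)=|x|^2\Pi(x)$ and \eqref{eq:proofI1+s2}. Pointwise,
$$a(v-w):(X-Y)^{\otimes 2}=|\Pi(v-w)(X-Y)|^2|v-w|^2 .$$
The aim is a lower bound, in which the $X$-part survives and the $Y$-part is absorbed. By the elementary inequality $|A-B|^{2p}\ge 2^{1-2p}|A|^{2p}-|B|^{2p}$, applied to the vectors $A=|v-w|\Pi(v-w)X$ and $B=|v-w|\Pi(v-w)Y$, we get
$$4^{p}\,\mathbb{J}^s_{\bar\beta}(F)\ge 2^{1-2p}\!\int \bar\beta f(v)f(w)\big(a(v-w):X^{\otimes2}\big)^{p}dvdw-\int\bar\beta f(v)f(w)\big(a(v-w):Y^{\otimes 2}\big)^{p}dvdw .$$
This splitting is too crude, since the two terms are symmetric and would cancel. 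So I would instead keep the cross term as in Step 2 of Lemma~\ref{lem:JcontrolsI1+s}. Expand $(a:(X-Y)^{\otimes2})^p$ using convexity of $t\mapsto t^p$: $(\alpha-2\beta+\gamma')^p\ge \alpha^p - p\,\alpha^{p-1}\,2|\beta|$ with $\alpha=a:X^{\otimes 2}$ and $\beta=a:X\otimes Y$ (dropping $\gamma'=a:Y^{\otimes2}\ge0$, which only helps). Then integrate the cross term by parts in $w$, through $f(w)Y(w)=\nabla_w f(w)$. This gives
$$\int \bar\beta f(w)\,\alpha^{p-1}\,a(v-w)X\cdot\nabla f(w)\,dw=-\int f(w)\nabla_w\cdot\big(\bar\beta\,\alpha^{p-1}a(v-w)X\big)dw .$$
The derivative hitting $a$ and $\bar\beta$ is harmless, giving terms like $|v-w|\,\alpha^{p-1}|X|$. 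The derivative hitting $\alpha^{p-1}=(|v-w|^2|\Pi(v-w)X|^2)^{s}$ only involves $w$ through $v-w$, and costs $\alpha^{p-1}|v-w|^{-1}$ times $a X$, so again of order $|v-w|^{2s+1}|X|^{2s+1}$. Altogether the cross term is bounded by $C\int\bar\beta f(v)f(w)\jap{v-w}^{2p-1}|X(v)|^{2p-1}$. Young's inequality then absorbs it into a small fraction of the main term $\int f(v)|X|^{2p}\jap{v}^{\gamma-2}$, up to a remainder of order $\int f|X|^{2}$ times weights. The remainder is handled by $I(f)$, and the mismatched weights are handled by the moment/Hölder argument of Lemma~\ref{lem:JcontrolsI1+s}.

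Next, lower bound the main term via non-aligned triplets, exactly as in Step 2 of Lemma~\ref{lem:JcontrolsI1+s}. Fix $v$, take $\xi=X(v)/|X(v)|$, and use Lemma~\ref{lem:conenonaligned} together with the definition \eqref{def:iota} of $\iota_{\delta,R}$. There is a ball $B(v_k,\delta)$ of $f$-mass at least $\iota_{\delta,R}(f)$ on which $|\Pi(v-w)X|^2|v-w|^2\ge c_{\delta,R}|X|^2$ and $\bar\beta(r)\ge c_{R}\jap{v}^{\gamma-2}$. This yields
$$\int\bar\beta f(v)f(w)\alpha^p\,dw\,dv\ge c\,\iota_{\delta,R}(f)\int f(v)|X(v)|^{2p}\jap{v}^{\gamma-2}dv .$$

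The main obstacle is making the absorption work with only a factor $\iota_{\delta,R}(f)$ in front of the good term while the bad term carries no such factor. When Young's inequality is applied with a weight $\epsilon\,\iota$, the remainder gets a power $\iota^{-(2p-1)}$ times $\int f|X|^{\dots}$. Interpolating, for instance $\int f|X|^{2p-1}\le (\int f|X|^{2p})^{1-\theta}I(f)^{\theta}$, is what should produce the term $(I(f)/\iota_{\delta,R}(f))^{1+s}$. Lemma~\ref{lem:JcontrolsI1+s} is used to keep this remainder finite and weight-compatible, since it already bounds $(\iota I)^{1+s}$. If the integration by parts in $w$ creates singular factors $|v-w|^{-1}$ near $v=w$, the plan is to check that the extra powers $|v-w|^{2s}$ from $\alpha^{p-1}$ compensate them, and otherwise to cut out a small ball around $v$.
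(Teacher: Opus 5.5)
\textbf{The gap is the absorption of the cross term.} After integrating by parts you replace $\alpha^{s}$ by $(|v-w||X|)^{2s}$. The error you must absorb is then $\int f(v)|X(v)|^{2s+1}\Theta(v)\,dv$ with $\Theta(v)=\int f(w)\bar\beta(r)|v-w|^{2s+1}dw\le C$, and this carries no decay in $v$. You try to absorb it into the quantity $\iota_{\delta,R}(f)\int f|X|^{2+2s}\jap{v}^{\gamma-2}$ produced by the cone argument. As soon as $s>1/2$, every Young or interpolation step leaves a remainder with a growing weight. For instance, Young gives $(\epsilon\iota)^{-(2s+1)}\int f\jap{v}^{(2-\gamma)(2s+1)}$, and interpolating with $I(f)$ gives $\int f|X|^2\jap{v}^{(2-\gamma)(2s-1)}$. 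Your interpolation $\int f|X|^{2p-1}\le(\int f|X|^{2p})^{1-\theta}I^\theta$ involves the unweighted $\int f|X|^{2p}$, which the left-hand side does not control. Invoking Lemma~\ref{lem:JcontrolsI1+s} imports the moment $m_{(2-\gamma)/s}(f)$. The statement has no moment term and $c_6=c_6(\delta,R,s,\gamma)$, so at best your plan proves a moment-dependent variant. Separately, writing $|\beta|$ in the convexity step destroys the integration by parts; you must keep the sign of $-2p\,\alpha^{s}\beta$.

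\textbf{The missing idea is localization before splitting.} You correctly saw that splitting over all $w$ is useless by symmetry. The paper breaks the symmetry as follows. Since the integrand is nonnegative, restrict $w$ to $B(v_1,\delta)\setminus A_M(v)$, where $A_M(v)=\{w\in B(v_1,\delta):|\nabla\log f(w)|\ge M\}$ and $M^2=4I(f)/\iota_{\delta,R}(f)$. Markov's inequality leaves $f$-mass at least $\iota/4$ there. On this set the crude split $|A-B|^{2p}\ge k_s|A|^{2p}-K_s|B|^{2p}$ works. The bad part is bounded pointwise by $CM^{2(1+s)}$, because $\bar\beta(r)|v-w|^{2(1+s)}\le C$; this is exactly where $\gamma+2s\le 0$ enters, and it gives $(I/\iota)^{1+s}$. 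The good part yields $\iota\jap{v}^{\gamma-2}|X|^{2(1+s)}$ via the cone.

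\textbf{Alternatively, your integration by parts can be rescued.} Keep the sign and the factor $\alpha^s$. One computes exactly $\nabla_w\cdot\big(\bar\beta\,\alpha^s a(v-w)X\big)=2(1+s)\big((v-w)\cdot X\big)\bar\beta\,\alpha^s$. The $\bar\beta'$ term vanishes since $a(x)X\perp x$, and there is no singularity at $v=w$. Young's inequality $|v-w||X|\alpha^s\le\eta\,\alpha^{1+s}+C_\eta(|v-w||X|)^{1+s}$ then absorbs the cross term into $\iint\bar\beta f f\alpha^{1+s}$ \emph{before} the cone bound is applied. The remainder is $\le C\int f|X|^{1+s}\le C\,I(f)^{(1+s)/2}$, since $\bar\beta|v-w|^{1+s}$ is bounded. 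This is $\lesssim (I/\iota)^{1+s}$ because $\iota\le 1$ and $\iota\lesssim\delta^2 I(f)$, the latter by Sobolev: $\|f\|_{L^3}\lesssim I(f)$.
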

\begin{proof}
    \textit{Step 1.} We start from the expression \eqref{eq:proofI1+s0} for $\mathbb{J}^s_{\bar{\beta}}(F)$ from the previous proof:
    \begin{align}
    \label{eq:proofunlift1}
        \mathbb{J}^s_{\bar{\beta}}(F)
        = C_s\int_{\mathbb{R}^6} \bar{\beta}(r)  F \left( a(v-w):\left[(\nabla_v - \nabla_w) \log F\right]^{\otimes 2}\right)^{1+s} dvdw.
    \end{align}
     We first focus on the integral in $w$. Fix $\bar{v}=(v_1,v_2,v_3)\in B(0,R)^3$ a $\delta$-non-aligned triplet such that $\iota_{\delta, R}(f) \leq 2\min_{k=1,2,3} \int_{B(v_k,\delta)}f$. Also fix a point $v\in \mathbb{R}^3$, and $\xi$ any unit vector. Let $C$ be the cone from Lemma~\ref{lem:conenonaligned}, centered at $v$ with axis $\xi$, and say that it does not intersect $B(v_1,2\delta)$.
     Using Markov's inequality, we can make the $f$-measure of the set
     $$A_M(v):=\{w\in B(v_1,\delta) \text{ s.t. }\vert \nabla_w \log f(w) \vert \geq M\}$$
     arbitrary small by choosing $M>0$ large enough. Indeed,
     \begin{align*}
         \int_{A_M(v)} f(w) dw \leq \frac{1}{M^2}\int_{\mathbb{R}^3} f(w) \vert \nabla_w \log f(w) \vert^2 dw = \frac{I(f)}{M^2} =  \frac{\iota_{\delta, R}(f)}{4},
     \end{align*}
     by picking $M^2=4I(f)/\iota_{\delta, R}(f)$. This implies that for this choice of $M$,
     \begin{equation}
     \label{eq:proofunlift15}
         \int_{B(v_1,\delta)\setminus A_M(v)}f(w)dw \geq \frac{\iota_{\delta, R}(f)}{2}-\frac{\iota_{\delta, R}(f)}{4}=\frac{\iota_{\delta, R}(f)}{4}.
     \end{equation}
     Using $\vert a-b\vert ^{2(1+s)} \geq k_s \vert a\vert^{2(1+s)} - K_s\vert b\vert^{2(1+s)}$ for two constants $k_s, K_s>0$, we also have the bound
    \begin{align}
         &\big( a(v-w):\left[(\nabla_v - \nabla_w) \log F\right]^{\otimes 2}\big)^{1+s}\nonumber\\
         &=  \vert v-w\vert^{2(1+s)} \left\vert  \Pi(v-w)(\nabla_v \log f(v)- \nabla_w \log f(w))\right\vert^{2(1+s)}\nonumber\\
         &\geq  \vert v-w\vert^{2(1+s)} \left(k_s\left\vert  \Pi(v-w)\nabla_v \log f(v)\right\vert^{2(1+s)}-K_s\left\vert  \Pi(v-w) \nabla_w \log f(w))\right\vert^{2(1+s)}\right)\nonumber\\
         &= k_s \left(a(v-w):\left[\nabla_v \log f(v)\right]^{\otimes 2}\right)^{1+s} -K_s\left(a(v-w):\left[\nabla_w \log f(w)\right]^{\otimes 2}\right)^{1+s}.\nonumber
    \end{align}
    Considering the integral in $w$ in \eqref{eq:proofunlift1}, we hence have by restricting the integral to $B(v_1,\delta)\setminus A_M(v)$ and using the bound above:
     \begin{align}
     \label{eq:proofunlift2}
         \int_{\mathbb{R}^3} \bar{\beta}(r)  &f(w) \left( a(v-w):\left[(\nabla_v - \nabla_w) \log F(v,w)\right]^{\otimes 2}\right)^{1+s} dw\nonumber  \\
         \geq &\ k_s \int_{B(v_1,\delta)\setminus A_M(v)} \bar{\beta}(r)  f(w) \left( a(v-w):\left[\nabla_v \log f(v)\right]^{\otimes 2}\right)^{1+s} dw \nonumber \\
         &-K_s \int_{B(v_1,\delta)\setminus A_M(v)} \bar{\beta}(r)  f(w) \left( a(v-w):\left[\nabla_w \log f(w)\right]^{\otimes 2}\right)^{1+s} dw\nonumber \\
         =:& \ J_1(v) - J_2(v).
     \end{align}
     The term $J_1$ will yield the bound on $f$, and the term $J_2$ is an error that we keep under control.
     
     \textit{Step 2.} We bound $J_2$ from above. Since $ \bar{\beta}(r)=\jap{(v-w)/2}^{\gamma-2}\leq C_{\gamma} \jap{v-w}^{\gamma-2}$, we have
    \begin{align*}
         \bar{\beta}(r)\left(a(v-w):\left[\nabla_w \log f(w)\right]^{\otimes 2}\right)^{1+s} &\leq C_{s,\gamma} \jap{v-w}^{\gamma-2}\vert v-w\vert^{2(1+s)} \vert \nabla_w\log f(w)\vert^{2(1+s)}\\
         &\leq  C_{s,\gamma} \vert \nabla_w\log f(w)\vert^{2(1+s)}
     \end{align*}
     because $\gamma-2 + 2(1+s)\leq 0$. Hence $J_2$ is bounded by
     \begin{equation}
     \label{eq:proofunlift3}
         J_2(v) \leq K_s  C_{s,\gamma} M^{2(1+s)} \int_{\mathbb{R}^3} f(w)dw = K_s C_{s,\gamma} \left(\frac{I(f)}{\iota_{\delta,R}(f)}\right)^{1+s},
    \end{equation}
    by definition of $A_M(v)$ and of $M$.
    
    \textit{Step 3.} We apply the same strategy as in the previous proof. We define the unit vector $\xi=\nabla_v \log f(v) / \vert\nabla_v \log f(v)  \vert$. Hence
    \begin{align*}
        J_1(v) &= k_s \vert\nabla_v \log f(v)  \vert^{2(1+s)} \int_{B(v_1,\delta)\setminus A_M(v)} \bar{\beta}(r)  f(w) \left( a(v-w):\xi^{\otimes 2}\right)^{1+s} dw
    \end{align*}
    and we want to bound the integral from below. By identity \eqref{eq:proofI1+s2}, and then applying $\bar{\beta}(r)\geq \jap{v-w}^{\gamma-2}$,
    \begin{align*}
        \bar{\beta}(r)  \left( a(v-w)\!:\!\xi^{\otimes 2}\right)^{1+s} \!=\!\bar{\beta}(r)  \left\vert \Pi(\xi)(v-w)\right\vert^{2(1+s)} 
        \geq \jap{v-w}^{\gamma-2}\left(\frac{\delta \vert v-w\vert}{(2+R+\vert v\vert) }\right)^{2(1+s)}
    \end{align*}
    for all $w \in B(v_1,\delta)$, since $B(v_1,2\delta)$ does not intersect $C$. We have $\vert v-w\vert \geq \delta$ because $v\in C$. For large $v$, since $w$ is bounded (by $R+1$), the right-hand side above behaves as $\jap{v}^{\gamma-2}$. Hence, by integrating over $B(v_1,\delta)\setminus A_M(v)$,
    \begin{align}
    \label{eq:proofunlift4}
        J_1 (v) &\geq C_{s,\delta,R,\gamma}k_s \jap{v}^{\gamma-2}  \vert\nabla_v \log f(v)  \vert^{2(1+s)} \int_{B(v_1,\delta)\setminus A_M(v)} f(w) dw \nonumber \\
        &\geq C_{s,\delta,R,\gamma}k_s \jap{v}^{\gamma-2}  \vert\nabla_v \log f(v)  \vert^{2(1+s)} \iota_{\delta,R}(f).
    \end{align}
    where the second line is by \eqref{eq:proofunlift15}.
    
    \textit{Step 4.} Integrating \eqref{eq:proofunlift2} against $f(v)dv$, we recover the right-hand side of \eqref{eq:proofunlift1}, so
    $$\mathbb{J}^s_{\bar{\beta}}(F)= C_s (\int_{\mathbb{R}^3} J_1(v) f(v) dv-\int_{\mathbb{R}^3} J_2(v) f(v) dv).$$
    Using the bound from below \eqref{eq:proofunlift4} on $J_1$ and the one from above \eqref{eq:proofunlift3} on $J_2$,
    $$\mathbb{J}^s_{\bar{\beta}}(F)\geq C_{s,\delta,R,\gamma}\iota_{\delta,R}(f) \int_{\mathbb{R}^3} f(v) \vert\nabla_v \log f(v)  \vert^{2(1+s)} \jap{v}^{\gamma-2} dv - C_{s,\gamma} \left(\frac{I(f)}{\iota_{\delta,R}(f)}\right)^{1+s},$$
    which is exactly the desired result.
\end{proof}
The following Lemma links the $L^p$ norm of $f$ we seek to bound and the quantity we control in Proposition~\ref{prop:unliftingofJ}. 
\begin{lemma}
\label{lem:lpbound}
Let $f$ be a probability density on $\mathbb{R}^3$, there exists $p>\frac{3}{3+\gamma}$ depending on $\bar{\ell}$ given by hypothesis \eqref{hyp:moment}, such that
$$\Vert f \Vert_{L^p(\mathbb{R}^3)} \leq c_7 \left[\int_{\mathbb{R}^3} f(v) \vert\nabla_v \log f(v)\vert^{2(1+s)} \jap{v}^{\gamma-2} dv + m_{\bar{\ell}}(f) + 1\right],$$
with $c_7>0$ depending on $\bar{\ell}$ and $p$.

If $\bar{\ell}(2s-1)<3(2-\gamma)$, we can take $p=\frac{3(2-\gamma+\bar{\ell})}{3(2-\gamma)+\bar{\ell}(1-2s)}$, if $\bar{\ell}(2s-1)=3(2-\gamma)$ we can take any finite $p$, and else we can take $p=+\infty$.
\end{lemma}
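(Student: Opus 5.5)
Set $q:=2(1+s)$ and $u:=f^{1/q}$, so that, as in \eqref{eq:reformJ}, $f\vert\nabla\log f\vert^{q}=q^q\vert\nabla u\vert^q$. The quantity to control is then a weighted Sobolev seminorm,
$$\mathcal{W}:=\int_{\mathbb{R}^3} f\vert\nabla\log f\vert^{q}\jap{v}^{\gamma-2}dv=q^q\int_{\mathbb{R}^3}\vert \nabla u\vert^q\jap{v}^{\gamma-2}dv .$$
The plan is to get rid of the weight $\jap{v}^{\gamma-2}$ with a cutoff, apply an unweighted Sobolev embedding to the cutoff function, and then trade the resulting decay loss for moments by interpolation.

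\textbf{Step 1 (weighted Sobolev bound).} Let $w:=\jap{v}^{(\gamma-2)/q}$. Since $q=2+2s<3$, Sobolev gives $W^{1,q}(\mathbb{R}^3)\hookrightarrow L^{q^*}$ with $q^*=3q/(3-q)$. Applying it to $wu$, and using $\vert\nabla w\vert\lesssim w$,
$$\Vert wu\Vert_{L^{q^*}}\lesssim \Vert w\nabla u\Vert_{L^q}+\Vert wu\Vert_{L^q}\lesssim \mathcal{W}^{1/q}+\Big(\int f\jap{v}^{\gamma-2}\Big)^{1/q}\le \mathcal{W}^{1/q}+1 .$$
Since $u^{q^*}=f^{q^*/q}$ and $r:=q^*/q=3/(3-q)=3/(1-2s)$, this becomes
$$\int f^{r}\jap{v}^{-\theta}dv\lesssim(1+\mathcal{W})^{r},\qquad \theta:=\frac{(2-\gamma)\,r}{1}\cdot\frac{1}{1}=\frac{3(2-\gamma)}{1-2s}.$$
This step needs $s<1/2$. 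If $2s\ge 1$, then $q\ge3$; I would instead use Morrey (for $q>3$) or the borderline embedding into every $L^{m}$ (for $q=3$), which yields pointwise or weighted $L^m$ bounds of the same kind. This is consistent with the case distinction in the statement.

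\textbf{Step 2 (interpolation with moments).} I write $f=f^{a}\jap{v}^{-\theta a/r}\cdot f^{1-a}\jap{v}^{\theta a/r}$ and apply Hölder with exponents chosen so that the first factor is controlled by $\int f^r\jap{v}^{-\theta}$ and the second by $m_{\bar\ell}(f)=\int f\jap{v}^{\bar\ell}$. To match the powers of $f$ and of $\jap{v}$, I solve
$$p=\lambda r+(1-\lambda),\qquad \lambda\theta=(1-\lambda)\bar\ell ,$$
which gives $\lambda=\bar\ell/(\theta+\bar\ell)$ and hence
$$p=\frac{\bar\ell r+\theta}{\theta+\bar\ell}=\frac{3(2-\gamma+\bar\ell)}{3(2-\gamma)+\bar\ell(1-2s)} .$$
This is exactly the exponent in the statement. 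The interpolation then reads
$$\Vert f\Vert_p^p\le\Big(\int f^r\jap{v}^{-\theta}\Big)^{\lambda}m_{\bar\ell}^{1-\lambda} .$$
Applying Young's inequality to the product, together with $a^{1/p}\lesssim 1+a$ for the powers, gives the claimed linear bound in $\mathcal{W}+m_{\bar\ell}+1$.

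\textbf{Step 3 (the condition $p>3/(3+\gamma)$).} Rewriting $p>3/(3+\gamma)$ in terms of $\bar\ell$ should reduce precisely to hypothesis \eqref{hyp:moment}, $\bar\ell>-\gamma(2-\gamma)/(\gamma+2s+2)$. Cross-multiplying,
$$(2-\gamma+\bar\ell)(3+\gamma)>3(2-\gamma)+\bar\ell(1-2s)\iff \bar\ell(2+\gamma+2s)>-\gamma(2-\gamma),$$
so the condition holds exactly under the hypothesis. When the denominator vanishes or is negative (the cases $\bar\ell(2s-1)\ge 3(2-\gamma)$), the same Hölder argument gives any finite $p$, or $p=\infty$ via Morrey.

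The main obstacle I expect is the precise exponent bookkeeping in Steps 1 and 2: the weight must be pulled inside the gradient at cost only $\jap{v}^{(\gamma-2)/q}$, and the interpolation must actually produce the stated $p$. The regime $2s\ge1$ also requires switching from Sobolev to Morrey-type embeddings.
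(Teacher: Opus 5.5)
Your Steps 1--3 are correct when $2(1+s)<3$, i.e.\ $s<1/2$, and there they reproduce the exact exponent of the statement. The gap is the regime $s\ge 1/2$, which you only sketch. This regime contains every power-law kernel of Theorem~\ref{thm:maincollisions}, since $s=1/(q-1)\in[3/4,1)$.

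For $s>1/2$, Morrey applied to $\jap{v}^{(\gamma-2)/q}f^{1/q}$ only yields the pointwise bound $f(v)\lesssim(1+\mathcal{W})\jap{v}^{2-\gamma}$. Your interpolation (the limit $r\to\infty$, $\theta=(2-\gamma)r$ of your formula for $p$) then gives at best $p=1+\bar{\ell}/(2-\gamma)$. This exponent has three defects.
\begin{itemize}
\item It is strictly smaller than the stated exponent whenever $s>1/2$.
\item It is never $+\infty$, because the pointwise bound grows at infinity.
\item It exceeds $3/(3+\gamma)$ only if $\bar{\ell}>-\gamma(2-\gamma)/(3+\gamma)$. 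For $s>1/2$ this is strictly stronger than \eqref{hyp:moment}, since then $\gamma+2s+2>\gamma+3$.
\end{itemize}
For a concrete case, take the power law with $\gamma=-5/2$, $s=7/8$. Hypothesis \eqref{hyp:moment} reads $\bar{\ell}>9$. With $\bar{\ell}=10$ your route gives $p=29/9<6=3/(3+\gamma)$, whereas the statement gives $p=29/4$. So your claim that Morrey gives bounds ``of the same kind'' consistent with the statement is wrong. Your case split is $s$ versus $1/2$, while the statement's is $\bar{\ell}(2s-1)$ versus $3(2-\gamma)$, and the finite-$p$ formula is asserted for many $s>1/2$.

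The missing idea is to use the moment \emph{before} the embedding, to lower the integrability exponent of $\nabla\log f$, rather than attaching the whole weight to $f^{1/q}$ at the full exponent $q=2(1+s)$. Set $\theta:=\bar{\ell}/(2-\gamma+\bar{\ell})$ and $q_0:=2(1+s)\theta$. The weights cancel because $(\gamma-2)\theta+\bar{\ell}(1-\theta)=0$, so Hölder's inequality gives
$$\int_{\mathbb{R}^3} f\vert\nabla\log f\vert^{q_0}\,dv\le \mathcal{W}^{\theta}\,m_{\bar{\ell}}(f)^{1-\theta}.$$
Then apply the unweighted Sobolev embedding to $f^{1/q_0}$, whose $L^{q_0}$ norm is $1$. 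This bounds $\Vert f\Vert_{L^{3/(3-q_0)}}$ linearly after Young's inequality. One checks the following.
\begin{itemize}
\item $3/(3-q_0)$ is exactly the stated $p$.
\item The cases $q_0<3$, $q_0=3$, $q_0>3$ are exactly $\bar{\ell}(2s-1)<3(2-\gamma)$, $=3(2-\gamma)$, $>3(2-\gamma)$. In the last case Morrey now gives a genuine $L^\infty$ bound on $f$.
\item $q_0>-\gamma$ is exactly \eqref{hyp:moment}, i.e.\ $p>3/(3+\gamma)$.
\end{itemize}
This is the paper's argument. In the subcritical regime $2(1+s)<3$, your order (weighted embedding first, interpolation second) happens to be equivalent, which is why your Step 2 landed on the right exponent. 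For $2(1+s)\ge 3$, however, the weighted embedding saturates at $L^\infty$ with a growing weight and discards the mass information.
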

\begin{proof}
By hypothesis \eqref{hyp:moment}, $\bar{\ell}(2s+2) > -\gamma(2-\gamma + \bar{\ell})$, so $q:=\frac{\bar{\ell}(2s+2)}{2-\gamma+\bar{\ell}}>-\gamma$. Suppose that we are in the first case $\bar{\ell}(2s-1)<3(2-\gamma)$, so that $q<3$. We get rid of the weight using the Hölder inequality with exponents $\frac{2s+2}{q}$ and $\frac{2s+2}{2s+2-q}$:
    $$\int_{\mathbb{R}^3} f(v) \vert\nabla_v \log f(v)\vert^{q} dv  \leq \left(\int_{\mathbb{R}^3} f(v) \vert\nabla_v \log f(v)\vert^{2(1+s)} \jap{v}^{\gamma-2} dv\right)^{\frac{q}{2s+2}} \left(m_{\bar{\ell}}(f)\right)^{\frac{2s+2-q}{2s+2}}$$
    since $\bar{\ell}= \frac{(2-\gamma)q}{2+2s-q}$. But letting $g=f^{1/q}$, we have by Sobolev embedding
    $$1+\int_{\mathbb{R}^3} f(v) \vert\nabla_v \log f(v)\vert^{q} dv = \Vert g \Vert^q_{L^q(\mathbb{R}^3)}+ q^q \int_{\mathbb{R}^3} \vert\nabla_v  g(v)\vert^{q} dv \geq c_{\bar{\ell}} \Vert g \Vert_{L^{q^*}(\mathbb{R}^3)}^q  = c_{\bar{\ell}} \Vert f \Vert_{L^{p}(\mathbb{R}^3)},$$
     where $\frac{1}{q^*}=\frac{1}{q}-\frac{1}{3}$, so that $\frac{1}{p}=\frac{q}{q^*}=\frac{3-q}{3}<\frac{3+\gamma}{3}$. Young's inequality concludes. In the other cases, either $q=3$ and we can reach any finite $p$ with the Sobolev embedding, or $q>3$ and we can reach $p=+\infty$.
\end{proof}
We can finally complete the last step of the propagation of chaos by showing uniqueness of the cluster points of $(\mu^N)_N$.

\begin{prop}
    \label{prop:uniqueness}
    Consider $g=(g_t)_t$ the regular solution to the Boltzmann equation with initial data $f_0$, and $f=(f_t)_t$ any cluster point of the empirical measures $(\mu^N)_N$. Then a.s. $f=g$.
\end{prop}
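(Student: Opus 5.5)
The plan is to show that almost surely $f\in L^1([0,T],L^p(\mathbb{R}^3))$ for every $T$, with some $p>3/(3+\gamma)$. By Proposition~\ref{prop:asweaksol}, $f$ is almost surely a weak solution with initial datum $f_0$. The uniqueness theorem of Fournier and Guérin \cite{FournierGuerin2008} then forces $f=g$, since $g$ is itself a weak solution in that class. All the needed ingredients are in place; what remains is to chain Lemma~\ref{lem:Jasfinite}, Proposition~\ref{prop:unliftingofJ}, Lemma~\ref{lem:lpbound} and Proposition~\ref{prop:moments} pathwise, on a single almost sure event.

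First I fix the almost sure event. On it, the following hold simultaneously for all integers $n$:
\begin{itemize}
\item $f$ is a continuous weak solution with $\sup_t m_2(f_t)\le m_2(f_0)$;
\item $\int_0^n I(f_\tau)\,d\tau<\infty$, from \eqref{eq:limitfisher};
\item $\int_0^n \mathbb{J}^s_{\bar\beta}(f_\tau\otimes f_\tau)\,d\tau<\infty$, from Lemma~\ref{lem:Jasfinite};
\item $\int_0^n\mathbb{D}_B(f_\tau^{\otimes 2})\,d\tau<\infty$, from \eqref{eq:limitentropy};
\item the moment bound $m_{\bar\ell}(f_t)\le c_{\bar\ell}(1+t)$ of Proposition~\ref{prop:moments}, with a random but finite constant.
\end{itemize}

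Next, I fix such a realization and $T>0$, and obtain a uniform-in-time lower bound on $\iota_{\delta,R}(f_t)$ for $t\in[0,T]$. This is the step I expect to be the main obstacle, since we have no pathwise uniform entropy bound. The route is to use lower semicontinuity of $\iota_{\delta,R}$ together with continuity of $t\mapsto f_t$ in $\mathcal{P}(\mathbb{R}^3)$.

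For each $t\in[0,T]$ the measure $f_t$ is a genuine probability, and I want a triple of $\delta$-non-aligned points that it charges. Concretely, I would apply Lemma~\ref{lem:entropyimpliesnonaligned} to the measures $f_t$ for $t$ in the full-measure set where $H(f_t)<\infty$, which holds for a.e.\ $t$ since $I(f_t)<\infty$ and the energy is finite. The difficulty is that $\delta,R,\kappa$ then depend on $H(f_t)$, which is not uniform in $t$.

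I would instead argue by compactness. For each $t$ there exist $\delta_t,R_t$ with $\iota_{\delta_t,R_t}(f_t)>0$. This is in fact elementary: an absolutely continuous $f_t$ cannot be supported on a line, so it charges small balls around three non-aligned points. Lower semicontinuity then gives $\iota_{\delta_t,R_t}\ge\kappa_t/2$ on a neighborhood of $t$, by continuity of $f$. Compactness of $[0,T]$ yields finitely many neighborhoods. Taking $\delta$ the minimum and $R$ the maximum, and using that $\iota_{\delta,R}$ is monotone in the right direction (smaller $\delta$ still needs care, so I would rather keep finitely many pairs and sum the resulting estimates), this gives constants $\kappa>0$, $\delta$, $R$ working on all of $[0,T]$.

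Then, on $[0,T]$, Proposition~\ref{prop:unliftingofJ} gives
\[
\int_{\mathbb{R}^3} f_t\,|\nabla\log f_t|^{2(1+s)}\langle v\rangle^{\gamma-2}\,dv\le \frac{c_6}{\kappa}\Bigl[\mathbb{J}^s_{\bar\beta}(f_t\otimes f_t)+\Bigl(\frac{I(f_t)}{\kappa}\Bigr)^{1+s}\Bigr].
\]
To make the $I(f_t)^{1+s}$ term integrable in time, I use Lemma~\ref{lem:JcontrolsI1+s}. This gives $I(f_t)^{1+s}\lesssim 1+\mathbb{J}^s_{\bar\beta}(f_t\otimes f_t)\,m_{(2-\gamma)/s}(f_t)^s$, where the moment is bounded on $[0,T]$ by Proposition~\ref{prop:moments}. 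Hence the right-hand side is in $L^1([0,T])$.

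Finally, Lemma~\ref{lem:lpbound} with the moment bound $m_{\bar\ell}(f_t)\le c_{\bar\ell}(1+T)$ gives $f\in L^1([0,T],L^p)$ with $p>3/(3+\gamma)$, by hypothesis \eqref{hyp:moment}. Fournier–Guérin uniqueness then concludes $f=g$ on $[0,T]$ for all $T$, almost surely.
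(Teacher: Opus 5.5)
\textbf{There is a genuine gap: the uniform-in-time lower bound on $\iota_{\delta,R}(f_t)$ over the whole interval $[0,T]$ is not justified.} Your compactness argument needs, for \emph{every} $t\in[0,T]$, a pair $(\delta_t,R_t)$ with $\iota_{\delta_t,R_t}(f_t)>0$, that is, that $f_t$ is not supported in a line. You justify this by absolute continuity of $f_t$. On your almost sure event, however, absolute continuity is known only for Lebesgue-a.e.\ $t$ (where $I(f_t)<\infty$). For each fixed $t$ you also have $H(f_t)<\infty$ a.s.\ by \eqref{eq:limitentropy}, but not simultaneously for all $t$: that entropy estimate holds only in expectation, and you have no pathwise control of $\sup_{t\le T}H(f_t)$.

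Time-integrated bounds and weak continuity of $t\mapsto f_t$ do not prevent $f_{t^*}$ from concentrating on a line at an isolated exceptional time $t^*$. For instance, a density squeezed transversally onto a line at scale $|t-t^*|^{1/4}$ has $I(f_t)\sim|t-t^*|^{-1/2}$, which is integrable. At such a $t^*$ one has $\iota_{\delta,R}(f_{t^*})=0$ for all $(\delta,R)$: a line through $v$ with direction $\xi$ lies in the cone of Lemma~\ref{lem:conenonaligned}, hence misses one of the balls $B(v_k,2\delta)$.

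Consequently no neighbourhood of $t^*$ carries a positive lower bound, and your open cover only covers an open set of full measure. Near $t^*$ the factors $\iota^{-1}$ and $\iota^{-(1+s)}$ in Proposition~\ref{prop:unliftingofJ} and Lemma~\ref{lem:JcontrolsI1+s} may blow up, destroying integrability of $\Vert f_t\Vert_{L^p}$ there. Your bookkeeping with finitely many pairs $(\delta_i,R_i)$ would be fine if every $t$ were covered.

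\textbf{The missing idea is a continuation argument that imports a pathwise entropy bound from the regular solution $g$,} rather than trying to produce one for $f$. This is what the paper does. Show that $\Gamma=\{t\ge 0:\ f_\tau=g_\tau \text{ for all } \tau\in[0,t]\}$ is a.s.\ non-empty, closed and open:
\begin{itemize}
\item non-empty by Lemma~\ref{lem:initialcond};
\item closed by continuity of $f$ and $g$;
\item open by the following argument.
\end{itemize}
For openness at $t_0\in\Gamma$: on $[0,t_0]$ one has $f_t=g_t$, so $H(f_t)\le H(f_0)$ by the H-theorem for $g$. Lemma~\ref{lem:entropyimpliesnonaligned} then gives $\iota_{\delta,R}(f_t)\ge\kappa$ on $[0,t_0]$, with $(\delta,R,\kappa)$ depending only on $H(f_0)$ and the energy of $f_0$. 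Lower semicontinuity of $\iota_{\delta,R}$ and continuity of $f$ at $t_0$ give $\iota_{\delta,R}(f_t)>\kappa/2$ on $(t_0,t_1]$ for some $t_1>t_0$.

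On $[0,t_1]$ your chain of estimates then yields $f\in L^1([0,t_1],L^p)$ with $p>3/(3+\gamma)$. That chain is Lemma~\ref{lem:JcontrolsI1+s} to absorb $I(f_t)^{1+s}$, then Proposition~\ref{prop:unliftingofJ}, the moment bounds of Proposition~\ref{prop:moments}, and Lemma~\ref{lem:lpbound}. Fournier--Gu\'erin uniqueness then gives $f=g$ on $[0,t_1]$. With this replacement, the rest of your proposal goes through as written.
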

\begin{proof}
    We show that the set
    $$\Gamma:=\{ t\in\mathbb{R}_+ \vert \ \forall \tau \in [0,t],\ f_\tau=g_\tau  \}$$
    is a.s. open, closed, and non-empty in $\mathbb{R}_+$. By Lemma~\ref{lem:initialcond}, a.s. $0\in \Gamma$ so it is non-empty. Moreover, by Proposition~\ref{prop:asweaksol}, a.s. $f$ is a weak solution to the Boltzmann equation with initial condition $f_0$, with uniformly-in-time bounded energy, and $f\in C(\mathbb{R}_+,\mathcal{P}(\mathbb{R}^3))$. The latter shows that $\Gamma$ is a.s. closed since $t\mapsto g_t$ is also weakly continuous.
    
    To show that $\Gamma$ is open, pick $t_0 \in \Gamma$. Recall that almost surely the moment of $f$ of order $\bar{\ell}$ is propagated, by Proposition~\ref{prop:moments}: there exists an a.s. finite random constant $c_{\bar{\ell}}$ such that
    $m_{\bar{\ell}}(f_t)\leq c_{\bar{\ell}} (1+t)$.
    Since $\gamma<-2$, we have $\bar{\ell}> \frac{2-\gamma}{s}$ from condition \eqref{hyp:moment}. The weak solution $f$ is equal to the regular solution $g$ on $[0,t_0]$, so its entropy does not increase. Applying Lemma~\ref{lem:entropyimpliesnonaligned}, we get $\iota_{\delta,R}(f_{t}) \geq \kappa>0$ for all $t\in [0,t_0]$ (with $\kappa,\delta,R$ depending on $H(f_0)$ and $\int \vert v\vert^2 f_0(dv)$). By lower semicontinuity of $\iota_{\delta,R}$ on $\mathcal{P}(\mathbb{R}^3)$, there exists $t_1>t_0$ such that $\iota_{\delta,R}(f_{t})>\kappa/2$ for $t\in(t_0,t_1]$.
    Chaining Lemma~\ref{lem:lpbound}, Proposition~\ref{prop:unliftingofJ} and Lemma~\ref{lem:JcontrolsI1+s}, and bounding every moment that appears by $c_{mom}=c_{\bar{\ell}}(1+t_1)$, we get that for any $t\in[0,t_1]$,
    \begin{align*}
        \Vert f_t \Vert_{L^p(\mathbb{R}^3)} &\leq C   \left[\int_{\mathbb{R}^3} f(v) \vert\nabla_v \log f(v)\vert^{2(1+s)} \jap{v}^{\gamma-2} dv + c_{mom}+1\right]\\
        &\leq C\left[\frac{1}{\kappa}\left( \mathbb{J}^s_{\bar{\beta}}(f_t\otimes f_t)+ \frac{I(f_t)^{1+s}}{\kappa^{1+s}}\right)+ c_{mom}+1\right]\\
        &\leq C\left[\frac{1}{\kappa}\left( \mathbb{J}^s_{\bar{\beta}}(f_t\otimes f_t)+ \frac{1+\mathbb{J}^s_{\bar{\beta}}(f_t\otimes f_t)(c_{mom})^s}{\kappa^{2+2s}}\right)+ c_{mom}+1\right]\\
        &=C\left[\mathbb{J}^s_{\bar{\beta}}(f_t\otimes f_t) + 1\right],
    \end{align*}
    where the final constant $C$ depends on $\kappa,\delta,R,s,\gamma,\bar{\ell},t_1$ but not on $t$.
    But from  Lemma~\ref{lem:Jasfinite} we have a.s.  $\int_0^T \mathbb{J}_\beta^s(f_t \otimes f_t)dt<+\infty$ for every $T\geq 0$. We hence get that $f\in L^1([0,t_1], L^p(\mathbb{R}^3))$. Since $p>\frac{3}{3+\gamma}$ and $f$ also as a.s. bounded energy, we can apply the uniqueness result \cite[Corollary 1.5]{FournierGuerin2008}, so that a.s. $f=g$ on $[0,t_1]$ and $t_0$ is in the interior of $\Gamma$. The almost sure event we are working on does not depend on $t_0$, so $\Gamma$ is a.s. open. Hence, almost surely, $\Gamma=\mathbb{R}_+$ and $f=g$.
\end{proof}
We summarize the whole proof of propagation of chaos:
\begin{proof}[Proof of Theorem~\ref{thm:main} and of Theorem~\ref{thm:maincollisions}]
For any $N\geq 2$, we consider $(\mathbf{V}^N(t))_{t\geq 0}$ the solution of Kac's particle system \eqref{eq:partsyst} constructed in Proposition~\ref{prop:wellposednesspart}, and $(\mu^N)_{N\geq 2}$ the associated sequence of empirical measures, which we recall are $\mathcal{D}$-valued random variables. By Proposition~\ref{prop:tightness2}, the sequence is tight in $\mathcal{D}$, and by Proposition~\ref{prop:uniqueness}, its only cluster point is the unique solution $f=(f_t)_{t\geq 0}$ to the Boltzmann equation with initial data $f_0$. This means that the whole sequence $(\mu^N)_{N\geq 2}$ converges in law to $f$. Since $t\mapsto f_t$ is continuous, for any $t\geq 0$ the map $\mathcal{D}\ni \rho \mapsto \rho_t\in\mathcal{P}(\mathbb{R}^3)$ is continuous at $f$, so $\mu^N_t$ converges in law to $f_t$. Since the limit is deterministic the convergence also holds in probability. This proves Theorem~\ref{thm:main}, which implies Theorem~\ref{thm:maincollisions} since the power-law kernels obviously check the hypotheses \eqref{hyp:alpha},\eqref{hyp:H2},\eqref{hyp:H0} ; and \eqref{hyp:H1} is checked in Appendix~\ref{app:powerlaws}. The improved convergence in Remark~\ref{rem:improvedconv} is because convergence in the Skorokhod topology is equivalent to uniform convergence if the limit is continuous, which is the case here.
\end{proof}

\begin{proof}[Proof of Corollary~\ref{cor}] If $\pi_t\in\mathcal{P}(\mathcal{P}(\mathbb{R}^3))$ denotes the law of $f_t$, we know that $\pi_t = \delta_{f_t}$ since it is in fact deterministic. We know that the law of $\mu^N_t$ converges weakly to $\pi_t$, so \cite[Theorem 5.3, (1)]{HaurayMischler2012} implies that $F^{N:j}_t$ converges weakly to $\pi^j_t = \int_{\mathcal{P}(\mathbb{R}^3)} \rho^{\otimes j} \pi_t(\rho) = f_t^{\otimes j}$ (this was already used in the proof of Proposition~\ref{prop:limitestimates}).

To prove entropic chaos, we use the fact that it is implied by the boundedness of $(I(F^N_t))_{N,t}$ (Proposition \eqref{prop:estimatesbis}) and the weak convergence $F^{N:1}_t \rightharpoonup f_t$, thanks to \cite[Theorem 1.3, (1)]{HaurayMischler2012}. To apply this result, we need to bound a moment of $F^{N:1}_t$ of order strictly greater than $2$, which we can do by propagating the moment of order $\bar{\ell}>2$ of $F^{N:1}_0=f_0$ along the Boltzmann Master equation (more precisely, propagating moments of $F^{N,k}$ along the regularized Master Equation and passing to the limit $k\rightarrow +\infty$). The fact that this linear equation propagates moments (for finite times) is a lengthy but straightforward computation using truncated versions of the test function $\varphi(\mathbf{v})=\jap{v_1}^{\bar{\ell}}$.

The fact that entropic chaos (together with energy bounds) allows one to upgrade the weak convergence to $L^1$ convergence is classical, see for instance the very end of \cite[Section 5]{FengWang2025} or \cite[Proof of Theorem 2.13]{FournierHaurayMischler2014}.

\end{proof}
\appendix

\section{Proof of the two inequalities along the heat flow}
\label{app:twoineq}
\begin{proof}[Proof of Proposition \ref{prop:ineq_heat_flow}]
We can give a quick proof of both inequalities using the same technique, although it certainly does not yield the best constants. We begin with the first one \eqref{eq:logcontrolssqrt2}. Keeping only the terms with $k=l$ in \eqref{def:mathcalK}, and writing $\partial_k$ as a short-hand for the derivation $b_k(\sigma)\cdot \nabla_\sigma$,
$$\mathcal{K}(g)\geq \sum_{k=1}^3 \int_{\mathbb{S}^{2}} g \left\vert \partial_k (\partial_k \log g) \right\vert^2 d\sigma.$$ 
We easily compute that
$$\partial_k (\partial_k \log g)) = 2\frac{\partial_k (\partial_k \sqrt{g})}{\sqrt{g}}-\frac{1}{2}(\partial_k \log g)^2$$
so that we obtain
\begin{align*}
    \int_{\mathbb{S}^{2}} g \left\vert \partial_k (\partial_k \log g) \right\vert^2 d\sigma = 4 \int_{\mathbb{S}^{2}} &\left\vert \partial_k (\partial_k \sqrt{g}) \right\vert^2 d\sigma+\frac{1}{4}\int_{\mathbb{S}^{2}} g(\partial_k \log g)^4 d\sigma\\
    &-2\int_{\mathbb{S}^{2}}\sqrt{g}(\partial_k (\partial_k \sqrt{g}))(\partial_k \log g)^2 d\sigma.
\end{align*}
The cross term rewrites:
\begin{align*}
   \int_{\mathbb{S}^{2}}\sqrt{g}(\partial_k (\partial_k \sqrt{g}))(\partial_k \log g)^2 d\sigma &= 4\int_{\mathbb{S}^{2}}\frac{(\partial_k (\partial_k \sqrt{g}))(\partial_k \sqrt{g})^2}{\sqrt{g}} d\sigma\\
   &= \frac{4}{3}\int_{\mathbb{S}^{2}}\frac{\partial_k ((\partial_k \sqrt{g})^3)}{\sqrt{g}} d\sigma\\
   &=-\frac{4}{3}\int_{\mathbb{S}^{2}}\frac{ (\partial_k \sqrt{g})^4}{(\sqrt{g})^2} d\sigma\\
   &=-\frac{1}{12}\int_{\mathbb{S}^{2}}g (\partial_k \log g)^4 d\sigma,
\end{align*}
by integrating by parts between the second and third line. All in all, we obtain
$$\mathcal{K}(g) \geq  4 \sum_{k=1}^3\int_{\mathbb{S}^{2}} \left\vert \partial_k (\partial_k \sqrt{g}) \right\vert^2 d\sigma+\frac{1}{6}\sum_{k=1}^3 \int_{\mathbb{S}^{2}} g(\partial_k \log g)^4 d\sigma\geq 4 \sum_{k=1}^3\int_{\mathbb{S}^{2}} \left\vert \partial_k (\partial_k \sqrt{g}) \right\vert^2 d\sigma. $$
Using \eqref{eq:Lap_bkbk} to write $\Delta_\sigma$ in terms of the $b_k$, and the Cauchy-Schwarz inequality,
$$\Vert \sqrt{g} \Vert_{\dot{H}^{2}(\mathbb{S}^{2})}^2=\Vert \Delta_\sigma \sqrt{g} \Vert_{L^{2}(\mathbb{S}^{2})}^2=\int_{\mathbb{S}^{2}} \left\vert \sum_{k=1}^3 \partial_k (\partial_k \sqrt{g}) \right\vert^2 d\sigma \leq 3\sum_{k=1}^{3} \int_{\mathbb{S}^{2}} \left\vert  \partial_k (\partial_k \sqrt{g}) \right\vert^2 d\sigma $$
which yields the first inequality with $c_0=4/3$.

For the second inequality, we invoke \cite[Lemma 9.11]{GuillenSilvestre2023} for the second equality in the following line:
$$\Vert \sqrt{g} \Vert_{\dot{H}^{2}(\mathbb{S}^{2})}^2=\int_{\mathbb{S}^{2}} \left\vert \sum_{k=1}^3 \partial_k (\partial_k \sqrt{g}) \right\vert^2 d\sigma=\sum_{k,l=1}^3\int_{\mathbb{S}^{2}} \left\vert  \partial_k (\partial_l \sqrt{g}) \right\vert^2 d\sigma \geq \sum_{k=1}^3\int_{\mathbb{S}^{2}} \left\vert  \partial_k (\partial_k \sqrt{g}) \right\vert^2 d\sigma.$$
We now follow a similar proof as before, but starting from the identity
$$\partial_k (\partial_k \sqrt{g})) = \frac{3}{2}(\partial_k (\partial_k g^{\frac{1}{3}}))g^{\frac{1}{6}}-\frac{3}{4}\frac{ (\partial_k g^{\frac{1}{3}})^2}{g^{\frac{1}{6}}}.$$
This yields
\begin{align*}
    \int_{\mathbb{S}^{2}} \left\vert  \partial_k (\partial_k \sqrt{g}) \right\vert^2 d\sigma = \frac{9}{4}\int_{\mathbb{S}^{2}} &\left\vert  (\partial_k (\partial_k g^{\frac{1}{3}}))g^{\frac{1}{6}} \right\vert^2 d\sigma + \frac{9}{16}\int_{\mathbb{S}^{2}} \frac{ (\partial_k g^{\frac{1}{3}})^4}{g^{\frac{1}{3}}} d\sigma\\
    &-\frac{9}{8}\int_{\mathbb{S}^{2}}(\partial_k (\partial_k g^{\frac{1}{3}})) (\partial_k g^{\frac{1}{3}})^2 d\sigma
\end{align*}
and this time the cross term actually vanishes since the integrand is proportional to the derivative $\partial_k((\partial_k g^{\frac{1}{3}})^3)$. We drop the first non-negative term and rewrite the second one as
$$\frac{9}{16}\int_{\mathbb{S}^{2}} \frac{ (\partial_k g^{\frac{1}{3}})^4}{g^{\frac{1}{3}}} d\sigma=\frac{9}{16}\int_{\mathbb{S}^{2}} g \frac{ (\partial_k g^{\frac{1}{3}})^4}{(g^{\frac{1}{3}})^4} d\sigma = \frac{1}{144}\int_{\mathbb{S}^{2}} g  (\partial_k \log g)^4 d\sigma.$$
Using the identity \eqref{eq:normtangent} and the Cauchy-Schwarz inequality,
$$\vert \nabla_\sigma \log g \vert^4 =\left(\sum_{k=1}^3(\partial_k \log g)^2 \right)^2\leq 3\sum_{k=1}^3(\partial_k \log g)^4,$$
we obtain \eqref{eq:logcontrolsfisher4} with $c_1=1/432$.
\end{proof}

\section{Approximations for power-law kernels}
\label{app:powerlaws}

We fix $q \in (2,7/3]$ and consider the spherical part $b=b_q$ of the  power-law collision kernel $B_q$. These values of $q$ corresponds to $s\in[3/4,1)$. To build the approximating $b^k$, we need to recall how the constant $\Lambda_{b}$ is estimated in \cite{ImbertSilvestreVillani2024}. It uses the following comparison principle: if two kernels $b,\tilde{b}$ satisfy the pointwise inequalities
\begin{equation}
    \label{eq:compprinc}
  \forall c \in [0,1),\ \   c_1 \left(\tilde{b}(c)+\tilde{b}(-c)\right) \leq b(c) + b(-c) \leq C_2\left(\tilde{b}(c)+\tilde{b}(-c)\right)
\end{equation}
then $\Lambda_b \geq c_1 \Lambda_{\tilde{b}} /C_2$. In \cite[Appendix A]{ImbertSilvestreVillani2024} and \cite{Silvestre2024}, an explicit weight function $\omega=\omega(t)\geq 0$ (depending on $q$) is given such that \eqref{eq:compprinc} is numerically checked for the kernel
\begin{equation}
    \label{eq:tildeb}
    \tilde{b}(c):=\int_0^{+\infty} \Phi_t(c)\omega(t)dt,
\end{equation}
 with $c_1/C_2> 0.95$ for all values of $q$. Replacing $\omega$ with $c_1 \omega$, we can assume $c_1=1$. The advantage of this comparison is that one can then analytically compute a lower bound of $\Lambda_{\tilde{b}}$ using the following formula \cite[Proposition 9.3]{ImbertSilvestreVillani2024}:
 \begin{equation}
 \label{eq:subbound}
     \Lambda_{\tilde{b}} \geq 3 \frac{\int_0^{+\infty}(1-e^{-2\Lambda_{loc}t})\omega(t)dt}{\int_0^{+\infty}(1-e^{-6 t})\omega(t)dt}>3,
 \end{equation}
where $\Lambda_{loc}=5.5$. The rightmost bound is crude but enough since it implies
$$\vert \gamma \vert \leq 3 < 3.37 \approx  2\sqrt{0.95\cdot 3} < 2\sqrt{\Lambda_b},$$
meaning that we have quite a lot of room in the inequality.

We fix a non-increasing sequence $(\varepsilon_k)_{k\geq 1}$ going to $0$, to be determined more precisely later. We fix $0\leq \psi_k\leq 1$ a smooth cut-off function that is equal to $1$ on $[-1,1-\frac{1}{k}]$ and equal to $0$ on $[1-\frac{1}{2k},1]$.
%We let $\zeta_n = (1-\psi_n)\mathbf{1}_{c\leq 0}$, which is supported on $[-1,-1+\frac{2}{n}]$. We finally define $\eta_n=\psi_n + \zeta_n$ which is $1$ on $[-1,1-\frac{2}{n}]$ and $0$ on $[1-\frac{1}{n},1]$.
We define the approximation sequence by
\begin{align*}
    b^k:=b \left(\psi_k + (1-\psi_k) \frac{\tilde{b}^k}{\tilde{b}}\right), &&\tilde{b}^k(c)=\int_{\varepsilon_k}^{+\infty} \Phi_t(c)\omega(t)dt.
\end{align*}
We recall that for all $t$ the heat kernel $\Phi_t$ is an increasing function \cite{NowakSjogrenSzarek2019}, so $\tilde{b}$ and $\tilde{b}^k$ are increasing. 
Observe that $\tilde{b}^k\leq \tilde{b}$, which implies $b^k \leq b$. We can hence find $M>0$ such that these four functions are bounded by $M$ on $[-1,0]$, uniformly in $k$. Because of the singularity at $b=1$, we can choose $k_0$ large enough such that 
$b(c), \tilde{b}(c)\geq 200M$ for all $c\in[1-\frac{1}{k_0},1)$. Then, for all $c\in [1-\frac{1}{k_0},1)$, $b(-c)\leq 0.01 b(c)$ and $\tilde{b}(-c)\leq 0.01 \tilde{b}(c)$, so the comparison \eqref{eq:compprinc} yields
\begin{equation}
\label{eq:app1}
    \frac{1}{1.01}\tilde{b}(c) \leq b(c) \leq 1.01 C_2 \tilde{b}(c),
\end{equation}
recalling that $c_1=1$. We now choose $\varepsilon_{k_0}>0$ small enough so that for all $c\in[1-\frac{1}{k_0},1)$, for all $k\geq k_0$,
$$\tilde{b}^k(c)\geq  \tilde{b}^{k_0}(c) \geq \tilde{b}^{k_0}\left(1-\frac{1}{k_0}\right) \geq 101M,$$ which is possible since $\tilde{b}^{k_0}(1-1/k_0)\rightarrow\tilde{b}(1-1/k_0)\geq 200M$ as $\varepsilon_{k_0}\rightarrow 0$. Then, still for $c\in[1-\frac{1}{k_0},1)$, using \eqref{eq:app1} we also have
\begin{equation}
    \label{eq:app3}
    b^k(c) \geq \frac{b(c) \tilde{b}^k(c)}{\tilde{b}(c)}  \geq \frac{\tilde{b}^k(c)}{1.01} \geq 100M.
\end{equation}
From that we deduce that for all $c\in[1-\frac{1}{k_0},1)$, for all $k\geq k_0$,
\begin{align}
    \label{eq:app2}
    b^k(c) \leq b^k(c)+b^k(-c) \leq 1.01 b^k(c), && \tilde{b}^k(c) \leq \tilde{b}^k(c)+\tilde{b}^k(-c) \leq 1.01 \tilde{b}^k(c).
\end{align}
We are now ready to check \eqref{hyp:H1}, by comparing the kernels $b^k$ and $\tilde{b}^k$. For $k\geq k_0$ and $c\in[1-\frac{1}{k_0},1)$, using \eqref{eq:app3} and then \eqref{eq:app2} we have the lower bound
$$ b^k(c)+b^k(-c) \geq b^k(c) \geq \frac{\tilde{b}^k(c)}{1.01} \geq\frac{\tilde{b}^k(c)+\tilde{b}^k(-c)}{1.01^2}.$$
For the upper bound, we use \eqref{eq:app2} and \eqref{eq:app1} to get
$$b^k(c)+b^k(-c) \leq 1.01b^k(c) \leq 1.01^2 C_2 \left( \psi_k(c) \tilde{b}(c) + (1-\psi_k(c))\tilde{b}^k(c) \right).$$
For all $c$ in $[-1,1-\frac{1}{2k}]$, which includes the support of $\psi_k$,
\begin{align*}
    \frac{\tilde{b}(c)}{\tilde{b}^k(c)}=1+\frac{\int_{0}^{\varepsilon_k}\Phi_t(c)\omega(t)dt}{\int_{\varepsilon_k}^{+\infty}\Phi_t(c)\omega(t)dt} \leq 1 + \frac{\int_{0}^{\varepsilon_k}\Phi_t(1-\frac{1}{2k})\omega(t)dt}{\int_{\varepsilon_k}^{+\infty}\Phi_t(-1)\omega(t)dt}.
\end{align*}
By choosing $\varepsilon_k$ small enough, we can ensure that the right-hand side is less than $1.001$, so we obtain the upper bound
$$b^k(c)+b^k(-c) \leq 1.01^2 \cdot  1.001C_2 (\tilde{b}^k(c)+\tilde{b}^k(-c)).$$
The comparison for $c\in[0,1-\frac{1}{k_0})$ is simpler, because for any $k\geq k_0$, $b^k(\pm c)=b(\pm c)$, so \eqref{eq:compprinc} and the condition on $\varepsilon_k$ ensure
$$\tilde{b}^k(c) + \tilde{b}^k(-c) \leq \tilde{b}(c) + \tilde{b}(-c) \leq b^k(c)+b^k(-c) \leq C_2 (\tilde{b}(c) + \tilde{b}(-c)) \leq C_2 1.001 (\tilde{b}^k(c) + \tilde{b}^k(-c)).$$
Since $\tilde{b}^k$ is of the subordinate form (with $\omega^k = \omega \mathbf{1}_{t\geq \varepsilon_k}$), the bound \eqref{eq:subbound} (i.e. \cite[Proposition 9.3]{ImbertSilvestreVillani2024}) applies and for all $k$, $\Lambda_{\tilde{b}^k}>3$. The comparison principle \eqref{eq:compprinc} yields that for all $k\geq k_0$,
$$\Lambda_{b^k} \geq \frac{1}{1.01^4 \cdot 1.001 C_2}\Lambda_{\tilde{b}^k} >\frac{3\cdot 0.95}{1.01^4 \cdot 1.001},$$
so that $2\sqrt{\Lambda_{b^k}}>3.3> 3>\vert \gamma\vert $, which implies the hypothesis \eqref{hyp:H1}.

Hypothesis \eqref{hyp:H1bis} is also checked: we have $b^k=b$ on $[-1,1-\frac{1}{k}]$ by construction. Necessarily $b^k$ is bounded because up to a constant, it is smaller than $\tilde{b}^k(c)+\tilde{b}^k(-c)$, which is itself bounded. The lower bound $b^k(c)\geq \tilde{b}^k(c)/1.01$ from \eqref{eq:app3}, which holds on $[1-1/k_0,1)$, guarantees that on $[1-1/k,1)$, $b^k(c) \geq \rho_k $ where $\rho_k\rightarrow +\infty$, because $\tilde{b}^k(c) \geq \tilde{b}^k(1-1/k) \rightarrow +\infty$ since $\varepsilon_k \rightarrow 0$.

Since $b$, $\tilde{b}$ and $\tilde{b}^k$ are continuous on $[-1,1)$, the lower semicontinuity is clear apart from $c= 1$, but we now from \cite[Appendix A]{ImbertSilvestreVillani2024} that $b/\tilde{b}$ converges to a finite constant as $c\rightarrow 1$, so so does $b^k$. Hence $b^k$ can even be taken continuous on $[-1,1]$. Finally, if we take $(\phi_k)_k$ to be pointwise non-decreasing, the sequence $(b^k)_k$ is pointwise non-decreasing.

The sequence $(b^{k_0+k})_{k\geq 1}$ then checks \eqref{hyp:H1bis} and \eqref{hyp:H1}.

\sloppy
\printbibliography
\end{document}